\newtheorem{theorem}{Theorem}
\newtheorem{lemma}{Lemma}
\newtheorem{remark}{Remark}
\newtheorem{definition}{Definition}
\newtheorem{example}{Example}
\newtheorem{alg}{Algorithm}
\DeclareMathOperator*{\argmin}{arg\,min}
\newcommand{\ra}{\rangle}
\newcommand{\la}{\langle}
\newcommand{\norms}[1]{\|#1\|_2^2}
\newcommand{\N}{\mathbb{N}}
\newcommand{\R}{\mathbb{R}}
\renewcommand{\P}{\mathbb{P}}
\renewcommand{\complement}[1]{\left( #1\right)^c}
\newcommand\muAOneHashing{C_1 \sqrt{C_2} \sqrt{\epsilon}  \min \left\{\frac{\log(E/ (C_2 \epsilon))}{4r + \log(1/\delta)}, \sqrt{\frac{\log(E)}{4r + \log(1/\delta)}} \right\} }
\newcommand\logDeltaI{\log(1/\delta)}
\newcommand\fourRplusLog{\left[ 4r + \logDeltaI\right]}
\newcommand{\nuOneHashing}{C_1 \sqrt{\epsilon} \min \left\{ \frac{\log(E/\epsilon)}{\log(1/\delta)}, \sqrt{\frac{\log(E)}{\log(1/\delta)}} \right\}}
\newcommand{\muASHashing}{\sqrt{s} C_{\nu} C_1^{-1} \muBarEpsDelta}
\newcommand{\muASHashingVariant}{\sqrt{s}\muBarEpsDelta}
\newcommand{\EUpperSHashing}{C_2^2 \epsilon^2 s[4r + \logDeltaI]^{-1} e^{C_s (C_2 \epsilon s)^{-1} \left[ 4r+\logDeltaI \right]}}
\newcommand{\muLower}{ \sqrt{r/n} + \sqrt{8 \log(n/\delta_1)/n } }
\newcommand{\muLowerNoN}{ \sqrt{r} + \sqrt{8 \log(n/\delta_1) } }
\newcommand{\nLower}{ \frac{ \left( \muLowerNoN  \right)^2}{ 
sC_{\nu}^2C_1^{-2}\bar{\mu}(\epsilon,\delta)^2 }}
\newcommand{\nLowerSVariant}{ \frac{ \left( \muLowerNoN  \right)^2}{ s\bar{\mu}(\epsilon,\delta)^2 } }
\newcommand{\mLower}{E C_2^{-2} \epsilon^{-2} \fourRplusLog}
\newcommand{\epsilonPrimeFactor}{\frac{(1-\gamma)(1-\gamma^2)}{1+2\gamma-\gamma^2}}
\renewcommand{\k}{p}
\newcommand{\set}[1]{\left\{ #1 \right\}}
\newcommand{\yPlus}{Y_+}
\newcommand{\yMinus}{Y_-}
\newcommand{\nPlus}{N_+}
\newcommand{\nMinus}{N_-}
\newcommand{\yPlusExpression}{\set{y_1 + y_2: y_1,y_2 \in Y}}
\newcommand{\yMinusExpression}{\set{y_1 - y_2: y_1,y_2 \in Y}}
\newcommand{\nPlusExpression}{\set{y_i + y_j: i,j \in \left[1,|N|\right]}}
\newcommand{\nMinusExpression}{\set{y_i - y_j: i,j  \in \left[1,|N|\right]}}
\newcommand{\union}[2]{#1\cup#2}
\newcommand{\yExpression}{\set{y_1, y_2, \dots y_{|Y|}}}
\newcommand{\probability}[1]{\P \left( #1 \right)}
\renewcommand{\complement}[1]{\left( #1 \right)^c}
\newcommand{\squareBracket}[1]{\left[ #1 \right]}
\newcommand{\nMinusOneExpression}{\set{y_i-y_j: 1\leq i <j \leq |N|}}
\newcommand{\nMinusTwoExpression}{\set{y_i - y_j: 1\leq j <i \leq |N|}}
\newcommand{\nMinusOne}{N_-^{(1)}}
\newcommand{\nMinusTwo}{N_-^{(2)}}
\newcommand{\muBarEpsDelta}{\bar{\mu}(\epsilon,\delta)}
\newcommand{\EUpperHashing}{\frac{2 e^{4r}}{\left[ 4r + \logDeltaI \right]\delta }}
\newcommand{\muHatSEpsDelta}{\hat{\mu}(s,\epsilon,\delta)}
\newcommand{\HRHTProb}{1-\delta-\delta_1 + \delta\delta_1}
\newcommand{\bracket}[1]{\left( #1\right)}
\newcommand{\constantsDescription}{problem-independent constants}
\newcommand{\whereMuBarIsDefined}{where $\muBarEpsDelta$ is defined in \eqref{A:mu-1}}
\newcommand{\theoremFiveFirstSentence}{Suppose that $\epsilon,\delta \in (0,1)$, $r \leq d \leq n, m\leq n \in \N$, $E >0$ satisfy }
\newcommand{\theoremThreeFirstSentence}{Let  $C_1, C_2, C_3, C_M, C_{\nu}, C_s >0$ be \constantsDescription. Suppose that $\epsilon,\delta \in (0, C_3)$, 
$m,\,s\in \N^+$ and $E>0$ satisfy }
\newcommand{\anSHashingMat}{an $s$-hashing matrix}
\newcommand{\anSHashingVariantMat}{an $s$-hashing variant matrix}
\newcommand{\refAlgOne}{Algorithm \ref{alg1}}
\newcommand{\solverName}{Ski-LLS}
\newcommand{\solverNameDense}{Ski-LLS-dense}
\newcommand{\solverNameSparse}{Ski-LLS-sparse}
\newcommand{\calibrationDenseCaptionSentenceOne}[1]{Runtime of #1 on dense matrices $A \in \R^{n \times d}$ from Test Set 1
with $n=50000, d=4000$ and $n=50000, d=7000$ and different values of $\gamma=m/d$} 
\newcommand{\calibrationDenseCaptionSentenceTwo}[1]{For each plot, #1 is run three times on (the same) randomly generated $A$. We see that the runtime has low variance despite the randomness in the solver} 
\newcommand{\calibrationDenseCaptionSentenceThree}[1]{We choose #1 to approximately minimize the runtime across the above plots}
\newcommand{\performanceProfileCaption}[1]{Performance profile comparison of \solverName{} with LSRN, LS\_HSL and LS\_SPQR for all matrices $A\in\R^{n\times d}$ in the Florida matrix collection with #1}
\newcommand{\calibrationSixFigures}[8]{
\begin{figure}[H]
    \centering
    \begin{minipage}{\mysize\textwidth}
        \centering
        \includegraphics[width=\textwidth]{#1} 
    \end{minipage}
    \begin{minipage}{\mysize\textwidth}
        \centering
        \includegraphics[width=\textwidth]{#2} 
    \end{minipage}
    \begin{minipage}{\mysize\textwidth}
        \centering
        \includegraphics[width=\textwidth]{#3} 
    \end{minipage}    
    \centering
    \begin{minipage}{\mysize\textwidth}
        \centering
        \includegraphics[width=\textwidth]{#4} 
    \end{minipage}
    \begin{minipage}{\mysize\textwidth}
        \centering
        \includegraphics[width=\textwidth]{#5} 
    \end{minipage}
    \begin{minipage}{\mysize\textwidth}
        \centering
        \includegraphics[width=\textwidth]{#6} 
    \end{minipage}        
    \caption{#7} \label{#8}
\end{figure}
}
\newcommand{\threeFigures}[9]{
\begin{figure}
    \begin{minipage}{\mysize\textwidth}
    \centering
        \includegraphics[width=\textwidth]{#1} 
        \caption{#2}
        \label{#3}
    \end{minipage}\hfill
    \begin{minipage}{\mysize\textwidth}
    \centering
        \includegraphics[width=\textwidth]{#4} 
        \caption{#5}
        \label{#6}
    \end{minipage}
    \begin{minipage}{\mysize\textwidth}
    \centering
        \includegraphics[width=\textwidth]{#7} 
        \caption{#8}
        \label{#9}
    \end{minipage}    
\end{figure}
}
\newcommand{\twoFigures}[6]{
    \begin{figure}
    \centering
    \begin{minipage}{\mysize\textwidth}
        \centering
        \includegraphics[width=\textwidth]{#1} 
        \caption{#2}
        \label{#3}
    \end{minipage}\hfill
    \begin{minipage}{\mysize\textwidth}
    \centering
    \includegraphics[width=\textwidth]{#4}
    \caption{#5}
    \label{#6}
    \end{minipage}
    \end{figure}
}
\newcommand{\mathInTitle}[1]{\texorpdfstring{#1}{TEXT}}
\newcommand{\singleQuote}[1]{\lq #1\rq}
\begin{document}

\title{Hashing embeddings of optimal dimension, with applications to linear least squares}

\author{Coralia Cartis\thanks{The  order  of  the  authors  is  alphabetical;  the  third  author  is  the  primary  contributor.}  \textsuperscript{\normalfont,}\thanks{Mathematical Institute, University of Oxford, Radcliffe Observatory Quarter, Woodstock Road, Oxford, OX2 6GG, United Kingdom (\texttt{cartis@maths.ox.ac.uk}). This author's work was supported by the Alan Turing Institute for Data Science, London, UK.}
	\and
	Jan Fiala\footnotemark[1] \textsuperscript{\normalfont,}\thanks{Numerical Algorithms Group Ltd, 	30 St Giles', Oxford, OX1 3LE, United Kingdom
	(\texttt{jan.fiala@nag.co.uk}).} 
	\and
	Zhen Shao\footnotemark[1]  \textsuperscript{\normalfont,}\thanks{Mathematical Institute, University of Oxford, Radcliffe Observatory Quarter, Woodstock Road, Oxford, OX2 6GG, United Kingdom (\texttt{shaoz@maths.ox.ac.uk}). This author's work was supported by the EPSRC Centre For Doctoral Training in Industrially Focused Mathematical Modelling (EP/L015803/1) in collaboration with the Numerical Algorithms Group Ltd.} 
	}
\date{\today}

\maketitle

\begin{abstract}
The aim of this paper is two-fold: firstly, 
to present subspace embedding properties for $s$-hashing sketching matrices, with $s\geq 1$, that are optimal in the projection dimension $m$ of  the sketch, namely, $m=\mathcal{O}(d)$, where $d$ is the dimension of the subspace. A diverse set of results are presented that 
address the case when the input matrix has sufficiently low coherence
(thus removing the $\log^2 d$ factor dependence in $m$, in the low-coherence result of Bourgain et al (2015) at the expense of a smaller coherence requirement);
how this coherence changes with the number $s$ of column nonzeros   (allowing a scaling of $\sqrt{s}$ of the coherence bound), or is reduced through suitable transformations (when considering hashed- instead of subsampled- coherence reducing transformations such as randomised Hadamard). Secondly, we apply these  general hashing sketching results to the special case of Linear Least Squares (LLS), and develop \solverName{}, a generic software package for these problems, 
that builds upon and improves the Blendenpik solver on dense input and the (sequential) LSRN performance on sparse problems. In addition to the hashing sketching improvements, we add suitable linear algebra tools for rank-deficient and for sparse problems that lead Ski-LLS to outperform 
not only sketching-based routines on randomly generated input, but also  state of the art direct solver SPQR and iterative code HSL  on certain subsets of the sparse Florida matrix collection; namely, on least squares problems that are 
 significantly overdetermined, or  moderately sparse, or
 difficult.

\end{abstract}

\textbf{Keywords:} sketching techniques, sparse random matrices, linear least-squares, iterative methods, preconditioning, sparse matrices, mathematical software.
\\

\textbf{Mathematics Subject Classification:} 65K05, 93E24,  65F08, 65F10, 65F20, 65F50, 62J05

\section{Introduction}

	Over the past fifteen years, sketching techniques have proved to be useful tools for improving the computational efficiency and scalability of linear algebra and optimization techniques, such as of methods for solving least-squares, sums of functions and low-rank matrix problems  \cite{10.1561/2200000035, 10.1561/0400000060}. 
The celebrated Johnson-Lindenstrauss Lemma \cite{Johnson:1984aa} and subsequent results  
 use a carefully-chosen random  matrix $S\in\R^{m\times n}$, $m\ll n$, to sample/project the column vectors of a matrix $A\in \R^{n\times d}$,
 with $n\geq d$ to lower dimensions, while approximately preserving the length of vectors in this column space; this quality of $S$ (and of its associated distribution) is captured by the (oblivious) subspace embedding property \cite{10.1561/0400000060} (see Definition \ref{subspace_embedding_def1_statement}). Sketching has found a variety of uses, such as in the solution of linear least squares problems and low-rank matrix approximations, in subsampling data points and reducing the dimension of the parameter space in training tasks for machine learning systems and  imaging, and more. 
 
 Gaussian matrices have been shown to have optimal subspace embedding properties in terms of the size $m$ of the sketch, allowing $m=\mathcal{O}(d)$ for any given matrix $A\in \R^{n\times d}$ (with high probability).
 Due to their density, however, the computational cost of calculating the sketch $SA$ is often prohibitively expensive, potentially limiting its use in large-scale contexts. To alleviate these deficiencies, as well as an alternative to subsampling techniques, 
 sparse random matrix ensembles have been proposed in the randomized linear algebra literature by
 Clarkson and Woodruff \cite{10.1145/3019134}; namely, $s$-hashing matrices with $s$ (fixed) non-zeros per column, $s\geq 1$. Hashing matrices have been shown empirically to be almost as good as Gaussian matrices for sketching purposes \cite{10.1145/3219819.3220098}, but their theoretical embedding guarantees require at least $m = O(d^2)$ rows for the sketch $S$ \cite{Nelson:2014uu}. Furthermore, numerical evidence also shows that using $s\geq 2$ instead of $s=1$ in $S$ leads to a more accurately sketched input, especially when this input is sparse. Here, in our theoretical developments, we aim to make precise and quantify these numerical observations; namely, we show that subspace embedding properties can be shown for 
  $s$-hashing matrices with $s\geq 1$   that have the optimal dimension dependence $m=\mathcal{O}(d)$  on some classes of inputs.
 
 Sketching results have direct implications on the efficiency of  solving Linear Least Squares (LLS) problems, which can be written as the optimization problem,
 \begin{align} 
\min_{x \in \R^d}\|Ax-b\|_2^2, \label{LLS-statement}
\end{align}
where  $A \in \R^{n\times d}$ is a given data matrix that has (unknown) rank $r$,  $b\in \R^n$ is the vector of observations, and
$n \geq d \geq r$. The sketched  matrix $SA$ is used to either directly compute an approximate solution to  (\ref{LLS-statement}) or to generate a high-quality preconditioner for the iterative solution of   (\ref{LLS-statement}); the latter has been the basis of state-of-the-art randomized linear algebra codes such as Blendenpik \cite{doi:10.1137/090767911} and LSRN \cite{Meng:2014ib}, where the latter improves on the former by exploiting input sparsity, parallelization  and allowing rank-deficiency of the input matrix. Here, we propose a  generic solver called \solverName{} (SKetchIng for Linear Least Squares) that builds upon and extends the Blendenpik/(serial) LSRN frameworks and judiciously uses hashing sketching with one or more nonzeros and efficiently addresses both dense and sparse inputs; extensive numerical results are presented.

\paragraph{Existing literature: sparse random ensembles and subspace embedding properties} 
Various sparse and structured random matrices have been proposed, in an attempt to reduce the computational cost of using dense Gaussian sketching. As an obvious choice,
subsampling matrices that have one nonzero per row have been used in \cite{10.5555/1109557.1109682} but do not have good subspace embedding properties unless sampling is done in a non-uniform way using computationally expensive probabilities. 

The fast Johnson-Lindenstrauss transform (FJLT) proposed by Ailon and Chazelle \cite{10.1145/1132516.1132597} is a structured matrix, needing  $\mathcal{O} \left( nd \bracket{\log(d)+ \log \log n} \right) $ operations to apply to a given $A\in \R^{n\times d}$. 
To ensure subspace embedding properties of the sketched input,
 the sketching matrix $S$ is required to have  $m = \mathcal{O} \left( (\sqrt{d} + \sqrt{\log n})^2 \log(d) \right) $  \cite{Tropp:wr, 
 10.1145/2483699.2483701}.
 Clarkson and Woodruff \cite{10.1145/3019134} proposed the use of $1$-hashing matrices, with  one non-zero ($\pm1$)  per column in random rows. Such a random matrix takes $\mathcal{O} \left( nnz(A) \right) $ operations to be applied to $A$, where $nnz(A)$ denotes the number of nonzeros in $A$. It 
 needs $m=\Theta \left( d^2 \right) $ rows to be a subspace embedding  \cite{10.1145/2488608.2488621, Nelson:2014uu,10.1145/2488608.2488622}. Cohen \cite{10.5555/2884435.2884456} and Nelson and Nguyen  \cite{Nelson:te}  have shown that increasing number of non-zeros per column in  hashing matrices, namely, using $s$-hashing matrices with $s> 1$, leads to a reduced requirement in the number of rows $m$ of the sketching matrix.
 Bourgain et al \cite{Bourgain:2015tc} further showed that if the coherence of the input matrix -- a measure of the non-uniformity of its rows -- is sufficiently low, $1$-hashing sketching  requires fewer rows, namely $m=\mathcal{O}(d\log^2 d)$; see Table \ref{tab::m_and_mu_1_hashing}. When $s$-hashing matrices are employed, the coherence requirement can be relaxed by a factor of $\sqrt{s}$ for similar $m$ requirements
\cite{Bourgain:2015tc}.
A recent paper on tensor subspace embeddings \cite{iwen2020lower} can be particularised to   vector subspace embeddings, leading to a matrix distribution of matrices $S$ with $m = \mathcal{O} (d \log^4 d)$  that requires $\mathcal{O}(n\log n)$ operations to apply to any vector. 

In \cite{CHEN2020105639},  a `stable' $1$-hashing matrix is proposed, for which each row  has approximately the same number of non-zeros, and that has good JL embedding properties with the optimal $m = \mathcal{O}(\epsilon^{-2} \log(1/\delta)$ for such projections.
Furthermore, \cite{liu2021extending} proposed learning the positions and values of non-zero entries in $1$-hashing matrices by assuming the data comes from a fixed distribution.  Our contributions to the embedding properties of $s$-hashing matrices with $s\geq 1$ are summarized below.

\paragraph{Existing literature: efficient algorithms for LLS problems employing sketching}
Theoretical understanding of sketching properties led to the creation of novel algorithms for diverse problem classes, including LLS in \eqref{LLS-statement}. In particular, Sarlos \cite{10.1109/FOCS.2006.37} first proposed directly sketching problem 
\eqref{LLS-statement} and solving the reduced problem, while 
Rokhlin \cite{Rokhlin:2008wb} introduced the idea of using  sketching for the construction of a suitable preconditioner, to help with solving \eqref{LLS-statement} via iterative means; this has been further successfully implemented in the state of the art solvers that use sketching, Blendenpik \cite{doi:10.1137/090767911} and LSRN \cite{Meng:2014ib}. Various sketching matrices have been used in algorithms for \eqref{LLS-statement}: Blendenpik  uses a variant of FJLT; LSRN \cite{Meng:2014ib}, Gaussian sketching; Iyer \cite{10.5555/3019094.3019103} and Dahiya\cite{10.1145/3219819.3220098} experimented with $1$-hashing. Recently, \cite{Iyer:2016aa} implemented Blendenpik in a distributed computing environment,  showing the advantages of using sketching over LAPACK routines on huge-size LLS problems with dense matrices.


Large scale 
LLS problems may alternatively be solved by first-order methods, such as stochastic gradient or block coordinates, that use sketching directly in subselecting the gradient terms or components that form the search direction \cite{lacotte2020optimal,lacotte2019faster, MR4187148, MR3432148, lopes2018error}.
Sketching curvature information has also been investigated for problem 
 \eqref{LLS-statement} such as in \cite{kahale2020leastsquares}, while 
\cite{zhu2018gradientbased} proposed a gradient-based sampling method.
A nice overview of these methods can be found in  \cite{2019arXiv190912176L}. However, such first order approaches are beyond our scope here, as we are searching for high-accuracy solutions to problem \eqref{LLS-statement}, extending the Blendenpik and LSRN frameworks. 

LLS problems have been the focus of the numerical linear algebra community for several decades now; techniques abound, and we refer the interested reader to  Chapter 9 in \cite{Nocedal:2006uv} for a brief introduction, and to classical textbooks \cite{Bjorck:1996uz} for an extensive coverage. For a more recent benchmarking paper that addresses recent developments in preconditioned iterative methods, see \cite{10.1145/3014057}; we use one of the most competitive codes in \cite{10.1145/3014057} for the  benchmarking of our solver. 


\paragraph{Summary of our contributions}
In terms of theoretical contributions to {\bf subspace embedding properties} of $s$-hashing matrices, we have the following main results.
\begin{itemize}
    \item We provide a subspace embedding result for $1$-hashing matrices $S\in \R^{m\times n}$   that has the optimal choice of embedding dimension, $m=\mathcal{O}(d)$,
    when applied to a(ny) given $A\in \R^{n\times d}$ with sufficiently small coherence (see Theorem \ref{thm1}). This result improves upon 
    Bourgain et al \cite{Bourgain:2015tc}, where  $m=\mathcal{O}(d\log^2 d)$, but where a more relaxed coherence requirement is imposed; see Table \ref{tab::m_and_mu_1_hashing}.
    Our optimal dimension dependence comes at the expense of a stricter coherence requirement, reducing the problem class we can address. 
    \item We cascade our  result for $1$-hashing to $s$-hashing matrices with $s>1$, achieving an embedding result with a sketching matrix of size  $m=\mathcal{O}(d)$ when applied to a larger input matrix class, whose coherence is allowed to increase by at most a factor of $\sqrt{s}$; see Theorem \ref{thm::s-hashing}.
    \item Instead of using subsampled coherence-reduction transformations, we propose to replace the subsampled aspect with hashing, leading to novel hashed- variants of any such transformations; Figure \ref{fig::blen_motivation} gives a simple illustration of the benefits of our proposal. Then, in the case of Randomised Hadamard Transform (RHT), we show that for its $1$-hashed variant, H-RHT, a subspace embedding result holds that has optimal dimension dependence, namely, $m=\mathcal{O}(d)$, when applied to any $n\times d$ input matrix with $n>d^3$ (that is sufficiently overdetermined); see Theorem \ref{thm::HRHT} which improves corresponding results for subsampled RHT \cite{Tropp:wr}.
    \item Finally, we propose a new $s$-hashing variant that coincides with (the usual) $1$-hashing when $s=1$ but may differ when $s>1$; in particular, this variant has  at most $s$ nonzeros per column, as opposed to $s$-hashing that has precisely $s$ nonzero entries.   We show a general subspace embedding result that allows translating any subspace embedding property of $1$-hashing sketching and input matrix with coherence at most (some/any) $\mu>0$ to a similar guarantee for the $s$-hashing variant and input of coherence at most $\mu\sqrt{s}$; see Theorem \ref{1-hashing-and-s-hashing}. Our result relies on a single, intuitive proof, and is not tied to any particular proof of embedding properties in the case of $1$-hashing; thus if the latter improves, so will the guarantees for the $s$-hashing variant, immediately.
\end{itemize}
We note that the $d$ dependence in our embedding results is replaced by $r$ dependence, where $r$ is the rank of the embedded matrix $A$, as this accurately captures the true data dimension.

Regarding our {\bf algorithmic contributions}, we introduce and analyse a general framework (Algorithm \ref{alg1}) for solving \eqref{LLS-statement} that includes Blendenpik and LSRN, while allowing arbitrary choice of sketching and a wide range of rank-revealing factorizations in order to build a preconditioner using the sketched matrix $SA$. We show that a minimal residual solution of \eqref{LLS-statement} is generated by Algorithm \ref{alg1} when a rank-revealing factorization is calculated for $SA$. If the latter is replaced by a 
complete orthogonal factorization of $SA$, then the minimal norm solution of \eqref{LLS-statement} is obtained. 
Our {\bf software contribution} is \solverName{}, a C++ 
implementation of Algorithm \ref{alg1} that allows and distinguishes between dense and sparse input and makes extensive use of $s$-hashing matrices for the sketching step. 
When $A$ in \eqref{LLS-statement} is dense, \solverName{} employs the following changes/improvements to Blendenpik:  choice of sketching is the {\it $1$-hashed} 
DHT transform instead of subsampled; a randomised column pivoted QR factorization \cite{Martinsson:2017eh} of $SA$ instead of just QR, allowing rank-deficient input. The latter factorization is less computationally expensive than the SVD choice in LSRN. Similarly, in the case of sparse input $A$,
we let $S$ be an $s$-hashing matrix (with $s=2$) and use a  rank-revealing sparse QR factorization for $SA$ (SPQR \cite{10.1145/2049662.2049670}), which leads to improved performance over (serial) LSRN and other state of the art LLS solvers. In particular,
\solverName{} is more than 10 times faster than LSRN for sparse Gaussian inputs. We extensively compare \solverName{}  with LSRN, SPQR and the iterative approach in HSL, on the Florida Matrix Collection \cite{10.1145/2049662.2049663}, and find that \solverName{} is competitive on significantly-overdetermined or ill-conditioned inputs.

\paragraph{Structure of the paper} 
The necessary technical background is given in Section 2.
In Section 3, we state and prove our theorem on the coherence requirement needed to use $1$-hashing with $m=O(d)$ as a subspace embedding. In Section 4, we show how increasing the number of non-zeros per column from $1$ to $s$ relaxes the coherence requirement for $s$-hashing matrices by a factor of $\sqrt{s}$. In the same section, we also consider hashed coherence reduction transformations and show their embeddings properties. The algorithmic framework that uses sketching for rank-deficient linear least squares is introduced and analysed in Section 5. In Section 6 we introduce our solver \solverName{} and discuss its key features and implementation details. We show extensive numerical experiments for both dense and sparse matrices $A$ and demonstrate the competitiveness of our solver in Section 7.

Throughout the paper, we let $\langle\cdot,\cdot\rangle$ and $\|\cdot\|_{2}$ denote the usual Euclidean inner product and norm, 
respectively, and  $\|\cdot\|_{\infty}$, the $l_{\infty}$ norm. Also, for some $n\in \N$, $[n]=\{1,2,\ldots,n\}$. For a(ny) symmetric positive definite matrix $\overline{W}$,
we define the norm $\|x\|_{\overline{W}}:=x^T\overline{W}x$, for all $x$,  as the norm induced  by $\overline{W}$.
The notation $\Theta \left( \cdot\right)$ denotes both lower and upper bounds of the respective order. $\Omega \left( \cdot \right)$ denotes a lower bound of the respective order.

\section{Technical Background}

In this section, we review some important concepts and their properties that we then use throughout the paper. 
We employ several variants of the notion of random embeddings for finite or infinite sets, as we define next.

\subsection{Random embeddings}

We start with a very general concept of embedding a (finite or infinite) number of points; throughout, we let $\epsilon \in (0,1)$ be the user-chosen/arbitrary error
tolerance in the embeddings 
\enlargethispage{4ex}
and $n, k\in \N$. 

\begin{definition}[Generalised JL\footnote{Note that `JL' stands for Johnson-Lindenstrauss, recalling their pioneering lemma \cite{Johnson:1984aa}.} embedding \cite{10.1561/0400000060}]
\label{genJL}
A generalised $\epsilon$-JL embedding for a set $Y\subseteq \R^n$
is a matrix $S \in\R^{m\times n}$ such that
\begin{equation}\label{JL-plus}
-\epsilon \|y_i\|_2\cdot \|y_j\|_2 \leq \langle Sy_i, Sy_j \rangle - \langle y_i, y_j \rangle \leq \epsilon \|y_i\|_2 \cdot \|y_j\|_2, \quad \text{for all }\,\, y_i, y_j \in Y.
\end{equation}
\end{definition}

If we let $y_i=y_j$ in (\ref{JL-plus}),  we recover the common notion of an $\epsilon$-JL embedding, that approximately preserves the length of vectors in a given set. 

\begin{definition}[JL embedding \cite{10.1561/0400000060}]
\label{def::JL_embedding}
An $\epsilon$-JL embedding for a set $Y\subseteq \R^{n}$ 
is a matrix $S \in\R^{m\times n}$ such that
\begin{equation}\label{JL}
(1-\epsilon)\|y\|_2^2 \leq \|Sy\|_2^2 \leq (1+\epsilon)\|y\|_2^2 \quad \text{for all}\,\, y \in Y.
\end{equation}
\end{definition}
Often, in the above definitions, the set $Y=\{y_1,\ldots,y_k\}$ is a finite collection of vectors in $\R^n$. But an infinite number of points may also be embedded, such as in the
case when $Y$ is an entire subspace. Then, an embedding approximately preserves pairwise distances between any points in the column 
space of a  matrix $B\in \R^{n\times k}$.
\begin{definition}[$\epsilon$-subspace embedding \cite{10.1561/0400000060}]\label{subspace_embedding_def1_statement}
An $\epsilon$-subspace embedding for a matrix $B \in \R^{n\times k}$ is a matrix $S\in\R^{m\times n}$ such that
\begin{equation}\label{subspace_embedding_def1}
(1-\epsilon)\|y\|_2^2 \leq \|Sy\|_2^2 \leq (1+\epsilon)\|y\|_2^2 \quad \text{ for all $y\in Y=\{y: y=Bz, z\in \R^k\}$}. 
\end{equation}
\end{definition}
In other words, $S$ is an $\epsilon$-subspace embedding for $B$ if and only if $S$ is 
an $\epsilon$-JL embedding for  the column subspace $Y$ of $B$.

 Oblivious embeddings are matrix distributions such that given a(ny)  subset/column subspace of vectors in $\R^n$, a random matrix drawn 
 from such a distribution is an embedding for these vectors with high probability. We let $1-\delta\in [0,1]$ denote a(ny) success probability of an embedding.
 
\begin{definition}[Oblivious embedding \cite{10.1561/0400000060,10.1109/FOCS.2006.37}] \label{Oblivious_embedding}
A distribution $\cal{S}$ on $S \in \R^{m \times n}$  is an $(\epsilon,\delta)$-oblivious embedding if given a fixed/arbitrary set of vectors, we have that, with probability at least $1-\delta$, a matrix $S$ from the distribution is an $\epsilon$-embedding for these vectors.
\end{definition}
Using the above definitions of embeddings, we have distributions that are {\it oblivious JL-embeddings} for a(ny) given/fixed set $Y$ of some vectors $y\in \R^n$, and distributions that are   {\it oblivious subspace embeddings} 
 for a(ny) given/fixed matrix $B\in \R^{n\times k}$ (and for the corresponding subspace $Y$ of its columns).  We note that depending on the quantities being embedded, in addition to $\epsilon$ and $\delta$ dependencies, the size $m$
 of $S$ may depend on $n$ and  the `dimension' of the embedded sets; for example, in the case of a finite set $Y$ of $k$ vectors in $\R^n$, $m$ additionally may depend on $k$ while in the subspace embedding case, $m$ may depend on  the rank $r$ of $B$.

\subsection{Generic properties of subspace embeddings}


A necessary condition for a matrix $S$ to be an $\epsilon$-subspace embedding for a given matrix is that the sketched matrix  has the same rank. The proofs of the next two lemmas are provided in the appendix. 

\begin{lemma}\label{rank-of-sketched-equal-to-unsketched}
If the matrix $S$ is an $\epsilon$-subspace embedding for a given matrix $B$ for some $\epsilon \in (0,1)$, then  $rank(SB) = rank(B)$, where $rank(\cdot)$ denotes the rank of the argument matrix.
\end{lemma}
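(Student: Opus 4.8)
The plan is to recast the rank equality as a statement about the dimensions of column spaces and then exploit the \emph{lower} bound of the subspace embedding inequality. First I would recall that $\text{rank}(B)$ equals the dimension of the column space $Y=\{Bz:z\in\R^{k}\}$, and that $\text{rank}(SB)$ equals the dimension of the column space of $SB$, which is precisely the image $S(Y)=\{Sy:y\in Y\}$ of $Y$ under the linear map $S$. Thus it suffices to prove that $\dim\big(S(Y)\big)=\dim(Y)$.

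Next I would view $S$ as a linear map restricted to the subspace $Y$ and invoke the rank--nullity theorem, giving $\dim\big(S(Y)\big)=\dim(Y)-\dim\big(\ker(S|_{Y})\big)$. Hence the claim reduces to showing that $S$ is injective on $Y$, that is, $\ker(S|_{Y})=\{0\}$.

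The decisive step is then to apply the lower bound in Definition \ref{subspace_embedding_def1_statement}. Take any $y\in Y$ with $Sy=0$, so that $\|Sy\|_2^2=0$; the embedding inequality yields $(1-\epsilon)\|y\|_2^2\le\|Sy\|_2^2=0$. Since $\epsilon\in(0,1)$ we have $1-\epsilon>0$, which forces $\|y\|_2=0$ and hence $y=0$. This establishes injectivity of $S$ on $Y$, so $\dim\big(S(Y)\big)=\dim(Y)$, and the desired rank equality follows.

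There is no real obstacle here: the entire argument rests on the strict positivity of $1-\epsilon$, which is exactly what guarantees that no nonzero vector of the column space is annihilated by $S$. The only point needing minor care is the bookkeeping that identifies the column space of $SB$ with the image $S(Y)$ and translates rank statements into statements about subspace dimensions; once that identification is in place, the conclusion is immediate.
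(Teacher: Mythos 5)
Your proof is correct and follows essentially the same route as the paper's: both arguments reduce the rank equality to rank--nullity bookkeeping and then use the lower bound $(1-\epsilon)\|y\|_2^2 \le \|Sy\|_2^2$ with $1-\epsilon>0$ to rule out any nonzero vector of the column space of $B$ being annihilated by $S$. The only cosmetic difference is that you apply rank--nullity to $S$ restricted to ${\rm range}(B)$, whereas the paper applies it to $B$ and $SB$ as maps on $\R^k$ and compares their kernels; these are equivalent formulations of the same argument.
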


Given any matrix $A \in \R^{n\times d}$ of rank $r$, the compact singular value decomposition (SVD) of $A$ provides a perfect subspace embedding.
In particular, let
\begin{align}
A = U \Sigma V^T,\label{thin-SVD}
\end{align}
where $U \in \R^{n \times r}$ with orthonormal columns, $\Sigma \in \R^{r \times r}$ is diagonal matrix with strictly positive diagonal entries, and $V \in \R^{d \times r}$ with orthonormal columns  \cite{10.5555/248979}.  Then the matrix $U^T$ is a $\epsilon$-subspace embedding for $A$ for any $\epsilon\in (0,1)$.

Next we connect the embedding properties of $S$ for $A$ with those for $U$ in \eqref{thin-SVD}, using a proof technique in Woodruff \cite{10.1561/0400000060}.

\begin{lemma} \label{subspace-embedding-def-2}
Let $A \in \R^{n \times d}$ with rank $r$ and SVD-decomposition factor 
$U \in \R^{n \times r}$ defined in \eqref{thin-SVD}, and let
$\epsilon \in (0,1)$. Then the following equivalences hold:
\begin{itemize}
\item[(i)] 
a matrix $S$ is an $\epsilon$-subspace embedding for  $A$ if and only if $S$ is an $\epsilon$-subspace embedding for $U$, namely,
\begin{align}
(1-\epsilon)\|Uz\|_2^2 \leq \|SUz\|_2^2 \leq (1+\epsilon)\|Uz\|_2^2, \quad \text{for all $z \in \R^{r}$}.\label{U-condition}
\end{align}
\item[(ii)] A matrix $S$ is an $\epsilon$-subspace embedding for  $A$ if and only if for all $z\in \R^{r}$ with $\|z\|_2=1$, we have\footnote{We note that since
$\|z\|_2=1$ and $U$ has orthonormal columns, $\|Uz\|_2=\|z\|_2=1$ in \eqref{U-condition-2}.}
\begin{align}
(1-\epsilon)\|Uz\|_2^2 \leq \|SUz\|_2^2 \leq (1+\epsilon)\|Uz\|_2^2. \label{U-condition-2}
\end{align}
\end{itemize}
\end{lemma}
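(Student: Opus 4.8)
The plan is to reduce both equivalences to a single structural fact: the matrices $A$ and $U$ have the same column space, so that the sets over which the two embedding inequalities are required to hold actually coincide. Write $Y = \{y : y = Az,\ z \in \R^d\}$ for the column space of $A$ appearing in Definition~\ref{subspace_embedding_def1_statement}. Using the compact SVD factorization $A = U\Sigma V^T$ from \eqref{thin-SVD}, I would first show that $Y$ equals the column space of $U$, i.e. $\{Uw : w \in \R^r\}$.

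To do this, note that $V \in \R^{d\times r}$ has orthonormal columns, so $V^T : \R^d \to \R^r$ is surjective (it has rank $r$); hence as $z$ ranges over $\R^d$, the vector $V^T z$ ranges over all of $\R^r$. Since $\Sigma$ is diagonal with strictly positive entries it is invertible, so $\Sigma V^T z$ also ranges over all of $\R^r$. Consequently $Az = U(\Sigma V^T z)$ ranges over exactly $\{Uw : w \in \R^r\}$, proving $Y = \{Uw : w \in \R^r\}$. With this in hand, part~(i) is immediate: the defining inequality $(1-\epsilon)\|y\|_2^2 \leq \|Sy\|_2^2 \leq (1+\epsilon)\|y\|_2^2$ for all $y \in Y$ is, term for term, the same requirement as \eqref{U-condition} once $y$ is reparametrized as $y = Uz$ with $z \in \R^r$. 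Thus $S$ is an $\epsilon$-subspace embedding for $A$ if and only if it is one for $U$.

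For part~(ii), I would argue by homogeneity. The three quantities in \eqref{U-condition} are each quadratic in $z$: replacing $z$ by $tz$ multiplies $\|Uz\|_2^2$, $\|SUz\|_2^2$ and $\|Uz\|_2^2$ all by $t^2$. Hence \eqref{U-condition} holds for a given nonzero $z$ if and only if it holds for the normalized vector $z/\|z\|_2$, and it holds trivially for $z = 0$. Therefore requiring \eqref{U-condition} for all $z \in \R^r$ is equivalent to requiring \eqref{U-condition-2} only for unit vectors $\|z\|_2 = 1$, and combining this with part~(i) yields (ii). The footnote observation that $\|Uz\|_2 = \|z\|_2 = 1$ on the unit sphere, since $U$ has orthonormal columns, is what lets one read \eqref{U-condition-2} as a statement purely about $\|SUz\|_2^2$.

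There is no serious obstacle here; the only point needing care is the surjectivity/invertibility argument establishing $Y = \{Uw : w \in \R^r\}$, where one must use both that $V^T$ has full row rank $r$ and that $\Sigma$ is invertible — dropping either would yield only one inclusion of the two column spaces. Everything else is the definition of subspace embedding combined with the elementary scaling argument.
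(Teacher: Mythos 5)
Your proof is correct and follows essentially the same route as the paper: the paper's appendix proof establishes part (i) via the explicit correspondence $x = V\Sigma^{-1}z$ and $z = \Sigma V^T x$ (which is exactly your column-space-equality argument made pointwise), and handles part (ii) by the same homogeneity/rescaling observation. No gaps.
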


\begin{remark}\label{rem_rd}
Lemma \ref{subspace-embedding-def-2}  shows that to obtain a subspace embedding for an $n\times d$ matrix $A$ it is sufficient (and necessary) to embed correctly its left-singular matrix that has rank $r$. Thus, the dependence on $d$ in subspace embedding results can be replaced by dependence on $r$, the rank of the input matrix $A$. As rank deficient matrices $A$ are important in this paper, we opt to state our results in terms of their $r$ dependency (instead of $d$).
\end{remark}

The matrix $U$ in \eqref{thin-SVD} can be seen as the ideal `sketching' matrix for $A$; however,  there is not much computational gain in doing this as computing the compact SVD has similar complexity as computing a minimal residual solution to (\ref{LLS-statement}) directly.

\subsection{Sparse matrix distributions and their embeddings properties}

In terms of optimal embedding properties, it is well known that (dense)
scaled Gaussian matrices  $S$ with $m = \mathcal{O} \left( \epsilon^{-2} (r + \log(1/\delta) )\right)$ provide an $(\epsilon, \delta)$-oblivious subspace embedding for $n\times d$ matrices $A$ of rank $r$ \cite{10.1561/0400000060}. However, the computational cost of the matrix-matrix product $SA$  is  $\mathcal{O}(nd^2)$, which is similar 
to the complexity of solving the original LLS problem \eqref{LLS-statement}; thus it seems difficult to achieve computational gains by calculating a sketched solution of 
\eqref{LLS-statement} in this case.
In order to improve the computational cost of using sketching for solving LLS problems, and to help preserve input sparsity (when $A$ is sparse), sparse random matrices have been proposed, 
namely, such as random matrices with one non-zero per row.  However, uniformly sampling rows of $A$ (and entries of $b$ in  \eqref{LLS-statement}) may miss choosing some (possibly important) row/entry.  A more robust proposal, both theoretically and numerically, is to use hashing matrices, with one (or more) nonzero entries per column, which when applied to $A$ (and $b$), captures all rows of $A$  (and entries of $b$) by adding two (or more) rows/entries with randomised signs.


 \begin{definition}\cite{10.1145/3019134} \label{def::sampling_and_hashing}
$S \in \R^{m \times n}$ is a $s$-hashing matrix if independently for each $j \in [n]$, we sample without replacement $i_1, i_2, \dots, i_s \in [m]$ uniformly at random and let $S_{i_k j} = \pm 1/\sqrt{s}$ for $k = 1, 2, \dots, s$.
\end{definition}
It follows that when $s=1$, 	$S \in \R^{m \times n}$ is a {\it $1$-hashing matrix} if independently for each $j \in [n]$, we sample $i \in [m]$ uniformly at random and let $S_{ij} = \pm1$ \footnote{The random signs are so as  to ensure that in expectation, the matrix $S$ preserves the norm of a vector $x \in \R^n$.}.

Still, in general, the optimal dimension dependence present in Gaussian subspace embeddings cannot be replicated even for hashing distributions, as our next example illustrates.

\begin{example}[The $1$-hashing matrix distributions fails to yield an oblivious subspace embedding with $m = \mathcal{O}(r)$]
	Consider the matrix
	\begin{align}
	A = \begin{pmatrix}
		I_{r\times r} & 0 \\ 0 & 0
	\end{pmatrix} \in \R^{n \times d}. \label{eqn::problematic_matrix}
	\end{align}
	If $S$ is a 1-hashing matrix with $m=\mathcal{O}(r)$, then 
	$\displaystyle SA = \bigl( S_1 \quad 0 \bigr)$, where the $S_1$ block contains the first $r$ columns of  $S$ 
	To ensure that the rank of $A$ is preserved (cf. Lemma \ref{rank-of-sketched-equal-to-unsketched}), a necessary condition for $S_1$ to have rank $r$ is that the $r$ non-zeros of $S_1$ are in different rows. Since, by definition, the respective  row is chosen independently and uniformly at random for each column of $S$, the probability of $S_1$ having rank $r$ is no greater than

	\begin{align}\label{hashing-fails}
	\left(1 - \frac{1}{m}\right) \cdot \left(1 - \frac{2}{m}\right)\cdot \ldots \cdot\left(1-\frac{r-1}{m}\right) 
	\leq e^{-\frac{1}{m}-\frac{2}{m}-\ldots -\frac{r-1}{m}}  
	= e^{-\frac{r(r-1)}{2m}},
	\end{align}
For the probability\footnote{The argument in the example relating  to 1-hashing sketching is related to the birthday paradox, as mentioned (but not proved) in Nelson and Nguyen \cite{10.1145/2488608.2488622}.}  \eqref{hashing-fails}  to be at least $1/2$, we must have
 $m \geq \frac{r(r-1)}{2 \log (2)}$.
\end{example}

The above example  improves upon the lower bound in Nelson et al.  \cite{10.1145/2488608.2488622} by slightly relaxing the requirements on $m$ and $n$\footnote{We note that
in fact,  \cite{Nelson:te} considers a more general set up, namely, any matrix distribution with column sparsity one.}.
We note that in the order of $r$ (or equivalently\footnote{See Remark \ref{rem_rd}.}, $d$), the lower bound $m=\mathcal{O}(r^2)$ for $1$-hashing matches the upper bound given in Nelson and Nguyen \cite{Nelson:te}, Meng and Mahoney \cite{10.1145/2488608.2488621}. 


When $S$ is an $s$-hashing matrix, with $s>1$, the tight bound $m=\Theta(r^2)$ can be improved to 
$m=\Theta (r\log r)$ for $s$ sufficiently large. In particular, Cohen \cite{10.5555/2884435.2884456} derived a general upper bound  that implies, for example, subspace embedding properties of 
$s$-hashing matrices provided $m=\mathcal{O}(d\log d)$ and $s=\mathcal{O}(\log d)$; the value of $s$ may be further reduced to a constant (that is not equal to $1$) at the expense of increasing $m$ and worsening its dependence of $d$. 
A lower bound for guaranteeing oblivious embedding properties of $s$-hashing matrices is given in \cite{Nelson:2014uu}.
Thus we can see that for $s$-hashing (and especially for $1$-hashing) matrices, their general subspace embedding properties are suboptimal in terms of the dependence of $m$ on $d$ when compared to the Gaussian sketching results. To improve the embedding properties of hashing matrices, we must focus on special structure input matrices.

\subsubsection{Coherence-dependent embedding properties of sparse random matrices}

A  feature of the problematic matrix (\ref{eqn::problematic_matrix}) is that its rows are separated into two groups, with the first $r$ rows containing all the information. If the rows  of $A$ were more `uniform' in the sense of equally important in terms of relevant information content, hashing may perform better as a sketching matrix. Interestingly, it is not the uniformity of the rows of $A$ but the uniformity of the rows of $U$, the left singular matrix from the compact SVD of $A$, that plays an important role. The concept of coherence is a useful proxy for the uniformity of the rows of $U$ and $A$\footnote{We note that sampling matrices were shown to have good subspace embedding properties for input matrices with low coherence \cite{10.1145/1132516.1132597, Tropp:wr}.
Even if the coherence is minimal, the size of the sampling matrix has a $d\log d$ dependence where the $\log d $ term cannot be removed due to the coupon collector problem \cite{Tropp:wr}.}.

\begin{definition} (Matrix coherence \cite{10.1561/2200000035})
\label{def::coherence}
The coherence of a matrix $A \in \R^{n\times d}$, denoted $\mu(A)$, is the largest Euclidean norm of the rows of $U$ defined in (\ref{thin-SVD}). Namely,
\begin{align}
\mu(A) = \max_{i\in [n]} \|U_i\|_2,
\end{align}
where  $U_i$ denotes the $i$th row of $U$.
\end{definition}

Some useful properties follow.

\begin{lemma}
Let $A \in \R^{n\times d}$ have rank $r\leq d\leq n$. Then
\begin{equation} \label{mu_bound}
\sqrt{\frac{r}{n}} \leq \mu(A) \leq 1.
\end{equation}
Furthermore, if $\mu(A) = \sqrt{\frac{r}{n}}$, then $\|U_i\|_2= \sqrt{\frac{r}{n}}$ for all $i\in [n]$ where $U$ is defined in \eqref{thin-SVD}.
\end{lemma}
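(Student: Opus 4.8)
The plan is to exploit the two defining features of the factor $U$ in \eqref{thin-SVD}: its columns are orthonormal, so $U^T U = I_r$, and it can therefore be completed to an $n\times n$ orthogonal matrix. Both the lower and upper bounds, as well as the equality characterisation, will follow from elementary identities for the row norms $\|U_i\|_2$. First I would establish the lower bound via an averaging argument. The key identity is
\begin{equation*}
\sum_{i=1}^n \|U_i\|_2^2 = \|U\|_F^2 = \mathrm{tr}(U^T U) = \mathrm{tr}(I_r) = r,
\end{equation*}
where the first equality simply groups the Frobenius norm by rows and the third uses orthonormality of the columns of $U$. Since the maximum of $n$ nonnegative numbers is at least their average, $\mu(A)^2 = \max_{i\in[n]} \|U_i\|_2^2 \geq \tfrac{1}{n}\sum_{i=1}^n \|U_i\|_2^2 = \tfrac{r}{n}$, which gives $\mu(A) \geq \sqrt{r/n}$.

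For the upper bound I would complete $U$ to an orthogonal matrix: choose $U_\perp \in \R^{n\times(n-r)}$ so that $Q = \bracket{U \;\; U_\perp} \in \R^{n\times n}$ is orthogonal. Then every row of $Q$ is a unit vector, and the $i$th row of $Q$ is the concatenation of $U_i$ with the $i$th row of $U_\perp$; hence $\|U_i\|_2^2 \leq \|U_i\|_2^2 + \|(U_\perp)_i\|_2^2 = 1$, giving $\mu(A) = \max_{i\in[n]} \|U_i\|_2 \leq 1$.

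Finally, for the equality characterisation, suppose $\mu(A) = \sqrt{r/n}$, i.e. $\max_{i} \|U_i\|_2^2 = r/n$. Each of the $n$ terms in the sum $\sum_{i=1}^n \|U_i\|_2^2 = r$ is then bounded above by $r/n$, yet the total equals $r = n\cdot(r/n)$; a sum of $n$ numbers each at most $r/n$ can equal $r$ only if every term is exactly $r/n$. Hence $\|U_i\|_2 = \sqrt{r/n}$ for all $i\in[n]$, as claimed.

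The argument is routine throughout; the only step requiring an idea beyond direct computation is the upper bound, where I invoke the orthogonal completion of $U$ rather than attempting to bound a single row norm of a non-square matrix directly. Everything else reduces to the Frobenius-norm/trace identity together with the elementary observation that a maximum dominates an average, with equality precisely when all entries coincide, so I do not anticipate any genuine obstacle here.
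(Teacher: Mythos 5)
Your proof is correct and follows essentially the same route as the paper: the Frobenius-norm identity $\sum_{i=1}^n\|U_i\|_2^2=r$ combined with a max-versus-average argument for the lower bound and the equality case, and the orthogonal completion of $U$ for the upper bound. No issues to report.
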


\begin{proof}
Since the matrix $U \in \R^{n\times r}$ has orthonormal columns, we have that
\begin{equation}
    \sum_{i=1}^{n} \|U_i\|_2^2 = r. \label{eqn:sum_of_U_i}
\end{equation} Therefore the maximum 2-norm of $U$ must not be less than $\sqrt{\frac{r}{n}}$, and thus $\mu(A) \geq \sqrt{\frac{r}{n}}$. Furthermore, if $\mu(A) = \sqrt{\frac{r}{n}}$, then \eqref{eqn:sum_of_U_i} implies $\|U_i\|_2 = \sqrt{\frac{r}{n}}$ for all $i \in [n]$.

Next, by expanding the set of columns of $U$ to a basis of $R^n$, there exists $U_f \in \R^{n \times n}$ such that $U_f = \bigl(  U \; \hat{U}  \bigr)$ orthogonal where $\hat{U} \in \R^{n \times (n-d)}$ has orthonormal columns. The 2-norm of $i$th row of $U$ is bounded above by the 2-norm of $i$th row of $U_f$, which is one. Hence $\mu(A) \leq 1$. 
\end{proof}

We note that for $A$ in (\ref{eqn::problematic_matrix}), we have $\mu(A) = 1$. The  maximal coherence of this matrix sheds some light on the ensuing poor embedding properties we noticed in Example 1.

Bourgain et al \cite{Bourgain:2015tc}  gives a general result that captures the coherence-restricted subspace embedding properties 
of $s$-hashing matrices.

\begin{theorem}[Bourgain et al \cite{Bourgain:2015tc}] \label{Bourgain}
Let $A \in \R^{n \times d}$ with coherence $\mu(A)$  and rank $r$; and  let $0 < \epsilon, \delta <1$.
Assume also that
\begin{align}
m \geq c_1\max\left\{\delta^{-1},(r + \log m ) \left[ \min \left\{ \log^2(r/\epsilon), \log^2(m)
\right\} + r\log(1/\delta) \right]\epsilon^{-2}\right\} \label{Bourgain-m}\\
{\rm and}\quad s \geq c_2 \left[ \log(m) \log(1/\delta) \min \left\{ \log^2(r/\epsilon), \log^2(m)
\right\} + \log^2(1/\delta) \right] \mu(A)^2 \epsilon^{-2},\label{Bourgain_s_eqn} 
\end{align}
where $c_1$ and $c_2$ are  positive constants.
Then  a(ny) s-hashing matrix $S \in \R^{m\times n}$ is an $\epsilon$-subspace embedding for  $A$  with probability at least $1-\delta$.

\end{theorem}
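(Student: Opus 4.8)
The plan is to reduce the subspace embedding property to a single operator-norm concentration bound and then to control that norm by a moment (trace) method in which the coherence $\mu(A)$ governs the size of the individual collision contributions. By Lemma~\ref{subspace-embedding-def-2}(ii) it suffices to embed the SVD factor $U\in\R^{n\times r}$ from \eqref{thin-SVD}, i.e. to show $\|SUz\|_2^2\in[1-\epsilon,1+\epsilon]$ for every unit $z\in\R^r$. Setting $B=SU\in\R^{m\times r}$, this is exactly
\begin{equation*}
\|B^TB-I_r\|_2\le\epsilon .
\end{equation*}
For an $s$-hashing matrix one has $(S^TS)_{jj}=s\cdot(1/\sqrt{s})^2=1$ deterministically and $\E[(S^TS)_{jl}]=0$ for $j\ne l$, so $\E[S^TS]=I_n$; since $U^TU=I_r$, the target reduces to bounding $\|U^TQU\|_2$, where $Q:=S^TS-I_n$ is the purely off-diagonal \emph{collision matrix} with entries $Q_{jl}=\sum_i S_{ij}S_{il}$ for $j\ne l$.

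I would then write $U^TQU=\sum_{j\ne l}Q_{jl}\,U_jU_l^{T}$, with $U_j\in\R^r$ the $j$th row of $U$, and estimate its spectral norm through even moments, $\E\|U^TQU\|_2^{2q}\le\E\,\mathrm{tr}\!\left[(U^TQU)^{2q}\right]$ for a well-chosen integer $q$. Expanding the trace yields a sum over closed index walks, each term being a product of $2q$ collision entries of $Q$ times inner products of the corresponding rows of $U$. The coherence enters decisively here: by Definition~\ref{def::coherence} every $\|U_j\|_2\le\mu(A)$, so each geometric factor is controlled by powers of $\mu(A)$. Meanwhile the expectation of a product of $Q$-entries is nonzero only for walks whose column-collisions are jointly realizable, and each realized collision carries an inverse power of $m$ (the probability that two columns hash to a common row) together with the $1/s$ weight from the $1/\sqrt{s}$ normalization; crucially, the hash is independent across the $n$ columns, so these expectations factorize over the walk.

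The technical heart — and the step I expect to be the main obstacle — is the combinatorial bookkeeping of these walks: one must classify the admissible collision patterns, count the walks realizing each pattern, and balance the gains of $1/m$ and the $1/s$ weights per collision against the combinatorial multiplicity, while handling the mild negative dependence induced by sampling the $s$ rows of each column without replacement (e.g. by comparison with an independent or Poissonized model). This should produce a bound on $\E\,\mathrm{tr}[(U^TQU)^{2q}]$ that is polynomial in $r$ and $m$ and geometric in $q$. Applying Markov's inequality and choosing the moment order $q$ of order $\min\{\log(r/\epsilon),\log m\}$ then yields the tail bound, with this optimization accounting for the $\min\{\log^2(r/\epsilon),\log^2 m\}$ factor in \eqref{Bourgain-m} and \eqref{Bourgain_s_eqn}, and with tracking the failure probability through the argument producing the residual $\log(1/\delta)$ factors. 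A separate, lower-order bad event — that some row of $S$ receives an unusually large collision load, handled directly by a union/Markov bound — is what contributes the $\delta^{-1}$ requirement in \eqref{Bourgain-m}. Finally, because the $1/\sqrt{s}$ normalization damps the heavy multi-column collisions that dominate the higher-order trace terms, the coherence enters those terms essentially through the ratio $\mu(A)^2/s$, which is precisely what permits the admissible coherence to grow by a factor $\sqrt{s}$ when $s>1$.
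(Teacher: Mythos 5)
This statement is quoted in the paper directly from Bourgain, Dirksen and Nelson \cite{Bourgain:2015tc}; the paper gives no proof of it, so there is nothing internal to compare against, and your attempt has to be judged as a proof of the cited result itself. As such, it is not a proof but a program. The reduction via Lemma \ref{subspace-embedding-def-2}(ii) to bounding $\|U^TQU\|_2$ with $Q=S^TS-I_n$, and the observation that $\E[S^TS]=I_n$ with coherence controlling $\|U_j\|_2$, are correct and standard openers. But everything that makes the theorem true is deferred to the step you yourself flag as ``the technical heart'': classifying the closed walks in the trace expansion of $\E\,\mathrm{tr}[(U^TQU)^{2q}]$, counting them, handling the without-replacement dependence within each column, and showing the resulting bound is small enough that $q\sim\min\{\log(r/\epsilon),\log m\}$ and the stated conditions \eqref{Bourgain-m}, \eqref{Bourgain_s_eqn} suffice. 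None of that is carried out, and it is exactly where the difficulty lies: a naive trace bound pays a factor of $r$ (forcing $q\gtrsim\log r$), the dominant walk classes must be matched precisely against powers of $1/m$, $1/s$ and $\mu(A)^2$ to recover the specific trade-off $\mu(A)^2\epsilon^{-2}\log(m)\log(1/\delta)\min\{\log^2(r/\epsilon),\log^2 m\}$ in \eqref{Bourgain_s_eqn}, and it is not evident that a bare moment computation, without the chaining/decoupling machinery used in \cite{Bourgain:2015tc}, closes at the claimed optimal-in-$r$ scale $m=\mathcal{O}\bigl(r\,\mathrm{polylog}\bigr)$. Your attributions of individual terms in the bounds (e.g.\ the $\delta^{-1}$ term to a row-overload event, the $\sqrt{s}$ coherence gain to damping of multi-column collisions) are plausible heuristics but are asserted, not derived.

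Concretely, then: the gap is that the central combinatorial and concentration estimates --- the admissible collision patterns, their multiplicities, the factorized expectations under the $s$-hashing model, and the optimization over $q$ that produces the stated logarithmic factors --- are missing, and without them the argument does not establish the quantitative conditions \eqref{Bourgain-m} and \eqref{Bourgain_s_eqn} that constitute the theorem. If you want a complete argument you would either have to carry out this bookkeeping in full (essentially re-deriving the sparse-embedding moment bounds of \cite{Bourgain:2015tc}) or follow their actual route, which supplements moment bounds with a chaining argument over the unit sphere of the column space; as written, the proposal identifies the right quantities to control but does not control them.
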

Substituting $s=1$ in (\ref{Bourgain_s_eqn}), we can use the above Theorem to deduce an upper bound $\mu$ of acceptable  coherence values of the input matrix $A$, namely,
\begin{align}
\mu(A)\leq c_2^{-1/2} \epsilon \left[ \log(m) \log(1/\delta) \min \left\{ \log^2(r/\epsilon), \log^2(m)
\right\} + \log^2(1/\delta) \right]^{-1/2} :=\mu.\label{Bourgain-1-hashing-mu}
\end{align}
Thus Theorem \ref{Bourgain} implies that the distribution of $1$-hashing matrices with
$m$ satisfying (\ref{Bourgain-m})  is 
 an oblivious subspace embedding for any input matrix $A$ with 
 $\mu(A)\leq \mu$, where  $\mu$ is defined in \eqref{Bourgain-1-hashing-mu}.

\subsubsection{Non-uniformity of vectors and their relation to embedding properties of sparse random matrices}

In order to prove some of our main results, we need a corresponding notion of coherence of vectors,  in order to be able to measure the `importance' of their respective entries; this is captured by the so-called non-uniformity of a vector.

\begin{definition}[Non-uniformity of a vector]
Given $x \in \R^{n}$, the non-uniformity of $x$, $\nu(x)$, is defined as
\begin{align}
\nu(x) = \frac{\|x\|_{\infty}}{\|x\|_2}. 
\end{align}
\end{definition}

We note that for any vector $x \in \R^{n}$, we have $\frac{1}{\sqrt{n}} \leq \nu(x) \leq 1$.

\begin{lemma}\label{non_uniformity_col_subspace_coherence}
Given $A \in \R^{n\times d}$, let $y=Ax$ for some $x \in \R^{d}$. Then
\begin{align}
\nu(y) \leq \mu(A).
\end{align}
\end{lemma}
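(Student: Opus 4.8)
The plan is to exploit the fact that $y = Ax$ lives in the column space of the left-singular matrix $U$ from the compact SVD \eqref{thin-SVD}, and then to bound each coordinate of $y$ by Cauchy--Schwarz against the rows of $U$. The key observation is that coherence controls exactly the quantity we need: the Euclidean norms of those rows.

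First I would write $A = U\Sigma V^T$ as in \eqref{thin-SVD} and introduce $z = \Sigma V^T x \in \R^{r}$, so that $y = Ax = Uz$. Since $U$ has orthonormal columns, the linear map $z \mapsto Uz$ preserves the Euclidean norm, giving $\|y\|_2 = \|Uz\|_2 = \|z\|_2$. This step converts the denominator of $\nu(y)$ into $\|z\|_2$ and is where the orthonormality of $U$ (rather than any property of $A$ itself) does the work.

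Next I would bound the numerator $\|y\|_{\infty}$ coordinate by coordinate. Writing $U_i$ for the $i$th row of $U$, each entry is $y_i = \langle U_i, z\rangle$, so Cauchy--Schwarz gives $|y_i| \leq \|U_i\|_2\,\|z\|_2 \leq \mu(A)\,\|z\|_2$, where the final inequality is precisely the definition of matrix coherence (Definition \ref{def::coherence}). Taking the maximum over $i \in [n]$ yields $\|y\|_{\infty} \leq \mu(A)\,\|z\|_2 = \mu(A)\,\|y\|_2$, and dividing by $\|y\|_2$ (the case $y = 0$ being vacuous) gives $\nu(y) \leq \mu(A)$, as claimed.

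I do not anticipate any real obstacle: the proof is essentially a two-line computation. The only conceptually non-obvious move is to pass from $A$ to its left-singular factor $U$ and recognize $y = Uz$, after which the norm preservation and the row-wise Cauchy--Schwarz bound combine immediately. Everything downstream is routine.
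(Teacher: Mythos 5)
Your proposal is correct and follows essentially the same route as the paper's proof: both pass to the compact SVD, set $z = \Sigma V^T x$ so that $y = Uz$, bound $\|y\|_\infty$ row by row via Cauchy--Schwarz against the rows of $U$, and use $\|y\|_2 = \|z\|_2$ from the orthonormality of $U$'s columns. No issues.
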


The next lemmas are crucial to our results in the next section; the proof of the first lemma can be found in the respective paper \cite{10.5555/3327345.3327444},
while the proofs of Lemmas \ref{point to JL plus} and Lemma \ref{lem::lemma7} are given in the appendix.

We also note, in subsequent results, the presence of {\it problem-independent constants}, also called absolute constants that will be implicitly or explicitly defined, depending on the context. Our convention here is as expected, that the same notation denotes the same constant across all results in the paper. 

The following expression will be needed in our results,
\begin{equation}
\bar{\nu}(\epsilon,\delta):= \nuOneHashing,
      \label{L5:nu}
\end{equation}
where $\epsilon, \delta \in (0,1)$ and $E, C_1>0$.

\begin{lemma}[\cite{10.5555/3327345.3327444}, Theorem 2] \label{Freksen}
Suppose that $\epsilon, \delta \in (0,1)$, and $E$ satisfies $C \leq E < \frac{2}{\delta \log(1/\delta)}$, where $C>0$ and $C_1$ are  problem-independent constants. Let 
$m \leq n \in \N$ with
$m\geq E \epsilon^{-2} \log(1/\delta)$.

Then,  for any $x\in \R^n$ with 
\begin{equation}
    \nu(x) \leq\bar{\nu}(\epsilon,\delta),
    \label{L5:nu-1}
\end{equation}
where $\bar{\nu}(\epsilon,\delta)$ is defined in \eqref{L5:nu},
a randomly generated 1-hashing matrix $S\in \R^{m\times n}$ is an $\epsilon$-JL embedding for $\{x\}$ with probability at least $1-\delta$.

\end{lemma}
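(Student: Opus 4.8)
The plan is to reduce the JL property to a one-dimensional concentration statement for $\|Sx\|_2^2$ and then control its fluctuation by a second-order chaos tail bound; since the result is quoted as Theorem 2 of \cite{10.5555/3327345.3327444}, I only outline the steps. First I would normalise $\|x\|_2=1$, so that $\nu(x)=\|x\|_\infty$ and \eqref{JL} becomes $\big|\,\|Sx\|_2^2-1\,\big|\leq\epsilon$ with probability at least $1-\delta$. Writing a $1$-hashing matrix as $S_{ij}=\eta_{ij}\sigma_j$, where $\eta_{ij}\in\{0,1\}$ records whether column $j$ is hashed to row $i$ (so $\sum_i\eta_{ij}=1$) and $\sigma_j\in\{\pm1\}$ is the sign of that column, a direct expansion gives
\begin{equation}
\|Sx\|_2^2=\sum_{j}x_j^2+\sum_{j\neq k}\sigma_j\sigma_k x_j x_k\,\mathbf{1}\{h(j)=h(k)\}=1+Z,
\end{equation}
where $h(j)$ denotes the random row assigned to column $j$. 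Since $\E[\sigma_j\sigma_k]=0$ for $j\neq k$, we have $\E[Z]=0$, so the whole task is to show that the off-diagonal collision term $Z$ has tails of size $\epsilon$ with failure probability at most $\delta$.

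The natural route is to condition on the hash $h$ and view $Z=\sigma^\top M\sigma$ as a Rademacher chaos of order two, where $M_{jk}=x_j x_k\,\mathbf{1}\{h(j)=h(k)\}$ for $j\neq k$ and $M_{jj}=0$. A Hanson--Wright tail bound for the chaos, applied conditionally on $h$, gives schematically
\begin{equation}
\P\big(|Z|\geq\epsilon\,\big|\,h\big)\ \lesssim\ \exp\!\left(-c\min\left\{\frac{\epsilon^2}{\|M\|_F^2},\,\frac{\epsilon}{\|M\|_{\mathrm{op}}}\right\}\right),
\end{equation}
so the two scales controlling the deviation are the Frobenius and operator norms of $M$. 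The first is benign: $\|M\|_F^2=\sum_{j\neq k}x_j^2 x_k^2\,\mathbf{1}\{h(j)=h(k)\}$ has expectation at most $\|x\|_2^4/m=1/m\leq\epsilon^2/(E\log(1/\delta))$ and concentrates tightly, so the Frobenius term delivers the sub-Gaussian part of the tail and is responsible for the $\sqrt{\log(E)/\log(1/\delta)}$ branch of $\bar{\nu}$ in \eqref{L5:nu}. Equivalently, one may run the moment method directly, bounding $\E[Z^{2\ell}]$ by enumerating the index patterns that survive the sign expectation (each column index must appear an even number of times), weighting each collision block by a factor $1/m$, and then optimising over $\ell$; the two branches of the $\min$ in \eqref{L5:nu} emerge from the two regimes of the optimal moment order.

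The hard part is the operator-norm (heavy-tail) contribution. Here $M$ is block structured by hash bucket: each bucket $b$ contributes the rank-one block $x_{S_b}x_{S_b}^\top$ with its diagonal removed, where $S_b$ is the set of columns landing in $b$, so $\|M\|_{\mathrm{op}}$ is governed by $\max_b\|x_{S_b}\|_2^2$, the largest $\ell_2$ mass accumulating in a single bucket. Bounding this maximum is a balls-into-bins concentration problem in which the only available control on the per-bucket mass is the uniform entry bound $\|x\|_\infty=\nu(x)$; a Bennett/Bernstein estimate for each bucket load, followed by a union bound over the $m$ buckets, converts a hypothesis $\nu(x)\leq\bar{\nu}(\epsilon,\delta)$ into a bound on $\|M\|_{\mathrm{op}}$ small enough that the sub-exponential term contributes failure probability at most $\delta$, and this branch produces the $\log(E/\epsilon)/\log(1/\delta)$ term. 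I expect the main obstacle to be making the bucket-load argument tight enough that the crossover between the two branches of the $\min$ sits exactly at the stated threshold: this balancing is precisely the delicate part of the Freksen--Kamma--Larsen analysis, which is why the result is quoted here rather than reproduced.
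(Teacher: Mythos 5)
The paper does not actually prove Lemma~\ref{Freksen}: it is imported verbatim as Theorem~2 of \cite{10.5555/3327345.3327444}, and the text explicitly defers to that reference for the proof. So your decision to outline rather than reproduce the argument matches the paper's own treatment, and your structural reduction is sound: normalising $\|x\|_2=1$, expanding $\|Sx\|_2^2=1+Z$ with $Z$ the signed collision chaos, noting $\E[Z]=0$, and identifying the two controlling scales (the conditional Frobenius norm, of expected size $1/m$, and the operator norm governed by the largest per-bucket $\ell_2$ mass, which is where the hypothesis $\nu(x)\leq\bar{\nu}(\epsilon,\delta)$ enters) is a faithful roadmap of why the result is true.

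Judged as a proof, however, there is a genuine gap, and it sits exactly where you flag it. The statement being verified is quantitative: the specific threshold $\bar{\nu}(\epsilon,\delta)$ in \eqref{L5:nu}, with its $\min\bigl\{\log(E/\epsilon)/\log(1/\delta),\sqrt{\log(E)/\log(1/\delta)}\bigr\}$ structure, and the admissible window $C\leq E<2/(\delta\log(1/\delta))$, and none of this is derived in your sketch. Your primary route (Hanson--Wright conditionally on the hash, plus a Bernstein-and-union-bound control of the maximum bucket load) is expected to lose logarithmic factors: a union bound over the $m$ buckets forces a $\log(m/\delta)$ into the operator-norm estimate, which does not reproduce the stated branches of the $\min$, and Hanson--Wright itself is not tight in this regime. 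This is precisely why the original analysis in \cite{10.5555/3327345.3327444} proceeds by a bespoke high-moment combinatorial computation (the route you mention only as an ``equivalent'' aside) rather than by an off-the-shelf chaos tail bound. So your attribution of the two branches of $\bar{\nu}$ to the Frobenius and operator-norm terms is heuristic and unverified, and the balancing that determines the crossover point, together with the role of the upper bound on $E$, remains to be established; as the paper itself relies on the cited theorem for exactly these points, the proposal should be read as a correct orientation toward the literature rather than a self-contained proof.
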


\begin{lemma} \label{point to JL plus}
Let $\epsilon, \delta \in (0,1)$, $\nu \in (0,1]$ and $m,n \in \N$. Let $\cal{S}$ be a distribution of $m\times n$ random matrices. Suppose that for any given 
y with $\nu(y) \leq \nu$, a matrix $S \in \R^{m\times n}$ randomly drawn from $\cal{S}$ is an $\epsilon$-JL embedding for $\{y\}$ with probability at least $1-\delta$.
Then for any given set $Y\subseteq \R^n$ with $\max_{y \in Y} \nu(y) \leq \nu$ and cardinality $|Y|\leq 1/\sqrt{\delta}$, a matrix $S$ randomly drawn from $\cal{S}$ is an $\epsilon$-JL embedding for Y with probability at least $1-|Y|\delta$.
\end{lemma}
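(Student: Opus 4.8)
The plan is to prove this by a single union bound, exploiting the fact that, by Definition \ref{def::JL_embedding}, the $\epsilon$-JL embedding property for a \emph{set} $Y$ is exactly the conjunction over $y\in Y$ of the $\epsilon$-JL embedding property for the singleton $\set{y}$. Thus $S$ fails to embed $Y$ if and only if there exists some $y\in Y$ for which the two-sided bound $(1-\epsilon)\normTwo{y}^2 \leq \normTwo{Sy}^2 \leq (1+\epsilon)\normTwo{y}^2$ is violated, i.e.\ for which $S$ fails to embed $\set{y}$.

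First I would check that the per-vector hypothesis applies to every element of $Y$: since $\max_{y\in Y}\nu(y)\leq\nu$, each individual $y\in Y$ satisfies $\nu(y)\leq\nu$, so the assumption of the lemma gives $\P[\,S\text{ is not an }\epsilon\text{-JL embedding for }\set{y}\,]\leq\delta$. As the cardinality bound $|Y|\leq 1/\sqrt{\delta}$ guarantees that $Y$ is finite, I would then union-bound over its (at most $|Y|$) elements,
\[
\P\big[\,S\text{ is not an }\epsilon\text{-JL embedding for }Y\,\big]\;\leq\;\sum_{y\in Y}\P\big[\,S\text{ is not an }\epsilon\text{-JL embedding for }\set{y}\,\big]\;\leq\;|Y|\,\delta,
\]
and take complements to obtain the advertised success probability $1-|Y|\delta$.

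There is no substantive obstacle here; the argument is a routine union bound, and the only care needed is with the two bookkeeping hypotheses. The condition $\max_{y\in Y}\nu(y)\leq\nu$ is precisely what lets the single-vector assumption be invoked for each $y\in Y$. The cardinality condition $|Y|\leq 1/\sqrt{\delta}$ is not needed to justify the union bound itself (which holds for any finite $Y$); rather, it keeps the failure budget controlled, $|Y|\delta\leq\sqrt{\delta}<1$, so that the conclusion $1-|Y|\delta\geq 1-\sqrt{\delta}>0$ is non-vacuous. This is also the regime in which the lemma is meant to be consumed downstream, e.g.\ after replacing $Y$ by polarisation sets such as $\yPlus$ and $\yMinus$ built from $Y$ in order to upgrade pointwise embeddings to generalised-JL (and hence subspace) embeddings.
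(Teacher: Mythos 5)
Your proof is correct and follows essentially the same route as the paper: a union bound over the elements of $Y$, using the per-vector hypothesis (valid for each $y\in Y$ since $\nu(y)\leq\nu$) and taking complements to obtain the $1-|Y|\delta$ success probability. Your observation that the cardinality bound $|Y|\leq 1/\sqrt{\delta}$ is not needed for the union bound itself, but only to keep the conclusion non-vacuous downstream, is accurate and consistent with how the lemma is used in the paper.
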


\begin{lemma}\label{lem::lemma7}
Let $\epsilon\in (0,1)$, and $Y \subseteq \R^n$ be a finite set such that
$\|y\|_2=1$ for each $y\in Y$. Define 
\begin{align}
    & \yPlus= \yPlusExpression \\
    & \yMinus = \yMinusExpression.
\end{align} 
If $S \in \R^{m\times n}$ is an $\epsilon$-JL embedding for $\set{ \union{\yPlus}{\yMinus} }$, then $S$ is a generalised $\epsilon$-JL embedding for $Y$.
\end{lemma}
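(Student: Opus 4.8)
The plan is to reduce the generalised embedding inequality for inner products to the ordinary JL embedding inequality for squared norms, via the polarization identity. Fix any $y_i, y_j \in Y$. Using $\langle u, v\rangle = \tfrac14\bracket{\|u+v\|_2^2 - \|u-v\|_2^2}$ once for the pair $(Sy_i, Sy_j)$ and once for $(y_i, y_j)$, and exploiting the linearity of $S$ to write $Sy_i \pm Sy_j = S(y_i \pm y_j)$, I would obtain
\[
\langle Sy_i, Sy_j\rangle - \langle y_i, y_j\rangle
= \tfrac14\Big[\bracket{\|S(y_i+y_j)\|_2^2 - \|y_i+y_j\|_2^2} - \bracket{\|S(y_i-y_j)\|_2^2 - \|y_i-y_j\|_2^2}\Big].
\]
This is the key structural identity: the left-hand side, which the generalised JL definition controls, is an average of two quantities of the form $\|Sw\|_2^2 - \|w\|_2^2$ that the ordinary JL hypothesis controls.

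Next I would invoke the hypothesis. Since $y_i + y_j \in \yPlus$ and $y_i - y_j \in \yMinus$, both vectors lie in $\union{\yPlus}{\yMinus}$, so the assumption that $S$ is an $\epsilon$-JL embedding for $\set{\union{\yPlus}{\yMinus}}$ yields $\big|\|S(y_i+y_j)\|_2^2 - \|y_i+y_j\|_2^2\big| \le \epsilon\|y_i+y_j\|_2^2$ and the analogous bound for the difference. Applying the triangle inequality to the displayed identity then gives
\[
\big|\langle Sy_i, Sy_j\rangle - \langle y_i, y_j\rangle\big|
\le \tfrac{\epsilon}{4}\bracket{\|y_i+y_j\|_2^2 + \|y_i-y_j\|_2^2}.
\]

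Finally I would simplify the right-hand side with the parallelogram law $\|y_i+y_j\|_2^2 + \|y_i-y_j\|_2^2 = 2\|y_i\|_2^2 + 2\|y_j\|_2^2$, and then use the normalization $\|y_i\|_2 = \|y_j\|_2 = 1$ to evaluate this as $4$, collapsing the bound to $\big|\langle Sy_i, Sy_j\rangle - \langle y_i, y_j\rangle\big| \le \epsilon = \epsilon\|y_i\|_2\|y_j\|_2$, which is exactly \eqref{JL-plus}. There is no real obstacle here; the argument is a clean polarization-plus-parallelogram calculation. The one point deserving emphasis is that the unit-norm hypothesis is precisely what makes the parallelogram factor equal to the clean constant required by the definition: the natural bound produced is $\epsilon\cdot\tfrac12\bracket{\|y_i\|_2^2 + \|y_j\|_2^2}$, and while this dominates $\epsilon\|y_i\|_2\|y_j\|_2$ by AM--GM in general, the two coincide exactly on the unit sphere, so restricting to normalized $Y$ delivers the stated generalised $\epsilon$-JL guarantee without loss.
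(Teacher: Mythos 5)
Your proof is correct and follows essentially the same route as the paper's: the polarization identity applied to both $(Sy_i,Sy_j)$ and $(y_i,y_j)$, the $\epsilon$-JL hypothesis on $y_i\pm y_j\in \union{\yPlus}{\yMinus}$, the triangle inequality, and the parallelogram law combined with the unit-norm assumption to collapse the bound to $\epsilon$. No gaps; your closing remark about where the normalization is used matches the paper's final step exactly.
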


\section{Hashing sketching with \mathInTitle{$m=\mathcal{O}(r)$}}

Our first result shows that if the coherence of the input matrix is sufficiently low, the distribution of $1$-hashing matrices with $m = \mathcal{O}(r)$ is an $(\epsilon,\delta)$-oblivious subspace embedding.

The following expression will be useful later,
\begin{equation}\label{A:mu-1}
\bar{\mu}(\epsilon,\delta):=  \muAOneHashing,
\end{equation}
where $\epsilon, \delta \in (0,1)$,  $r, E>0$ are to be chosen/defined depending on the context, and $C_1, C_2>0$ are problem-independent constants.

\begin{theorem} \label{thm1}
Suppose that $\epsilon,\delta \in (0,1)$, $r \leq d \leq n, m\leq n \in \N^+$, $E >0$ satisfy 
\begin{align}
&C \leq E \leq \EUpperHashing,\label{eqn::theoremTwoOne}\\[0.5ex]
&m \geq \mLower,\label{eqn::theoremTwoTwo}
\end{align}
where $C>0$ and $C_1, C_2>0$  are problem-independent constants. Then for any matrix $A\in\R^{n\times d}$ with rank $r$ and 
\begin{equation}\label{A:mu}
    \mu(A) \leq  \bar{\mu}(\epsilon,\delta),
\end{equation}
where $\bar{\mu}(\epsilon,\delta)$ is defined in \eqref{A:mu-1}, a randomly generated 1-hashing matrix $S\in\R^{m\times n}$ is an $\epsilon$-subspace embedding for $A$ with probability at least $1-\delta$.
\end{theorem}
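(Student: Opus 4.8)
The plan is to reduce the subspace-embedding property to a collection of single-vector JL embeddings on a net of the relevant sphere, using the low coherence to bound the non-uniformity of every vector that enters the argument. First I would invoke Lemma \ref{subspace-embedding-def-2}(ii) to replace $A$ by its left singular factor $U\in\R^{n\times r}$: it suffices to show $(1-\epsilon)\leq\|SUz\|_2^2\leq(1+\epsilon)$ for every unit vector $z\in\R^r$. This reduction is exactly what lets the final statement depend on the rank $r$ rather than on $d$.

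Next I would fix a $\theta$-net $\mathcal{N}$ of the unit sphere $S^{r-1}\subset\R^r$ for a suitable absolute constant $\theta$, so that $|\mathcal{N}|\leq(1+2/\theta)^r=e^{O(r)}$, and set $Y=\{Uz:z\in\mathcal{N}\}$, a finite set of unit vectors in $\R^n$. The central observation is that every vector in the associated sum/difference set $Y_+\cup Y_-$ has the form $U(z_i\pm z_j)$ and hence lies in the column space of $U$; since $\nu$ is scale invariant, Lemma \ref{non_uniformity_col_subspace_coherence} applied to $U$ gives $\nu(U(z_i\pm z_j))\leq\mu(U)=\mu(A)$. Thus the coherence hypothesis \eqref{A:mu} uniformly bounds the non-uniformity of every vector I need to embed.

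With the non-uniformity controlled, I would apply the single-vector guarantee of Lemma \ref{Freksen}, combined with the cardinality/union-bound mechanism of Lemma \ref{point to JL plus}, to conclude that $S$ is a plain $\epsilon$-JL embedding for $Y_+\cup Y_-$, and then invoke Lemma \ref{lem::lemma7} to upgrade this to a generalised $\epsilon$-JL embedding for $Y$, namely $|\langle SUz_i,SUz_j\rangle-\langle z_i,z_j\rangle|\leq\epsilon$ for all $z_i,z_j\in\mathcal{N}$. Finally, writing $M=(SU)^T(SU)-I_r$, the generalised-JL bound reads $|z_i^TMz_j|\leq\epsilon$ on the net, and a standard net-to-sphere argument for the bilinear form yields $\|M\|_2\leq c\epsilon$ for an absolute constant $c$; this is precisely the subspace embedding for $U$, and hence for $A$, after rescaling $\epsilon$ by $c$.

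The main obstacle is the parameter calibration in the middle steps, not any single inequality. The set $Y_+\cup Y_-$ has cardinality $e^{O(r)}$, so surviving the union bound forces the per-vector failure probability down to roughly $e^{-\Omega(r)}\delta$; feeding $\log(1/\delta')\sim 4r+\log(1/\delta)$ into the tolerance $\bar\nu$ of Lemma \ref{Freksen} is exactly what turns the $\log(1/\delta)$ of \eqref{L5:nu} into the $4r+\log(1/\delta)$ appearing in $\bar\mu(\epsilon,\delta)$ of \eqref{A:mu-1}, and what dictates the admissible ranges \eqref{eqn::theoremTwoOne}--\eqref{eqn::theoremTwoTwo} for $E$ and $m$ (the $e^{4r}$ in the upper bound on $E$ mirrors a net of size $\sim e^{4r}$). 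Making the constants $C_1,C_2$, the net radius $\theta$, and the net-to-sphere factor $c$ line up so that the stated $\bar\mu(\epsilon,\delta)$ and the overall probability $1-\delta$ fall out cleanly is the technical heart of the proof.
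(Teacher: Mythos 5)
Your proposal follows essentially the same route as the paper's proof: reduce to the left singular factor $U$ via Lemma \ref{subspace-embedding-def-2}(ii), build a net of size $e^{O(r)}$ in the column space of $U$, bound the non-uniformity of the sum/difference vectors by the coherence (Lemma \ref{non_uniformity_col_subspace_coherence}), apply Lemma \ref{Freksen} at the deflated failure probability $\delta_1\approx e^{-4r}\delta$ together with the union bound of Lemma \ref{point to JL plus}, and upgrade to a generalised JL embedding on the net via Lemma \ref{lem::lemma7}, with exactly the calibration that produces the $4r+\log(1/\delta)$ factors in $\bar\mu(\epsilon,\delta)$ and the bounds \eqref{eqn::theoremTwoOne}--\eqref{eqn::theoremTwoTwo}. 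The only variation is the final net-to-subspace step, where you invoke the standard operator-norm argument $\|(SU)^TSU-I_r\|_2\leq c\,\sup_{z_i,z_j\in\mathcal{N}}\lvert z_i^T\bigl((SU)^TSU-I_r\bigr)z_j\rvert$, whereas the paper uses its Lemma \ref{approximation-of-net} (a geometric-series expansion over the cover); both are valid and differ only in the problem-independent constant absorbed into $C_2$.
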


The proof of Theorem \ref{thm1} 
relies on the fact that the coherence of the input matrix gives a bound on the non-uniformity of the entries for all vectors in its column space (Lemma \ref{non_uniformity_col_subspace_coherence}),
adapting standard arguments in \cite{10.1561/0400000060} involving set covers, which are defined next and proved in the appendix.

\begin{definition}
A $\gamma$-cover of a set $M$ is a subset $N\subseteq M$ with the property that given any point $y \in M$, there exists $w \in N$ such that
 $\|y-w\|_2 \leq \gamma$. 
\end{definition}

Consider a given real  matrix $U\in\R^{n\times r}$ with orthonormal columns, and let
\begin{equation}\label{MU}
M:= \{Uz \in \R^n: z \in \R^r, \|z\|_2=1\}.
\end{equation} 

The next two lemmas show the existence of a $\gamma$-net $N$ for $M$, and connect generalised JL embeddings for $N$ with JL embeddings for $M$; their proofs can be found in the appendix.

\begin{lemma} \label{Gamma-cover-existance}

Let $0< \gamma <1$, $U\in\R^{n\times r}$ have orthonormal columns and $M$ be defined in \eqref{MU}. Then there exists a $\gamma$-cover $N$ of $M$ such that $|N| \leq (1 + \frac{2}{\gamma})^r$.
\end{lemma}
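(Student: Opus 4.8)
The plan is to reduce the problem to the classical covering-number bound for the Euclidean unit sphere, exploiting that $U$ acts as an isometry. Since $U$ has orthonormal columns, $\|Uz\|_2 = \|z\|_2$ for every $z \in \R^r$, so the map $z \mapsto Uz$ is a bijective isometry from the unit sphere $S^{r-1} := \{z \in \R^r : \|z\|_2 = 1\}$ onto $M$. Consequently, if $N_0 \subseteq S^{r-1}$ is a $\gamma$-cover of $S^{r-1}$, then $N := \{Uz : z \in N_0\}$ is a $\gamma$-cover of $M$ of the same cardinality: given any $Uz \in M$ with $\|z\|_2 = 1$, pick $w \in N_0$ with $\|z - w\|_2 \le \gamma$, and then $\|Uz - Uw\|_2 = \|z - w\|_2 \le \gamma$. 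It therefore suffices to construct a $\gamma$-cover of $S^{r-1}$ of size at most $(1 + 2/\gamma)^r$.

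To build such a cover, I would take $N_0$ to be a maximal $\gamma$-separated subset of $S^{r-1}$, i.e.\ a set whose points are pairwise at distance at least $\gamma$ and to which no further point of $S^{r-1}$ can be added without destroying this property. Maximality immediately makes $N_0$ a $\gamma$-cover: any $z \in S^{r-1}$ either lies in $N_0$ or, were it at distance at least $\gamma$ from every element of $N_0$, could be adjoined, contradicting maximality; hence some $w \in N_0$ satisfies $\|z - w\|_2 < \gamma$.

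The cardinality bound then follows from a volume-packing argument carried out in $\R^r$. Because the points of $N_0$ are pairwise $\gamma$-separated, the open Euclidean balls of radius $\gamma/2$ centered at them are pairwise disjoint (two points lying in a common such ball would be within $\gamma$ of each other by the triangle inequality). Each of these balls is contained in the ball $B(0, 1 + \gamma/2)$, since its center has norm $1$. Comparing $r$-dimensional Lebesgue volumes, and writing $V_r$ for the volume of the unit ball in $\R^r$, disjointness and containment give $|N_0|\,(\gamma/2)^r V_r \le (1 + \gamma/2)^r V_r$; cancelling $V_r (\gamma/2)^r$ yields $|N_0| \le (1 + 2/\gamma)^r$, which is exactly the claimed bound.

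I expect no serious obstacle, as this is a standard net/packing estimate; the only point requiring care is that the volume argument must be run on the sphere $S^{r-1}$ inside $\R^r$ rather than directly on $M \subseteq \R^n$, since $M$ lies in an $r$-dimensional subspace and carries zero $n$-dimensional volume. The isometry reduction in the first step is precisely what legitimises the volume comparison. A minor accompanying remark is that the existence of a maximal $\gamma$-separated set is guaranteed because every $\gamma$-separated subset of the compact sphere has cardinality bounded by the same packing estimate, hence is finite, so the greedy construction terminates.
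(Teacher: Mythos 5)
Your proposal is correct and follows essentially the same route as the paper's proof: a maximal $\gamma$-separated subset of the unit sphere in $\R^r$, a volume-packing comparison of disjoint $\gamma/2$-balls inside the ball of radius $1+\gamma/2$, and a push-forward of the resulting net to $M$ via the isometry $z\mapsto Uz$. Your added remarks on why maximality yields a cover and why the maximal separated set exists are small refinements of details the paper leaves implicit.
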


\begin{lemma} \label{approximation-of-net}
Let $\epsilon, \gamma \in (0,1), U\in \R^{n\times d}, M \subseteq \R^n$ associated with $U$ be defined in \eqref{MU}. Suppose $N \subseteq M$ is a $\gamma$-cover of $M$ and $S \in \R^{m\times n}$ is a generalised $\epsilon_1$-JL embedding for $N$, where $\epsilon_1 = \epsilonPrimeFactor \epsilon$. Then $S$ is an $\epsilon$-JL embedding for $M$. 
\end{lemma}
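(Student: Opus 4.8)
\textbf{Proof proposal for Lemma~\ref{approximation-of-net}.}
The plan is to run a covering (net) argument in the style of Woodruff~\cite{10.1561/0400000060}, reducing control of $S$ on the infinite set $M$ to the finite $\gamma$-cover $N$ on which the generalised embedding hypothesis already gives us control. First I would reformulate the goal. Every $y\in M$ has the form $y=Uz$ with $\|z\|_2=1$, and since $U$ has orthonormal columns $\|y\|_2=1$; hence the conclusion ``$S$ is an $\epsilon$-JL embedding for $M$'' is exactly $\sup_{y\in M}\bigl|\,\|Sy\|_2^2-1\,\bigr|\leq\epsilon$. Writing $Q:=S^TS-I$, the generalised $\epsilon_1$-JL hypothesis on $N$ reads $|w^TQw'|=|\langle Sw,Sw'\rangle-\langle w,w'\rangle|\leq\epsilon_1$ for all $w,w'\in N$ (their norms being $1$), and the target is $\sup_{y\in M}|y^TQy|\leq\epsilon$. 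It is convenient to pass to the bilinear quantity $\beta:=\sup_{y,y'\in M}|y^TQy'|$, which is finite (it is bounded by the operator norm of $Q$ restricted to the column space of $U$) and dominates the quadratic supremum, so that proving $\beta\leq\epsilon$ suffices.

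Next I would set up the self-referential bound. Fix $y,y'\in M$ and choose net points $w,w'\in N$ with $\|y-w\|_2\leq\gamma$ and $\|y'-w'\|_2\leq\gamma$; put $v:=y-w$ and $v':=y'-w'$. The key geometric observation is that $v,v'$ lie in the column space of $U$ (being differences of elements of $M\subseteq\{Uz\}$), so whenever $v\neq0$ the renormalisation $v/\|v\|_2$ again lies in $M$, and likewise for $v'$. Expanding
\[
y^TQy' = w^TQw' + w^TQv' + v^TQw' + v^TQv',
\]
the first term is bounded by $\epsilon_1$ via the hypothesis on $N$, while each remaining term is routed through $\beta$ using the definition of $\beta$ and $\|v\|_2,\|v'\|_2\leq\gamma$: for instance $|w^TQv'|=\|v'\|_2\,|w^TQ(v'/\|v'\|_2)|\leq\gamma\beta$, and similarly $|v^TQw'|\leq\gamma\beta$, $|v^TQv'|\leq\gamma^2\beta$. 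Taking the supremum over $y,y'\in M$ yields $\beta\leq\epsilon_1+(2\gamma+\gamma^2)\beta$. To handle all $\gamma\in(0,1)$ (and to pin down the exact admissible $\epsilon_1$) rather than just small $\gamma$, I would instead iterate this approximation: express an arbitrary $y\in M$ as a convergent combination $y=\sum_i c_i w_i$ of net points with $|c_i|\leq\gamma^i$, and bound the resulting double sum $\sum_{i,j}c_ic_j\,(w_i^TQw_j)$ termwise by $\epsilon_1$; the careful bookkeeping of these contributions produces the stated amplification factor.

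Finally I would solve the recursion and verify the constant. Rearranging gives $\beta\leq\epsilon_1\cdot\varphi(\gamma)$ for an explicit amplification factor $\varphi(\gamma)$ in $\gamma$; substituting the hypothesised $\epsilon_1=\epsilonPrimeFactor\,\epsilon$ and checking the resulting elementary inequality in $\gamma$ over $(0,1)$ gives $\beta\leq\epsilon$, whence $\sup_{y\in M}\bigl|\,\|Sy\|_2^2-1\,\bigr|\leq\beta\leq\epsilon$, i.e.\ $S$ is an $\epsilon$-JL embedding for $M$. I expect the main obstacle to be the cross and residual terms: because $v,v'$ are \emph{not} net points, the generalised $\epsilon_1$-JL guarantee does not apply to them directly, so the estimate must be folded back into the global quantity $\beta$ itself. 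What makes this legitimate—and what ultimately closes the recursion and fixes the allowable value of $\epsilon_1$—is precisely that the residuals remain in the column space of $U$ with norm at most $\gamma$, so that their renormalisations lie in $M$ and can be charged against $\beta$.
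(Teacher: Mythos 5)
Your proposal is correct and, in its decisive part, coincides with the paper's own proof: the fallback you describe — expanding $y=\sum_i \alpha_i y_i$ with net points $y_i\in N$ and $|\alpha_i|\leq\gamma^i$ (the paper proves this representation as a separate lemma, via exactly the renormalised-residual observation you make), then bounding the resulting double sum termwise by the generalised $\epsilon_1$-JL guarantee and summing the geometric series to recover the factor $\frac{1+2\gamma-\gamma^2}{(1-\gamma)(1-\gamma^2)}$ — is precisely the argument in the paper. You are also right to discard the preliminary self-referential recursion as the main route, since $\beta\leq\epsilon_1+(2\gamma+\gamma^2)\beta$ cannot be closed for $\gamma\geq\sqrt{2}-1$ and, even for moderate $\gamma$ below that threshold, yields an amplification factor $\frac{1}{1-2\gamma-\gamma^2}$ too large to accommodate the stated $\epsilon_1$.
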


We are ready to prove Theorem \ref{thm1}.
\begin{proof}[Proof of Theorem 2]
Let $A\in \R^{n\times d}$ with rank $r$ and satisfying \eqref{A:mu}.
Let  $U\in \R^{n\times r}$ be an SVD factor of $A$ as defined in \eqref{thin-SVD}, which by definition of coherence, implies
\begin{equation}
    \mu(U)=\mu (A)\leq \bar{\mu}(\epsilon,\delta),
    \label{th2:barmu}
\end{equation}
where $\bar{\mu}(\epsilon,\delta)$ is defined in \eqref{A:mu-1}. 
We let $\gamma, \epsilon_1, \delta_1 \in (0,1)$ be defined as
  \begin{equation}
        \gamma=\frac{2}{e^2-1}, \quad C_2 =  \epsilonPrimeFactor,\quad 
        \epsilon_1 = C_2 \epsilon \quad {\text{and}}\quad \delta_1 = e^{-4r}\delta,
    \label{th2:eps}
  \end{equation}
and note that $C_2\in (0,1)$ and 
\begin{equation}
\bar{\nu}(\epsilon_1,\delta_1) = \bar{\mu} (\epsilon,\delta),
\label{th2:numu}
\end{equation}
where $\bar{\nu}(\cdot,\cdot)$ is defined in \eqref{L5:nu}.
Let $M \in \R^n$ be associated to $U$ as in \eqref{MU}
and let $N\subseteq M$ be the $\gamma$-cover of $M$  as guaranteed by Lemma \ref{Gamma-cover-existance}, with $\gamma$ defined in \eqref{th2:eps} which implies that $|N| \leq e^{2r}$.

Let $S \in \R^{m\times n}$ be a randomly generated 1-hashing matrix with $m\geq E \epsilon_1^{-2}\log (1/\delta_1)=E C_2^{-2}\epsilon^{-2}[4r+\log(1/\delta)]$, where to obtain the last equality, we used \eqref{th2:eps}.

To show that the sketching matrix $S$ is an $\epsilon$-subspace embedding for $A$ (with probability at least $1-\delta$),
it is sufficient to show that $S$ is an $\epsilon_1$-generalised JL embedding for $N\subseteq M$ (with probability at least $1-\delta$). To see this, recall \eqref{MU} and Lemma \ref{subspace-embedding-def-2}(ii) which show that $S$ is an $\epsilon$-subspace embedding for $A$ if and only if 
$S$ is an $\epsilon-$JL embedding for $M$. Our sufficiency claim now follows by invoking Lemma \ref{approximation-of-net} for $S$, $N$ and $M$.

We are left with considering in detail the cover set $N = \set{y_1,y_2, \dots, y_{|N|}}$ and the following useful ensuing  sets
\begin{align*}
    &\nPlus = \nPlusExpression \quad
    \text{and}\quad \nMinus = \nMinusExpression, \\\nonumber
    &\nMinusOne = \nMinusOneExpression \quad \text{and}\quad
    \nMinusTwo = \nMinusTwoExpression.
\end{align*}
Now let  $Y := \nPlus \cup \nMinusOne$  and show that 
\begin{equation}
    \nu(y)\leq \bar{\nu}(\epsilon_1,\delta_1) \quad\text{for all}\quad y\in Y.
  \label{th2:nu-barnu}
\end{equation}
 To see this, assume first that $y=y_i+y_j\in \nPlus$, with 
$y_i, y_j \in N\subseteq M$. Thus there exist $z_i, z_j\in R^r$ such that 
$y_i=Uz_i$ and $y_j=Uz_j$, and so $y=U(z_i+z_j)$. Using Lemma \ref{non_uniformity_col_subspace_coherence}, $\nu(y)\leq \mu(U)=\mu(A)$, which together with \eqref{th2:barmu}
and \eqref{th2:numu}, gives \eqref{th2:nu-barnu} for points $y\in\nPlus$;
the proof for $y\in \nMinusOne$ follows similarly. 

Lemma \ref{Freksen} with $(\epsilon,\delta):= (\epsilon_1,\delta_1)$ provides that for any $x\in\R^n$ with $\nu(x) \leq \bar{\nu}(\epsilon_1,\delta_1)$, $S$ is an $\epsilon_1$-JL embedding for $\{x\}$ with probability at least $1-\delta_1$.  This and  \eqref{th2:nu-barnu} imply that the conditions of Lemma \ref{point to JL plus} are satisfied for $Y = \nPlus \cup \nMinusOne$, from which we conclude that 
$S$ is an $\epsilon_1$-JL embedding for $Y$ with probability at least $1-|Y|\delta_1$. Note that 
\[
|Y|\leq |\nPlus|+|\nMinusOne|\leq \frac{1}{2} |N|(|N|+1) +  \frac{1}{2}
 |N|(|N|-1)=|N|^2.
 \]
 This, the definition of $\delta_1$ in \eqref{th2:eps} and $|N|\leq e^{2r}$ imply that $1-|Y|\delta_1\geq 1-\delta$. 
Therefore $S$ is an $\epsilon_1$-JL embedding for $\nPlus \cup \nMinusOne$ with probability at least $1-\delta$.

Finally, Definition \ref{def::JL_embedding} of JL-embeddings implies that the sign of the embedded vector is irrelevant and that $\{0\}$ is always embedded, and so if
$S$ is an $\epsilon_1$-JL embedding for $\nPlus \cup \nMinusOne$, it is also an $\epsilon_1$-JL embedding for $\union{\nPlus}{\nMinus}$.
Lemma \ref{lem::lemma7} now provides us with the desired result that then, 
$S$ is a generalised $\epsilon_1$-JL embedding for $N$. 
\end{proof}

Next we discuss the results in Theorem \ref{thm1}.

\paragraph{Conditions for a well-defined coherence requirement}

While our result guarantees optimal dimensionality reduction for the sketched matrix, using a very sparse 1-hashing matrix for the sketch, it imposes  implicit restrictions on the number $n$ of rows of $A$. Recalling 
\eqref{mu_bound}, we note that condition \eqref{A:mu} is well-defined
when 
\begin{equation}\label{th2:nr-mu}
\sqrt{\frac{r}{n}}\leq \bar{\mu}(\epsilon,\delta).
\end{equation}
Using the definition of $\bar{\mu}(\epsilon,\delta)$ in \eqref{A:mu-1} and
assuming reasonably that $\logDeltaI=\mathcal{O}(r)$, 
we have the lower bound

\begin{equation*}
\bar{\mu}(\epsilon,\delta) \geq C_1\sqrt{C_2}\sqrt{\epsilon} \frac{\min\left\{ \log(E/(C_2\epsilon)), \sqrt{\log E}\right\}}{4r+\log(1/\delta)},
\end{equation*}
and so
\eqref{th2:nr-mu} is satisfied if
\begin{align}
n \geq \frac{r (4r+\log(1/\delta)^2}{C_1^2C_2\epsilon\min\left\{ \log^2(E/(C_2\epsilon)), \log E\right\}} = \mathcal{O} \left(  \frac{r^3}{\epsilon \log^2(\epsilon)} \right).
\end{align}

\paragraph{Comparison with data-independent bounds}
    Existing results show that $m = \Theta \left( r^2\right)$ is both necessary and sufficient in order to secure an oblivious subspace embedding property for 1-hashing matrices with no restriction on the coherence of the input matrix \cite{10.1145/2488608.2488622,Nelson:te, 10.1145/2488608.2488621}. Aside from requiring more projected rows than in Theorem \ref{thm1}, these results implicitly impose $n \geq \mathcal{O}(r^2)$ for the size/rank of data matrix in order to secure  meaningful dimensionality reduction.

    \paragraph{Comparison with data-dependent bounds}
    To the best of our knowledge, the only data-dependent result subspace embedding result for hashing matrices is   \cite{Bourgain:2015tc} (see Theorem \ref{Bourgain}). From \eqref{Bourgain-m}, we have that $m\geq c_1 r \min\left\{ \log^2(r/\epsilon), \log^2(m) \right\}\epsilon^{-2}$ and hence $m = \Omega(r \log^2 r)$;  while Theorem \ref{thm1} only needs $m = \mathcal{O}(r)$. However, the coherence requirement on $A$ in Theorem \ref{Bourgain} is weaker than \eqref{A:mu} and so \cite{Bourgain:2015tc}  applies to a wider range of inputs at the expense of a larger value of $m$ required for the sketching matrix.
    
    \paragraph{Summary and look ahead}
Table \ref{tab::m_and_mu_1_hashing} summarises existing results and we see stricter coherence assumptions lead to improved dimensionality reduction properties. In the next section, we investigate relaxing coherence requirements by using hashing matrices with increased column sparsity ($s$-hashing) and coherence reduction transformations. 

    \begin{table}
        \caption{Summary of results for $1$-hashing}
        \label{tab::m_and_mu_1_hashing}
        \centering
\begin{tabular}{|c|c|c|}
\hline
Result                     & \mbox{$\mu$ (coherence of $A$)}                         & \mbox{$m$ (size of sketching $S$)}                        \\ \hline
\cite{10.1145/2488608.2488621} & --                              & $\Theta(r^2)$                \\ \hline
\cite{Bourgain:2015tc}         & $\mathcal{O} \left( \log^{-3/2} (r)\right)$ & $\mathcal{O}\left(r \log^2(r) \right)$ \\ \hline
Theorem \ref{thm1}             & $\mathcal{O} \left( r^{-1}\right)$        & $\mathcal{O}(r)$                       \\ \hline
\end{tabular}
    \end{table}

\section{Relaxing the coherence requirement using \mathInTitle{$s$}-hashing matrices}

This section investigates the embedding properties of  $s$-hashing matrices when $s\geq 1$. 
Indeed, \cite{Bourgain:2015tc} shows that $s$-hashing relaxes their particular coherence requirement by $\sqrt{s}$. 
Theorem \ref{thm::s-hashing} presents a similar result for our particular coherence requirement  \eqref{A:mu} that again guarantees embedding properties for $m=\mathcal{O}(r)$. Then we present a new $s$-hashing variant that allows us to give a general result showing that (any) subspace embedding properties of $1$-hashing matrices immediately translate into similar properties for these $s$-hashing matrices when applied to a larger class of data matrices, with larger coherence. 
A simplified embedding result with $m=\mathcal{O}(r)$ is then deduced for this $s$-hashing variant.

Numerical benefits of $s$-hashing (for improved preconditioning) are investigated in later sections; see Figures \ref{fig::1-2-3-inco} and \ref{fig::1-2-3-semi-co} for example.

\subsection{The embedding properties of \mathInTitle{$s$}-hashing matrices}
Our next result shows that using $s$-hashing (Definition \ref{def::sampling_and_hashing}) relaxes the particular coherence requirement in Theorem \ref{thm1} by $\sqrt{s}$. 

\begin{theorem} \label{thm::s-hashing} 
Let $ r \leq d\leq n \in \N^+$. Let  $C_1, C_2, C_3, C_M, C_{\nu}, C_s >0$ be \constantsDescription. Suppose that $\epsilon,\delta \in (0, C_3)$, 
$m,\,s\in \N^+$ and $E>0$ satisfy\footnote{Note that the expressions
of the lower bounds in \eqref{eqn::theoremTwoTwo} and \eqref{s:m_lower}
are identical apart from  the choice of $E$ and the condition $m \geq se$.}
\begin{align}
&1\leq s \leq C_s C_2^{-1} \epsilon^{-1} \fourRplusLog,\label{eqn::theoremThreeOne}\\[0.5ex]
& C_M \leq E \leq \EUpperSHashing,\label{eqn::theoremThreeTwo}\\[0.5ex]
&m \geq \set{\mLower, se}.\label{s:m_lower}
\end{align}
Then for any matrix $A\in \R^{n\times d}$ with rank $r$ and $\mu(A) \leq \sqrt{s}C_{\nu} C_1^{-1} \bar{\mu}(\epsilon,\delta)$, where $\bar{\mu}(\epsilon,\delta)$ is defined in \eqref{A:mu-1}, a randomly generated $s$-hashing matrix $S \in \R^{m\times n}$ is an $\epsilon$-subspace embedding for $A$ with probability at least $1-\delta$.
\end{theorem}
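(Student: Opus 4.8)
The plan is to replicate, almost verbatim, the covering-set argument from the proof of Theorem~\ref{thm1}; the only genuinely new ingredient is a single-vector JL-embedding guarantee for $s$-hashing matrices, to be used in place of Lemma~\ref{Freksen}, whose admissible non-uniformity is larger by a factor of order $\sqrt{s}$. The key observation that makes this economical is that \emph{every} auxiliary result feeding the Theorem~\ref{thm1} proof except Lemma~\ref{Freksen} is distribution-agnostic: Lemma~\ref{point to JL plus} is stated for an arbitrary matrix distribution $\cal{S}$, and Lemmas~\ref{lem::lemma7}, \ref{approximation-of-net}, \ref{Gamma-cover-existance} and \ref{subspace-embedding-def-2} concern a fixed matrix. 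Hence the whole pipeline transfers to $s$-hashing once the single-vector step is replaced.

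First I would record the general-$s$ version of the Freksen et al.\ bound \cite{10.5555/3327345.3327444}: under the stated constraints on $s$, $E$ and $m$, there is a tolerance $\hat{\nu}(s,\epsilon,\delta)$ such that whenever $\nu(x)\leq\hat{\nu}(s,\epsilon,\delta)$, a randomly drawn $s$-hashing matrix is an $\epsilon$-JL embedding for $\{x\}$ with probability at least $1-\delta$; by construction $\hat{\nu}(s,\epsilon,\delta)=\sqrt{s}\,C_{\nu}C_1^{-1}\,\bar{\nu}(\epsilon,\delta)$, i.e.\ it is the $s=1$ quantity $\bar{\nu}(\epsilon,\delta)$ of \eqref{L5:nu} inflated by $\sqrt{s}$ up to the absolute constants $C_{\nu},C_1$. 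The role of the extra hypotheses is precisely to license this invocation at the rescaled parameters: the constraint $m\geq se$ in \eqref{s:m_lower} makes the $s$-hashing distribution well-defined (one needs at least $s$ rows to place $s$ nonzeros in distinct rows) and meets the row-count demand, while the $s$-dependent upper bound on $E$ in \eqref{eqn::theoremThreeTwo} and the ceiling on $s$ in \eqref{eqn::theoremThreeOne} keep the tolerance formula in its valid regime.

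With this lemma in hand I would reuse the parameter choices of \eqref{th2:eps}, namely $\gamma=2/(e^2-1)$, $\delta_1=e^{-4r}\delta$ and the correspondingly scaled $\epsilon_1$, and take $U\in\R^{n\times r}$ to be the SVD factor of $A$ from \eqref{thin-SVD}, so that $\mu(U)=\mu(A)$. Lemma~\ref{Gamma-cover-existance} furnishes a $\gamma$-cover $N$ of the set $M$ in \eqref{MU} with $|N|\leq e^{2r}$, and I form the same sets $\nPlus$ and $\nMinusOne$ and the union $Y:=\nPlus\cup\nMinusOne$, with $|Y|\leq|N|^2\leq e^{4r}$. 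The decisive point is the non-uniformity bound: for $y=U(z_i\pm z_j)\in Y$, Lemma~\ref{non_uniformity_col_subspace_coherence} gives $\nu(y)\leq\mu(U)=\mu(A)$, and the hypothesis $\mu(A)\leq\sqrt{s}\,C_{\nu}C_1^{-1}\bar{\mu}(\epsilon,\delta)$ is exactly $\hat{\nu}(s,\epsilon_1,\delta_1)$, since $\hat{\nu}(s,\epsilon_1,\delta_1)=\sqrt{s}\,C_{\nu}C_1^{-1}\bar{\nu}(\epsilon_1,\delta_1)$ and $\bar{\nu}(\epsilon_1,\delta_1)=\bar{\mu}(\epsilon,\delta)$ by \eqref{th2:numu} — the $s$-analogue of that identity. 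Thus every $y\in Y$ is embeddable, and the union bound of Lemma~\ref{point to JL plus} (applied to the $s$-hashing distribution) shows $S$ is an $\epsilon_1$-JL embedding for $Y$ with probability at least $1-|Y|\delta_1\geq 1-\delta$. Lemma~\ref{lem::lemma7} then upgrades this to a generalised $\epsilon_1$-JL embedding for $N$, and Lemma~\ref{approximation-of-net} together with Lemma~\ref{subspace-embedding-def-2}(ii) converts it into the claimed $\epsilon$-subspace embedding for $A$.

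I expect the main obstacle to be the single-vector step: pinning down the general-$s$ Freksen-type guarantee with the correct $\sqrt{s}$ scaling of the tolerance, and then reconciling the problem-independent constants $C_{\nu},C_s,C_M$ so that the hypotheses \eqref{eqn::theoremThreeOne}--\eqref{s:m_lower} map exactly onto those of that bound at parameters $(\epsilon_1,\delta_1)$. Once the single-vector lemma and this constant bookkeeping are settled, the remainder is an unchanged transcription of the Theorem~\ref{thm1} argument.
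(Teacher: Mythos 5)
Your proposal matches the paper's own (omitted) proof: the paper states that Theorem \ref{thm::s-hashing} follows the same covering-set argument as Theorem \ref{thm1}, with Lemma \ref{Freksen} replaced by Lemma \ref{Jaga-lemma} (cited from \cite{NIPS2019_9656}, Theorem 1.5), which is precisely the single-vector $s$-hashing JL guarantee with tolerance $\sqrt{s}\,C_{\nu}C_1^{-1}\bar{\nu}(\epsilon,\delta)$ that you postulate. The one step you flag as the main obstacle is resolved in the paper by direct citation rather than derivation, and the hypotheses \eqref{eqn::theoremThreeOne}--\eqref{s:m_lower} are exactly that lemma's conditions evaluated at $(\epsilon_1,\delta_1)=(C_2\epsilon,\,e^{-4r}\delta)$, so your constant bookkeeping goes through as expected.
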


Theorem \ref{thm::s-hashing} parallels Theorem \ref{thm1}; and its proof relies on the following lemma which parallels Lemma \ref{Freksen}.

\begin{lemma}[\cite{NIPS2019_9656}, Theorem 1.5] \label{Jaga-lemma}
Let $C_1, C_3, C_M, C_{\nu}, C_s>0$ be \constantsDescription. Suppose that $\epsilon,\delta \in (0, C_3), m,s\in \N^+, E\in \R$ satisfy 
\begin{align}\nonumber
&1\leq s \leq C_s \epsilon^{-1} \logDeltaI,\\\nonumber
& C_M \leq E < \epsilon^2 s\log^{-1}(1/\delta) e^{C_s (\epsilon s)^{-1}\logDeltaI },\\\nonumber
& m\geq \max \set{ E \epsilon^{-2} \log(1/\delta), se}.
\end{align}
Then for any $x\in \R^n$ with $\nu(x) \leq \sqrt{s}C_{\nu}C_1^{-1}\bar{\nu}(\epsilon,\delta)$, where $\bar{\nu}(\epsilon,\delta)$ is defined in \eqref{L5:nu},
a randomly generated $s$-hashing matrix $S\in \R^{m\times n}$ is an $\epsilon$-JL embedding for $\{x\}$ with probability at least $1-\delta$.
\end{lemma}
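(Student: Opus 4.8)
The statement is a one-point (single-vector) Johnson--Lindenstrauss concentration bound for the quadratic form $\normTwo{Sx}^2$ under an $s$-hashing sketch $S$, so the plan is to control the deviation of $\normTwo{Sx}^2$ from its mean $\normTwo{x}^2$ and then convert a tail bound into the $\epsilon$-JL guarantee. Since the result is imported from \cite{NIPS2019_9656}, the quickest route is bookkeeping: Jagadeesan's Theorem~1.5 characterises, up to absolute constants, the largest admissible non-uniformity $\nu(x)$ as a function of $(m,s,\epsilon,\delta)$, and I would verify that the three displayed constraints on $(s,E,m)$ together with the definition \eqref{L5:nu} of $\bar{\nu}(\epsilon,\delta)$ place the triple inside her admissible region once $m\geq E\epsilon^{-2}\log(1/\delta)$ is substituted. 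The constants $C_1,C_3,C_M,C_\nu,C_s$ are precisely the slack needed to absorb her absolute constants, and the factor $\sqrt{s}$ multiplying the budget $C_\nu C_1^{-1}\bar{\nu}(\epsilon,\delta)$ is the $\sqrt{s}$ relaxation her bound delivers over the $s=1$ case in \autoref{Freksen}.

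For a self-contained derivation I would use the moment method. Normalising to $\normTwo{x}=1$ and writing $\eta_{ij}\in\{0,1\}$ for the event that row $i$ is among the $s$ rows chosen for column $j$, and $\sigma_{ij}$ for the attached Rademacher signs, we have
\[
\normTwo{Sx}^2 = \frac{1}{s}\sum_{i}\Bigl(\sum_{j}\eta_{ij}\sigma_{ij}x_j\Bigr)^2 .
\]
Because each column selects exactly $s$ distinct rows, the diagonal terms $j=j'$ contribute $\tfrac1s\sum_j x_j^2\sum_i\eta_{ij}=\normTwo{x}^2$ deterministically; hence $\E\bigl[\normTwo{Sx}^2\bigr]=\normTwo{x}^2$ and the error $Z:=\normTwo{Sx}^2-\normTwo{x}^2$ is the off-diagonal collision sum $\tfrac1s\sum_{j\neq j'}x_jx_{j'}\sum_i\eta_{ij}\eta_{ij'}\sigma_{ij}\sigma_{ij'}$. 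Independence and mean-zero of the signs give $\E[Z]=0$ and annihilate all odd-degree contributions, so the task reduces to bounding an even moment $\E[Z^{\ell}]$ for a well-chosen even $\ell$ and applying Markov's inequality $\P\bigl(\abs{Z}\geq\epsilon\bigr)\leq\epsilon^{-\ell}\E[Z^{\ell}]$.

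The main obstacle, and the genuine content of the cited theorem, is the combinatorial estimate of $\E[Z^{\ell}]$. Expanding the $\ell$-th power produces a sum indexed by multigraphs on the coordinates, and only terms in which every sign $\sigma_{ij}$ occurs an even number of times survive; the surviving weight is a product of the $x_j$'s together with expected collision counts $\E[\sum_i\eta_{ij}\eta_{ij'}]\approx s^2/m$. Organising these contributions so that the heavy-coordinate ($\ell_\infty$) mass of $x$ enters sharply is what forces the appearance of $\nu(x)=\normTwo{x}^{-1}\normInf{x}$ in the budget and produces both the $\sqrt{s}$ gain and the exponential ceiling $E<\epsilon^2 s\log^{-1}(1/\delta)\,e^{C_s(\epsilon s)^{-1}\log(1/\delta)}$; getting this tradeoff tight, rather than merely correct up to polynomial factors, is the delicate step. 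With the moment bound in hand, optimising $\ell$ against $\log(1/\delta)$ and reading off when the resulting tail is at most $\delta$ recovers exactly the displayed conditions on $(s,E,m)$ and the coherence budget $\sqrt{s}\,C_\nu C_1^{-1}\bar{\nu}(\epsilon,\delta)$.
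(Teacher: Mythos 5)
Your proposal takes essentially the same route as the paper, which offers no proof of this lemma at all but imports it verbatim from \cite{NIPS2019_9656} (Theorem 1.5); your bookkeeping paragraph—absorbing Jagadeesan's absolute constants into $C_1, C_3, C_M, C_{\nu}, C_s$ and checking that the three displayed constraints on $(s,E,m)$ together with the definition of $\bar{\nu}(\epsilon,\delta)$ place the parameters inside her admissible region, with the $\sqrt{s}$ factor giving the relaxation over the $s=1$ case of Lemma \ref{Freksen}—is precisely what the paper's citation amounts to. Your supplementary moment-method sketch (deterministic diagonal term, mean-zero off-diagonal collision sum, even-moment bound via Markov with $\ell$ optimised against $\log(1/\delta)$) correctly outlines how the cited result is actually proved, and you rightly flag that the delicate combinatorial moment estimate is the content of the citation rather than something you rederive.
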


The proof of Theorem \ref{thm::s-hashing} follows the same argument as Theorem \ref{thm1}, replacing $1$-hashing with $s$-hashing and using Lemma \ref{Jaga-lemma} instead of Lemma \ref{Freksen}. We omit the details.

\subsection{A general embedding property for an \mathInTitle{$s$}-hashing variant}

Note that in both Theorem \ref{Bourgain} and Theorem \ref{thm::s-hashing}, allowing column sparsity of hashing matrices to increase from $1$ to $s$ results in coherence requirements being relaxed by $\sqrt{s}$. We introduce an
$s$-hashing variant that allows us to generalise this result.

\begin{definition} 
\label{def::s-hashing-variant}
 We say $T \in \R^{m \times n}$ is an s-hashing variant matrix if independently for each $j \in [n]$, we sample with replacement $i_1, i_2, \dots, i_s \in [m]$ uniformly at random and add $\pm 1/\sqrt{s}$ to $T_{i_k j}$, where $k = 1, 2, \dots, s$. \footnote{We add $\pm 1/\sqrt{s}$ to $T_{i_k j}$ because we may have $i_k = i_l$ for some $l <k$, as we have sampled with replacement.  }
\end{definition}

Both $s$-hashing and $s$-hashing variant matrices reduce to $1$-hashing matrices when $s=1$. For $s\geq 1$, the $s$-hashing variant has at most $s$ non-zeros per column, while the usual $s$-hashing matrix has precisely $s$ nonzero entries per same column.

The next lemma connects $s$-hashing variant matrices to $1$-hashing matrices. 

\begin{lemma}
\label{decompose_s_hashing_varaint}
An $s$-hashing variant matrix $T\in \R^{m\times n}$ (as  in Definition \ref{def::s-hashing-variant}) could alternatively be generated by calculating $T = \frac{1}{\sqrt{s}} \left[ S^{(1)} + S^{(2)} + \dots + S^{(s)} \right]$, where $S^{(k)} \in \R^{m\times n}$ are independent $1$-hashing matrices for $k = 1, 2, \dots, s$.
\end{lemma}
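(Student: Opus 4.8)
The plan is to verify that the two generation procedures produce matrices with identical distributions, by comparing them column by column. Both the $s$-hashing variant of Definition \ref{def::s-hashing-variant} and the summed construction $T = \frac{1}{\sqrt{s}}\left[ S^{(1)} + \dots + S^{(s)} \right]$ build the matrix one column at a time, with all columns drawn independently across $j \in [n]$ (in the summed case because each $1$-hashing matrix $S^{(k)}$ has independent columns, and a sum of matrices with independent columns again has independent columns). Hence it suffices to fix a column index $j$ and show that the $j$-th column of $T$ has the same distribution under both constructions.

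For the summed construction, fix $j$ and let $i_k \in [m]$ denote the (uniform) row index sampled by $S^{(k)}$ for column $j$, and $\sigma_k \in \{-1,+1\}$ the associated random sign, so that the $j$-th column of $S^{(k)}$ equals $\sigma_k e_{i_k}$, where $e_i$ is the $i$-th standard basis vector of $\R^m$. Because the matrices $S^{(1)}, \dots, S^{(s)}$ are independent, the pairs $(i_k, \sigma_k)$ for $k = 1, \dots, s$ are mutually independent, with each $i_k$ uniform on $[m]$ and each $\sigma_k$ an independent random sign. The $j$-th column of $\frac{1}{\sqrt{s}}\sum_{k} S^{(k)}$ is therefore $\frac{1}{\sqrt{s}}\sum_{k=1}^{s} \sigma_k e_{i_k}$.

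I would then observe that this is exactly the recipe in Definition \ref{def::s-hashing-variant}: sampling $i_1, \dots, i_s \in [m]$ independently and uniformly is precisely sampling with replacement, and forming $\frac{1}{\sqrt{s}}\sum_{k} \sigma_k e_{i_k}$ amounts to adding $\pm 1/\sqrt{s}$ (with independent signs) to the entries indexed by $i_1, \dots, i_s$, where coincident indices cause the corresponding $\pm 1/\sqrt{s}$ contributions to accumulate in the same entry (and possibly cancel). Thus the distributions of the $j$-th column coincide under the two constructions, and by the column independence noted above the full matrices share the same distribution.

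There is no substantial obstacle here: the lemma is essentially an identification of two sampling descriptions of the same random object. The only point requiring care is the bookkeeping of signs and of the with-replacement sampling --- one must note that the independence of the $S^{(k)}$ corresponds exactly to drawing the $s$ row indices with replacement, so that when two draws coincide the variant's prescription of \emph{adding} (rather than setting) $\pm 1/\sqrt{s}$ is what reproduces the summed entry. This is precisely why Definition \ref{def::s-hashing-variant} is phrased with ``add'' and why its footnote flags the possibility that $i_k = i_l$ for $l < k$.
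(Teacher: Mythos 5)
Your proof is correct and takes essentially the same route as the paper: the paper establishes the lemma by writing the variant's generation procedure as nested loops over columns $j$ and repetitions $k$ and swapping the loop order, which is exactly your observation that the independent draws $(i_k,\sigma_k)$ can be reorganized so that, for each fixed $k$, the draws over $j$ constitute an independent $1$-hashing matrix $S^{(k)}$ contributing $\frac{1}{\sqrt{s}}S^{(k)}$ to $T$. Your column-by-column distributional phrasing, including the explicit bookkeeping of coincident indices and accumulating signs, is simply a more detailed rendering of the same identification.
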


\begin{proof}
In Definition \ref{def::s-hashing-variant}, an s-hashing variant matrix $T$ is generated by the following procedure:
\begin{algorithmic}
      \For{\texttt{$j = 1, 2, \dots n$}}
        \For{ \texttt{$k = 1, 2, \dots, s$}}
            \State \texttt{Sample $i_k \in [m]$ uniformly at random and add $\pm 1/\sqrt{s}$ to $T_{i_k,j}$. }
        \EndFor
      \EndFor
\end{algorithmic}
Due to the independence of the entries, the 'for' loops in the above routine can be swapped, leading to the equivalent formulation,
\begin{algorithmic}
      \For{\texttt{$k = 1, 2, \dots s$}}
        \For{ \texttt{$j = 1, 2, \dots, n$}}
            \State \texttt{Sample $i_k \in [m]$ uniformly at random and add $\pm 1/\sqrt{s}$ to $T_{i_k,j}$}.
        \EndFor
      \EndFor
\end{algorithmic}
For each $k\leq s$, the 'for' loop over $j$ in the above routine generates an independent random $1$-hashing matrix $S^{(k)}$ and adds $\left( 1/\sqrt{s} \right)S^{(k)}$ to $T$.

\end{proof}

We are ready to state and prove the main result in this section. 
\begin{theorem} \label{1-hashing-and-s-hashing}
Let $s, r \leq d\leq n \in \N^+$, $\epsilon, \delta\in (0, 1)$. Suppose that $m \in \N^+$ is chosen such that the distribution of $1$-hashing matrices $S \in \R^{m\times ns}$ is an $(\epsilon,\delta)$-oblivious subspace embedding for any matrix $B\in \R^{ns\times d}$ with rank $r$ and $\mu(B)\leq \mu$ for some $\mu>0$. Then the distribution of $s$-hashing variant matrices $T\in\R^{m\times n}$ is an $(\epsilon, \delta)$-oblivious subspace embedding for any matrix $A\in \R^{n\times d}$ with rank $r$ and  $\mu(A) \leq \mu\sqrt{s}$.

\end{theorem}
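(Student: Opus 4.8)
The plan is to reduce the $s$-hashing variant matrix $T\in\R^{m\times n}$ acting on $A$ to a single $1$-hashing matrix $S\in\R^{m\times ns}$ acting on a suitably enlarged data matrix $B\in\R^{ns\times d}$, and then invoke the hypothesised embedding property of $S$ for $B$. The construction of $B$ is the crux: writing the compact SVD of $A$ as $A=U\Sigma V^T$, I define $B$ to be the vertical stacking of $s$ scaled copies of $A$,
\[
B=\frac{1}{\sqrt{s}}\begin{pmatrix} A\\ A\\ \vdots \\ A\end{pmatrix}\in\R^{ns\times d}.
\]

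First I would record the structural facts about $B$. Setting $U_B=\frac{1}{\sqrt{s}}(U^T,\dots,U^T)^T\in\R^{ns\times r}$, a direct computation gives $U_B^TU_B=\frac{1}{s}\sum_{k=1}^s U^TU=I_r$, so $B=U_B\Sigma V^T$ is a compact SVD of $B$; in particular $\mathrm{rank}(B)=r$. Since every row of $U_B$ equals $\frac{1}{\sqrt{s}}$ times a row of $U$, the coherence satisfies $\mu(B)=\mu(A)/\sqrt{s}$, and hence the hypothesis $\mu(A)\le\mu\sqrt{s}$ yields $\mu(B)\le\mu$, placing $B$ exactly in the class for which $S$ is assumed to embed. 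I would also note the norm identity $\|Bz\|_2^2=\frac{1}{s}\sum_{k=1}^s\|Az\|_2^2=\|Az\|_2^2$ for all $z\in\R^d$, so that an embedding bound phrased in terms of $\|Bz\|_2$ is identical to one in terms of $\|Az\|_2$.

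The second ingredient is a distributional identity linking the two sketches. Partitioning the columns of a $1$-hashing matrix $S\in\R^{m\times ns}$ into $s$ consecutive blocks $S=[S_1\,|\,\cdots\,|\,S_s]$ with each $S_k\in\R^{m\times n}$, Definition \ref{def::sampling_and_hashing} (with $s=1$) shows that $S_1,\dots,S_s$ are independent $1$-hashing matrices, and $SB=\frac{1}{\sqrt{s}}\sum_{k=1}^s S_kA$. On the other hand, Lemma \ref{decompose_s_hashing_varaint} writes the $s$-hashing variant matrix as $T=\frac{1}{\sqrt{s}}\sum_{k=1}^s S^{(k)}$ with $S^{(1)},\dots,S^{(s)}$ independent $1$-hashing matrices, so $TA=\frac{1}{\sqrt{s}}\sum_{k=1}^s S^{(k)}A$. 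Since $(S_1,\dots,S_s)$ and $(S^{(1)},\dots,S^{(s)})$ are identically distributed tuples of independent $1$-hashing matrices, the sketches $SB$ and $TA$ have the same distribution.

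Combining these, I would conclude as follows. By hypothesis, with probability at least $1-\delta$ the matrix $S$ satisfies $(1-\epsilon)\|Bz\|_2^2\le\|SBz\|_2^2\le(1+\epsilon)\|Bz\|_2^2$ for all $z\in\R^d$; using $\|Bz\|_2=\|Az\|_2$ this is equivalent to $(1-\epsilon)\|Az\|_2^2\le\|SBz\|_2^2\le(1+\epsilon)\|Az\|_2^2$ for all $z$. This event is a fixed measurable functional of $(S_1,\dots,S_s)$, and applying the same functional to $(S^{(1)},\dots,S^{(s)})$ gives the corresponding event for $T$ with $\|TAz\|_2^2$ in place of $\|SBz\|_2^2$; by the distributional equality the two events have equal probability, namely at least $1-\delta$. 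Since $\{Az:z\in\R^d\}$ is the column space of $A$, this is precisely the statement that $T$ is an $\epsilon$-subspace embedding for $A$ with probability at least $1-\delta$. The only genuine obstacle is identifying the correct enlarged matrix $B$ and verifying the coherence drop $\mu(B)=\mu(A)/\sqrt{s}$ via the orthonormality of $U_B$; once $B$ is in hand, the rank bookkeeping and the distributional matching are routine.
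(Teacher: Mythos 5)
Your proposal is correct and follows essentially the same route as the paper: decompose the $s$-hashing variant via Lemma \ref{decompose_s_hashing_varaint}, stack $s$ scaled copies vertically to drop the coherence by $\sqrt{s}$, identify the concatenated blocks with a single $1$-hashing matrix in $\R^{m\times ns}$, and transfer the embedding guarantee through the identity $SB=TA$ (the paper writes $SW=TU$). The only difference is cosmetic: the paper stacks the left singular factor $U$ and invokes Lemma \ref{subspace-embedding-def-2}, while you stack $A$ itself and verify that the stacked matrix's SVD factor is the stacked $U$, which if anything matches the $\R^{ns\times d}$ hypothesis of the theorem slightly more literally.
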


\begin{proof}
Applying Lemma \ref{decompose_s_hashing_varaint}, we let 
\begin{equation}
T = \frac{1}{\sqrt{s}} \left[ S^{(1)} + S^{(2)} + \ldots + S^{(s)} \right]
\end{equation}
be a randomly generated $s$-hashing variant matrix where $S^{(k)} \in \R^{m\times n}$ are independent $1$-hashing matrices, $k\in \{1,\ldots, s\}$. Let $A\in \R^{n\times d}$ with rank $r$ and  with $\mu(A)\leq \mu\sqrt{s}$; let
$U \in \R^{n \times r}$ be an SVD-factor of $A$ as defined in \eqref{thin-SVD}. Let 
\begin{equation}
W = \frac{1}{\sqrt{s}} \begin{pmatrix} U\\ \vdots \\ U \end{pmatrix} \in \R^{ns \times r}.
\end{equation}
As $U$ has orthonormal columns, the matrix $W$ also has orthonormal columns and hence the coherence of $W$ coincides with the largest Euclidean norm of its rows
	\begin{equation}
	\mu(W) = \frac{1}{\sqrt{s}} \mu(U)=\frac{1}{\sqrt{s}} \mu(A) \leq \mu.
	\end{equation}
Let $S = \begin{pmatrix} S^{(1)} \hdots  S^{(s)} \end{pmatrix} \in \R^{m \times ns}$.
We note that the $j$-th column of $S$ is generated by sampling $i \in [m]$ and setting $S_{ij} = \pm 1$. Moreover, as $S^{(k)}$,   $k\in \{1,\ldots, s\}$, are independent, the sampled entries are independent. Therefore, $S$ is distributed as a $1$-hashing matrix. Furthermore, due to our assumption on the distribution of $1$-hashing matrices, 
$m$ is chosen such that  $S \in \R^{m \times ns}$ is an $(\epsilon, \delta)$-oblivious subspace embedding for $(ns)\times r$ matrices of coherence at most $\mu$. Applying this to input matrix $W$, we have that 
 with probability at least $1-\delta$, 
	\begin{equation}\label{s-hash:SW}
	(1-\epsilon)\|z\|^2_2=	(1-\epsilon)\|Wz\|^2_2 \leq \|SWz\|^2_2 \leq (1+\epsilon)\|Wz\|^2_2=	(1+\epsilon)\|z\|^2_2,
	\end{equation}
	for all $z\in \R^r$, where in the equality signs, we used that  $W$ has orthonormal columns.
On the other hand, we have that 
\begin{equation*}
SW = \frac{1}{\sqrt{s}} \begin{pmatrix} S^{(1)} \hdots  S^{(s)} \end{pmatrix}  \begin{pmatrix} U\\ \vdots \\ U \end{pmatrix}  
   = \frac{1}{\sqrt{s}} \left[ S^{(1)} U + S^{(2)} U + \dots + S^{(s)} U \right]
   = TU. 
\end{equation*}
This and \eqref{s-hash:SW} provide that, with probability 
at least $1-\delta$, 
	\begin{equation}
	(1-\epsilon)\|z\|^2_2 \leq \|TUz\|^2_2 \leq (1+\epsilon)\|z\|^2_2,
	\end{equation}
	which implies that $T$ is an $\epsilon$-subspace embedding for $A$ by Lemma \ref{subspace-embedding-def-2}.
\end{proof}

 Theorem \ref{thm1} and Theorem \ref{1-hashing-and-s-hashing} imply an $s$-hashing variant version of Theorem \ref{thm::s-hashing}.

\begin{theorem}\label{thm::s-hashing-variant}
Suppose that $\epsilon,\delta \in (0,1)$, $s, r \leq d \leq n, m\leq n \in \N^+$, $E >0$ satisfy \eqref{eqn::theoremTwoOne} and \eqref{eqn::theoremTwoTwo}. Then for any matrix $A\in\R^{n\times d}$ with rank $r$ and 
$\mu(A) \leq \bar{\mu}(\epsilon,\delta)\sqrt{s}$, \whereMuBarIsDefined,
a randomly generated $s$-hashing variant matrix $S\in\R^{m\times n}$ is an $\epsilon$-subspace embedding for $A$ with probability at least $1-\delta$.
\end{theorem}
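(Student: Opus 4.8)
The plan is to obtain this result as a direct composition of Theorem~\ref{thm1} and Theorem~\ref{1-hashing-and-s-hashing}, exactly as the sentence preceding the statement anticipates; no new estimates are needed. The only quantity that must be tracked is the coherence budget, which picks up the factor $\sqrt{s}$ from Theorem~\ref{1-hashing-and-s-hashing}.

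First I would apply Theorem~\ref{thm1} with its ambient row dimension taken to be $ns$ rather than $n$. The point worth checking is that the hypotheses \eqref{eqn::theoremTwoOne} and \eqref{eqn::theoremTwoTwo} constrain only $\epsilon$, $\delta$, $r$, $E$ and $m$, and are completely independent of the number of rows of the embedded matrix; hence they continue to hold verbatim. The remaining dimensional constraints required by Theorem~\ref{thm1}, namely $r\leq d\leq ns$ and $m\leq ns$, follow immediately from the assumed $r\leq d\leq n$, $m\leq n$ and $s\geq 1$. Theorem~\ref{thm1} then guarantees that the distribution of $1$-hashing matrices $S\in\R^{m\times ns}$ is an $(\epsilon,\delta)$-oblivious subspace embedding for every matrix $B\in\R^{ns\times d}$ of rank $r$ with $\mu(B)\leq \bar\mu(\epsilon,\delta)$, where $\bar\mu(\epsilon,\delta)$ is defined in \eqref{A:mu-1}.

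With this established, I would invoke Theorem~\ref{1-hashing-and-s-hashing} with the choice $\mu=\bar\mu(\epsilon,\delta)$. Its hypothesis, that $m$ is chosen so that $1$-hashing matrices in $\R^{m\times ns}$ embed rank-$r$ matrices of coherence at most $\mu$, is precisely the conclusion of the previous paragraph. The theorem then yields that the distribution of $s$-hashing variant matrices $T\in\R^{m\times n}$ is an $(\epsilon,\delta)$-oblivious subspace embedding for every rank-$r$ matrix $A\in\R^{n\times d}$ with $\mu(A)\leq \mu\sqrt{s}=\bar\mu(\epsilon,\delta)\sqrt{s}$, which is exactly the asserted conclusion.

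Since both ingredients are already proved, there is no substantive obstacle; the single point requiring care is the bookkeeping in the first step, namely confirming that enlarging the row dimension from $n$ to $ns$ leaves the conditions on $E$ and $m$ intact (they do, as they do not reference the row count) and that the side constraints $d\leq ns$ and $m\leq ns$ survive. Everything else is a mechanical chaining of the two oblivious-embedding statements, with the coherence relaxation by $\sqrt{s}$ inherited directly from Theorem~\ref{1-hashing-and-s-hashing}.
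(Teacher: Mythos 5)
Your proposal is correct and coincides with the paper's own proof: it too applies Theorem~\ref{thm1} over the enlarged ambient dimension $ns$, noting that the conditions \eqref{eqn::theoremTwoOne}--\eqref{eqn::theoremTwoTwo} and the expression for $\bar{\mu}(\epsilon,\delta)$ do not depend on the row count, and then invokes Theorem~\ref{1-hashing-and-s-hashing} with $\mu=\bar{\mu}(\epsilon,\delta)$ to inherit the $\sqrt{s}$ coherence relaxation. Your explicit check of the side constraints $d\leq ns$ and $m\leq ns$ is a small but welcome addition to the same argument.
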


\begin{proof}
Theorem \ref{thm1} implies that the distribution of $1$-hashing matrices $S\in \R^{m\times ns}$ is an $(\epsilon,\delta)$-oblivious subspace embedding
for any matrix $B \in \R^{ns \times d}$ with rank $r$ and $\mu(B) \leq \bar{\mu}(\epsilon,\delta)$. We also note that this result is invariant to the number of rows in $B$ (as long as the column size of $S$ matches the row count of $B$), and so the expressions for $m$, $\bar{\mu}(\epsilon,\delta)$ and the constants therein remain unchanged. 
 
 Theorem \ref{1-hashing-and-s-hashing} then provides that the distribution of $s$-hashing variant matrices $S \in \R^{m\times n}$ is an $(\epsilon,\delta)$-oblivious subspace embedding for any matrix $A \in \R^{n\times d}$ with rank $r$ and $\mu(A) \leq \bar{\mu}(\epsilon,\delta) \sqrt{s}$; the desired result follows. 
\end{proof}

Theorem \ref{thm::s-hashing} and Theorem \ref{thm::s-hashing-variant} provide similar results, and we find that the latter provides simpler constant expressions (such as for $E$).

\subsection{The Hashed-Randomised-Hadamard-Transform sketching}

Here we consider the Randomised-Hadamard-Transform \cite{10.1145/1132516.1132597}, to be applied to the input matrix $A$ before sketching, as
another approach that allows reducing the coherence requirements under which  good subspace embedding properties can be guaranteed. It is common to use the Subsampled-RHT (SHRT) \cite{10.1145/1132516.1132597}, but the size of the sketch needs to be at least 
$\mathcal{O}(r\log r)$; this prompts us to consider using hashing instead of subsampling in this context (as well), and obtain an optimal order sketching bound. Figure \ref{fig::1-2-3-inco} illustrates numerically the benefit of HRHT sketching for preconditioning compared to SRHT.

\begin{definition}\label{def::HRHT}
A Hashed-Randomised-Hadamard-Transform (HRHT) is an $m\times n$ matrix of the form $S =  S_h HD$ with $m\leq n$, where
\begin{itemize}[topsep=0pt,itemsep=-1ex,partopsep=1ex,parsep=1ex]
    \item $D$ is a random $n \times n$ diagonal matrix with $\pm 1$ independent entries.
    \item $H$ is an $n\times n$ Walsh-Hadamard matrix defined by
        \begin{equation}
            H_{ij} = n^{-1/2}(-1)^{\la (i-1)_2, (j-1)_2\ra},
        \end{equation}
        where $(i-1)_2$, $(j-1)_2$ are binary representation vectors of the numbers $(i-1), (j-1)$ respectively\footnote{For example, $(3)_2 = (1,1)$.}.
    \item $S_h$ is a random $m\times n$ $s$-hashing or $s$-hashing variant matrix, independent of $D$.
\end{itemize}
\end{definition}

Our next results show 
that if the input matrix is sufficiently over-determined, the distribution of $HRHT$ matrices with optimal sketching size and either choice of $S_h$, is an $(\epsilon, \delta)$-oblivious subspace embedding.

\begin{theorem}[$s$-hashing version] \label{thm::HRHT}
$ r \leq d\leq n \in \N^+$. \theoremThreeFirstSentence \eqref{eqn::theoremThreeOne}, \eqref{eqn::theoremThreeTwo} and \eqref{s:m_lower}. Let $\delta_1 \in (0,1)$ and suppose further that
\begin{equation}
    n \geq \nLower \label{eqn::theoremSixN},
\end{equation}
\whereMuBarIsDefined. Then for any matrix $A \in \R^{n \times d}$ with rank $r$, an $HRHT$ matrix $S \in \R^{m\times n}$ with an $s$-hashing matrix $S_h$, is an $\epsilon$-subspace embedding for $A$ with probability at least $(1-\delta)(1-\delta_1)$. 

\end{theorem}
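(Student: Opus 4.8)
The plan is to exploit the orthogonality of $HD$ to reduce the claim to an $s$-hashing embedding of the coherence-reduced matrix $HDA$, and then to invoke Theorem \ref{thm::s-hashing}. First I would note that since $H$ is orthogonal and $D$ is a $\pm 1$ diagonal matrix, the product $HD$ is orthogonal, so that $\|HDy\|_2 = \|y\|_2$ for every $y$ in the column space of $A$. Writing $S = S_h HD$, the sandwich $(1-\epsilon)\|Ax\|_2^2 \leq \|S_h (HDAx)\|_2^2 \leq (1+\epsilon)\|Ax\|_2^2$ then holds for all $x$ if and only if $S_h$ is an $\epsilon$-subspace embedding for $HDA$. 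Thus it suffices to show that, with the stated probability, $S_h$ embeds $HDA$.

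The second and central step is to control the coherence of $HDA$. Writing the compact SVD $A = U\Sigma V^T$ as in \eqref{thin-SVD}, the matrix $HDU$ has orthonormal columns (since $HD$ is orthogonal and $U$ has orthonormal columns), so $HDU$ is an SVD factor of $HDA$ and hence $\mu(HDA) = \max_{i\in[n]} \|e_i^T HDU\|_2$. Fixing a row index $i$ and writing $Z_i := \|e_i^T HDU\|_2 = \|U^T g\|_2$ with $g := (HD)^T e_i = n^{-1/2}\sigma$, where $\sigma$ is the Rademacher sign vector encoded by $D$, a direct second-moment computation gives $\E[Z_i^2] = n^{-1}\operatorname{tr}(UU^T) = r/n$, whence $\E[Z_i] \leq \sqrt{r/n}$ by Jensen. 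Since $\sigma \mapsto Z_i$ is convex and $n^{-1/2}$-Lipschitz in the Euclidean norm (as $U^T$ has unit operator norm), a concentration inequality for convex Lipschitz functions of Rademacher variables yields $Z_i \leq \muLower$ with failure probability at most $\delta_1/n$ for each $i$; a union bound over the $n$ rows then gives, with probability at least $1-\delta_1$ over the randomness in $D$, the coherence estimate $\mu(HDA) \leq \muLower$. This coherence-reduction bound is the standard one for the randomised Hadamard transform (cf.\ Tropp \cite{Tropp:wr}) and is the main technical obstacle; I would flag that a crude entrywise Hoeffding bound or a direct McDiarmid estimate on $Z_i$ is too weak here (it degrades the leading term to order $\sqrt{r\log(n)/n}$), so the argument genuinely needs the dimension-free Lipschitz constant $n^{-1/2}$.

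Third, I would verify that hypothesis \eqref{eqn::theoremSixN}, namely $n \geq \nLower$, is precisely the algebraic rearrangement of the inequality $\muLower \leq \sqrt{s}C_{\nu}C_1^{-1}\muBarEpsDelta$. Hence, on the event that the coherence bound of the previous step holds, $HDA$ is an $n\times d$ matrix of rank $r$ whose coherence meets the requirement of Theorem \ref{thm::s-hashing}, for the same $\epsilon,\delta,s,m,E$, which satisfy \eqref{eqn::theoremThreeOne}, \eqref{eqn::theoremThreeTwo} and \eqref{s:m_lower} by assumption.

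Finally, I would assemble the two independent sources of randomness. The $s$-hashing matrix $S_h$ is independent of $D$, so conditioning on $D$, whenever the coherence event holds Theorem \ref{thm::s-hashing} guarantees that $S_h$ is an $\epsilon$-subspace embedding for $HDA$ with probability at least $1-\delta$. Taking expectations over $D$ and using that the coherence event has probability at least $1-\delta_1$ gives $\P[S_h \text{ embeds } HDA] \geq (1-\delta)(1-\delta_1)$; by the reduction in the first step, this is exactly the probability that $S = S_h HD$ is an $\epsilon$-subspace embedding for $A$, as required. The delicate points are the second-moment identity together with the correct Lipschitz constant in the concentration bound, and the bookkeeping that keeps $D$ and $S_h$ independent so that the two success probabilities multiply.
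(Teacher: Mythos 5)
Your argument is correct and follows essentially the same route as the paper's proof: reduce via orthogonality of $HD$ to showing $S_h$ embeds $HDA$, use that $\mu(HDA)=\mu(HDU)$ together with condition \eqref{eqn::theoremSixN} to verify the coherence hypothesis of Theorem \ref{thm::s-hashing}, and multiply the two success probabilities using the independence of $S_h$ and $D$. The only difference is that you re-derive the coherence-reduction bound for $HDU$ from scratch (second-moment identity plus convex Lipschitz concentration over the Rademacher signs), whereas the paper simply invokes it as Lemma \ref{thm::Tropp_HD}, quoted from Tropp \cite{Tropp:wr}; your sketch of that derivation, including the $n^{-1/2}$ Lipschitz constant and the union bound giving the $\sqrt{8\log(n/\delta_1)/n}$ term, matches the cited result.
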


\begin{theorem}[$s$-hashing variant distribution] \label{thm::HRHT_variant}
\theoremFiveFirstSentence \eqref{eqn::theoremTwoOne} and \eqref{eqn::theoremTwoTwo}. Let $\delta_1 \in (0,1)$ and suppose further that 
\begin{equation}
    n \geq \nLowerSVariant \label{eqn::theoremSevenN},
\end{equation}
\whereMuBarIsDefined.
Then for any matrix $A \in \R^{n \times d}$ with rank $r$, an $HRHT$ matrix $S \in \R^{m\times n}$ with an $s$-hashing variant matrix  $S_h$,  is an $\epsilon$-subspace embedding for $A$ with probability at least $\HRHTProb$. 
\end{theorem}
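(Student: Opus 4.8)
The plan is to exploit two features of the matrix $HD$: it is an orthogonal transformation (so it preserves the subspace-embedding question exactly), and it reduces the coherence of $A$ with high probability. Together these reduce the theorem to a single application of Theorem \ref{thm::s-hashing-variant} on the transformed matrix $HDA$.

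First I would observe that since $H$ is a scaled Walsh--Hadamard matrix and $D$ is diagonal with $\pm 1$ entries, the product $HD$ is orthogonal, so $\|HDy\|_2 = \|y\|_2$ for every $y \in \R^n$. Writing $z = HDy$ and letting $y$ range over the column space of $A$, it follows that $S = S_h HD$ is an $\epsilon$-subspace embedding for $A$ if and only if $S_h$ is an $\epsilon$-subspace embedding for $HDA$; moreover $HDA$ has the same rank $r$ as $A$ because $HD$ is invertible. This reduces the claim to controlling the action of $S_h$ on $HDA$.

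Next I would invoke the coherence-reduction property of the randomised Hadamard transform: over the randomness in $D$, with probability at least $1-\delta_1$ one has $\mu(HDA) \leq \muLower$, the bound arising from a Hoeffding-type concentration argument applied to the rows of $HDU$, where $U$ is the SVD factor of $A$ from \eqref{thin-SVD}. The hypothesis $n \geq \nLowerSVariant$ is exactly what converts this into the coherence requirement of Theorem \ref{thm::s-hashing-variant}: squaring and rearranging $n \geq (\muLowerNoN)^2 / (s\,\bar{\mu}(\epsilon,\delta)^2)$ yields $\muLower \leq \sqrt{s}\,\bar{\mu}(\epsilon,\delta)$, so on this event $\mu(HDA) \leq \sqrt{s}\,\bar{\mu}(\epsilon,\delta)$.

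Conditioning on a realisation of $D$ for which $\mu(HDA) \leq \sqrt{s}\,\bar{\mu}(\epsilon,\delta)$, and using that $\epsilon,\delta$ satisfy \eqref{eqn::theoremTwoOne} and \eqref{eqn::theoremTwoTwo}, Theorem \ref{thm::s-hashing-variant} applies to $HDA$ and guarantees that the $s$-hashing variant matrix $S_h$ is an $\epsilon$-subspace embedding for $HDA$ with probability at least $1-\delta$. Since $S_h$ is drawn independently of $D$, I would combine the two events by conditioning on $D$ and integrating: the joint success probability is at least $(1-\delta_1)(1-\delta) = \HRHTProb$, and on that event $S = S_h HD$ embeds $A$ as required. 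The main obstacle is the coherence estimate $\mu(HDA) \leq \muLower$ holding with probability $1-\delta_1$; the orthogonality reduction and the probability bookkeeping are clean, but that concentration bound is where the real work lies, and it is the source of the $n$-dependence entering $\muLowerNoN$.
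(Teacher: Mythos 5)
Your proof is correct and follows essentially the same route as the paper: reduce via the orthogonality of $HD$ to embedding $HDA$, apply the coherence-reduction bound of Lemma \ref{thm::Tropp_HD} (noting $\mu(HDA)=\mu(HDU)$), use condition \eqref{eqn::theoremSevenN} to convert that bound into the coherence hypothesis of Theorem \ref{thm::s-hashing-variant}, and multiply the two success probabilities using the independence of $S_h$ and $D$. This matches the paper's event-decomposition argument step for step, with only cosmetic differences in the probability bookkeeping.
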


The proof of Theorem \ref{thm::HRHT} and Theorem \ref{thm::HRHT_variant} relies on the analysis in \cite{Tropp:wr} of Randomised-Hadamard-Transforms, which are shown to reduce the coherence of any given matrix with high probability.

\begin{lemma} \cite{Tropp:wr} \label{thm::Tropp_HD}
Let $r\leq n \in \N^+$ and $U \in \R^{n \times r}$ have orthonormal columns. Suppose that $H, D$ are defined in Definition \ref{def::HRHT} and $\delta_1 \in (0,1)$. Then $$\mu(HDU) \leq \sqrt{\frac{r}{n}} + \sqrt{\frac{8\log(n/\delta_1)}{n}} $$ with probability at least $1-\delta_1$.
\end{lemma}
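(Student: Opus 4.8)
The plan is to reduce the coherence bound to a one-row concentration estimate and then take a union bound over the $n$ rows. First I would record that $\mu(HDU)$ is literally the largest row norm of $HDU$. Since $H,D$ in Definition \ref{def::HRHT} are orthogonal, $HD$ is orthogonal, so $HDU$ has orthonormal columns (as $U$ does); any matrix with orthonormal columns is its own compact-SVD left factor, so by Definition \ref{def::coherence}, $\mu(HDU)=\max_{i\in[n]}\|e_i^T HDU\|_2$. I then fix a row index $i$ and express that row as a vector-valued Rademacher sum. Writing $d_j:=D_{jj}\in\{\pm1\}$ (independent Rademacher) and $U_j\in\R^r$ for the $j$-th row of $U$ from \eqref{thin-SVD}, the $i$-th row is
\begin{equation}
w_i:=(HDU)_i^T=\sum_{j=1}^n H_{ij}\,d_j\,U_j=M_i\,d,\qquad M_i:=U^T\operatorname{diag}(H_{i1},\dots,H_{in})\in\R^{r\times n},
\end{equation}
with $d=(d_1,\dots,d_n)^T$.

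Next I would compute the mean of $\|w_i\|_2$. Using $\E[d_jd_k]=\delta_{jk}$ gives $\E\|w_i\|_2^2=\sum_{j=1}^n H_{ij}^2\|U_j\|_2^2$; since every Walsh--Hadamard entry satisfies $H_{ij}^2=1/n$ and $\sum_{j}\|U_j\|_2^2=r$ (the columns of $U$ are orthonormal), this equals $r/n$, whence by Jensen $\E\|w_i\|_2\le\sqrt{r/n}$. This already identifies $\sqrt{r/n}$ as the leading term of the claimed bound.

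For the deviation I would view $g(d):=\|M_id\|_2$ as a function of the independent signs $d$. It is convex (a norm of a linear image of $d$) and Lipschitz with constant $\|M_i\|_{op}\le n^{-1/2}$, because $M_i$ factors through $U^T$, of operator norm $1$, and a diagonal matrix whose entries have magnitude $n^{-1/2}$. Applying a convex-Lipschitz (Talagrand-type) concentration inequality for functions of Rademacher variables, of the form $\P\!\left[g\ge\E g+t\right]\le\exp\!\left(-t^2/(8\|M_i\|_{op}^2)\right)=\exp(-t^2 n/8)$, and choosing $t=\sqrt{8\log(n/\delta_1)/n}$ makes the per-row failure probability at most $\delta_1/n$. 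Combined with $\E\|w_i\|_2\le\sqrt{r/n}$, this yields $\|w_i\|_2\le\sqrt{r/n}+\sqrt{8\log(n/\delta_1)/n}$ off an event of probability at most $\delta_1/n$; a union bound over the $n$ rows gives the stated bound with probability at least $1-\delta_1$.

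The hard part will be the concentration step, and in particular invoking the inequality whose variance proxy is the \emph{operator} norm $\|M_i\|_{op}^2=1/n$ rather than a naive bounded-differences/Frobenius quantity: the latter would scale like $\sum_j\|M_ie_j\|_2^2=r/n$ and thereby reintroduce an $r$ factor into the deviation term, destroying the $r$-free second term. A secondary nuisance is whether the concentration inequality is centered at the mean or the median, which only affects constants (the factor $8$ is consistent with the median-free convex-Lipschitz form). Since the result is quoted from Tropp \cite{Tropp:wr}, I would ultimately cite that source for the precise constant, the argument above serving to reproduce both the $\sqrt{r/n}$ leading term and the $\sqrt{8\log(n/\delta_1)/n}$ deviation.
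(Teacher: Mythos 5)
Your proposal is correct and follows essentially the same argument as the source the paper cites: the paper gives no proof of this lemma, quoting it directly from Tropp \cite{Tropp:wr}, whose proof is exactly your reduction to a fixed row, the Jensen bound $\E\|w_i\|_2\leq\sqrt{r/n}$, concentration of the convex Lipschitz function $d\mapsto\|M_i d\|_2$ of Rademacher signs with the operator-norm Lipschitz constant, and a union bound over the $n$ rows. The only refinement worth noting is that $M_iM_i^T=\frac{1}{n}\sum_j U_jU_j^T=\frac{1}{n}I_r$, so $\|M_i\|_{op}=n^{-1/2}$ exactly (not merely an upper bound), and your concern about keeping the operator norm rather than a Frobenius-type proxy is precisely the point that keeps the deviation term free of $r$.
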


We are ready to prove Theorem \ref{thm::HRHT}.

\begin{proof}[Proof of Theorem \ref{thm::HRHT} and Theorem \ref{thm::HRHT_variant}]
Let $A = U\Sigma V$ be defined in \eqref{thin-SVD}, $S = S_h HD$ be an HRHT matrix. Define the following events:
\begin{itemize}[topsep=0pt,itemsep=-1ex,partopsep=1ex,parsep=1ex]
    \item $B_1 = \left\{ \mu(HDU) \leq \muLower \right\}$,
    \item $B_2 = \left\{ \mu(HDA) \leq \muLower \right\}$, 
    \item $B_3 = \left\{ \mu(HDA) \leq \muHatSEpsDelta \right\}$,
    \item $B_4 = \left\{ \text{$S_h$ is an $\epsilon$-subspace embedding for $HDA$ } \right\}$,
    \item $B_5 = \left\{ \text{$S_h HD$ is an $\epsilon$-subspace embedding for $A$} \right\}$,
\end{itemize}
where $\muHatSEpsDelta = \muASHashing$ if $S_h$ is an $s$-hashing matrix and $\muHatSEpsDelta = \muASHashingVariant$ if $S_h$ is an $s$-hashing variant matrix, and \whereMuBarIsDefined.

Observe that $B_4$ implies $B_5$ because $B_4$ gives 
\begin{equation}
    (1-\epsilon) \|Ax\|^2 \leq (1-\epsilon)\|HDA x\|^2 \leq \| S_h HDA x\|^2 \leq (1+\epsilon) \|HDA x\|^2 \leq (1+\epsilon) \|Ax\|^2,
\end{equation}
where the first and the last equality follows from $HD$ being orthogonal. Moreover, observe that $B_1 = B_2$ because $\mu(HDA) = \max_i \| (HDU)_i \|_2 = \mu(HDU)$, where the first equality follows from $HDA = (HDU) \Sigma V^T$ being an $SVD$ of $HDA$. Furthermore, $B_2$ implies $B_3$ due to \eqref{eqn::theoremSixN} in the $s$-hashing case; and \eqref{eqn::theoremSevenN} in the $s$-hashing variant case.

Thus $\P(B_5) \geq \P(B_4) = \P(B_4 | B_3) \P(B_3) \geq P(B_4|B_3) \P(B_2) = \P(B_4 |B_3) \P(B_1)$. If $S_h$ is \anSHashingMat, Theorem \ref{thm::s-hashing} gives $\P(B_4 | B_3) \geq 1-\delta$. If $S_h$ is \anSHashingVariantMat, Theorem \ref{thm::s-hashing-variant} gives $\P(B_4 | B_3) \geq 1-\delta$. Therefore in both cases, we have
\begin{equation}
     P(B_5) \geq \P(B_4 |B_3) \P(B_1) \geq (1-\delta) \probability{B_1} \geq (1-\delta)(1-\delta_1),
\end{equation}
where the third inequality uses Lemma \ref{thm::Tropp_HD}.
\end{proof}

\section{Algorithmic framework and analysis for linear least squares}
\label{sec:algo_analysis}
We now turn our attention to the LLS problem \eqref{LLS-statement} we are interested in solving. 
Building on the Blendenpik \cite{doi:10.1137/090767911} and LSRN \cite{Meng:2014ib} techniques, we introduce a generic algorithmic framework 
for \eqref{LLS-statement} that can employ any rank-revealing factorization of $SA$, where $S$ is a(ny) sketching matrix; we then analyse its convergence.

\begin{alg}[Generic Sketching Algorithm]\label{alg1}
Given $A \in \R^{n\times d}$ and $b \in \R^n$, set positive integers $m$ and $it_{max}$, and accuracy tolerances $\tau_a$ and $ \tau_r $, and an $m\times n$ random matrix distribution $\cal{S}$.
\begin{enumerate}[topsep=0pt,itemsep=-1ex,partopsep=1ex,parsep=1ex]
	\item Randomly draw a sketching matrix $S \in \R^{m\times n}$ from $\cal{S}$, compute the matrix-matrix product $SA \in \R^{m\times d}$ and the matrix-vector product $Sb \in \R^{m}$.

	\item Compute a factorization of $SA$ of the form, 
		\begin{align}
		SA= QR\hat{V}^T, \label{SA-QRV-fac}
		\end{align}
		where 
			\begin{itemize}
				\item $R = 
					\left( \begin{matrix} R_{11} & R_{12} \\ 0 & 0 	\end{matrix} 
					 \right) \in \R^{d \times d}$, where $R_{11} \in \R^{\k \times \k}$ is nonsingular.

				\item $Q =\left( \begin{matrix} Q_{1} & Q_{2} \end{matrix} \right) \in\R^{m \times d} $, where $Q_1 \in \R^{m \times \k}$ and $Q_2 \in \R^{m \times (d-\k)}$ have orthonormal columns.

				\item $\hat{V}= \left(  \begin{matrix} V_{1} \quad  V_{2}\end{matrix} \right)\in \R^{d \times d}$ is an orthogonal matrix with $V_1\in \R^{d \times \k}$.
			\end{itemize}

	\item Compute $x_{s} = V_1 R_{11}^{-1} Q_1^TSb$. If $\|Ax_{s}-b\|_2 \leq \tau_a$, terminate with solution $x_s$.

	\item 
	Else, iteratively, compute 
\begin{equation}\label{ytau}	 
	 y_{\tau} \approx \argmin_{ y \in \R^{\k}} \|Wy - b\|_2,
	 \end{equation}
	 where 
	  \begin{equation}
	      W = A V_1 R_{11}^{-1},\label{def::W}
	  \end{equation}
	  using LSQR \cite{10.1145/355984.355989} with (relative) 
	 tolerance $\tau_r$ and maximum iteration count $it_{max}$. 	  

	\item Return  $x_{\tau} = V_1 R_{11}^{-1} y_{\tau}$. 
	
\end{enumerate}
\end{alg}

\begin{remark}
\begin{itemize}
\item[(i)]
The factorization $SA = QR\hat{V}^T$ allows column-pivoted QR, or other rank-revealing factorization, complete orthogonal decomposition ($R_{12}=0$) and the SVD ($R_{12} = 0$, $R_{11}$ diagonal). It also includes the usual QR factorisation if $SA$ is full rank; then the $R_{12}$ block is absent. 

\item[(ii)]						Often in  implementations, the factorization \eqref{SA-QRV-fac} has $R=\left( \begin{matrix} R_{11} & R_{12} \\ 0 & R_{22}  \end{matrix} \right)$, where $R_{22} \approx 0$ and is treated as the zero matrix.

\item[(iii)] For computing $x_s$ in Step 3, we note that in practical implementations, $R_{11}$ in \eqref{SA-QRV-fac}  is upper triangular, enabling efficient calculation
	 of matrix-vector products involving $R_{11}^{-1}$; then, there is no need to form/calculate $R_{11}^{-1}$ explicitly.
	 \item[(iv)] For the solution of \eqref{ytau},
	 we use the termination criterion $\|y-y_*\|_{W^TW} \leq \tau_r \|y-y_* \|_{W^T W}$ in the theoretical analysis, 
	 where $y_*$ is defined in \eqref{def-ystar}. In practical implementations different termination criteria need to be employed (see 
	 Section \ref{subsec::implemntation_of_alg1}).
\end{itemize} 
\end{remark}

	\subsection{Analysis of Algorithm \ref{alg1}}
	Given problem (\ref{LLS-statement}), we denote its minimal Euclidean norm solution as follows
	\begin{equation}\label{def-xstar}
	x_{*,2} = \argmin_{x^*\in \R^d} \|x_*\|_2\quad{\rm subject\,\, to}\quad \|Ax_*-b\|_2 =\min_x \|Ax-b\|_2.
	\end{equation}
	and let
	\begin{equation}\label{def-ystar}
	 y_* = \argmin_{y\in \R^\k} \| Wy-b \|_2, \quad \text{where $W$ is defined in \eqref{def::W}}.
	\end{equation}

The following two lemmas provide basic properties of Algorithm \ref{alg1}. Their proofs can be found in the Appendix. 
\begin{lemma}
\label{Lemma::w}
$W\in\R^{n\times \k}$ defined in \eqref{def::W} has full rank $\k$.
\end{lemma}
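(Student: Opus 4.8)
The plan is to read the full-rank property of $W$ directly off the factorization \eqref{SA-QRV-fac}, reducing everything to an elementary rank argument. First I would right-multiply $SA = QR\hat{V}^T$ by the orthogonal matrix $\hat{V}$ to obtain $SA\hat{V} = QR$, and then compare the leading $\k$ columns on each side. Using the block forms $\hat{V}=(V_1\ V_2)$, $Q=(Q_1\ Q_2)$ and $R=\left(\begin{smallmatrix} R_{11} & R_{12}\\ 0 & 0\end{smallmatrix}\right)$ from \eqref{SA-QRV-fac}, the first $\k$ columns yield the key identity $SAV_1 = Q_1 R_{11}$.

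Next I would establish that the right-hand side has full column rank $\k$. Indeed, $R_{11}\in\R^{\k\times\k}$ is nonsingular by construction, and $Q_1\in\R^{m\times\k}$ has orthonormal columns and is therefore injective; hence the composite map $Q_1 R_{11}$ is injective, i.e.\ $\mathrm{rank}(Q_1 R_{11})=\k$. Consequently $\mathrm{rank}(SAV_1)=\k$. The central step is then to transfer this rank statement back to $AV_1$: since $SAV_1 = S(AV_1)$ and left-multiplication by any matrix cannot increase rank, we get $\mathrm{rank}(AV_1)\geq \mathrm{rank}(SAV_1)=\k$; as $AV_1\in\R^{n\times\k}$ has only $\k$ columns, its rank is at most $\k$, forcing $\mathrm{rank}(AV_1)=\k$. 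Finally, $R_{11}^{-1}$ is invertible, so right-multiplication by it preserves rank, giving $\mathrm{rank}(W)=\mathrm{rank}(AV_1 R_{11}^{-1})=\k$, which is the claim.

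I do not expect a genuine obstacle here: the entire argument is a short rank bookkeeping exercise once the identity $SAV_1=Q_1R_{11}$ is extracted. The only points requiring a little care are getting the \emph{direction} of the rank inequality right (multiplication can only decrease rank, so the bound must be used to pass from $SAV_1$ \emph{up} to $AV_1$), and verifying that it is the \emph{product} $Q_1R_{11}$ that is injective, which follows cleanly from the injectivity of each factor rather than needing any separate estimate.
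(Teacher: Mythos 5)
Your proof is correct and follows essentially the same route as the paper: both hinge on extracting $SAV_1 = Q_1R_{11}$ (equivalently $SW = Q_1$) from \eqref{SA-QRV-fac} and then using the fact that left-multiplication by $S$ cannot increase rank (which the paper phrases via the rank--nullity theorem) to conclude $\mathrm{rank}(W)=\k$. The only cosmetic difference is that you pass through $AV_1$ before applying $R_{11}^{-1}$, whereas the paper applies the rank argument to $W$ directly.
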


\begin{lemma}
\label{Lemma::p_equals_r}
In Algorithm \ref{alg1}, if $S$ is an $\epsilon$-subspace embedding for $A$ for some $\epsilon\in (0,1)$, then $\k = r$ where $r$ is the rank of $A$. 
\end{lemma}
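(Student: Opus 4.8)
The plan is to reduce the claim to Lemma \ref{rank-of-sketched-equal-to-unsketched}, which already guarantees that an $\epsilon$-subspace embedding preserves rank. Since the integer $\k$ is defined entirely through the factorization \eqref{SA-QRV-fac}, the argument splits into two tasks: first, to identify $\k$ with $rank(SA)$; second, to identify $rank(SA)$ with $r = rank(A)$.

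First I would read off the rank of $SA$ from its factorization $SA = QR\hat{V}^T$. Because $\hat{V}$ is orthogonal (hence invertible) and $Q$ has orthonormal columns (hence full column rank, $Q^T Q = I$), right-multiplication by $\hat{V}^T$ and left-multiplication by $Q$ both preserve rank; therefore $rank(SA) = rank(R)$. The block structure of $R$, with $R_{11}\in\R^{\k\times\k}$ nonsingular and the remaining blocks zero, shows that its first $\k$ rows are linearly independent and the rest vanish, so $rank(R)=\k$. Combining these gives $rank(SA) = \k$. Next I would invoke Lemma \ref{rank-of-sketched-equal-to-unsketched} directly: since $S$ is an $\epsilon$-subspace embedding for $A$ with $\epsilon\in(0,1)$, we have $rank(SA) = rank(A) = r$. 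Chaining the two equalities yields $\k = r$, as required.

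I do not anticipate a genuine obstacle, as the argument is essentially bookkeeping about how the rank-revealing factorization encodes $rank(SA)$, with the substantive content deferred to Lemma \ref{rank-of-sketched-equal-to-unsketched}. The only point deserving explicit care is the claim that left-multiplication by $Q$ preserves rank: this relies on $Q$ having orthonormal \emph{columns} (full column rank) rather than being a square orthogonal matrix, so I would spell this out to avoid silently treating $Q$ as invertible.
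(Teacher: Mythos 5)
Your proof is correct and follows essentially the same route as the paper, whose entire proof is the chain $r=\rank(A)=\rank(SA)=\k$ via Lemma \ref{rank-of-sketched-equal-to-unsketched}; you merely make explicit the bookkeeping step $\rank(SA)=\k$ that the paper leaves implicit. One small slip: in \eqref{SA-QRV-fac} only the bottom block row of $R$ is zero ($R_{12}$ need not vanish), but your actual argument — first $\k$ rows independent since $R_{11}$ is nonsingular, remaining rows zero — is unaffected by this.
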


	  If the LLS problem (\ref{LLS-statement}) has a sufficiently small optimal residual, then
	Algorithm \ref{alg1} terminates early in Step 3 with the solution $x_s$ of the sketched problem $\min \|SAx-Sb\|_2$; then,
	 no LSQR iterations are required.
	

\begin{lemma}[Explicit Sketching Guarantee] \label{explicit-sketching-guarantee}
Given problem \eqref{LLS-statement},
suppose that the matrix $S \in \R^{m\times n}$ in Algorithm \ref{alg1} is an $\epsilon$-subspace embedding for the augmented matrix $\left(A\; \;b \right)$ for some $0<\epsilon<1$. Then 
\begin{align}\label{explicit-sketch}
\|Ax_s - b\|_2 \leq \frac{1+\epsilon}{1-\epsilon} \|Ax_*-b\|_2,
\end{align}
where $x_s$ is defined in Step 3 of Algorithm \ref{alg1} and $x_*$ is a(ny) solution of \eqref{LLS-statement}.
\end{lemma}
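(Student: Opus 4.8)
The plan is to exploit two facts in tandem: that $x_s$ is the exact minimizer of the \emph{sketched} least-squares objective $\min_{x}\|SAx-Sb\|_2$, and that the subspace-embedding hypothesis on the \emph{augmented} matrix $(A\;\,b)$ lets me pass between sketched and unsketched residual norms. The whole argument is then a three-term comparison.

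First I would verify that $x_s=V_1R_{11}^{-1}Q_1^TSb$, as defined in Step 3 of Algorithm \ref{alg1}, indeed minimizes $\|SAx-Sb\|_2$ over $x\in\R^d$. Substituting the factorization $SA=QR\hat V^T$ from \eqref{SA-QRV-fac} and using that the columns of $Q=(Q_1\;\,Q_2)$ and of $\hat V$ are orthonormal, one computes $SAx=Q_1\bigl(R_{11}V_1^Tx+R_{12}V_2^Tx\bigr)$, so the range of $SA$ is exactly the range of $Q_1$. Minimizing over $x$ is therefore equivalent to projecting $Sb$ onto this range, which is attained precisely when $R_{11}V_1^Tx+R_{12}V_2^Tx=Q_1^TSb$; the stated $x_s$ satisfies this (with $V_2^Tx_s=0$, $V_1^Tx_s=R_{11}^{-1}Q_1^TSb$), confirming $\|S(Ax_s-b)\|_2\le\|S(Ax-b)\|_2$ for every $x$, in particular for $x=x_*$. (Note that the embedding for $(A\;\,b)$ implies one for $A$, so $\k=r$ by Lemma \ref{Lemma::p_equals_r} and the factorization is the expected rank-revealing one.)

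Next I would observe that for \emph{any} $x\in\R^d$ the residual vector $Ax-b=(A\;\,b)\left(\begin{smallmatrix}x\\-1\end{smallmatrix}\right)$ lies in the column space of the augmented matrix $(A\;\,b)$. Hence the $\epsilon$-subspace-embedding hypothesis applies to each such residual, giving $(1-\epsilon)\|Ax-b\|_2^2\le\|S(Ax-b)\|_2^2\le(1+\epsilon)\|Ax-b\|_2^2$; this is exactly why the embedding is assumed for $(A\;\,b)$ and not merely for $A$. Chaining the embedding bounds with the sketched-optimality of $x_s$ gives
\[
(1-\epsilon)\|Ax_s-b\|_2^2\le\|S(Ax_s-b)\|_2^2\le\|S(Ax_*-b)\|_2^2\le(1+\epsilon)\|Ax_*-b\|_2^2 ,
\]
so that $\|Ax_s-b\|_2\le\sqrt{(1+\epsilon)/(1-\epsilon)}\,\|Ax_*-b\|_2$. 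Since $(1+\epsilon)/(1-\epsilon)\ge1$ for $\epsilon\in(0,1)$, the square root is dominated by $(1+\epsilon)/(1-\epsilon)$, which yields the stated (slightly weaker) bound \eqref{explicit-sketch}.

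The only real obstacle is the first step: certifying from the rank-revealing factorization that $x_s$ truly minimizes the sketched residual. Everything after that is the standard embedding sandwich, and the relaxation from the $\sqrt{\,\cdot\,}$ factor to $(1+\epsilon)/(1-\epsilon)$ is immediate. I would therefore spend the bulk of the write-up making the factorization bookkeeping in Step 1 precise and keep the comparison chain terse.
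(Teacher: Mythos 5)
Your proposal is correct and follows essentially the same route as the paper's proof: establish that $x_s$ exactly minimizes the sketched problem $\min_x\|SAx-Sb\|_2$, then chain the subspace-embedding bounds for $(A\;\,b)$ applied to the residuals $Ax_s-b$ and $Ax_*-b$ around that optimality, relaxing $\sqrt{(1+\epsilon)/(1-\epsilon)}$ to $(1+\epsilon)/(1-\epsilon)$. The only cosmetic difference is that the paper certifies the sketched optimality of $x_s$ by checking the normal equations $(SA)^TSAx_s=(SA)^TSb$, whereas you do it via the range/projection argument from the factorization \eqref{SA-QRV-fac}; both are fine.
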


The proof is similar to the result in \cite{10.1561/0400000060} that shows that any solution of the sketched problem $\min_x \|SAx-Sb\|_2$ satisfies \eqref{explicit-sketch}.  For completeness, the proof is included in the appendix. 
		
The following technical lemma is needed in the proof of our next theorem.
		\begin{lemma}\label{VA-cap}
Let $A \in \R^{n\times d}$ and $V_1 \in \R^{d \times \k}$ be defined in Algorithm \ref{alg1}. Then
$\ker(V_1^T) \cap {\rm range}(A^T) = \{0\}$, where $\ker(V_1^T)$ and ${\rm{range}}(A^T)$ denote the null space of $V_1^T$ and range subspace 
generated by the rows of $A$, respectively.
\end{lemma}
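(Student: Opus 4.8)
The plan is to take an arbitrary vector $w$ in the intersection $\ker(V_1^T)\cap{\rm range}(A^T)$ and show it must be zero. Membership in ${\rm range}(A^T)$ means $w=A^Tu$ for some $u\in\R^n$, while membership in $\ker(V_1^T)$ means $V_1^Tw=0$. The goal is to feed these two facts into the definition $W=AV_1R_{11}^{-1}$ from \eqref{def::W} and deduce $A^Tu=0$, which forces $w=0$.

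First I would compute, using $w=A^Tu$ together with $V_1^Tw=0$,
\[
W^Tu = R_{11}^{-T}V_1^TA^Tu = R_{11}^{-T}V_1^Tw = 0,
\]
so that $u\in\ker(W^T)={\rm range}(W)^{\perp}$. Since $R_{11}$ is nonsingular, ${\rm range}(W)={\rm range}(AV_1R_{11}^{-1})={\rm range}(AV_1)$, and therefore $u\perp{\rm range}(AV_1)$. This reduces everything to understanding how large ${\rm range}(AV_1)$ is.

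The decisive step is to upgrade ${\rm range}(AV_1)$ to the whole of ${\rm range}(A)$. The inclusion ${\rm range}(AV_1)\subseteq{\rm range}(A)$ is immediate, so it suffices to match dimensions. By \autoref{Lemma::w} the matrix $W$ has full rank $\k$, hence $\dim{\rm range}(AV_1)=\mathrm{rank}(AV_1)=\mathrm{rank}(W)=\k$; and since $S$ is an $\epsilon$-subspace embedding for $A$, \autoref{Lemma::p_equals_r} gives $\k=r=\mathrm{rank}(A)=\dim{\rm range}(A)$. A subspace contained in another of the same finite dimension must coincide with it, so ${\rm range}(AV_1)={\rm range}(A)$. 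Consequently $u\perp{\rm range}(A)$, i.e. $u\in{\rm range}(A)^{\perp}=\ker(A^T)$, whence $w=A^Tu=0$, as required.

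I expect the main obstacle to be precisely this last identification ${\rm range}(AV_1)={\rm range}(A)$, and more pointedly the fact that it cannot hold without the rank being revealed correctly: if $\mathrm{rank}(SA)=\k<r$, then $\dim\ker(V_1^T)+\dim{\rm range}(A^T)=(d-\k)+r>d$ forces a nontrivial intersection by dimension counting, so the statement would be false. Thus the argument genuinely relies on $\k=r$, which is exactly where the subspace-embedding hypothesis must be in force (entering through \autoref{Lemma::p_equals_r}); the remaining manipulations are routine linear algebra using the orthonormality of the columns of $\hat V$ and the invertibility of $R_{11}$.
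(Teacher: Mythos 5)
Your argument is correct, and it takes a genuinely different route from the paper's proof. The paper works through the SVD $A=U\Sigma V^T$ of \eqref{thin-SVD}: using the embedding property it asserts $\rank(SU)=\rank(SA)=r$, so for $z=A^Tw$ in the intersection one can pick $\hat w$ with $(SU)^T\hat w=U^Tw$, and then an identity chase through the factorization \eqref{SA-QRV-fac} gives $0=V_1^Tz=R_{11}^TQ_1^T\hat w$, hence $Q_1^T\hat w=0$ and finally $z=(SA)^T\hat w=0$. You instead work with the preconditioned matrix $W=AV_1R_{11}^{-1}$ of \eqref{def::W}: from $V_1^Tw=0$ and $w=A^Tu$ you get $W^Tu=0$, so $u\perp{\rm range}(AV_1)$, and you then upgrade ${\rm range}(AV_1)$ to ${\rm range}(A)$ by a dimension count, combining \autoref{Lemma::w} ($\rank(W)=\k$) with \autoref{Lemma::p_equals_r} ($\k=r$), which yields $u\in\ker(A^T)$ and $w=0$. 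Your version is shorter and more conceptual because it reuses the two auxiliary lemmas already established, whereas the paper's proof is a self-contained matrix computation that invokes the embedding only through $\rank(SU)=r$. Both arguments need the subspace-embedding hypothesis, which the lemma statement leaves implicit (it is in force in the theorems where the lemma is applied); your dimension-counting remark showing that the conclusion fails whenever $\k<r$ is a correct and worthwhile observation that makes this dependence explicit.
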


Theorem \ref{Implicit-sketching-guarantee} shows that when the LSQR algorithm in Step 4 converges, Algorithm \ref{alg1} returns a minimal residual solution of
\eqref{LLS-statement}.

\begin{theorem}[Implicit Sketching Guarantee]\label{Implicit-sketching-guarantee}
Given problem \eqref{LLS-statement},
suppose that the matrix $S \in \R^{m\times n}$ in Algorithm \ref{alg1} is an $\epsilon$-subspace embedding  for the augmented matrix $\left(A\; \;b \right)$ for some $0<\epsilon<1$. 
If  $y_{\tau} = y_*$ in Step 4 of Algorithm \ref{alg1} (by setting $\tau_r := 0$), where $y_*$ is defined in \eqref{def-ystar},  then $x_{\tau}$ in Step 5 satisfies
$x_{\tau}=x_*$, where $x_*$ is a solution of \eqref{LLS-statement}.
\end{theorem}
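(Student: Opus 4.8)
The plan is to show that the returned vector $x_{\tau} = V_1 R_{11}^{-1} y_*$ attains the minimal residual of \eqref{LLS-statement}, so that it coincides with one of the solutions $x_*$. The starting observation is that, by the definition \eqref{def::W} of $W$, we have $A x_{\tau} = A V_1 R_{11}^{-1} y_* = W y_*$, and hence $\|A x_{\tau} - b\|_2 = \|W y_* - b\|_2$. Since $y_*$ is the minimiser in \eqref{def-ystar}, this equals $\min_{y\in\R^{\k}} \|Wy-b\|_2$. Thus everything reduces to proving that minimising the residual of $W$ over $\R^{\k}$ yields the same optimal value as minimising the residual of $A$ over $\R^d$.

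First I would reparametrise the inner problem. Because $R_{11}$ is nonsingular and $V_1$ has orthonormal (hence independent) columns, the map $y \mapsto V_1 R_{11}^{-1} y$ is a bijection from $\R^{\k}$ onto the column space ${\rm range}(V_1)$. Consequently $\min_{y} \|Wy-b\|_2 = \min_{z\in {\rm range}(V_1)} \|Az-b\|_2$, and the claim becomes the equality $\min_{z\in {\rm range}(V_1)} \|Az-b\|_2 = \min_{x\in\R^d}\|Ax-b\|_2$. This holds as soon as $A({\rm range}(V_1)) = {\rm range}(A)$, since restricting the domain of $A$ to ${\rm range}(V_1)$ then leaves its image unchanged, and hence leaves the set of attainable residuals $Az-b$ unchanged.

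The heart of the argument, and the step I expect to be the main obstacle, is establishing $A({\rm range}(V_1)) = {\rm range}(A)$; I would obtain it from \autoref{VA-cap} together with \autoref{Lemma::p_equals_r}. Noting that $\ker(V_1^T) = {\rm range}(V_1)^{\perp}$ and ${\rm range}(A^T) = \ker(A)^{\perp}$, \autoref{VA-cap} reads ${\rm range}(V_1)^{\perp} \cap \ker(A)^{\perp} = \{0\}$; taking orthogonal complements and using the identity $(U^{\perp}\cap W^{\perp})^{\perp} = U+W$ with $U={\rm range}(V_1)$, $W=\ker(A)$ gives ${\rm range}(V_1) + \ker(A) = \R^d$. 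Since the hypothesis that $S$ is an $\epsilon$-subspace embedding for $(A\;b)$ — and a fortiori for $A$, as ${\rm range}(A)\subseteq {\rm range}((A\;b))$ — lets \autoref{Lemma::p_equals_r} supply $\k = r$, a dimension count gives $\dim {\rm range}(V_1) + \dim \ker(A) = r + (d-r) = d$, so the sum is in fact direct and ${\rm range}(V_1)\cap\ker(A)=\{0\}$. Hence $A$ is injective on ${\rm range}(V_1)$, so $A({\rm range}(V_1))$ has dimension $\k = r = \dim {\rm range}(A)$ and, being contained in ${\rm range}(A)$, equals it.

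Combining the steps yields $\|A x_{\tau} - b\|_2 = \min_{z\in {\rm range}(V_1)}\|Az-b\|_2 = \min_{x\in\R^d}\|Ax-b\|_2$, so $x_{\tau}$ is a minimal-residual solution and therefore $x_{\tau}=x_*$ for such a solution $x_*$, as claimed. The only genuinely delicate point is the subspace-complement bookkeeping relating \autoref{VA-cap} to the surjectivity $A({\rm range}(V_1))={\rm range}(A)$; the remainder is a direct reparametrisation, and the embedding assumption is used only through the rank identity $\k=r$.
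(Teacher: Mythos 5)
Your argument is correct, but it takes a genuinely different route from the paper's. The paper verifies optimality directly through the normal equations: from $W^TWy_{\tau}=W^Tb$ and $x_{\tau}=V_1R_{11}^{-1}y_{\tau}$ it deduces $V_1^T\left(A^TAx_{\tau}-A^Tb\right)=0$, so that $A^TAx_{\tau}-A^Tb$ lies in $\ker(V_1^T)\cap{\rm range}(A^T)$, which Lemma \ref{VA-cap} forces to be $\{0\}$; hence $x_{\tau}$ satisfies the normal equations of \eqref{LLS-statement}. You instead argue geometrically: you reparametrise the inner problem as $\min_{z\in{\rm range}(V_1)}\|Az-b\|_2$ and show the attainable residual set is unchanged because $A({\rm range}(V_1))={\rm range}(A)$, which you obtain by dualising Lemma \ref{VA-cap} into ${\rm range}(V_1)+\ker(A)=\R^d$ and then counting dimensions with $\k=r$ supplied by Lemma \ref{Lemma::p_equals_r}. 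Both proofs hinge on the same key lemma, Lemma \ref{VA-cap}, only applied differently: to the residual vector of the normal equations in the paper, to subspaces in your version. What each buys: the paper's route is a few lines of algebra and never needs $\k=r$; yours is more conceptual, making transparent why restricting the search to ${\rm range}(V_1)$ loses nothing. Incidentally, your dimension count is dispensable: once ${\rm range}(V_1)+\ker(A)=\R^d$, every $x\in\R^d$ splits as $x=z+w$ with $z\in{\rm range}(V_1)$, $w\in\ker(A)$, so $Ax=Az$ and $A({\rm range}(V_1))={\rm range}(A)$ follows immediately, without invoking Lemma \ref{Lemma::p_equals_r} or injectivity of $A$ on ${\rm range}(V_1)$.
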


\begin{proof}
Using the optimality conditions (normal equations) for the LLS in \eqref{def-ystar}, and $y_{\tau} = y_*$, we deduce 
$W^T Wy_{\tau} = W^T b$, 
where $W$ is defined in \eqref{def-ystar}. 
Substituting the definition of $x_{\tau}$ from Step 5 of Algorithm \ref{alg1}, 
we deduce
$$
(R_{11}^{-1})^TV_1^TA^TAx_{\tau}=(R_{11}^{-1})^TV_1^TA^Tb.
$$
Multiplying the last displayed equation by $R_{11}^T$, we obtain
\begin{align}
V_1^T \left( A^TA x_{\tau} - A^T b \right)  = 0. \label{tmp8}
\end{align}
It follows from
(\ref{tmp8}) that $A^T Ax - A^T b \in \ker(V_1^T) \cap {\rm range}(A^T)$. But Lemma \ref{VA-cap} implies that the latter set intersection only contains the origin, and so
 $A^T Ax_{\tau} - A^T b =0$; this and the normal equations for \eqref{LLS-statement} imply that $x_{\tau}$ is an optimal solution of \eqref{LLS-statement}.
\end{proof}

The following technical lemma is needed for our next result; it re-states Theorem 3.2 from  \cite{Meng:2014ib} in the context of Algorithm \ref{alg1}.

		\begin{lemma} \cite{Meng:2014ib} \label{Meng-min-norm}
		Given problem \eqref{LLS-statement}, let $x_{*,2}$ be its minimal Euclidean norm solution defined in \eqref{def-xstar} and $P\in\R^{d\times \k}$, a nonsingular matrix. 	Let $x_{\tau}:=Py_{\tau}$, where $y_{\tau}$ is assumed to be the minimal Euclidean norm solution  of $\min_{y\in \R^\k}\|APy-b\|_2$.
		Then $x_{\tau}=x_{*,2}$ if ${\rm range}(P)={\rm range}(A^T)$.	
\end{lemma}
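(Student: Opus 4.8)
The plan is to exploit the standard characterisation of the minimal Euclidean norm solution: $x_{*,2}$ is the unique minimiser of $\|Ax-b\|_2$ that lies in ${\rm range}(A^T)$. This follows from the orthogonal decomposition $\R^d = {\rm range}(A^T) \oplus \ker(A)$. Any minimiser of \eqref{LLS-statement} satisfies the normal equations $A^TAx = A^Tb$, so the minimiser set is the affine subspace $x_{*,2} + \ker(A)$; writing any such $x$ as its ${\rm range}(A^T)$-component plus its $\ker(A)$-component and using that the two are orthogonal, the norm is strictly decreased by discarding the $\ker(A)$-part, so the minimal-norm minimiser is exactly the one with zero $\ker(A)$-component, i.e. the one in ${\rm range}(A^T)$. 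First I would record this fact together with the companion fact $\ker(A) \cap {\rm range}(A^T)=\{0\}$ (which is just $\ker(A)\cap\ker(A)^\perp=\{0\}$).

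Next, since $P$ is nonsingular (i.e. has full column rank $\k$) with ${\rm range}(P) = {\rm range}(A^T)$, it restricts to a linear bijection from $\R^\k$ onto ${\rm range}(A^T)$. Hence minimising $\|APy-b\|_2$ over $y \in \R^\k$ is equivalent, via the substitution $x = Py$, to minimising $\|Ax-b\|_2$ over $x \in {\rm range}(P) = {\rm range}(A^T)$. I would then intersect the minimiser set $x_{*,2}+\ker(A)$ with ${\rm range}(A^T)$: because $x_{*,2}\in{\rm range}(A^T)$ and $\ker(A)\cap{\rm range}(A^T)=\{0\}$, this intersection collapses to the single point $\{x_{*,2}\}$. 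Consequently there is a unique $x\in{\rm range}(P)$ attaining the minimum, and by injectivity of $P$ a unique $y$ with $Py = x_{*,2}$ attaining $\min_y\|APy-b\|_2$.

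Because this minimiser in $y$ is unique, it is trivially also the minimal Euclidean norm minimiser, so $y_\tau$ equals this $y$ and therefore $x_\tau = Py_\tau = x_{*,2}$, as required. Equivalently, one can verify directly that $AP$ has full column rank $\k$ — its range is $A\cdot{\rm range}(A^T)$, on which $A$ acts injectively — so the normal equations $P^TA^TAPy=P^TA^Tb$ have a unique solution, again forcing uniqueness of $y_\tau$.

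The main obstacle, such as it is, is purely the linear-algebra bookkeeping: correctly justifying that the minimal-norm solution lives in ${\rm range}(A^T)$, and that restricting the feasible set to ${\rm range}(A^T)$ collapses the entire minimiser affine subspace to the single point $x_{*,2}$. Once the decomposition $\R^d={\rm range}(A^T)\oplus\ker(A)$ is in place, everything else is routine; in particular no subtlety arises from the \emph{minimal norm} qualifier on $y_\tau$, since the hypothesis ${\rm range}(P)={\rm range}(A^T)$ already forces the $y$-minimiser to be unique.
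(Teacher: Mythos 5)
The paper does not actually prove this lemma; it is imported verbatim from Meng et al.\ \cite{Meng:2014ib} (their Theorem 3.2), so there is no internal proof to compare against. Your argument is correct and self-contained, and would serve as a proof if the paper chose to include one. The chain is sound: the orthogonal decomposition $\R^d = {\rm range}(A^T)\oplus\ker(A)$ shows the minimiser set of \eqref{LLS-statement} is the affine subspace $x_{*,2}+\ker(A)$ with $x_{*,2}\in{\rm range}(A^T)$; since $\ker(A)\cap{\rm range}(A^T)=\{0\}$, intersecting this set with ${\rm range}(P)={\rm range}(A^T)$ collapses it to $\{x_{*,2}\}$; and because $P$ is injective (reading ``nonsingular'' for a $d\times \k$ matrix as full column rank, which is exactly what holds for $P=V_1R_{11}^{-1}$ in Algorithm \ref{alg1}), minimising $\|APy-b\|_2$ over $y$ is equivalent to minimising $\|Ax-b\|_2$ over $x\in{\rm range}(A^T)$, so the $y$-problem has a unique minimiser and the minimal-norm qualifier on $y_\tau$ is vacuous, giving $x_\tau=Py_\tau=x_{*,2}$. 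Two small points you handle correctly but could state more explicitly: the restricted minimum equals the global minimum precisely because $x_{*,2}$ itself lies in the restricted feasible set, and your alternative verification that $AP$ has full column rank (via $\ker(A)\cap{\rm range}(A^T)=\{0\}$ and injectivity of $P$) is the same fact recast through the normal equations, so either route suffices.
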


Theorem \ref{tmp9} further guarantees that if $R_{12}=0$ in \eqref{SA-QRV-fac} such as when a complete orthogonal factorization is used, then the minimal Euclidean norm solution of \eqref{LLS-statement} is obtained. 

\begin{theorem}[Minimal-Euclidean Norm Solution Guarantee] \label{tmp9}
Given problem \eqref{LLS-statement},
suppose that the matrix $S \in \R^{m\times n}$ in Algorithm \ref{alg1} is an $\epsilon$-subspace embedding  for the augmented matrix $\left(A\; \;b \right)$ for some $0<\epsilon<1$. 
If  $R_{12}=0$ in \eqref{SA-QRV-fac} and $y_{\tau} = y_*$ in Step 4 of Algorithm \ref{alg1} (by setting $\tau_r := 0$), where $y_*$ is defined in \eqref{def-ystar}, 
then $x_{\tau}$ in Step 5 satisfies
$x_{\tau}=x_{*,2}$, where $x_{*,2}$ is the minimal Euclidean norm solution \eqref{def-xstar} of \eqref{LLS-statement}.

\end{theorem}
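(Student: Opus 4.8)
The plan is to combine the optimality reasoning already developed for Theorem~\ref{Implicit-sketching-guarantee} with the range-space criterion of Lemma~\ref{Meng-min-norm}. The overall strategy is: first establish that $x_\tau$ is \emph{a} minimal-residual solution of \eqref{LLS-statement}, then upgrade this to the \emph{minimal Euclidean norm} solution by verifying the hypothesis $\mathrm{range}(P)=\mathrm{range}(A^T)$ of Lemma~\ref{Meng-min-norm} with the specific choice $P=V_1 R_{11}^{-1}$. The key structural fact I would exploit is that setting $R_{12}=0$ turns the factorization \eqref{SA-QRV-fac} into a complete orthogonal decomposition, which forces $V_1$ to span precisely the row space of $SA$, and hence (by the subspace embedding hypothesis, via Lemma~\ref{Lemma::p_equals_r}) the row space of $A$.

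First I would note that since $S$ is an $\epsilon$-subspace embedding for $A$, Lemma~\ref{Lemma::p_equals_r} gives $\k=r=\mathrm{rank}(A)$. Define $P:=V_1 R_{11}^{-1}\in\R^{d\times r}$, so that $x_\tau=Py_\tau$ and $W=AP$, matching the setup of Lemma~\ref{Meng-min-norm}. Because $R_{11}$ is nonsingular and $V_1$ has orthonormal columns, $P$ has full column rank $r$, so it is nonsingular onto its range and $\mathrm{range}(P)=\mathrm{range}(V_1)$. Next I would invoke Theorem~\ref{Implicit-sketching-guarantee} (whose hypotheses hold verbatim here) to conclude that $x_\tau=y_*$ pulled back through $P$ is already an optimal solution of \eqref{LLS-statement}; equivalently, $y_\tau=y_*$ is the exact minimizer of $\min_y\|APy-b\|_2$. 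Since $W=AP$ has full rank $r$ (Lemma~\ref{Lemma::w}), this minimizer is unique, hence it is trivially the minimal Euclidean norm solution of the $W$-problem, so the remaining hypothesis of Lemma~\ref{Meng-min-norm} on $y_\tau$ is met.

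The crux is therefore to prove $\mathrm{range}(P)=\mathrm{range}(V_1)=\mathrm{range}(A^T)$. Here is where $R_{12}=0$ is essential: the factorization becomes $SA=Q\begin{pmatrix}R_{11}&0\\0&0\end{pmatrix}\hat V^T = Q_1 R_{11} V_1^T$, so $\mathrm{range}((SA)^T)=\mathrm{range}(V_1 R_{11}^T Q_1^T\,\text{restricted})=\mathrm{range}(V_1)$, using that $R_{11}$ is nonsingular and $Q_1$ has orthonormal columns. Thus $\mathrm{range}(V_1)=\mathrm{range}((SA)^T)$. It then remains to show $\mathrm{range}((SA)^T)=\mathrm{range}(A^T)$. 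One inclusion is immediate since $(SA)^T=A^TS^T$ gives $\mathrm{range}((SA)^T)\subseteq\mathrm{range}(A^T)$; the reverse inclusion follows from a dimension count, as both spaces have dimension $r$ (the former because $\mathrm{rank}(SA)=\mathrm{rank}(A)=r$ by Lemma~\ref{rank-of-sketched-equal-to-unsketched}, the latter by definition of rank). Hence the two row spaces coincide.

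I expect the main obstacle to be the verification that $\mathrm{range}(V_1)=\mathrm{range}(A^T)$ rather than merely $\mathrm{range}(V_1)\supseteq$ or $\subseteq$ one of them; the subtlety is that $V_1$ lives in $\R^{d\times r}$ and a priori only captures the row space of the \emph{sketched} matrix $SA$, so the argument genuinely relies on the embedding preserving the rank (Lemma~\ref{rank-of-sketched-equal-to-unsketched}) to identify $\mathrm{range}((SA)^T)$ with $\mathrm{range}(A^T)$. Once this range identity is in hand, applying Lemma~\ref{Meng-min-norm} with $P=V_1R_{11}^{-1}$ immediately yields $x_\tau=x_{*,2}$, completing the proof. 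A minor bookkeeping point to handle carefully is confirming that the $y_\tau=y_*$ obtained from Theorem~\ref{Implicit-sketching-guarantee} is indeed the minimal-norm solution of the reduced problem, but as noted this is automatic from the full-rank property of $W$ in Lemma~\ref{Lemma::w}.
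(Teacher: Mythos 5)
Your proposal is correct and follows essentially the same route as the paper: both apply Lemma~\ref{Meng-min-norm} with $P=V_1R_{11}^{-1}$ and reduce the theorem to verifying $\mathrm{range}(V_1)=\mathrm{range}(A^T)$, which in both cases rests on rank preservation under the subspace embedding. The only (equivalent) difference is that you obtain the range identity via the inclusion $\mathrm{range}((SA)^T)\subseteq\mathrm{range}(A^T)$ plus a dimension count from $\mathrm{rank}(SA)=\mathrm{rank}(A)$ (Lemma~\ref{rank-of-sketched-equal-to-unsketched}), whereas the paper passes through the SVD factors and the full column rank of $SU$; also, your detour through Theorem~\ref{Implicit-sketching-guarantee} is harmless but unnecessary, since Lemma~\ref{Meng-min-norm} together with the uniqueness of $y_*$ (full rank of $W$, Lemma~\ref{Lemma::w}) already yields the conclusion.
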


\begin{proof}
The result follows from Lemma \ref{Meng-min-norm} with $P:=V_1R_{11}^{-1}$, provided  ${\rm range}(V_1 R_{11}^{-1}) = {\rm range}(A^T)$. To see this, note that 
\[
{\rm range}(V_1 R_{11}^{-1}) = {\rm range}(V_1) = {\rm range}( (SA)^T),
\]
where the last equality follows from $(SA)^T=V_1R_{11}^TQ_1^T+V_2R_{12}Q_1^T$ and $R_{12}=0$. Using the SVD decomposition \eqref{thin-SVD} of $A$, we further have
\[
{\rm range}(V_1 R_{11}^{-1})  = {\rm range}( A^T S^T) ={\rm range}( V\Sigma U^T S^T) ={\rm  range}(V\Sigma (SU)^T).
\]
Since $S$ is an $\epsilon-$subspace embedding for $A$, it is also an $\epsilon$-subspace embedding for $U$ by Lemma \ref{subspace-embedding-def-2} and therefore by Lemma \ref{rank-of-sketched-equal-to-unsketched}, $\rank(SU) = \rank(U) = r$. Since $S U \in \R^{m\times r}$ has full column rank, we have that ${\rm range}(V\Sigma (S U)^T) = {\rm range}(V) = {\rm range}(A^T)$. 
\end{proof}

Theorem \ref{Speed-of-convergence} gives an iteration complexity bound for the inner solver in Step 4  of Algorithm \ref{alg1}, as well as particularising this result for a special starting point for which  an optimality guarantee can be given. It relies crucially on the quality of the preconditioner provided by the sketched factorization in \eqref{SA-QRV-fac}, and its proof, that uses standard LSQR results, is given in the appendix.

\begin{theorem}[Rate of convergence] \label{Speed-of-convergence}
Given problem \eqref{LLS-statement},
suppose that the matrix $S \in \R^{m\times n}$ in Algorithm \ref{alg1} is an $\epsilon$-subspace embedding  for the augmented matrix $\left(A\; \;b \right)$ for some $0<\epsilon<1$. 
Then: 
\begin{itemize}
\item[(i)]
Step 4 of Algorithm \ref{alg1} takes at most
\begin{align}\label{LSQR_iterations}
\tau \leq O\left( \frac{|\log\tau_r|}{|\log\epsilon|}\right)
\end{align}
LSQR iterations to return a solution $y_{\tau}$ such that 
\begin{equation}\label{LSQR-tc}
\|y_{\tau}- y_*\|_{ W^T W} \leq \tau_r \|y_0 - y_*\|_{ W^T W}, 
\end{equation}
where $y_*$ and $W$ are defined in \eqref{def-ystar}.
\item[(ii)] 
If we initialize $y_0 : = Q^TSb$ for the LSQR method in Step 4, then at termination of Algorithm \ref{alg1}, we can further guarantee that
\begin{align}
		\|Ax_{\tau} - b\|_2 \leq \left( 1 + \frac{2\epsilon \tau_r}{1-\epsilon} \right) \|Ax_*-b\|_2,
		\end{align}
		where $x_{\tau}$ is computed in Step 5 of Algorithm \ref{alg1} and $x_*$ is a solution of \eqref{LLS-statement}.

\end{itemize}
\end{theorem}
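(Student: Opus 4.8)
The plan is to show that the sketched factorization \eqref{SA-QRV-fac} makes $W=AV_1R_{11}^{-1}$ in \eqref{def::W} extremely well conditioned, so that LSQR on \eqref{ytau} contracts at a rate governed only by $\epsilon$, and then to translate this into the residual bound of part (ii) via two orthogonal (Pythagorean) decompositions. Note first that since $S$ is an $\epsilon$-subspace embedding for $(A\ b)$ it is in particular one for $A$, which is all that part (i) needs.

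First I would establish the key preconditioning identity. Using the block form $R=\left(\begin{smallmatrix}R_{11}&R_{12}\\0&0\end{smallmatrix}\right)$ and the orthogonality of $\hat V=(V_1\ V_2)$ (so $V_1^TV_1=I$ and $V_2^TV_1=0$), a direct computation gives $SAV_1=Q_1R_{11}$ and hence
\[
SW = SAV_1R_{11}^{-1} = Q_1 .
\]
As $Q_1$ has orthonormal columns, $\|SWy\|_2=\|y\|_2$ for all $y$. Because $\mathrm{range}(W)\subseteq\mathrm{range}(A)$, substituting $\|SWy\|_2=\|y\|_2$ into the subspace-embedding inequality for $A$ yields $(1-\epsilon)\|Wy\|_2^2\le \|y\|_2^2\le (1+\epsilon)\|Wy\|_2^2$, which pins the singular values of $W$ into $[(1+\epsilon)^{-1/2},(1-\epsilon)^{-1/2}]$ and so $\kappa(W)\le\sqrt{(1+\epsilon)/(1-\epsilon)}$. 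For part (i), LSQR on $\min_y\|Wy-b\|_2$ is mathematically CG on the normal equations, whose standard bound contracts the $W^TW$-norm error by $\rho=(\kappa(W)-1)/(\kappa(W)+1)$ per step. I would check that $\rho=(1-\sqrt{1-\epsilon^2})/\epsilon=O(\epsilon)$, so $|\log\rho|=\Theta(|\log\epsilon|)$, and solve $2\rho^\tau\le\tau_r$ to read off \eqref{LSQR_iterations} and the guarantee \eqref{LSQR-tc}.

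For part (ii) I would use $Ax_\tau=Wy_\tau$, and combine Lemma \ref{Lemma::w}, Lemma \ref{Lemma::p_equals_r} and $\mathrm{range}(W)\subseteq\mathrm{range}(A)$ to conclude $\mathrm{range}(W)=\mathrm{range}(A)$ (equal rank $r=\k$, nested subspaces); hence $Wy_*=Ax_*$ is the projection of $b$ onto $\mathrm{range}(A)$ and its residual is orthogonal to $\mathrm{range}(W)$. A first Pythagorean splitting gives
\[
\|Wy_\tau-b\|_2^2 = \|y_\tau-y_*\|_{W^TW}^2 + \|Ax_*-b\|_2^2,
\]
and the LSQR tolerance \eqref{LSQR-tc} bounds the first term by $\tau_r^2\|y_0-y_*\|_{W^TW}^2$. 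For the warm start $y_0=Q_1^TSb$ one verifies $Wy_0=Ax_s$, so $\|y_0-y_*\|_{W^TW}=\|Ax_s-Ax_*\|_2$; a second Pythagorean split combined with the Explicit Sketching Guarantee (Lemma \ref{explicit-sketching-guarantee}, eq.\ \eqref{explicit-sketch}) gives $\|Ax_s-Ax_*\|_2^2=\|Ax_s-b\|_2^2-\|Ax_*-b\|_2^2\le \frac{4\epsilon}{(1-\epsilon)^2}\|Ax_*-b\|_2^2$. Assembling the two bounds and using $\sqrt{1+z}\le 1+\tfrac{z}{2}$ (the resulting constant check reduces to $\tau_r\le 1$) produces exactly $\|Ax_\tau-b\|_2\le\bigl(1+\tfrac{2\epsilon\tau_r}{1-\epsilon}\bigr)\|Ax_*-b\|_2$.

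The conceptual crux is the identity $SW=Q_1$ and the ensuing $\kappa(W)$ bound; once that is in hand part (i) is just the textbook CG rate and part (ii) is orthogonality bookkeeping. I expect the only genuinely delicate point to be matching the stated constant in part (ii): one must feed the warm-start identity $Wy_0=Ax_s$ into the explicit-sketching bound in precisely the right way, since a looser chaining would give $\tfrac{2\epsilon\tau_r^2}{(1-\epsilon)^2}$ rather than the cleaner $\tfrac{2\epsilon\tau_r}{1-\epsilon}$, and the final $\sqrt{1+z}$ estimate has to be the tight one.
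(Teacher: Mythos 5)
Your part (i) follows the paper's proof essentially verbatim: both hinge on the identity $SW=Q_1$, deduce $\kappa(W^TW)\le(1+\epsilon)/(1-\epsilon)$ from the embedding property, bound the CG/LSQR contraction factor by $\epsilon$, and solve $2\rho^{\tau}\le\tau_r$ (for \eqref{LSQR_iterations} you only need $|\log\rho|\ge|\log\epsilon|$, so the claim $|\log\rho|=\Theta(|\log\epsilon|)$ is an overstatement, though a harmless one). In part (ii) you take a genuinely different route. The paper bounds the warm-start error in the unsquared norm, $\|y_0-y_*\|_{W^TW}=\|Ax_s-Ax_*\|_2\le\tfrac{2\epsilon}{1-\epsilon}\|Ax_*-b\|_2$, and finishes with the plain triangle inequality; its intermediate step $\|Ax_s-Ax_*\|_2\le\|Ax_s-b\|_2-\|Ax_*-b\|_2$ is, however, the reverse of what the (reverse) triangle inequality or Pythagoras actually give, and an $O(\epsilon)$ bound at that point really requires the inner-product (generalised JL) consequence of $S$ embedding $\left(A\;\,b\right)$. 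You instead bound $\|Ax_s-Ax_*\|_2^2$ by Pythagoras plus Lemma \ref{explicit-sketching-guarantee}, which only yields an $O(\sqrt{\epsilon})$ bound on the unsquared initial error, but you recover the stated $O(\epsilon\tau_r)$ constant because your final step is also Pythagorean, so the LSQR error enters squared; this buys an argument using only orthogonality and the lemma as stated, at the price of the final constant check: $1+\tfrac{2\epsilon\tau_r^2}{(1-\epsilon)^2}\le1+\tfrac{2\epsilon\tau_r}{1-\epsilon}$ requires $\tau_r\le1-\epsilon$, not merely $\tau_r\le1$ as you assert. That proviso is innocuous for any realistic tolerance, but you should either record it as a hypothesis or sharpen the warm-start bound via the inner-product argument (which gives $\|Ax_s-Ax_*\|_2\le\tfrac{\epsilon}{1-\epsilon}\|Ax_*-b\|_2$ and yields the theorem's constant unconditionally).
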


\section{The Ski-LLS solver for linear least squares}
\label{subsec::implemntation_of_alg1}
\textbf{SK}etch\textbf{i}ng-for-\textbf{L}inear-\textbf{L}east-\textbf{S}quares (\solverName{})\footnote{Available at: 
https://github.com/numericalalgorithmsgroup/Ski-LLS}
is a C++ implementation of \autoref{alg1} for solving problem \eqref{LLS-statement}. We distinguish two cases based on whether the data matrix $A$ in \eqref{LLS-statement} is stored as a dense matrix or a sparse matrix. 

\paragraph{Dense matrix input}
When $A$ in \eqref{LLS-statement} is stored as a dense matrix \footnote{Namely, we assume that sufficiently many entries in $A$ are nonzero that  specialised, sparse numerical linear algebra techniques are ineffective.}, we employ the following implementation of \refAlgOne. We refer here to the resulting Ski-LLS variant as {\bf \solverNameDense{}}.
\begin{enumerate}
    \item In Step 1 of \refAlgOne, we let 
    \begin{equation}
        S = S_h F D, \label{eqn::HR-DHT}
    \end{equation}
    where
        \begin{enumerate}
            \item $D$ is a random $n \times n$ diagonal matrix with $\pm1$ independent entries, as in \autoref{def::HRHT}.
            \item F is a matrix representing the normalized Discrete Hartley Transform (DHT), defined as $F_{ij} = \sqrt{1/n} \squareBracket{ \cos{ \bracket{ 2\pi (i-1)(j-1)/n}} + \sin{ \bracket{2 \pi (i-1)(j-1)/n}}}$ \footnote{Here we use the same transform (DHT) as that in Blendenpik for comparison of other components of \solverNameDense{}, instead of the Walsh-Hadamard transform defined in \autoref{def::HRHT}.}. We use the (DHT) implementation in FFTW 3.3.8 \footnote{Available at http://www.fftw.org.}.  
            \item $S_h$ is an $s$-hashing matrix, defined in \autoref{def::sampling_and_hashing}. We use the sparse matrix-matrix multiplication routine in SuiteSparse 5.3.0 \footnote{Available at https://people.engr.tamu.edu/davis/suitesparse.html.} to compute $S_h  \times (FDA)$. 
        \end{enumerate}
    
    \item In Step 2 of \refAlgOne, the default factorization for the sketched matrix $SA$ is the Randomised Column Pivoted QR (R-CPQR)\footnote{The implementation can be found at https://github.com/flame/hqrrp/. The original code only has a 32-bit integer interface. We wrote a 64-bit integer wrapper as our code has 64-bit integers.} proposed in \cite{Martinsson:2017eh,martinsson2015blocked}. When $A$ is (assumed to be) full rank, we allow a usual QR factorization as in Blendenpik, using DGEQRF from LAPACK for its computation, with the same default values, except the parameter ${rcond}$ (defined below) is absent/not needed.

    \item In Step 3 of \refAlgOne, since $R_{11}$ from R-CPQR is upper triangular, we do not explicitly compute its inverse, but instead, use back-solve from the LAPACK provided by Intel MKL 2019 \footnote{See https://software.intel.com/content/www/us/en/develop/tools/oneapi/components/onemkl.html.}. 
    
    \item In Step 4 of \refAlgOne, we use the LSQR routine\footnote{Available at https://web.stanford.edu/group/SOL/software/lsrn/. We fixed some very minor bugs in the code. } to solve \eqref{ytau}, as implemented in LSRN \cite{Meng:2014ib}.
   The solution of  \eqref{ytau} terminates
   when
   \begin{equation}\label{TC-lsqr}
   \frac{\|W^T (Wy_k - b)\|}
			{\|W\|\cdot\|Wy_k-b\|} \leq \tau_r
			\end{equation}
			where $W$ is defined in \eqref{def::W}; see Section 6 in \cite{10.1145/355984.355989} for a justification of this termination condition.
\end{enumerate}

The choice of sketching matrix \eqref{eqn::HR-DHT}  in Step 1 of 
\solverNameDense{} is novel. It is a hashing variant of the subsampled DHT (SR-DHT, \eqref{eq::SR-DHT}) choice in 
Blendenpik. In Theorem \ref{thm::HRHT}, we made the case that using hashing -- instead of sampling -- with the randomised Walsh-Hadamard transform (HRHT) has optimal embedding properties  in terms of the sketching dimension $m$. Here, we aim to show the numerical gains in running time of using hashing (instead of sampling) with 
coherence-reduction transformations. We focus on an DHT variant
(rather than RHT) as the former is more flexible (in terms of allowing any values of $n$), stable and faster according to experience in
\cite{doi:10.1137/090767911}.

The solver employs the following user-chosen parameters, already introduced in \autoref{alg1}: $m$, number of rows of sketching matrix $S$ (default is $1.7d$); $s$, number of nonzero entries per column in $S$ (default is $1$); accuracy tolerances ${\tau_a}$ (default is $10^{-8}$)
and $\tau_r$ (default is $10^{-6}$); maximum iteration count ${it_{max}}$ (default value is $10^4$). 


Two more parameters are needed:
\begin{itemize}
    \item 
${rcond}$ (default value is $10^{-12})$, which is a parameter used in Step 2 of Algorithm 1 as follows. R-CPQR computes \eqref{SA-QRV-fac} as
            $SA = Q \tilde{R} \hat{V}^T$, which is then used to compute $\tilde{R}_{11}$,   the upper left $p \times p$ block of $\tilde{{R}}$ by
        letting $p = \max \set{q: \tilde{r}_{q,q}\geq rcond}$, where 
        $\tilde{r}_{q,q}$ is the $q$th diagonal entry of $\tilde{R}$.

\item $wisdom$ (default value is $1$). The DHT routine we use is faster with pre-tuning, see Blendenpik \cite{Avron:2009aa} for a detailed discussion. If the DHT has been pre-tuned, the user needs to set $wisdom=1$; otherwise  $wisdom=0$.  In all our experiments, the default is to tune the DHT using the simplest tuning mechanism offered by FFTW (which in our experiments has been very efficient). The cost of this tuning is not included in the reported runs here. 
\end{itemize}


\paragraph{Sparse matrix input}
When $A$ is stored as a sparse matrix \footnote{Namely,  the user stores the data in a sparse matrix format, which is maintained through the algorithm's run for efficient linear algebra procedures.}, we employ the following implementation of \refAlgOne. We refer here to the resulting Ski-LLS variant as {\bf \solverNameSparse{}}.
\begin{enumerate}
    \item In Step 1 of \refAlgOne, we let $S$ be an $s$-hashing matrix, as  in \autoref{def::sampling_and_hashing}.
    \item In Step 2 of \refAlgOne, we use the sparse QR factorization (SPQR) proposed in \cite{10.1145/2049662.2049670} and implemented in SuiteSparse.
    \item In Step 3 of \refAlgOne, since $R_{11}$ from SPQR is upper triangular, we do not explicitly compute its inverse, but instead, use the sparse back-substitution routine from SuiteSparse.
    \item In Step 4 of \refAlgOne, we use the LSQR routine as implemented in LSRN, extended to include the employment of a sparse preconditioner and sparse numerical linear algebra tools from SuiteSparse.  Again, we terminate the solution of  \eqref{ytau} 
   when \eqref{TC-lsqr} holds.
\end{enumerate}

The solver employs the following user-chosen parameters, already introduced in \autoref{alg1}:
 $m$ (default value is $1.4d$), $s$ (default value is $2$), $\tau_a$ (default value is $10^{-8}$), $\tau_r$ (default value is $10^{-6}$), $it_{max}$ (default value is $10^4$). We also need 
 \begin{itemize}
     \item 
$ordering$ (default value $2$) which is a parameter of the SPQR routine that influences the permutation matrix $\hat{V}$ and the sparsity of $R$. \footnote{Note that this is slightly different from the SPQR default, which  uses COLAMD if m2<=2*n2; otherwise switches to AMD. Let f be the flops for chol((S*P)’*(S*P)) with the ordering P found by AMD. Then if f/nnz(R) $\geq$ 500 and nnz(R)/nnz(S) $\geq$ 5 then use METIS, and take the best ordering found (AMD or METIS), where typically $m_2 = m$, $n_2 = n$ for $SA \in \R^{m \times n}$. By contrast, \solverName{} by default always uses the AMD ordering.}.
\item $rcond_{thres}$ (default value $10^{-10}$), which checks the conditioning of $R_{11}$ in \eqref{SA-QRV-fac} computed by SPQR. If the condition number  $\kappa(R_{11}) \geq 1/rcond_{thres}$, we use the perturbed back-solve for upper triangular linear systems involving $R_{11}$  with $perturb$ (default value $10^{-10}$).  In particular, any back-solve involving $R_{11}$ or its transpose is modified as follows: when divisions by a diagonal entry $r_{ii}$ of $R_{11}$ is required ($1\leq i \leq p$), we divide by $r_{ii} + perturb$ instead\footnote{This is a safeguard when SPQR fails to detect the rank of $A$. This happens infrequently \cite{10.1145/2049662.2049670}.}.
 \end{itemize}

 A few comments regarding our Ski-LLS implementation 
 are in order, that connect earlier theoretical developments to our implementation choices. 

\paragraph{Subspace embedding properties}
Our analysis of \refAlgOne\, in the previous section relies crucially on the sketching matrix $S$ being a subspace embedding. Theorem \ref{thm::HRHT} does not apply specifically to the coherence-reduction DHT we use in \solverNameDense{}, but it inspired us to explore the practical efficiency of using hashing instead of subsampling in the random transform used in
Blendenpik. 
For sparse matrices, Theorem \ref{thm::s-hashing} guarantees the desired embedding properties of $s$-hashing matrices when $A$ has low coherence. However as Figures \ref{fig::Ls_qr_engineering_time} and \ref{fig::Ls_qr_engineering_residual} indicate, these results may not be tight in that $s$-hashing matrices with $s>1$ and $m=\mathcal{O} \bracket{d}$ also seem to embed correctly matrices $A$ with high(er) coherence.
In both the dense and sparse Ski-LLS, numerical calibration is used to determine the default value of $m$ -- and in the sparse case also $s$ --  such that $\epsilon$-subspace embedding of $A$ is achieved with sufficiently high  probability (namely, for all the matrices in the calibration set); see \ref{Implementation-subsection} for details.

Even if the distribution of hashing matrices provides an oblivious subspace embedding, there is still a (small) positive probability that for a given $A$, a randomly drawn $S$ fails to sketch $A$ accurately. 
Then, if  $A$ has full rank, \solverName{} can still compute  an accurate solution of $\eqref{LLS-statement}$ as   the preconditioner $V_1 R_{11}^{-1}$ is a nonsingular  matrix.
When $A$ is rank deficient, and $m$ has been calibrated as mentioned above, we have not found detrimental evidence of solver failure in that case, just possibly some loss of accuracy; see for example, the Florida Collection test problem 'landmark' in Table 2 where the sketched input matrix has lower rank,
indicating failure of subspace embedding property, and resulting in slightly lower accuracy in the final residual value.

\paragraph{Sketching size tuning to ensure subspace embedding properties}
\label{Implementation-subsection}

We use numerical calibration  to determine the default value of $m$ for \solverNameDense{} (with and without R-CPQR), as well as the values of both $m$ and $s$ for \solverNameSparse{}. These choices are made 
such that $\epsilon$-subspace embedding properties are achieved with sufficiently high  probability  for all the random matrices in the respective calibration set (see Test Set 1 and 2 next).  Note for example,  the typical U-shaped curve in Figure \ref{fig::new_blen_engineering}:  as $\gamma:= m/d$ grows, we have better subspace embedding properties with smaller $\epsilon$ and hence, fewer LSQR iterations (due to \eqref{LSQR_iterations}). However the cost of sketching in Ski-LLS' Step 1 and of the factorization  in Step 2 will grow as $m$ grows. Thus a trade-off is achieved for an approximately median value of $m$; see Appendices B (dense input) and C (sparse input) for the plots of the calibration results for $m$, and $s$ when needed.

\paragraph{Approximate factorization in Step 2}
In both  dense and sparse Ski-LLS, the factorization in Step 2  is not guaranteed to be accurate when $A$ is rank-deficient. This is because R-CPQR, like column-pivoted QR \cite{10.5555/248979}, does not guarantee detection of rank. However,    the numerical rank is typically correctly determined, in the sense that using the procedure described in the definition of the parameter $rcond$, the factorization of  $SA$ will be accurate approximately up to  $rcond$ error. Similarly, SPQR only performs heuristical rank detection, without guarantees. Our encouraging numerical results testing for accuracy, alleviate these shortcomings;
see Section \ref{Acc-subsection}.



\section{Numerical experiments}

\subsection{Test sets}
The following linear least squares problems of the form  \eqref{LLS-statement} are used to test and benchmark Ski-LLS.

\paragraph{The vector $b$} In each test problem, the vector $b \in \R^n$ in \eqref{LLS-statement} is chosen to be a vector of all ones.

\paragraph{The input matrix $A$}
\begin{enumerate} \label{List_of_test_set}
	\item (Test Set 1) Three different types of {\bf random dense matrices} as used by Avron et al \cite{doi:10.1137/090767911} to compare Blendenpik with the LAPACK least square solver. They have different \singleQuote{non-uniformity} of the rows.
		\begin{enumerate}
			\item Incoherent dense type, defined by \eqref{thin-SVD} with $r=d$, and where 
			  $U \in \R^{n \times d}$, $V \in \R^{d \times d}$ are  generated by orthogonalising columns of two independent matrices with i.i.d. N(0,1) entries, and $\Sigma \in \R^{d \times d}$ is a diagonal matrix with diagonal entries equally spaced from $1$ to $10^6$.

			\item Semi-coherent dense type, defined by
			    \begin{equation}
			      A = \left( \begin{matrix} B & 0 \\ 0 & I_{\frac{d}{2}} \end{matrix} \right) + 
			10^{-8}  J \in \R^{n \times d}, \label{eq::A_semi_dense}  
			    \end{equation}
			 where $B$ is an incoherent dense matrix as in 1(a) and $J \in \R^{n \times d}$ is a matrix of all ones.

			\item Coherent dense type, defined by
			    \begin{equation}
			        A = \left(\begin{matrix} I_{d\times d} \\ 0  \end{matrix}\right) + 10^{-8}J \in \R^{n\times d} , \label{eq::A_co_dense}
			    \end{equation}
			where $J$ is again a matrix of all ones. 
		\end{enumerate}
	
	\item (Test Set 2) The following are three different types of {\bf random sparse matrices} with different \singleQuote{non-uniformity} of rows.
	    \begin{enumerate}
	        \item Incoherent sparse type, defined by 
                    \begin{equation}
                        A = \text{sprandn}(n, d, 0.01, 1e-6), \label{eq:A_inco_sparse}
                    \end{equation}
                    where \singleQuote{sprandn} is a command in MATLAB that generates a matrix with approximately $0.01nd$ normally distributed non-zero entries and a condition number approximately equal to $10^6$. 
            \item Semi-coherent sparse type, defined by
                \newcommand{\dTemp}{\hat{D}}
                \begin{equation}
                    A = \dTemp^5 B, \label{eq:A_semi-co_sparse} 
                \end{equation}
                where $B \in \R^{n\times d}$ is an incoherent sparse matrix defined in \eqref{eq:A_inco_sparse} and $\dTemp$ is a diagonal matrix with independent $N(0,1)$ entries on the diagonal. 
            \item Coherent sparse type, defined by
                \begin{equation}
                    A = \dTemp^{20} B, \label{eq:A_co_sparse} 
                \end{equation}
                where $B \in \R^{n\times d}, \dTemp$ are the same as in \eqref{eq:A_semi-co_sparse}.
	    \end{enumerate}
	
	\item (Test Set 3) A total of 181 matrices from  the {\bf Florida (SuiteSparse) matrix collection} \cite{10.1145/2049662.2049663} such that 
		if the matrix is under-determined, we transpose it to make it over-determined; and 
			once transposed,  $A \in \R^{n \times d}$ must have $n \geq 30000$ and $n \geq 2 d$.
\end{enumerate}

\begin{remark}
Note that the  \singleQuote{coherence} in Test Sets 1 and 2 above is slightly different than the earlier notion of coherence $\mu$ in \eqref{def::coherence}. In particular, the former is a more general concept, indicating that the leverage scores of $A$ are somewhat non-uniform, while the latter only predicates the value of the maximum leverage score. Although \singleQuote{coherent dense/sparse} $A$ tends to have higher values of $\mu(A)$ than that of \singleQuote{incoherent dense/sparse} $A$, the value of $\mu(A)$ may be similar for semi-coherent and coherent test matrices (namely, it is $1$ in Test Sets 1 and 2),
while their differences lie in that the 
 row norms of $A$ (and hence of $U$ in \eqref{thin-SVD}) tend to be more non-uniform than in the semi-coherent case.
The coherence terminology for the Sets 1 and 2 is derived from Blendenpik and we have maintained it for ease of comparison with earlier results. 
\end{remark}

\subsection{Solvers and their parameters}

 For {\bf dense input}, we compare  \solverNameDense{}  to the state-of-the-art sketching solver Blendenpik
\cite{doi:10.1137/090767911} \footnote{Available at https://github.com/haimav/Blendenpik. For the sake of a fair comparison, we wrote a C interface and use the same LSQR routine as in \solverName{}.}. The row dimension of the sketching matrix in Blendenpik is set to $m = 2.2d$, which we obtained  by calibrating Blendenpik as shown in  Appendix B. Note that we do {\it not} use the default value of $m$ in Blendenpik which is $4$; this default choice would yield significantly longer runtimes.  We also set $\tau_r = 10^{-6}$, $it_{max} = 10^4$ and $wisdom=1$. The same wisdom data file as in \solverName{} is used. 

For {\bf sparse input}, we compare  \solverNameSparse{}  to the following solvers:
\begin{enumerate}
	\item HSL\_MI35 (LS\_HSL), that uses an incomplete Cholesky factorization of $A$ to compute a preconditioner for problem \eqref{LLS-statement}, before using LSQR. \footnote{See http://www.hsl.rl.ac.uk/specs/hsl_mi35.pdf for a full specification. For a fair comparison, we wrote a C interface and used the same LSQR routine as in \solverName{}. We also disabled the pre-processing of the data as it was not done for the other solvers. We found that using no scaling and no ordering was more efficient than the default scaling and ordering, and so we chose the former. It may thus be possible that the performance of HSL may improve, however \cite{10.1145/3014057} experimented with the use of different scalings and orderings, providing some evidence that the improvement will not be significant.} This is a state of the art preconditioned, iterative solver  for sparse LLS problems  \cite{10.1145/3014057,Gould:2016vg}. We use $\tau_r = 10^{-6}$ and $it_{max} = 10^4$. 

	\item SPQR\_SOLVE (LS\_SPQR), that uses SPQR from Suitesparse to compute a sparse QR factorization of $A$, which is exploited to solve \eqref{LLS-statement} directly. \footnote{Available at https://people.engr.tamu.edu/davis/suitesparse.html.} This  is a state of the art direct solver for sparse LLS problems \cite{10.1145/2049662.2049670}. 

	\item LSRN, that uses the framework of \refAlgOne, with $S$ having i.i.d. $N\bracket{0, 1/\sqrt{m}}$ entries in Step 1; SVD factorization from Intel LAPACK of the matrix $SA$ in Step 2; the same LSQR routine as \solverName{} in Step 4. \footnote{Note that LSRN does not contain the Step 3.} LSRN has been shown to be  efficient  for possibly rank-deficient dense and sparse LLS problems in a parallel computing environment \cite{Meng:2014ib}. However, parallel techniques are outside the scope of this paper  and we therefore run LSRN  serially. The parameters are chosen to be $m = 1.1d$ (calibrated, random test sets) while $m$ is set to LSRN default otherwise; $\tau_r = 10^{-6}$, $it_{max}=10^4$. 
\end{enumerate}
For each solver, the algorithm parameters are set to their default values unless otherwise specified above.

\paragraph{Compilation and environment for timed experiments}
\label{subsec::compilation_and_running_env}
For our numerical studies,  unless otherwise mentioned, we use Intel C compiler icc with optimisation flag -O3 to compile all the C code, and Intel Fortran compiler ifort with -O3 to compile Fortran-based code. All code has been compiled in sequential mode and linked with sequential dense/sparse linear algebra libraries provided by Intel MKL, 2019 and Suitesparse 5.3.0. 
The machine used has Intel(R) Xeon(R) CPU E5-2667 v2 @ 3.30GHz with 8GB RAM.  
All reported times are wall clock times in seconds.

\paragraph{Performance profiles}

Performance profiles \cite{dolan2002benchmarking} are a popular tool when benchmarking software. In the performance profile to be encountered here, for some of the results, we plot the runtime ratio against the fastest solver on the horizontal axis,  in $\log 2$ scale. For each runtime ratio $a$, we have the ratio of problems in the test set $b$ on the vertical axis such that for a particular solver, the runtime ratio against the best solver is within $a$ for $b$ percent of the problems in the test set. For example, the intersect between the performance curve and the y-axis gives the ratio of the problems in the test set such that a particular solver is the fastest.

Given a problem $A$ from the test set, let $(r_1, r_2, r_3, r_4)$ be the residual values at the solutions computed by the four solvers, that we compare in the sparse case. And let $r = \min_{i\in\{1,\ldots,4\}} r_i$. A solver is declared as having failed on this particular problem if one of the following two conditions holds
\begin{enumerate}
    \item $r_i > (1+\tau_r)r$ and $r_i > r + \tau_a$, implying that the residual at the computed solution is neither relatively or absolutely sufficiently close to the residual at the best solution found by (one of the) remaining solvers.
    
    \item The solver takes more than 800 wall clock seconds to compute a solution. 
\end{enumerate}
When a solver fails, we set the runtime of the solver to be 9999 seconds on the corresponding problem. 
Thus a large runtime (ratio) could be due to either an inaccurate or an inefficient solution obtained by the respective solver.  We note that for all successful solvers, the runtime is bounded above by 800 seconds so that there can be no confusion  whether a solver is successful or not. 


\subsection{Numerical results}
We now present our numerical findings. 

\subsubsection{Numerical illustrations}

The advantages of two key choices of our implementation are illustrated next. We use Matlab for these illustrations as they do not involve runtime comparisons, nor the running of Ski-LLS. 

\paragraph{Using hashing instead of sampling with coherence-reduction transformations}
In \autoref{fig::blen_motivation}, we generate a random coherent dense matrix $A \in \R^{4000 \times 400}$ as  in \eqref{eq::A_co_dense}.  For each $m/d$ (where $d=400$), we sketch $A$ using an HR-DHT $S \in \R^{m \times n}$ defined in \eqref{eqn::HR-DHT} and using SR-DHT as in Blendenpik, defined by 
    \begin{equation}
        S = S_s FD, \label{eq::SR-DHT}
    \end{equation}
    where $S_s \in \R^{m\times n} $ is a sampling matrix, whose individual rows contain a single non-zero entry at a random column with value $1$; $F, D$ are defined the same as in \eqref{eqn::HR-DHT}.  We then compute a QR factorization without pivoting, of each sketch $SA=QR$, and the condition number of $AR^{-1}$. 

    We see that using hashing instead of sampling in randomised DHT allows the use of a smaller $m$ to reach a given preconditioning quality. 

\begin{figure}
\centering
\includegraphics[width=0.5\textwidth]{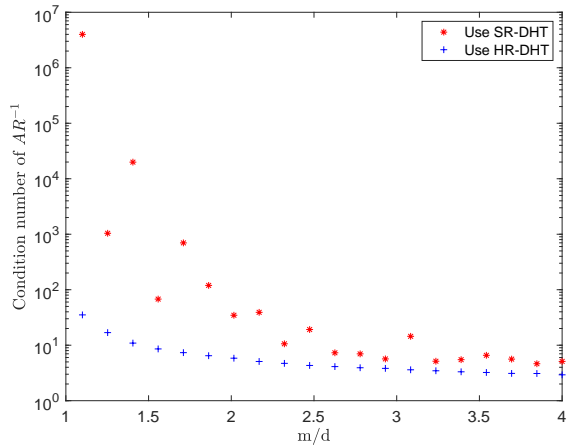}
\caption{Hashing combined with a randomised Discrete Hartley Transform (DHT) produces more accurate sketched matrix $SA$ for a given $m/d$ ratio comparing to sampling combined with a randomised DHT; the accuracy of the sketch is reflected in the quality of the preconditioner $R$ constructed from the matrix $SA$, see \eqref{Low condition number}.}
\label{fig::blen_motivation}
\end{figure}

\paragraph{Using $s$-hashing with $s>1$ to sketch sparse input}
In Figure \ref{fig::1-2-3-inco} , we let $A \in \R^{4000\times 400}$ be a random incoherent sparse matrix as in \eqref{eq:A_inco_sparse}. In Figure
\ref{fig::1-2-3-semi-co}, $A$ is defined as in \eqref{eq::A_semi_dense} 
but with $ B\in \R^{n\times d}$ being a random incoherent sparse matrix
 \footnote{We use this type of random sparse matrix instead of one of the types in Test Set 2 as this matrix better showcases the failure of $1$-hashing.}. Comparing \autoref{fig::1-2-3-inco} with \autoref{fig::1-2-3-semi-co}, we see that using $s$-hashing matrices with $s>1$ is essential in order to obtain a good preconditioner.

		\begin{figure}[H]
		    \centering
		    \begin{minipage}{0.48\textwidth}
		        \centering
		        \includegraphics[width=\textwidth]{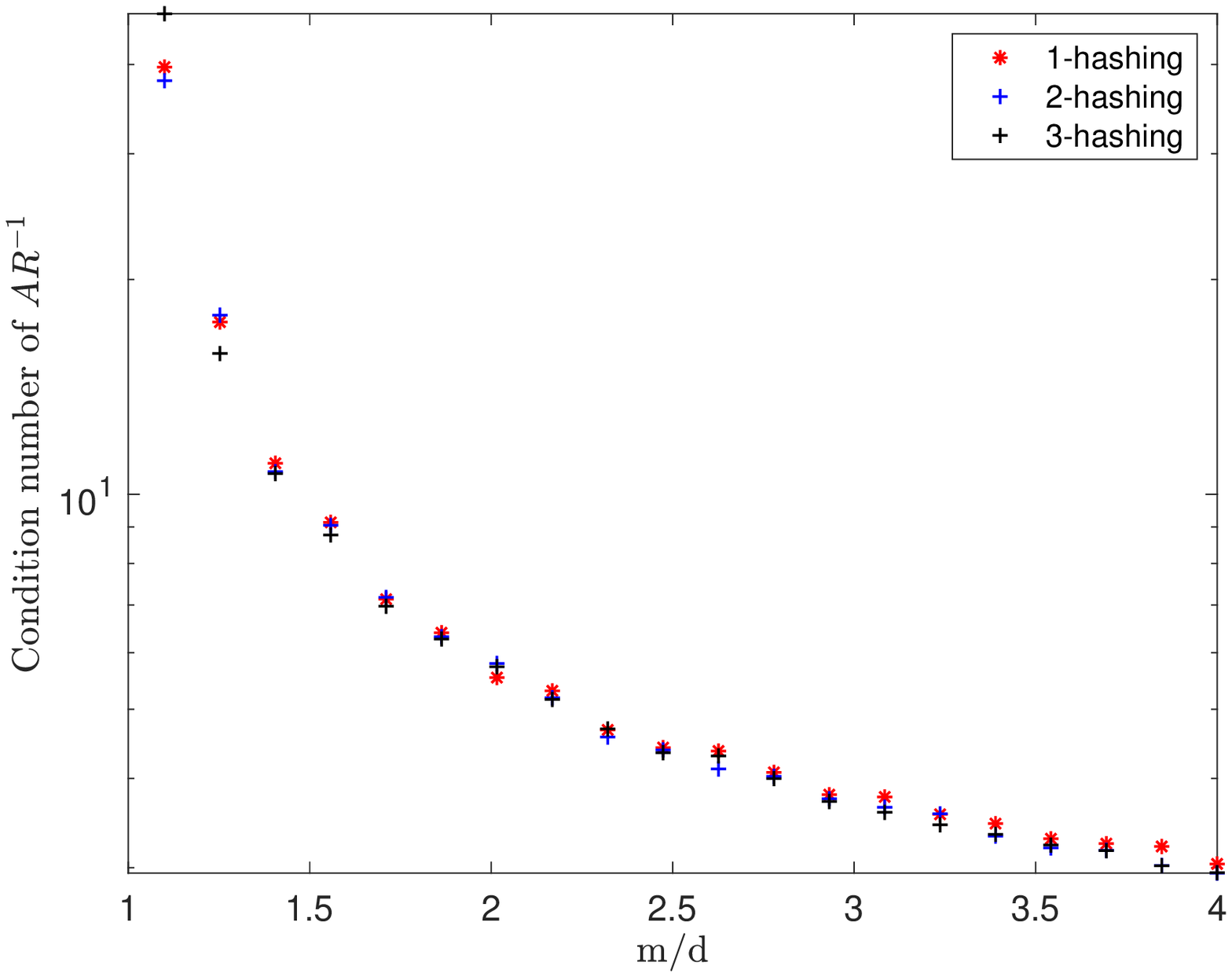} 
		        \caption{When the data matrix $A$ is an ill-conditioned sparse Gaussian, using $1,2,3$-hashing produces similarly good preconditioners.}
		        \label{fig::1-2-3-inco}
		    \end{minipage}\hfill
		    \begin{minipage}{0.48\textwidth}
		        \centering
		        \includegraphics[width=\textwidth]{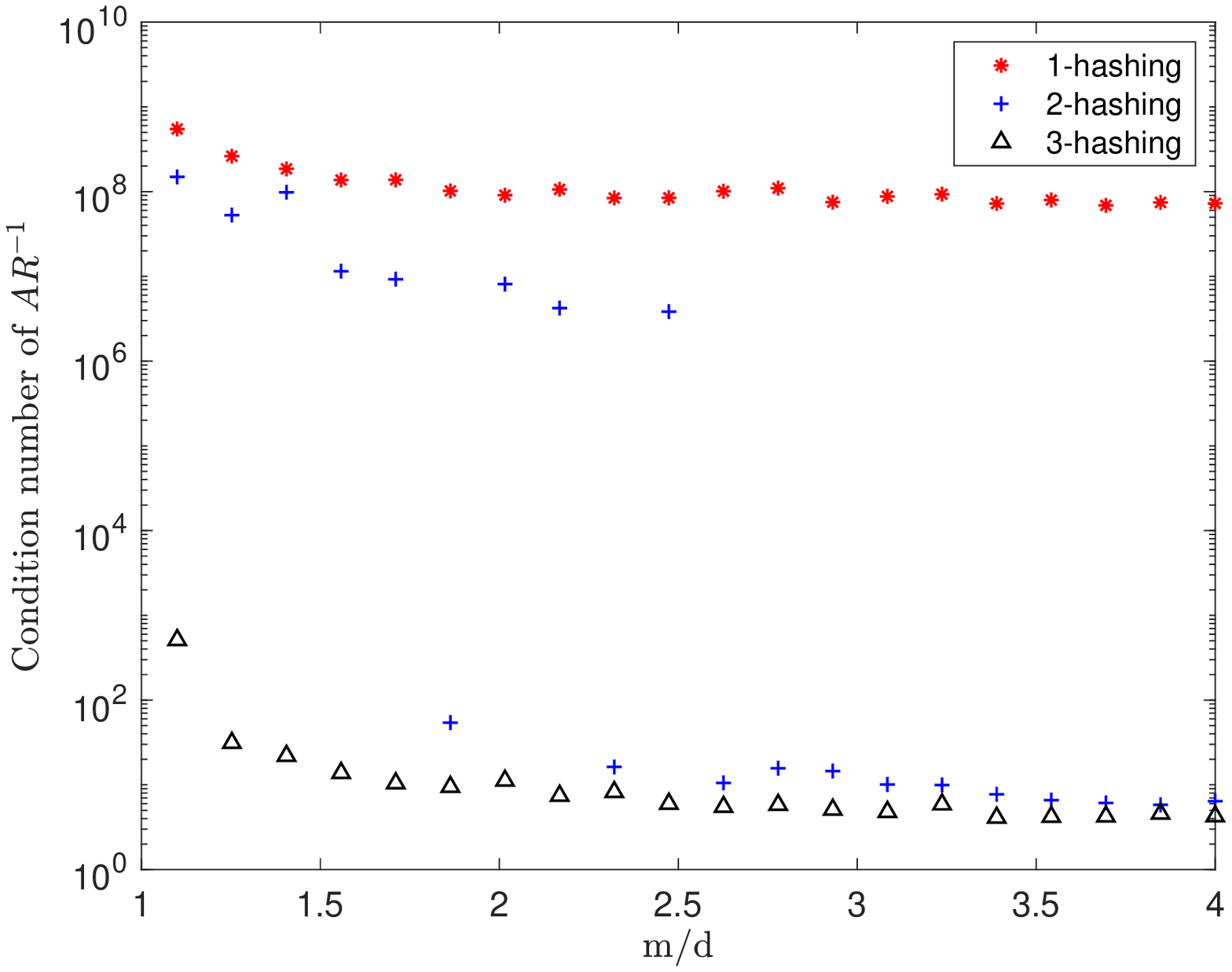} 
		        \caption{When the data matrix $A$ has higher coherence, using $s$-hashing with $s> 1$ is crucial to obtaining an acceptable preconditioner.}
		        \label{fig::1-2-3-semi-co}
		    \end{minipage}   
		\end{figure}


\subsubsection{Residual accuracy obtained by \solverName{} for some rank-deficient problems}
\label{Acc-subsection}
Here we check the final residual accuracy (objective decrease in \eqref{LLS-statement}) obtained by \solverName{}, as an illustration/check that the implementation does not depart much from the theoretical guarantees in Section \ref{sec:algo_analysis}. We choose 14 matrices  in the Florida matrix collection that are rank-deficient (Table \ref{tab::rank_def_accuracy_dim}). We use LAPACK's SVD-based LLS solver (SVD), LSRN, Blendenpik, \solverNameDense{} and \solverNameSparse{} on these problems with the residual shown in Table \ref{tab::rank_def_accuracy}.
Both  \solverNameDense{} and \solverNameSparse{} have very good residual accuracy compared to the SVD-based LAPACK solver; also, as expected, Blendenpik is unable to accurately solve rank-deficient problems.  

Furthermore, in our large scale numerical study with the Florida matrix collection in Section \ref{section:Florida_section}, we have also compared the residual values and found that the solution of \solverNameSparse{} is no-less accurate than the state-of-the-art sparse solvers LS\_SPQR and LS\_HSL.

\begin{table}[]
\center
{\scriptsize
\begin{tabular}{|l|l|l|l|l|}
\hline
                & Ski-LLS-Sparse & Ski-LLS-Dense & Blendenpik           & LSRN         \\ \hline
lp_ship12l  & 1.28E-09    & 9.84E-10  & NaN              & 1.01E-08   \\ \hline
Franz1      & 3.99E-09     & 1.55E-09   & 9405.535 & 5.18E-08  \\ \hline
GL7d26      & 3.11E-09    & 2.33E-09   & NaN             & 1.08E-08  \\ \hline
cis-n4c6-b2 & 1.24E-14   & 3.82E-14  & 174.4694  & 5.54E-15 \\ \hline
lp_modszk1  & 4.93E-09    & 4.83E-09    & NaN            & 5.48E-08  \\ \hline
rel5       & 1.29E-10     & 1.92E-10    & NaN             & 1.13E-09   \\ \hline
ch5-5-b1    & 2.20E-10   & 1.07E-10  & 11.49551   & 7.85E-10  \\ \hline
n3c5-b2     & 5.56E-15   & 1.36E-16  & 86.50651   & 1.41E-15 \\ \hline
ch4-4-b1    & 1.48E-12   & 4.30E-12  & 283.7672  & 9.77E-15 \\ \hline
n3c5-b1    & 0             & 0            & 9.185009   & 3.99E-14   \\ \hline
n3c4-b1     & 0             & 0            & 43.42909   & 0            \\ \hline
connectus   & 5.11E-03  & NaN     & NaN              & 5.11E-03  \\ \hline
landmark    & 2.83E-10   & 6.81E-16  & NaN              & 6.55E-11  \\ \hline
cis-n4c6-b3 & 2.83E-09    & 1.34E-09   & 2400.632 & 8.81E-09   \\ \hline
\end{tabular}%
}

\caption{Difference of residual values generated by the solvers  compared to the residual of the truncated-SVD solution, for a range of rank-deficient problems  from the Florida matrix collection \cite{10.1145/2049662.2049663}. The matrices, each given in sparse format, are converted into dense format before applying a dense solver such as Blendenpik. We find that \solverNameDense{} and \solverNameSparse{} achieve good residual accuracy on rank-deficient problems, as well as LSRN. Blendenpik is not designed for rank-deficient problems and either returns a large residual or encounters numerical issues/out of memory. }
\label{tab::rank_def_accuracy}
\end{table}


\subsubsection{Runtime performance of Ski-LLS on random, full-rank and dense input (Test Set 1)}
We compare \solverNameDense{} (with and without R-CPQR and with default settings) 
with (calibrated) Blendenpik (as above) on dense matrix input in \eqref{LLS-statement}. For each of the  sizes shown (on the horizontal axis) in Figures \ref{fig::compare_blen_coherent}, \ref{fig::compare_blen_semi-coherent} and \ref{fig::compare_blen_incoherent},  we generate three types of
random matrices as in Test Set 1. 
The residual values obtained by the different solvers coincide to six significant digits, indicating all three solvers give an accurate solution of \eqref{LLS-statement}.

Our results in Figures \ref{fig::compare_blen_coherent}, \ref{fig::compare_blen_semi-coherent} and \ref{fig::compare_blen_incoherent} showcase the improvements obtained  when using hashing instead of subsampling in the DHT sketching, yielding improved runtimes for \solverNameDense{} without R-CPQR compared to Blendenpik, especially when the input matrix $A$ is of coherent type, as in
 \eqref{eq::A_co_dense}. We also see that \solverNameDense{} with R-CPQR is as fast as Blendenpik on these full-rank and dense problems, while also being able to solve rank-deficient problems (see Table \ref{tab::rank_def_accuracy}). 

\newcommand{\mysize}{0.45}
\threeFigures
{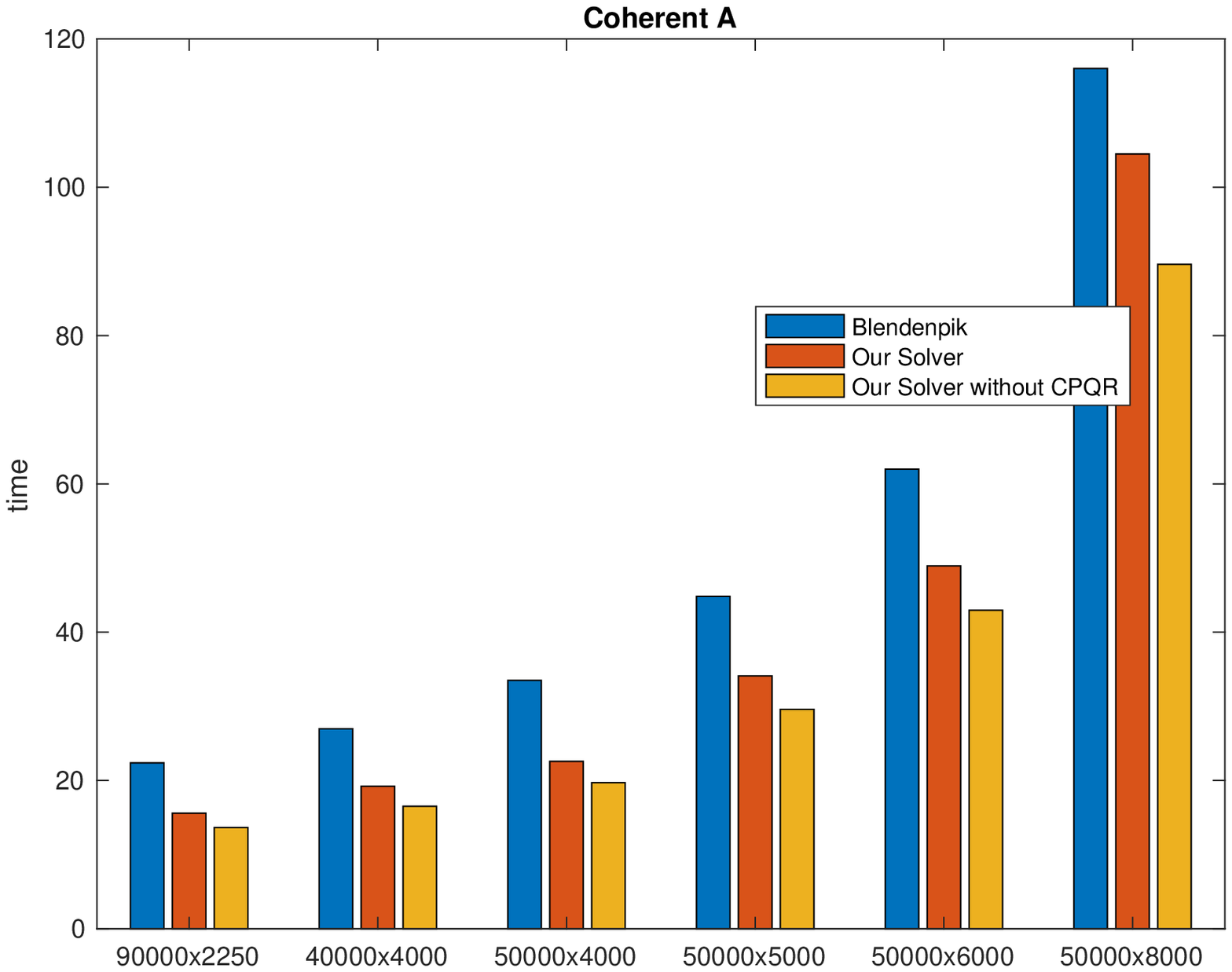}
{Time (in seconds) taken by solvers applied to problem (\ref{LLS-statement}) with coherent, dense and random  matrices  of various sizes (x-axis).}
{fig::compare_blen_coherent}
{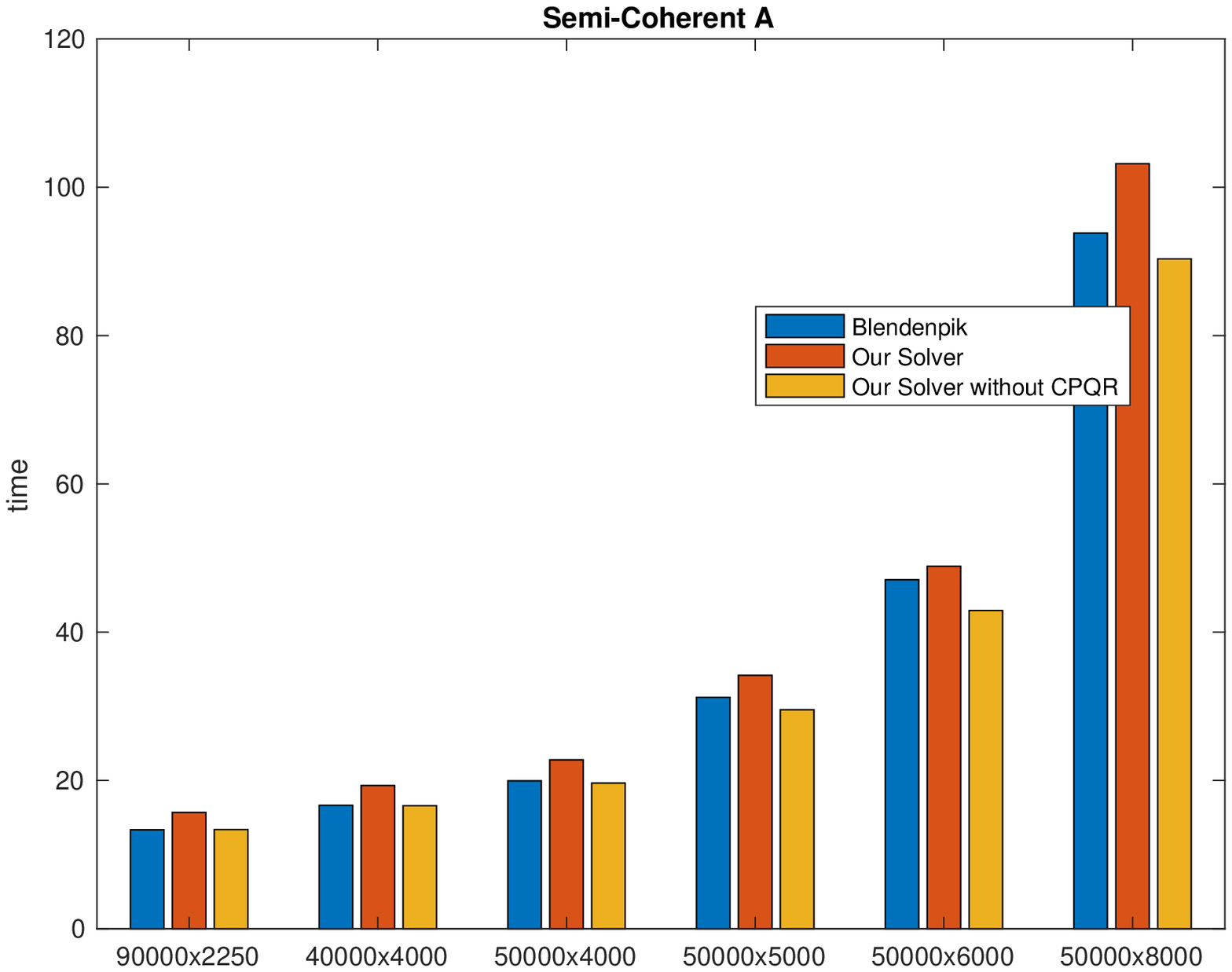}
{Time (in seconds) taken by solvers applied to problem (\ref{LLS-statement}) with semi-coherent, dense and random matrices  of various sizes (x-axis).}
{fig::compare_blen_semi-coherent}
{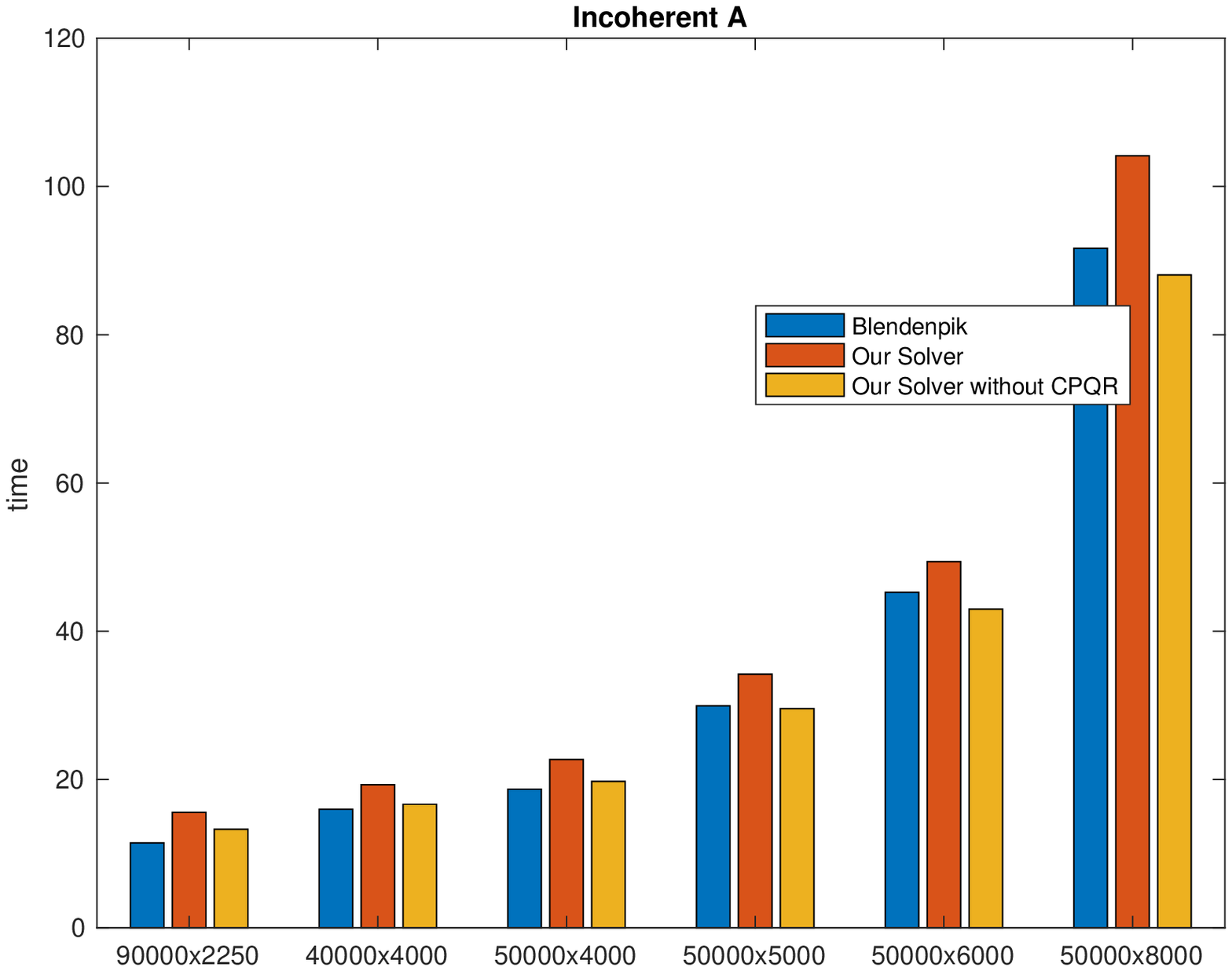}
{Time (in seconds) taken by solvers applied to problem (\ref{LLS-statement}) with incoherent, dense and random matrices of various sizes (x-axis).}
{fig::compare_blen_incoherent}

\subsubsection{Runtime performance of Ski-LLS on random, full-rank and sparse input (Test Set 2)}
Figures \ref{fig::sparse_rand_inco}, \ref{fig::sparse_rand_semi_co} and \ref{fig::sparse_rand_co} show the performance of \solverNameSparse{} compared to LS\_HSL, LS\_SPQR and LSRN on sparse random matrices of different types and sizes. We see \solverName{} can be up to $10$ times faster on this class of data matrix. Tables \ref{tab::res_sparse_rand_inco}, \ref{tab::res_sparse_rand_semi_co} and \ref{tab::res_sparse_rand_co} in the appendix record the residual values at the solution. We see that our solver can be more accurate than the state-of-the-art LS\_HSL and LS\_SPQR, although LSRN is the most accurate.

\newcommand{\runningTimeSparseRandomCaption}[1]{
Runtime comparison of \solverName{} with LS\_HSL, LS\_SPQR and LSRN on randomly generated #1 sparse matrices of different sizes.}

\threeFigures{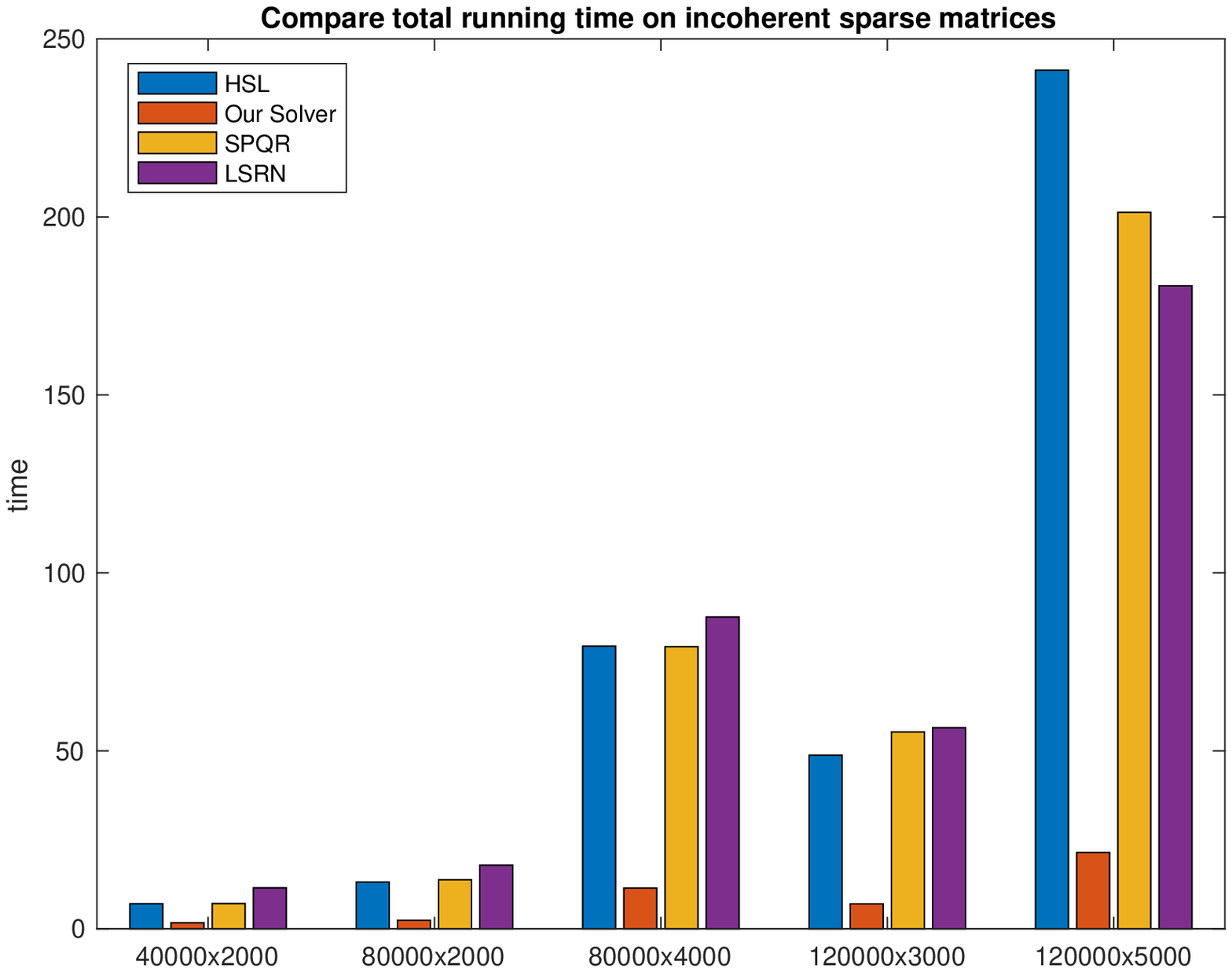}{\runningTimeSparseRandomCaption{incoherent}}{fig::sparse_rand_inco}
{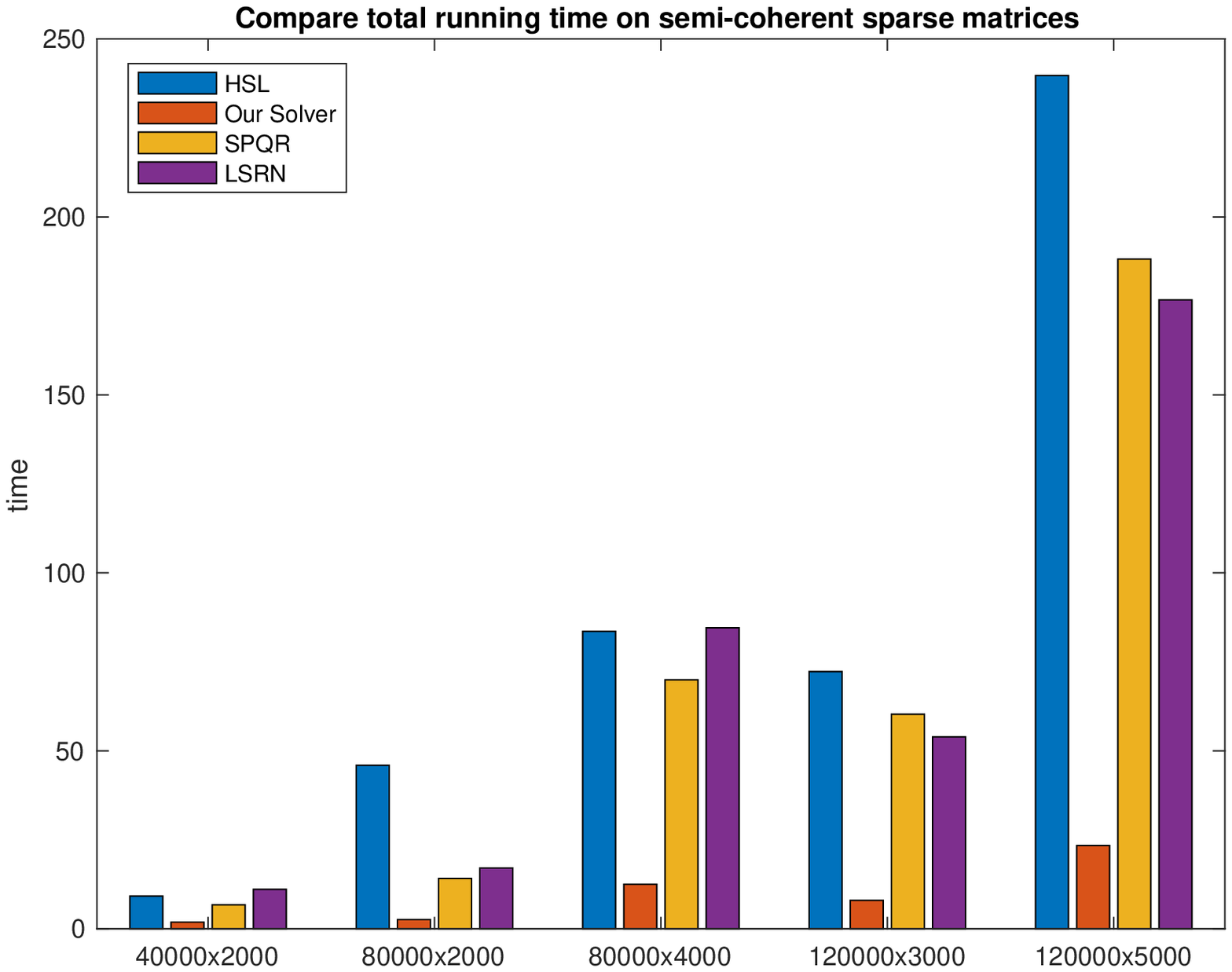}{\runningTimeSparseRandomCaption{semi-coherent}}{fig::sparse_rand_semi_co}
{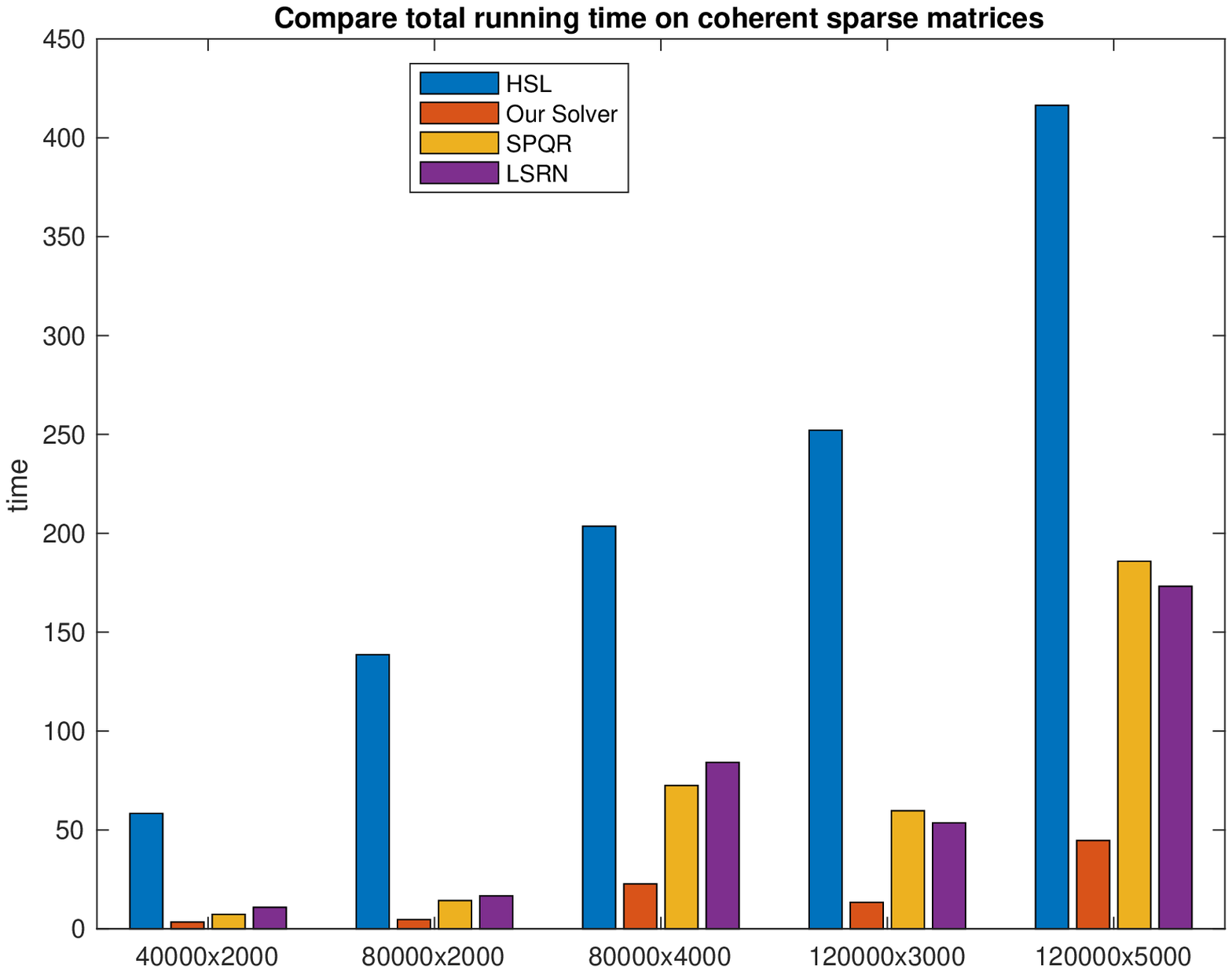}{\runningTimeSparseRandomCaption{coherent}}{fig::sparse_rand_co}

\subsection{Benchmarking \solverName{} on the Florida Matrix Collection (Test Set 3)}
\label{section:Florida_section}

We present the results of  benchmarking \solverNameSparse{} against LSRN, SPQR and HSL on the large scale Test Set 3.
Figure
\ref{fig::all_solver_2} shows these results for the entire collection considered here, namely the matrices or their transpose with $n\geq 2d$; we find that neither of the two sketching solvers,
\solverName{} and LSRN, are sufficiently competitive with the direct solver SPQR and the fastest solver, preconditioned iterative one in HSL. This is perhaps unsurprising as we would only really expect sketching solvers to improve state of the art ones when sketching can play a role, and for that either substantially more rows must be present, or some form of difficulty/structure. Indeed, 
the situation substantially changes  when we subselect from the entire Florida set the performance of solvers on significantly overdetermined problems, or ill-conditioned/`difficult ones' or moderately sparse ones.

\paragraph{Highly over-determined matrices in the Florida Matrix Collection}
\autoref{fig::all_solver_30} shows \solverName{} is the fastest in 75\% of problems in the Florida matrix collection with $n\geq 30d$, outperforming the iterative HSL code (with incomplete Cholesky preconditioning).

\paragraph{On moderately over-determined Florida collection problems}
\autoref{fig::all_solver_10} shows LS\_HSL is the fastest for the largest percentage of problems in the Florida matrix collection with $n\geq 10d$. However, \solverName{} is still competitive and noticeably faster than LSRN. As the proportion of rows versus columns further decreases, however, the profiles would approach those in Figure
\ref{fig::all_solver_2}.

\paragraph{Effect of condition number}
Many of the matrices in the Florida matrix condition are well-conditioned. Thus  LSQR (without preconditioning) converges in  few iterations. Then the benefits of the high quality preconditioner computed by   \solverName{} are lost to the incomplete one in LS\_HSL.
However, if the problems are not well-conditioned/'difficult',
the situation reverses.

\autoref{fig::all_solver_10_LSQR_5} shows \solverName{} is the fastest in more than 50\% of the moderately over-determined  problems ($n\geq 10d$) if we only consider problems such that it takes LSQR more than 5 seconds to solve.

\paragraph{Effect of sparsity}
Figure \ref{fig::density001} shows \solverName{} is highly competitive, being the fastest in all but one moderately over-determined problems with moderate sparsity ($\text{nnz}(A) \geq 0.01nd)$. The effect of increasing the sparsity by allowing ($\text{nnz}(A) \geq 0.001nd)$
is shown in Figure \ref{fig::density0001}.

\renewcommand{\mysize}{0.42}
\begin{figure}
    \centering
    \begin{minipage}{\mysize\textwidth}
        \centering
        \includegraphics[width=\textwidth]{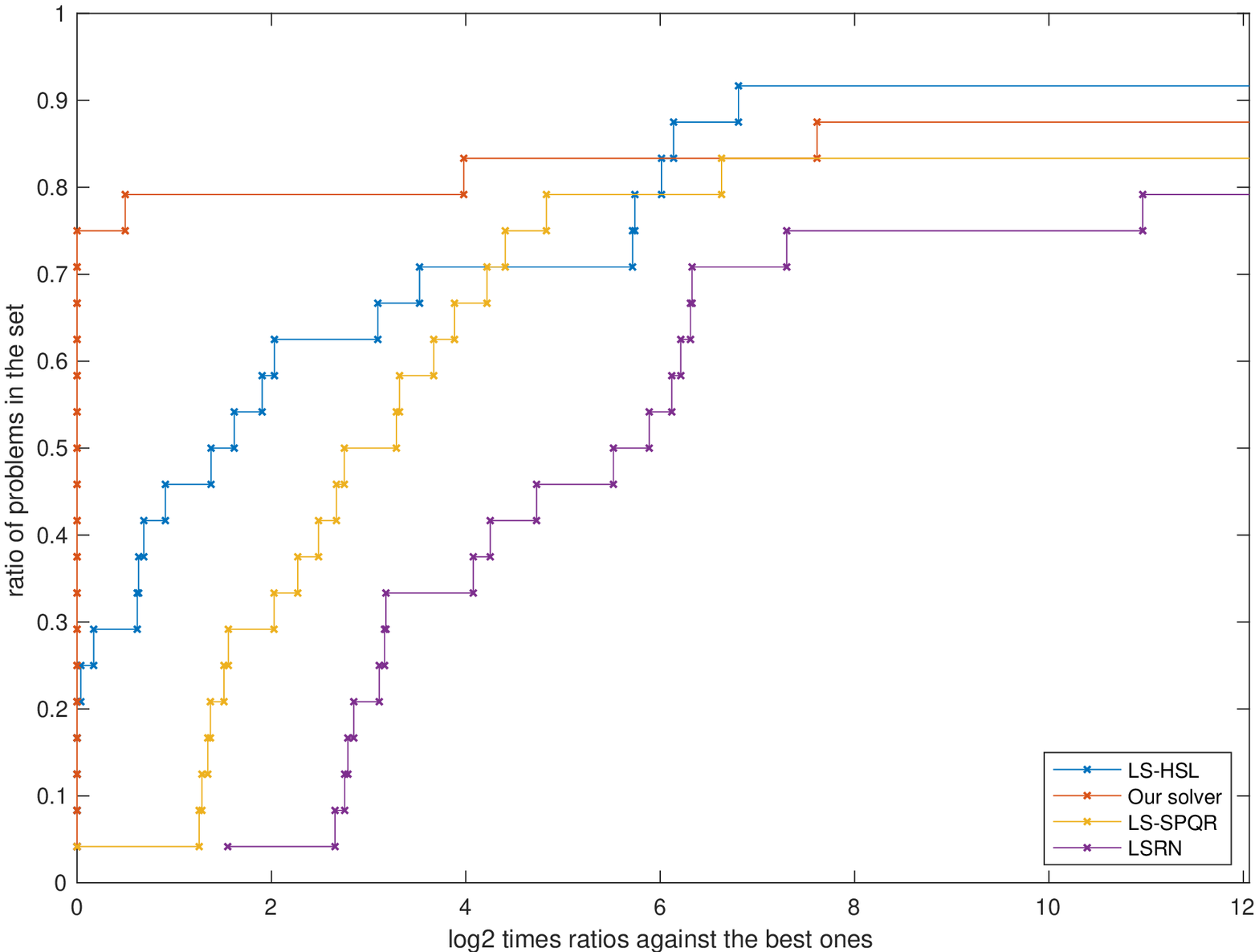} 
        \caption{\performanceProfileCaption{$n\geq 30d$}. }
        \label{fig::all_solver_30}
    \end{minipage}\hfill  
    \centering
    \begin{minipage}{\mysize\textwidth}
        \centering
        \includegraphics[width=\textwidth]{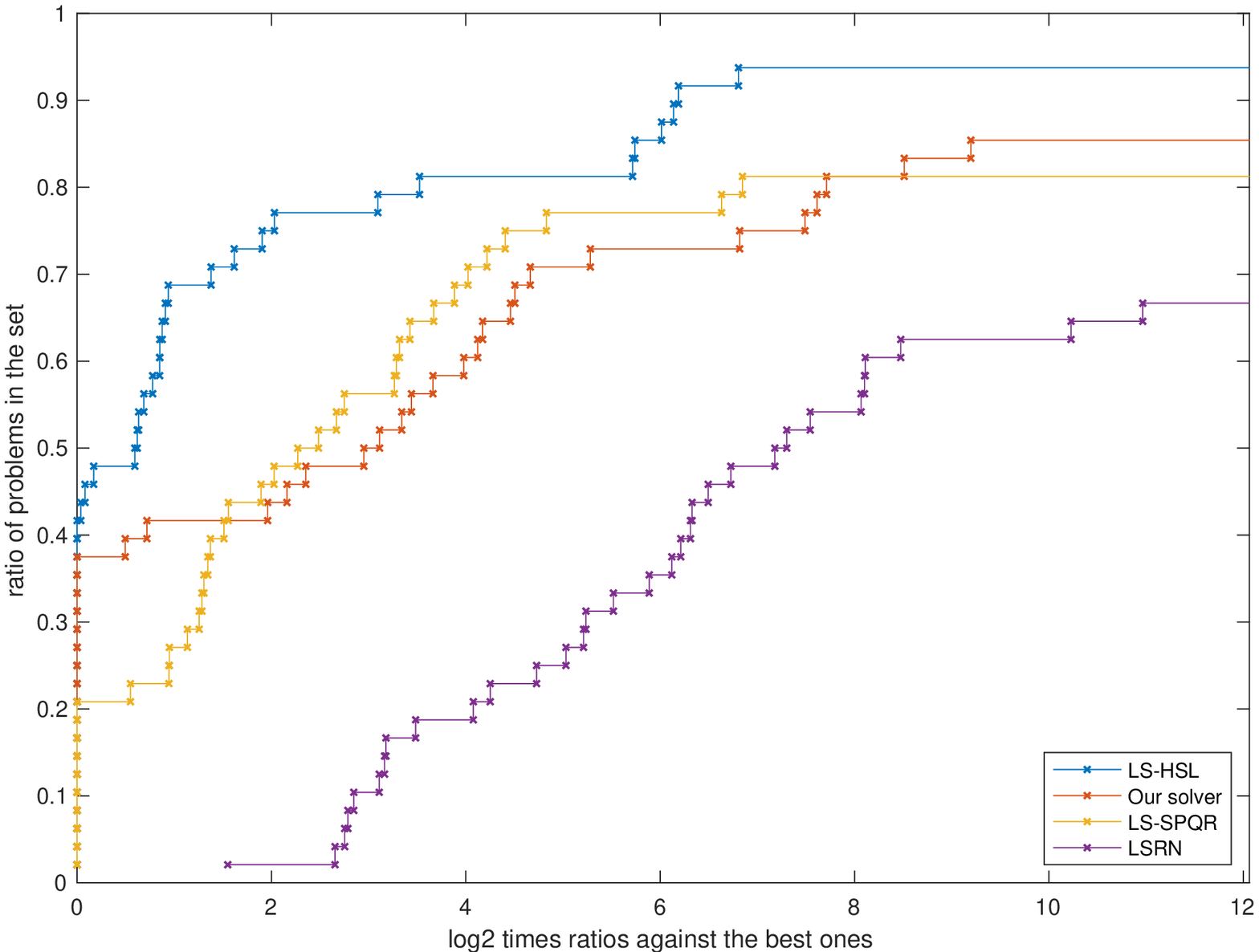} 
        \caption{\performanceProfileCaption{$n\geq 10d$}.}
        \label{fig::all_solver_10}
    \end{minipage}\hfill  
\end{figure}

\begin{figure}
    \centering
    \begin{minipage}{\mysize\textwidth}
        \centering
        \includegraphics[width=\textwidth]{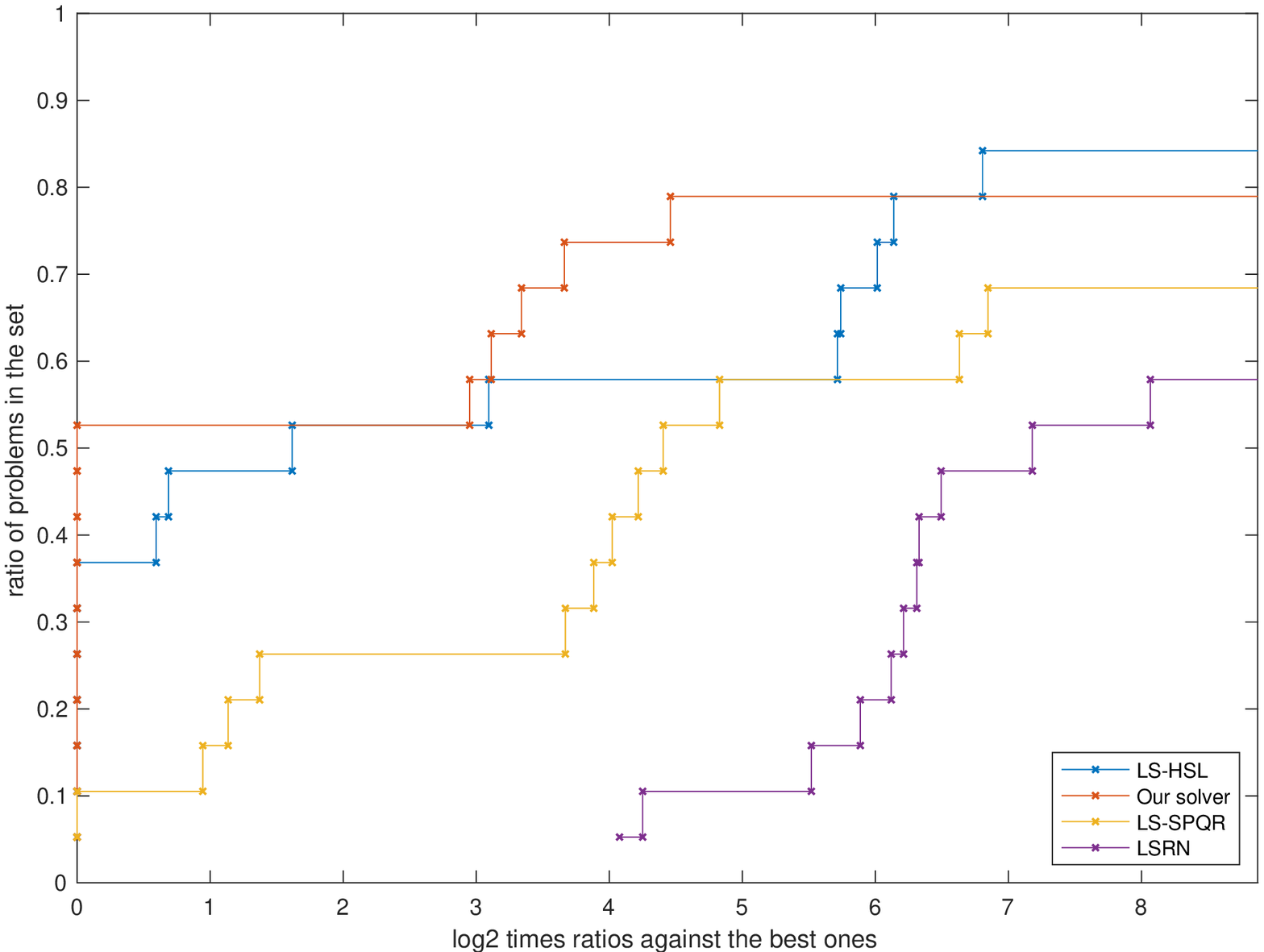} 
        \caption{\performanceProfileCaption{with $n\geq 10d$ and the unpreconditioned LSQR takes more than 5 seconds to solve}.}
        \label{fig::all_solver_10_LSQR_5}
    \end{minipage}\hfill  
    \begin{minipage}{\mysize\textwidth}
    \centering
    \includegraphics[width=\textwidth]{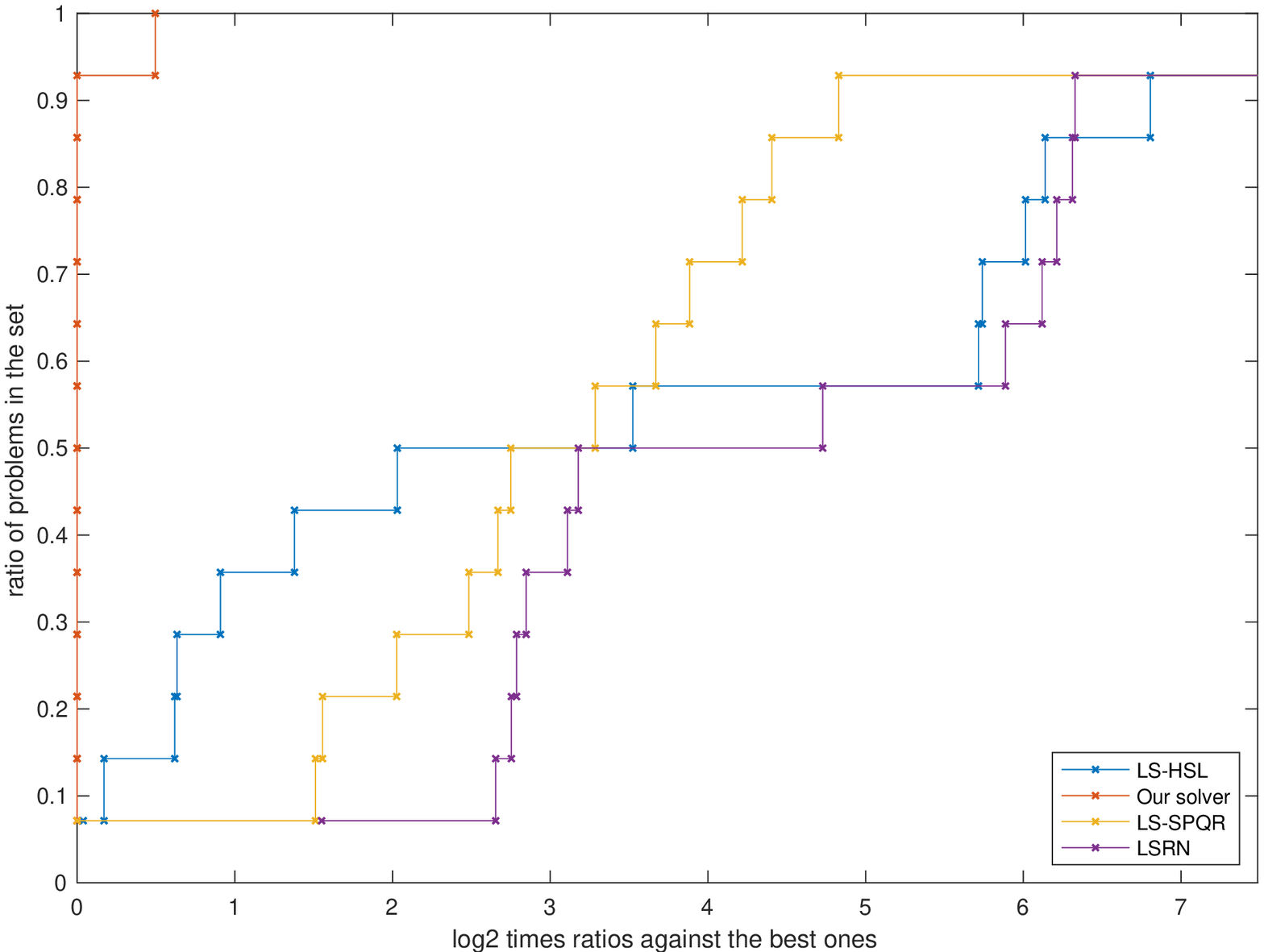}
    \caption{\performanceProfileCaption{$n \geq 10d$ and $\text{nnz}(A) \geq 0.01nd$}. }
    \label{fig::density001}
    \end{minipage}
\end{figure}

\bibliography{bib/2021_04_07_proper_ref.bib} 
\bibliographystyle{abbrv}

\appendix
\section{Additional proofs}
\subsection{Proof of results in Section 2}
\begin{proof}[Proof of Lemma \ref{rank-of-sketched-equal-to-unsketched}]
Let $B\in \R^{n \times k}.$
By rank-nullity theorem, $\rank(B) + \dim \ker(B) = \rank(SB) + \dim \ker(SB) =k$. Clearly, $\dim \ker(SB) \geq \dim \ker(B)$. If the previous inequality is strict, then there exists $z\in \R^k$ such that $\|SBz\|_2 = 0$ and $\|Bz\|_2 >0$, contradicting the assumption that $S$ is an $\epsilon$-subspace embedding for $B$ according to \eqref{subspace_embedding_def1}.
\end{proof}

    \begin{proof}[Proof of Lemma \ref{subspace-embedding-def-2}]
\begin{itemize}
    \item[(i)]
Let $A = U \Sigma V^T$ be defined as in (\ref{thin-SVD}). If $S$ is an $\epsilon$-subspace embedding for $A$, let $z \in \R^{r}$ and define $x = V\Sigma^{-1}z \in \R^{d}$. Then we have $Uz = Ax$ and

\begin{equation}
\|SUz\|_2^2 = \|SAx\|_2^2 
			\leq (1+\epsilon) \|Ax\|_2^2 \\
			= (1+\epsilon) \|Uz\|_2^2,
\end{equation}
where we have used $Uz = Ax$ and (\ref{subspace_embedding_def1}). Similarly, we have $\|SUz\|_2^2 \geq (1-\epsilon) \|Uz\|_2^2$. Hence $S$ is an $\epsilon$-subspace embedding for $U$.

Conversely, given $S$ is an $\epsilon$-subspace embedding for $U$, let $x \in \R^{d}$ and $z = \Sigma V^T x \in \R^{r}$. Then we have $Ax = Uz$, and $\|SAx\|_2^2 = \|SUz\|_2^2 \leq (1+\epsilon)\|Uz\|_2^2 = (1+\epsilon) \|Ax\|_2^2$. Similarly $\|SAx\|_2^2 \geq (1-\epsilon) \|Ax\|_2^2$. Hence $S$ is an $\epsilon$-subspace embedding for $A$.
\item[(ii)] Since the equivalence in (i) holds, note that \eqref{U-condition} clearly implies \eqref{U-condition-2}. The latter also implies the former if 
\eqref{U-condition-2} is applied to $z/\|z\|_2$ for any nonzero $z\in \R^r$.
\end{itemize}
\end{proof}

\begin{proof}[Proof of Lemma \ref{non_uniformity_col_subspace_coherence}]

Let $A = U\Sigma V^T$ be defined as in \eqref{thin-SVD}, and let $z = \Sigma V^T x \in \R^{r}$. Then $y = Ax = Uz$.
Therefore 
\begin{align}
\|y\|_{\infty} = \|Uz\|_{\infty} = \max_{1\leq i \leq n} | \langle U_i, z \rangle| \leq \max_{1 \leq i \leq n} \|U_i\|_2 \|z\|_2 \leq  \mu(A) \|z\|_2,
\end{align}
where $U_i$ denotes the $i^{th}$ row of $U$.
Furthermore, $\|y\|_2 = \|Uz\|_2 = \|z\|_2$ which then implies $\nu(y) = \|y\|_{\infty} / \|y\|_2 \leq \mu(A)$.
\end{proof}

\begin{proof}[Proof of Lemma \ref{point to JL plus}]
Let $Y = \yExpression$. Let $B_i$ be the event that $S$ is an $\epsilon$-JL embedding for $y_i\in Y$. Then $\probability{B_i} \geq 1-\delta$ by  assumption. We have
\begin{align}
    \probability{\text{S is an $\epsilon$-JL embedding for Y}}=
    \probability{\cap_i B_i} & = 1 - \probability{\complement{\cap_i B_i}} \\
    & = 1 - \probability{\cup_i B_i^c} \\
    & \geq 1 - \sum_i \probability{B_i^c} \\
    & = 1 - \sum_i \squareBracket{1 - \probability{B_i}}\\
    & \geq 1 - \sum_i \squareBracket{1 - (1-\delta)} = 1-|Y|\delta.
\end{align}
\end{proof}

\begin{proof}[Proof of Lemma \ref{lem::lemma7}]
Let $y_1, y_2 \in Y$. We have that
\begin{align}
\left| \la Sy_1, Sy_2 \ra - \la y_1, y_2 \ra \right| &=  | ( \|S(y_1+y_2)\|^2 - \|S(y_1-y_2)\|^2 ) \\\nonumber
&\quad\quad - ( \|(y_1+y_2)\|^2 - \|(y_1-y_2)\|^2 ) | /4 \\\nonumber
&\leq \epsilon( \|(y_1+y_2)\|^2 + \|(y_1 -y_2)\|^2)/4 \\\nonumber
& = \epsilon( \|y_1\|^2 + \|y_2\|^2)/2 \\\nonumber
& = \epsilon,
\end{align}
where to obtain the inequality, we use that $S$ is an $\epsilon$-JL embedding for $\set{\union{\yPlus}{\yMinus}}$;  the last equality follows from $\|y_1\|_2=\|y_2\|_2=1$.
\end{proof}

  \subsection{Proof of results in Section 3}

\begin{proof}[Proof of Lemma \ref{Gamma-cover-existance}]
Let $\tilde{M} = \{z \in \R^r: \|z\|_2=1\}$. Let $\tilde{N} \subseteq \tilde{M}$ be the maximal set such that no two points in $\tilde{N}$ are within distance $\gamma$ from each other. Then it follows that the r-dimensional balls centred at points in $\tilde{N}$ with radius $\gamma/2$ are all disjoint and contained in the r-dimensional ball centred at the origin with radius $(1+\gamma/2)$. Hence
\begin{align}
\frac{\text{Volume of the r-dimensional ball centred at the origin with radius $(1+\gamma/2)$}}{\text{Total volume of the r-dimensional balls centred at points in $\tilde{N}$ with radius $\gamma/2$ }} \\ 
= \frac{1}{|\tilde{N}|} \frac{(1+\frac{\gamma}{2})^{r}}{(\frac{\gamma}{2})^{r}} \geq 1,
\end{align}
which implies $|\tilde{N}| \leq (1+\frac{2}{\gamma})^r$.

Let $N = \{Uz \in \R^n: z \in \tilde{N}\}$. Then $|N| \leq |\tilde{N}| \leq (1+\frac{2}{\gamma})^r$ and we show $N$ is a $\gamma$-cover for $M$. Given $y_M \in M$, there exists $z_M \in \tilde{M}$ such that $y_M = Uz_M$. By definition of $\tilde{N}$, there must be $z_N \in \tilde{N}$ such that $\|z_M - z_N\|_2 \leq \gamma$ as otherwise $\tilde{N}$ would not be maximal. 
Let $y_N = Uz_N \in N$. Since $U$ has orthonormal columns, we have $\|y_M -y_N\|_2 = \|z_M-z_N\|_2 \leq \gamma$.

\end{proof}

To prove Lemma \ref{approximation-of-net}, we need the following Lemma. 

\begin{lemma} \label{approximation_unit_ball}
Let $\gamma \in (0,1)$, $U\in\R^{n\times r}$ having orthonormal columns and $M\subseteq \R^{n}$ associated with $U$ be defined in \eqref{MU}. Let $N$ be a $\gamma$-cover of $M$, $y\in M$. Then for any $k \in \N$, there exists 
 $\alpha_0, \alpha_1, \dots, \alpha_k \in \R$, $y_0, y_1, y_2, \dots, y_k \in N$ such that
\begin{align}
 \|y - \sum_{i=0}^k \alpha_i y_i \|_2 \leq \gamma^{k+1}, \label{A-1-1}\\
 |\alpha_i| \leq \gamma^i, i=0,1,\dots, k. \label{A-1-2}
 \end{align}

\end{lemma}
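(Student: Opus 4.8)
The plan is to prove the statement by induction on $k$, at each stage peeling off one more net element from the current residual and exploiting the fact that this residual always lies in the column space of $U$, so that after normalisation it lands back in $M$ and the $\gamma$-cover property can be reapplied.

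For the base case $k=0$, since $y\in M$ and $N$ is a $\gamma$-cover of $M$, there exists $y_0\in N$ with $\|y-y_0\|_2\leq \gamma$. Taking $\alpha_0=1$ gives $|\alpha_0|=1=\gamma^0$ and $\|y-\alpha_0 y_0\|_2\leq \gamma=\gamma^{0+1}$, as required. For the inductive step, I would assume that $\alpha_0,\dots,\alpha_k$ and $y_0,\dots,y_k$ have been constructed satisfying \eqref{A-1-1} and \eqref{A-1-2}, and set $w=y-\sum_{i=0}^k \alpha_i y_i$, so that $\|w\|_2\leq \gamma^{k+1}$ by the inductive hypothesis. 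The crucial observation is that $w\in\mathrm{range}(U)$: indeed $y=Uz$ for some unit $z$, and each $y_i\in N\subseteq M$ is of the form $Uz_i$, so $w=U(z-\sum_{i=0}^k\alpha_i z_i)$ lies in the column space of $U$.

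If $w=0$, I would pick any $y_{k+1}\in N$ and set $\alpha_{k+1}=0$; then trivially $|\alpha_{k+1}|=0\leq \gamma^{k+1}$ and the residual is unchanged and bounded by $0\leq \gamma^{k+2}$. Otherwise $w\neq 0$, and $w/\|w\|_2$ is a unit vector in $\mathrm{range}(U)$, hence an element of $M$. The $\gamma$-cover property then furnishes $y_{k+1}\in N$ with $\big\|\,w/\|w\|_2-y_{k+1}\big\|_2\leq \gamma$. Setting $\alpha_{k+1}=\|w\|_2$ yields $|\alpha_{k+1}|=\|w\|_2\leq\gamma^{k+1}$, and
\[
\Big\|\,y-\sum_{i=0}^{k+1}\alpha_i y_i\,\Big\|_2=\big\|w-\|w\|_2\,y_{k+1}\big\|_2=\|w\|_2\,\Big\|\tfrac{w}{\|w\|_2}-y_{k+1}\Big\|_2\leq \gamma^{k+1}\cdot\gamma=\gamma^{k+2},
\]
which closes the induction.

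I do not expect a serious obstacle here; the argument is a routine geometric-series-style net refinement. The only point requiring care is the verification that the running residual $w$ stays inside $\mathrm{range}(U)$ so that rescaling returns it to $M$ (where the cover is defined), together with the separate handling of the degenerate case $w=0$ to avoid dividing by zero. Both are handled cleanly as above, and the bounds $|\alpha_i|\leq\gamma^i$ drop out automatically from the choice $\alpha_{k+1}=\|w\|_2\leq\gamma^{k+1}$.
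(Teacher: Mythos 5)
Your proof is correct and follows essentially the same inductive net-refinement argument as the paper: peel off one cover element per step, observe that the residual stays in $\mathrm{range}(U)$ so its normalisation lies in $M$, apply the $\gamma$-cover property, and set $\alpha_{k+1}=\|w\|_2\leq\gamma^{k+1}$. Your explicit handling of the degenerate case $w=0$ is a small refinement the paper's proof passes over silently (it normalises the residual without checking it is nonzero), but otherwise the two arguments coincide.
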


\begin{proof}
We use induction. Let $k=0$. Then by definition of a $\gamma$-cover, there exists $y_0 \in N$ such that $\|y - y_0\| < \gamma$. Letting $\alpha_0=1$, we have covered the $k=0$ case.

Now assume \eqref{A-1-1} and \eqref{A-1-2} are true for $k = K \in \N$. Namely there exists $\alpha_0, \alpha_1, \dots, \alpha_K \in \R$, $y_0, y_1, y_2, \dots, y_K \in N$ such that
\begin{align}
 \|y - \sum_{i=0}^K \alpha_i y_i \|_2 \leq \gamma^{K+1} \\
 |\alpha_i| \leq \gamma^i, i=0,1,\dots, K.
 \end{align}
Because $y, y_0, y_1 \dots, y_K \in N \subseteq M$, there exists $z, z_0, z_1, \dots, z_K \in \R^r$ such that $y=Uz, y_0=Uz_0, y_1=Uz_1, \dots y_K=Uz_K$ with $\|z\| = \|z_0\| = \dots = \|z_K\|=1$. Therefore

\begin{align}
 &\frac{y - \sum_{i=0}^K \alpha_i y_i}{ \|y - \sum_{i=0}^K \alpha_i y_i\|_2} =
 \frac{ U \left( z - \sum_{i=0}^K \alpha_i z_i \right)  }{\|U \left( z - \sum_{i=0}^K \alpha_i z_i \right) \|_2} = 
 U \frac{z - \sum_{i=0}^K \alpha_i z_i}{ \|z - \sum_{i=0}^K \alpha_i z_i\|_2} \in M,
\end{align}
where we have used that the columns of $U$ are orthonormal. 

Since $N$ is a $\gamma$-cover for $M$,  there exists $y_{K+1} \in N$ such that

\begin{align}
 \left\| \frac{y - \sum_{i=0}^K \alpha_i y_i}{ \|y - \sum_{i=0}^K \alpha_i y_i\|_2} - y_{K+1} \right\|_2 \leq \gamma.
\end{align}

Multiplying both sides by $\alpha_{K+1}:=\|y - \sum_{i=0}^K \alpha_i y_i\|_2 \leq \gamma^{K+1}$, we have

\begin{align}
  \|y - \sum_{i=0}^{K+1} \alpha_i y_i \|_2 \leq \gamma^{K+2}, \\
 |\alpha_i| \leq \gamma^i, i=0,1, \dots, K+1.
\end{align}
 
\end{proof}

\begin{proof}[Proof of Lemma \ref{approximation-of-net}]
Let $y \in M$ and $k\in \N$, and consider the approximate representation of $y$ provided in Lemma \ref{approximation_unit_ball}, namely, assume that 
\eqref{A-1-1} and \eqref{A-1-2} hold. Then we have

\begin{align*}
 \|S \sum_{i=0}^k \alpha_i y_i\|_2^2 
		   & = \sum_{i=0}^k \|S\alpha_i y_i\|_2^2 + \sum_{0 \leq i < j \leq k} 2 \langle S\alpha_i y_i, S\alpha_j y_j \rangle \\
		   & = \sum_{i=0}^k \|S\alpha_i y_i\|_2^2 + \sum_{0 \leq i < j \leq k} 2 \langle \alpha_i y_i, \alpha_j y_j \rangle +\\
		   &\hspace*{2.5cm}
		   		+\left[  \sum_{0 \leq i < j \leq k} 2 \langle S\alpha_i y_i, S\alpha_j y_j \rangle - \sum_{0 \leq i < j \leq k} 2 \langle \alpha_i y_i, \alpha_j y_j \rangle\right] \\
		   & \leq (1+\epsilon_1) \sum_{i=0}^k \|\alpha_i y_i\|_2^2 +\sum_{0 \leq i < j \leq k} 2 \langle \alpha_i y_i, \alpha_j y_j \rangle +
 				2 \sum_{0 \leq i <j \leq k} \epsilon_1 \|\alpha_i y_i\|_2 \|\alpha_j y_j\|_2 \\
 		   & = \|\sum_{i=0}^k \alpha_i y_i\|^2_2 + \epsilon_1 \left[
 		   \sum_{i=0}^k \|\alpha_i y_i\|_2^2 + 2 \sum_{0 \leq i <j \leq k}  \|\alpha_i y_i\|_2 \|\alpha_j y_j\|_2
 		   \right],
\end{align*}
where to deduce the inequality, we use that $S$ is a generalised $\epsilon_1$-JL embedding for $N$. Using $\|y_i\|_2=1$ and $|\alpha_i| \leq \gamma^i$, we have
\begin{align}\label{L9:gamma_i}
\frac{1}{\epsilon_1}\left\{\|S \sum_{i=0}^k \alpha_i y_i\|_2^2 - \|\sum_{i=0}^k \alpha_i y_i\|^2_2\right\} &=
    \sum_i^k \|\alpha_i y_i\|_2^2 + 2 \sum_{0 \leq i <j \leq k} \|\alpha_i y_i\|_2 \|\alpha_j y_j\|_2 \\\nonumber
   & \leq \sum_{i=0}^{k} \gamma^i + 2 \sum_{0 \leq i <j \leq k} \gamma^i \gamma^j\\\nonumber
   &\leq \frac{1-\gamma^{k+1}}{1-\gamma}+\frac{2\gamma (1-\gamma^{k-i})(1-\gamma^{2k})}{\left( 1-\gamma \right) \left( 1-\gamma^2 \right)},
\end{align}
where we have used 
\begin{align*}
    \sum_{0 \leq i <j \leq k} \gamma^i \gamma^j = 
\sum_{i=0}^{k-1} \gamma^i \sum_{j=i+1}^{k} \gamma^j &=
\sum_{i=0}^{k-1} \gamma^{2i+1} \sum_{j=0}^{k-i-1} \gamma^j \\
&=
\frac{\gamma(1-\gamma^{k-i})}{1-\gamma} \sum_{i=0}^{k-1} \gamma^{2i} = 
\frac{\gamma (1-\gamma^{k-i})(1-\gamma^{2k})}{ \left( 1-\gamma \right) \left( 1-\gamma^2 \right) }.
\end{align*} 
Letting $k\rightarrow \infty$ in \eqref{L9:gamma_i}, we deduce
\[
\frac{1}{\epsilon_1}\left\{\|S \sum_{i=0}^{\infty} \alpha_i y_i\|_2^2 - \|\sum_{i=0}^{\infty} \alpha_i y_i\|^2_2\right\}\leq 
\frac{1}{1-\gamma}+\frac{2\gamma}{\left( 1-\gamma \right) \left( 1-\gamma^2 \right)}=\frac{1+2\gamma-\gamma^2}{\left( 1-\gamma \right) \left( 1-\gamma^2 \right)},
\]
Letting $k\rightarrow \infty$ in  \eqref{A-1-1} implies 
 $y = \sum_{i=0}^\infty \alpha_i y_i$, and so the above gives
 \[
\|S y\|_2^2 - \|y\|^2_2\leq 
\epsilon_1\frac{1+2\gamma-\gamma^2}{\left( 1-\gamma \right) \left( 1-\gamma^2 \right)}=\epsilon \|y\|_2^2,
\]
where to get the first equality, we used
 $\|y\|_2=1$ and the definition of $\epsilon_1$.
The lower bound in the $\epsilon_1$-JL embedding follows similarly.
\end{proof}

        \subsection{Proof of results in Section 5}

\begin{proof}[Proof of Lemma \ref{Lemma::w}]
Note $SW$ has rank $\k$ because $SW = Q_1$, where $Q_1$ is defined in \eqref{SA-QRV-fac}. By rank-nullity theorem in $\R^\k$, $\rank(W) + \dim \ker(W) = \rank(SW) + \dim \ker(SW)$ where $\ker(W)$ denotes the null space of $W$; and since $\text{dim} \ker(SW) \geq \dim \ker(W)$, we have that $\rank(SW) \leq \rank(W)$. So $\rank(W) \geq \k$. It follows that $\rank(W)=\k$ because $W\in\R^{n\times \k}$ can have at most rank $\k$.
\end{proof}

\begin{proof}[Proof of Lemma \ref{Lemma::p_equals_r}]
Lemma \ref{rank-of-sketched-equal-to-unsketched} gives $r= \rank(A) = \rank(SA)= \k$.
\end{proof}

\begin{proof}[Proof of Lemma \ref{explicit-sketching-guarantee}]
We have that $x_s \in \argmin \|SAx-Sb\|_2$ by checking the optimality condition $(SA)^T SA x_s = (SA)^T Sb$. Hence we have that
\begin{align}
\norms{Ax_s-b} \leq \frac{1}{1-\epsilon} \norms{SAx_s-Sb} \leq \frac{1}{1-\epsilon} \norms{SAx^*-Sb}
 \leq \frac{1+\epsilon}{1-\epsilon} \norms{Ax^*-b},
\end{align}
where the first and the last inequality follow from $S$ being a subspace embedding for $\left(A \,\,b \right)$, while the second inequality is due to $x_s$ minimizing $\|SAx-Sb\|$.
\end{proof}

\begin{proof}[Proof of Lemma \ref{VA-cap}]
Let $z \in \ker(V_1^T) \cap {\rm range}(A^T)$. Then $V_1^T z = 0$ and $z = A^T w$ for some $w \in \R^n$. Let $U, \Sigma, V$ be the SVD factors of $A$ as defined in \eqref{thin-SVD}. Since $S$ is an $\epsilon$-subspace embedding for $A$, $\rank \left( (SU)^T \right) = \rank(SU) = \rank(SA) = r$, where $r$ is the rank of $A$ and hence there exists $\hat{w} \in \R^m$ such that $(SU)^T \hat{w} = U^T w$. Note that 
\begin{equation}
    0 = V_1^T z = V_1 A^T w = V_1^T V \Sigma U^T w = V_1^T V \Sigma U^T S^T \hat{w} = V_1^T A^T S^T \hat{w} = R_{11}^T Q_1^T \hat{w},
\end{equation}
which implies $Q_1^T \hat{w} = 0$ because $R_{11}^T$ is nonsingular.
It follows that
\begin{equation}
    z = A^T w = V \Sigma U^T w = V\Sigma U^T S^T \hat{w} = (SA)^T \hat{w} = V 
\left( \begin{smallmatrix} R_{11}^T Q_1^T \\ R_{12}^T Q_1^T \end{smallmatrix} \right) \hat{w} = 0, 
\end{equation}
where we have used $Q_1^T \hat{w} = 0$ for the last equality.
\end{proof}

\begin{proof}[Proof of Theorem \ref{Speed-of-convergence}]
(i) Using results in \cite{10.5555/248979},  LSQR applied to \eqref{ytau} converges as follows 
\begin{align}
\frac{\|y_j - y_*\|_{ W^T W}}{ \|y_0 - y_*\|_{ W^T W}} \leq 
2 \left( \frac{ \sqrt{\kappa \left[ W^T W \right]} -1 } 
{\sqrt{\kappa \left[ W^T W \right]} +1 } \right)^j, \label{CG_guarantee}
\end{align}
where $y_j$ denotes the $j$th iterate of LSQR and $\kappa(W^TW)$ refers to the condition number of $W^TW$.
Since $S$ is an $\epsilon$-subspace embedding for $A$, we have that the largest singular value of $W$ satisfies
\begin{align*}
\sigma_{\max}(W) = \max_{\|y\|=1} \|AV_1R_{11}^{-1}y\| \leq \sqrt{1+\epsilon} \max_{\|y\|=1} \|SAV_1R_{11}^{-1}y\| = \sqrt{1+\epsilon} \max_{\|y\|=1} \|Q_1y\| = \sqrt{1+\epsilon} ,
\end{align*}
where we have used that $SAV_1R_{11}^{-1} = Q_1$ from (\ref{SA-QRV-fac}). Similarly, it can be shown that the smallest singular
value of $W$ satisfies $\sigma_{\min}(W) \geq \sqrt{1-\epsilon}$. Hence
\begin{align}
\kappa(W^T W) \leq  \frac{1+\epsilon}{1-\epsilon} . \label{Low condition number}
\end{align}
Hence we have
\begin{align*}
    \frac{ \sqrt{\kappa \left[ W^T W \right]} -1 } 
{\sqrt{\kappa \left[ W^T W \right]} +1 } \leq \frac{\sqrt{1+\epsilon} - \sqrt{1-\epsilon}}{\sqrt{1+\epsilon} + \sqrt{1-\epsilon}} =  \frac{\bracket{\sqrt{1+\epsilon} - \sqrt{1-\epsilon}} \bracket{\sqrt{1+\epsilon} + \sqrt{1-\epsilon}} }{\bracket{\sqrt{1+\epsilon} + \sqrt{1-\epsilon}}^2} \leq \epsilon.
\end{align*}
Thus (\ref{CG_guarantee}) implies $\|y_{\tau} - y_*\|_{ W^T W} \leq \tau_r \|y_0 - y_*\|_{ W^T W}$ whenever $\tau \geq \frac{\log(2) + |\log\tau_r|}{|\log\epsilon|}$.

(ii) If we initialize $y_0 : = Q^TSb$ for the LSQR method in Step 4, then we have
\begin{align*}
\|y_0 - y_*\|_{ W^T W} &= \|Ax_s - Ax_*\|_2 = \|Ax_s-b - (Ax_*-b)\|_2 
					                 \leq \|Ax_s -b\| - \|Ax_* -b\| \\
					                 & \leq \left( \sqrt{\frac{1+\epsilon}{1-\epsilon}} - 1 \right) \|Ax_*-b\| 
					                  \leq \frac{2\epsilon}{1-\epsilon} \|Ax_*-b\|,
\end{align*}
where we have used $\sqrt{\frac{1+\epsilon}{1-\epsilon}} \leq \frac{1+\epsilon}{1-\epsilon}$ to get the last inequality.
Using part (i), after at most $\frac{\log(2) + |\log \tau_r|}{|\log\epsilon|}$  LSQR iterations, we have that 
\begin{align}
\|y_{\tau} - y_*\|_{ W^T W} \leq \frac{2\epsilon \tau_r}{1-\epsilon} \|Ax_*-b\|_2. 
\end{align}
Note that $\|Ax_{\tau} - Ax_*\|_2 = \|y_{\tau} - y^*\|_{W^T W}$. Using the triangle inequality, we deduce 
\begin{align}
\|Ax_{\tau} - b\|_2 = \| A x_{\tau} - Ax_* + Ax_* -b  \|_2 \leq \left( 1 + \frac{2\epsilon \tau_r}{1-\epsilon} \right) \|Ax_*-b\|_2.
\end{align}
\end{proof}

\section{Calibration of sketching dimension for dense solvers}

In Figures \ref{fig::new_blen_engineering}, \ref{fig::new_blen_noCPQR_engineering}, \ref{fig::blen_engineering} and \ref{fig::LSRN_engineering},
we display the calibration results for finding a good default value for $m$, the size of the hashing sketching matrix, in 
\solverNameDense{}, Blendenpik and LSRN.

\renewcommand{\mysize}{0.3}

\calibrationSixFigures{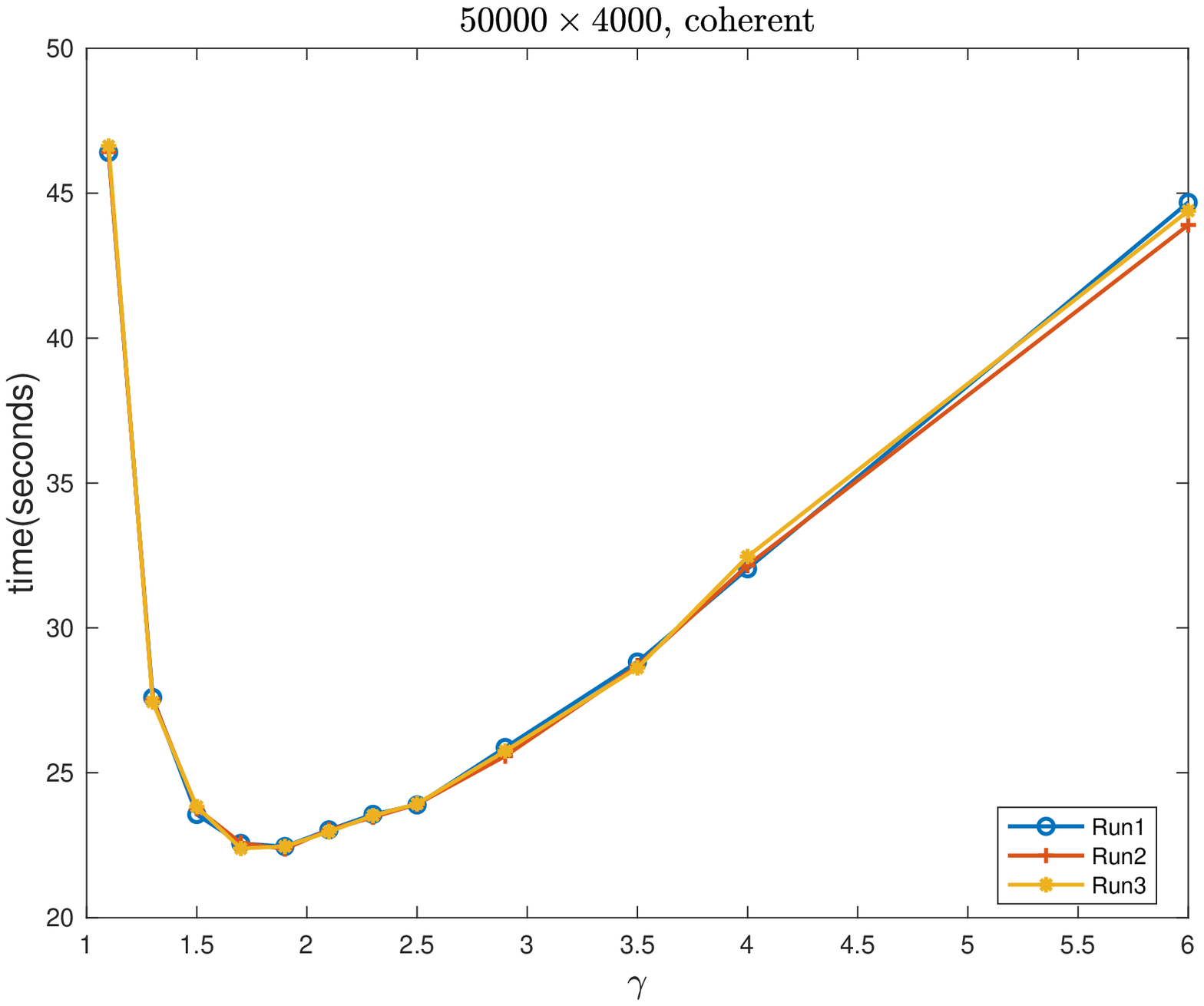}
{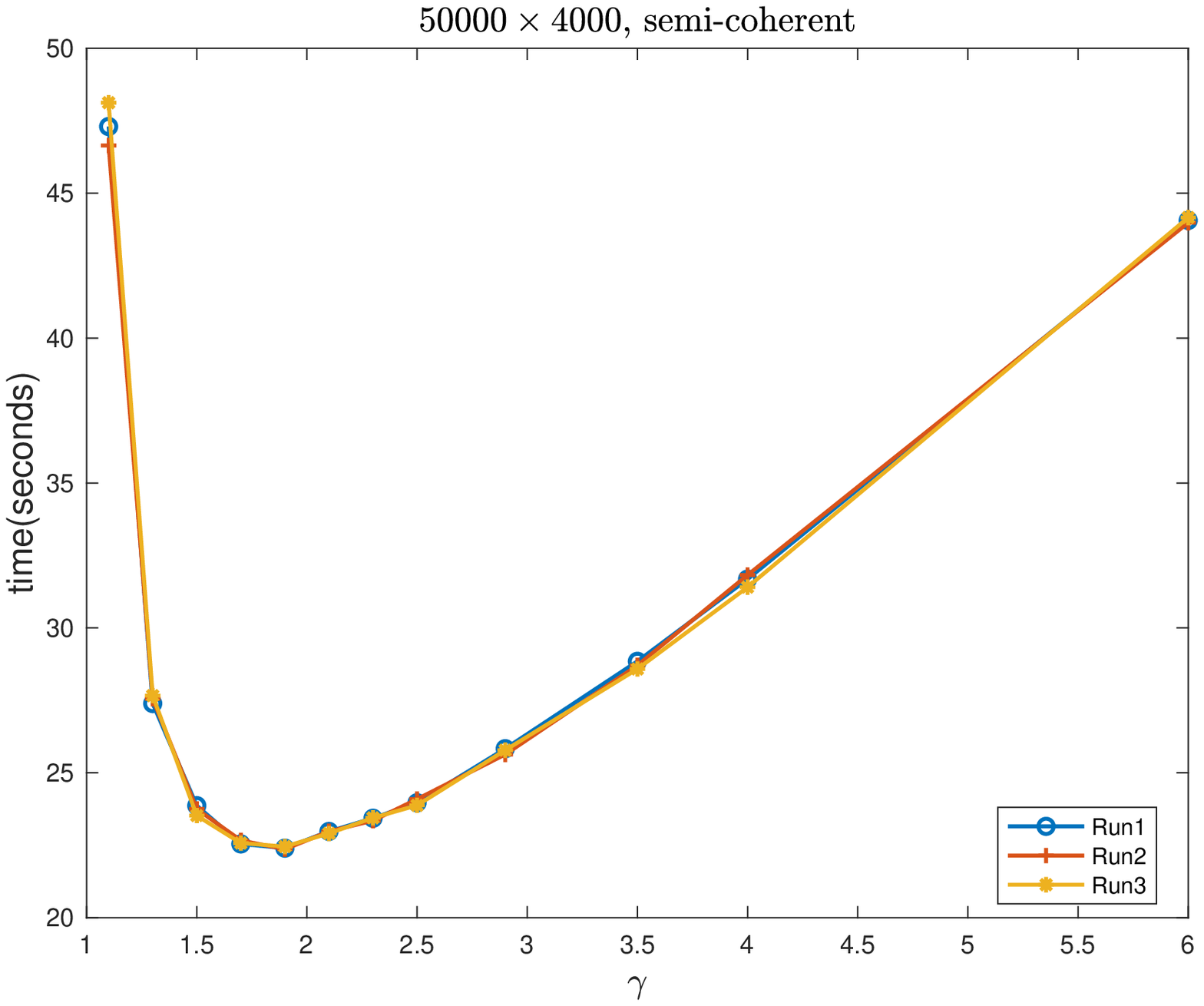}
{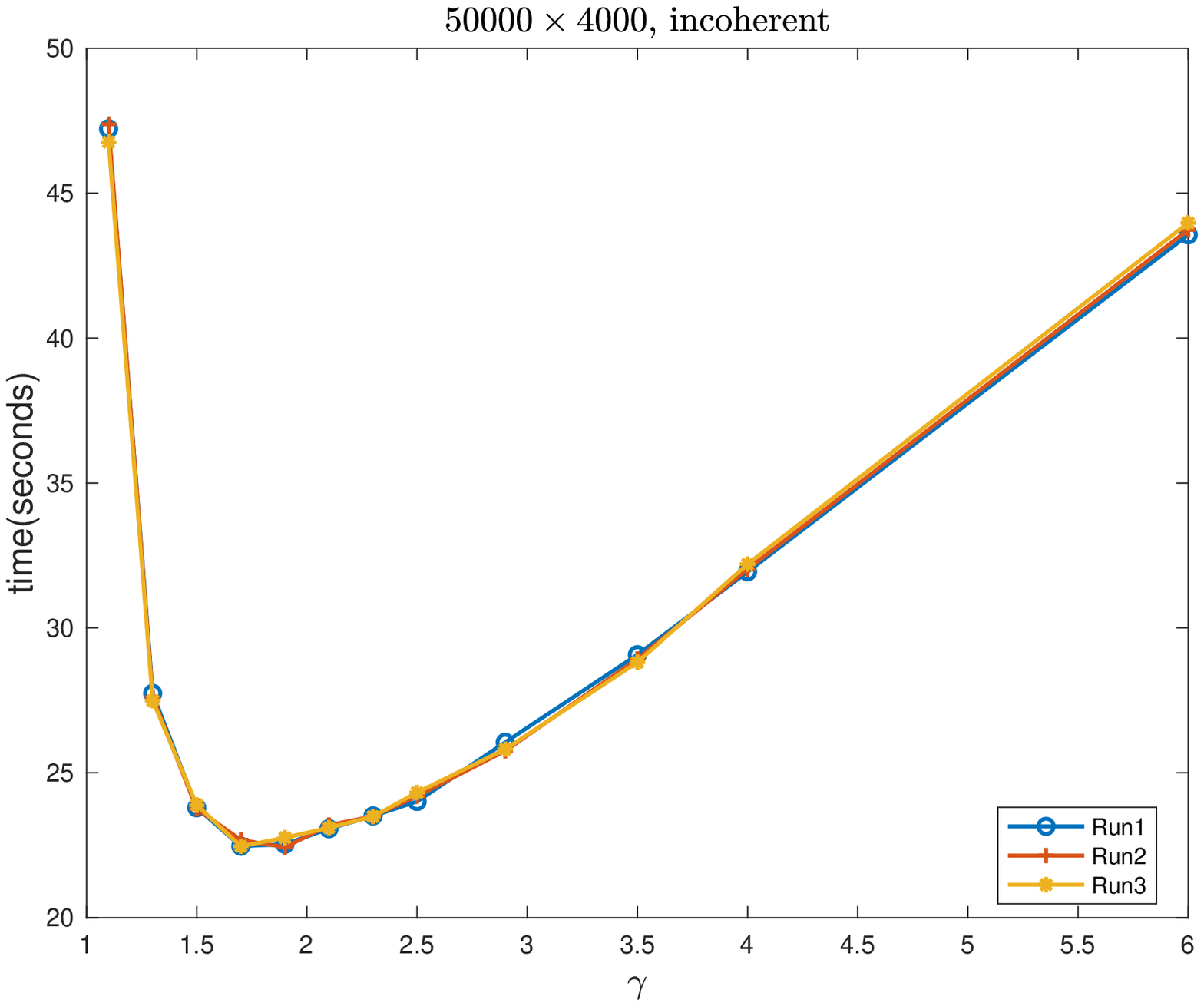}
{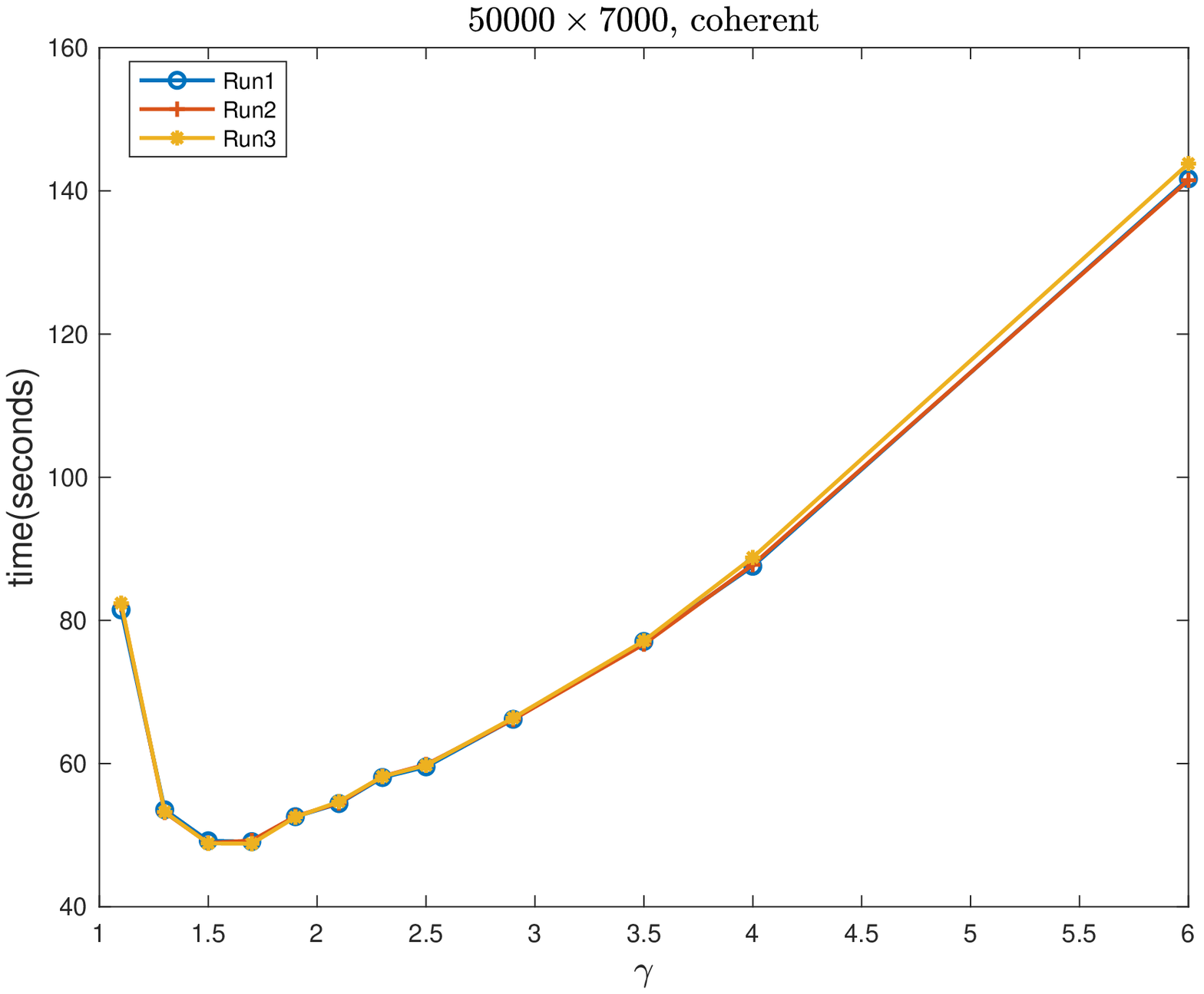}
{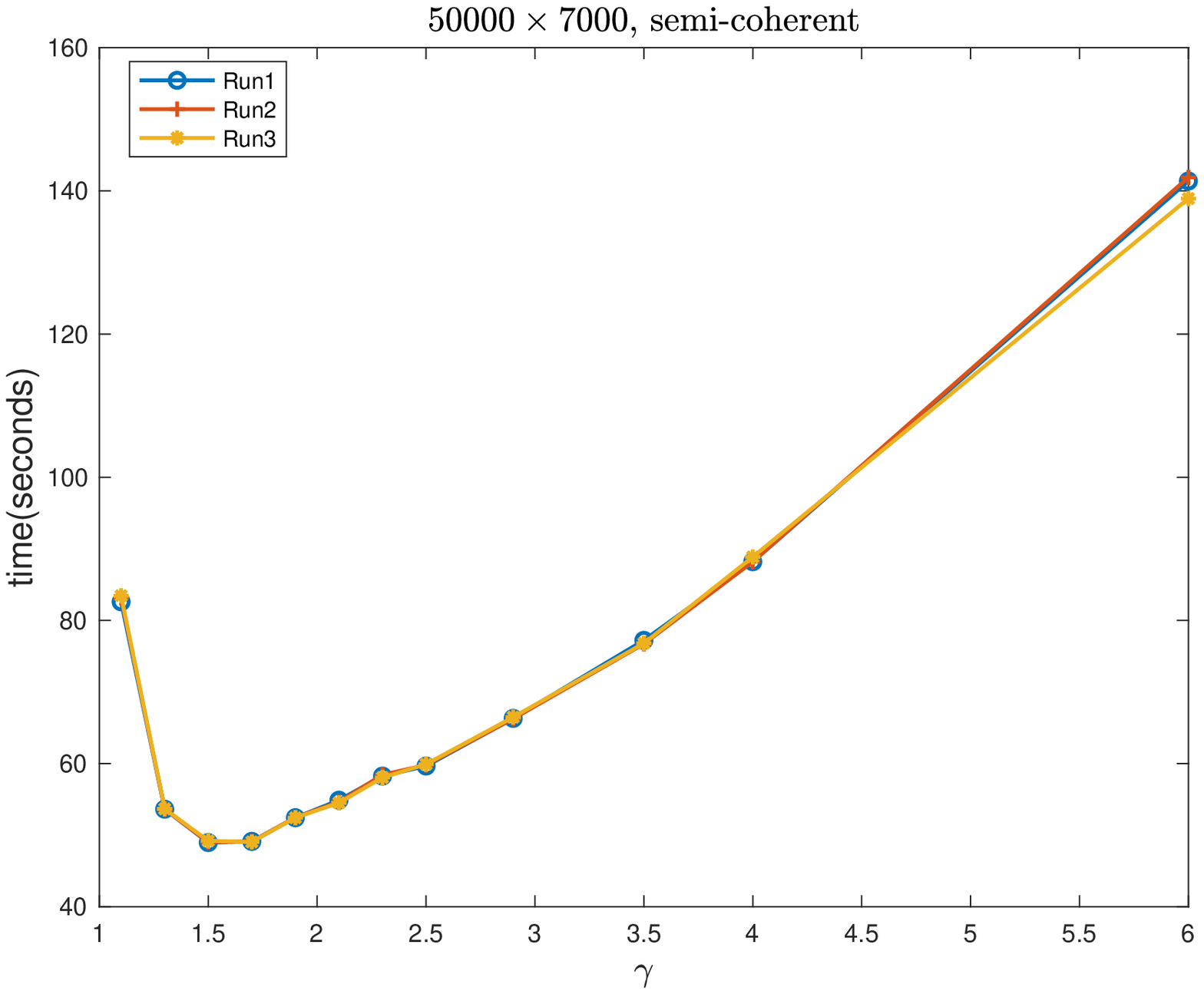}
{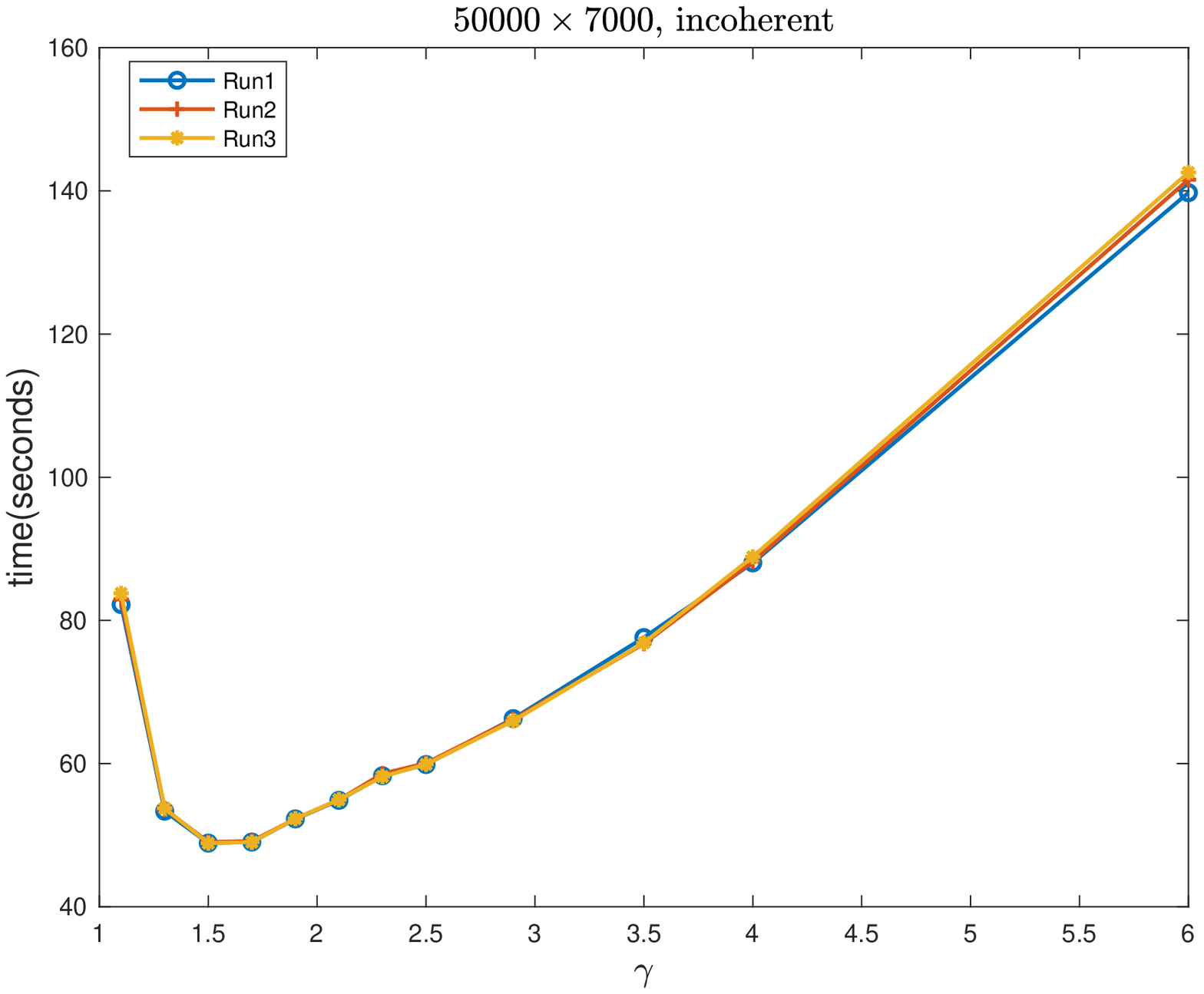}
{\calibrationDenseCaptionSentenceOne{\solverNameDense{}}. \calibrationDenseCaptionSentenceTwo{\solverNameDense{}}. \calibrationDenseCaptionSentenceThree{$
\gamma=1.7$}.}
{fig::new_blen_engineering}

\calibrationSixFigures{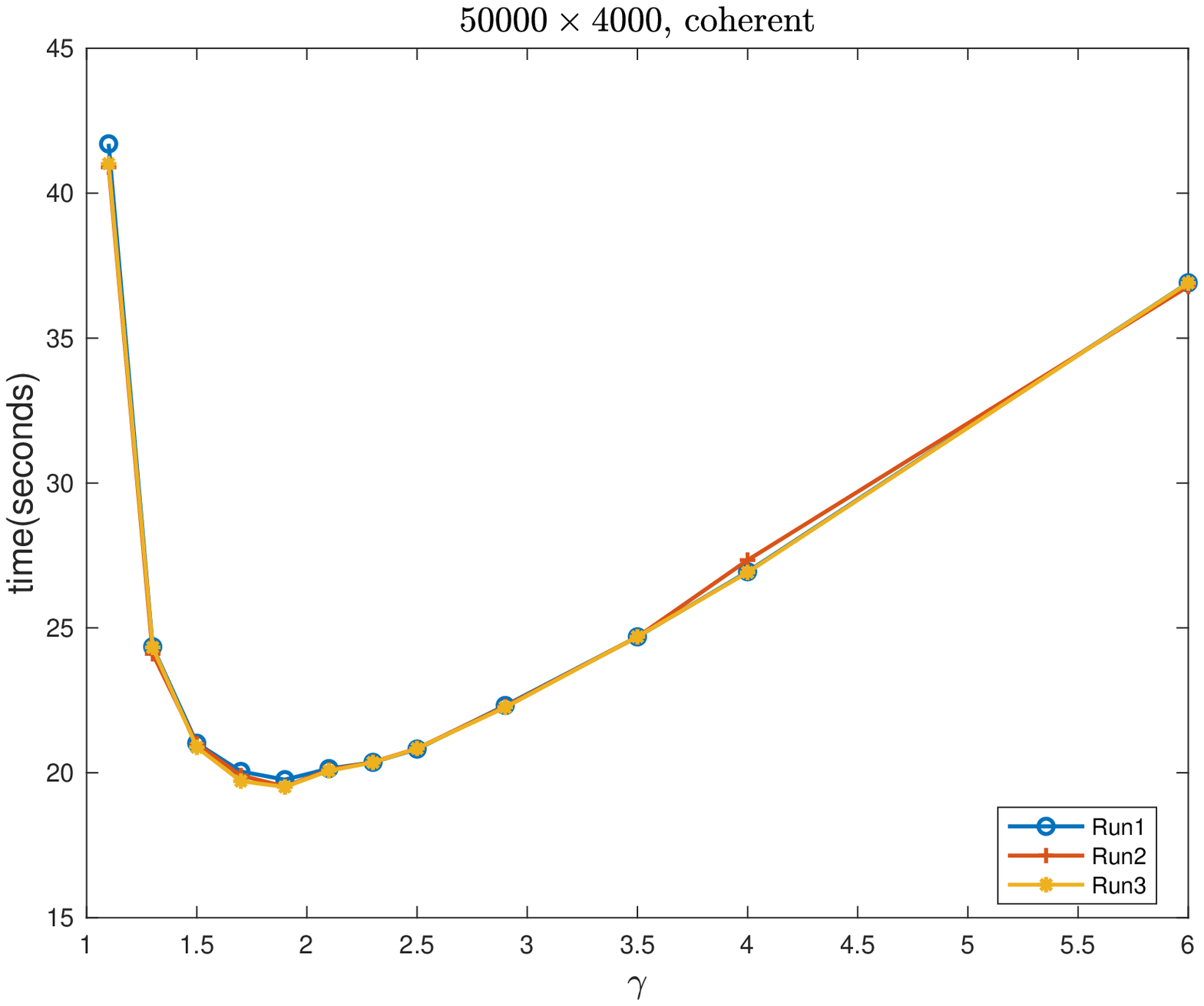}
{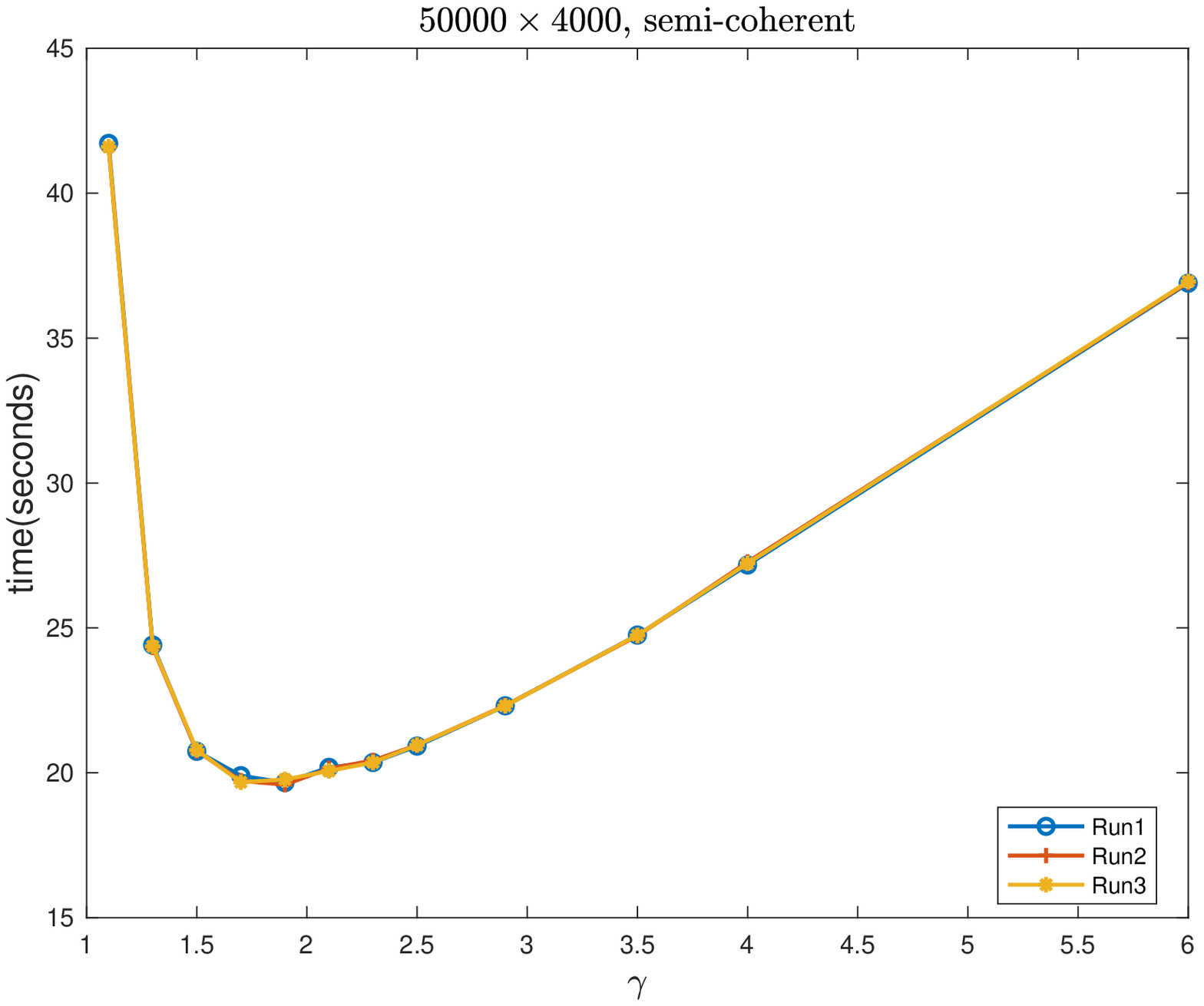}
{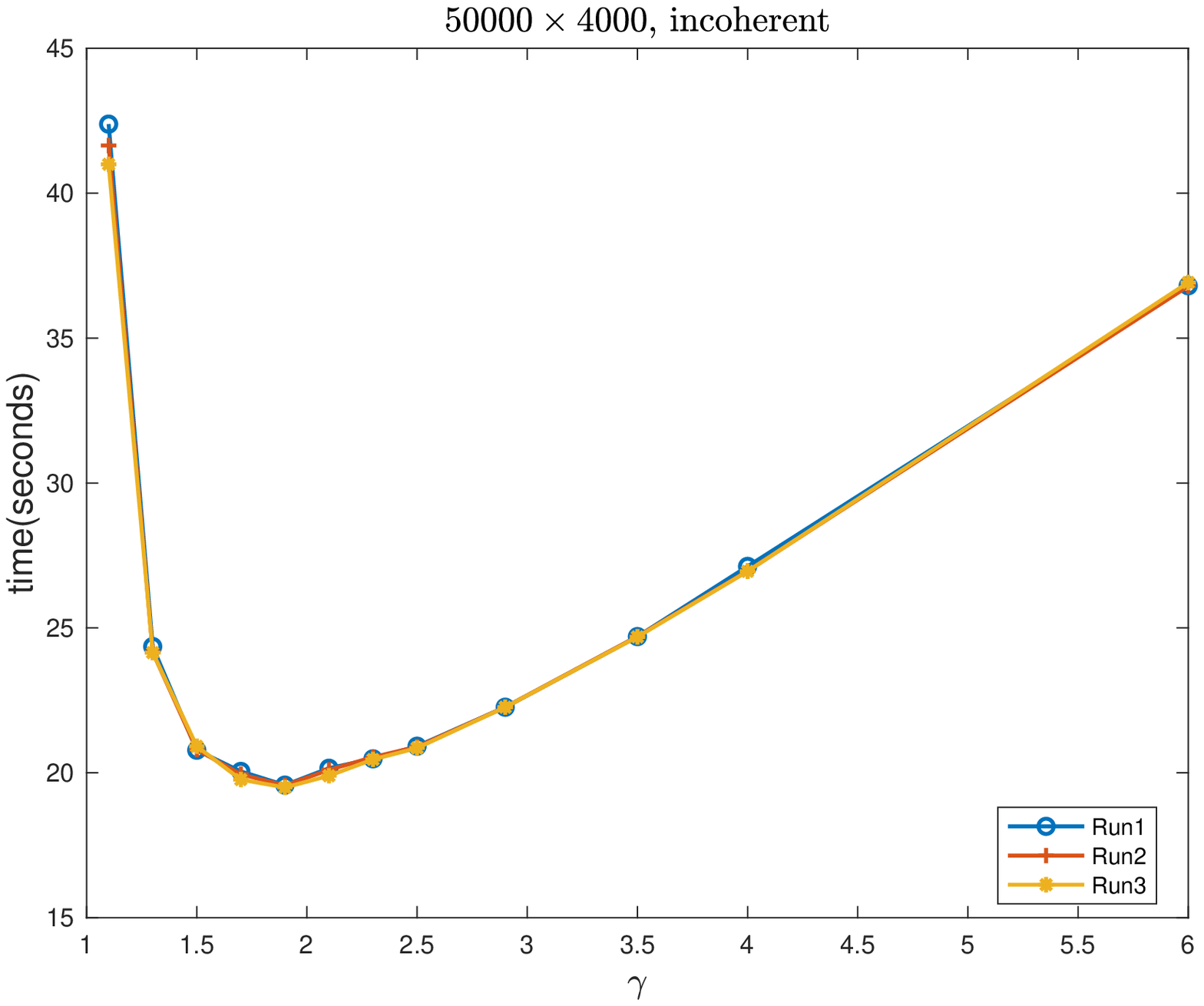}
{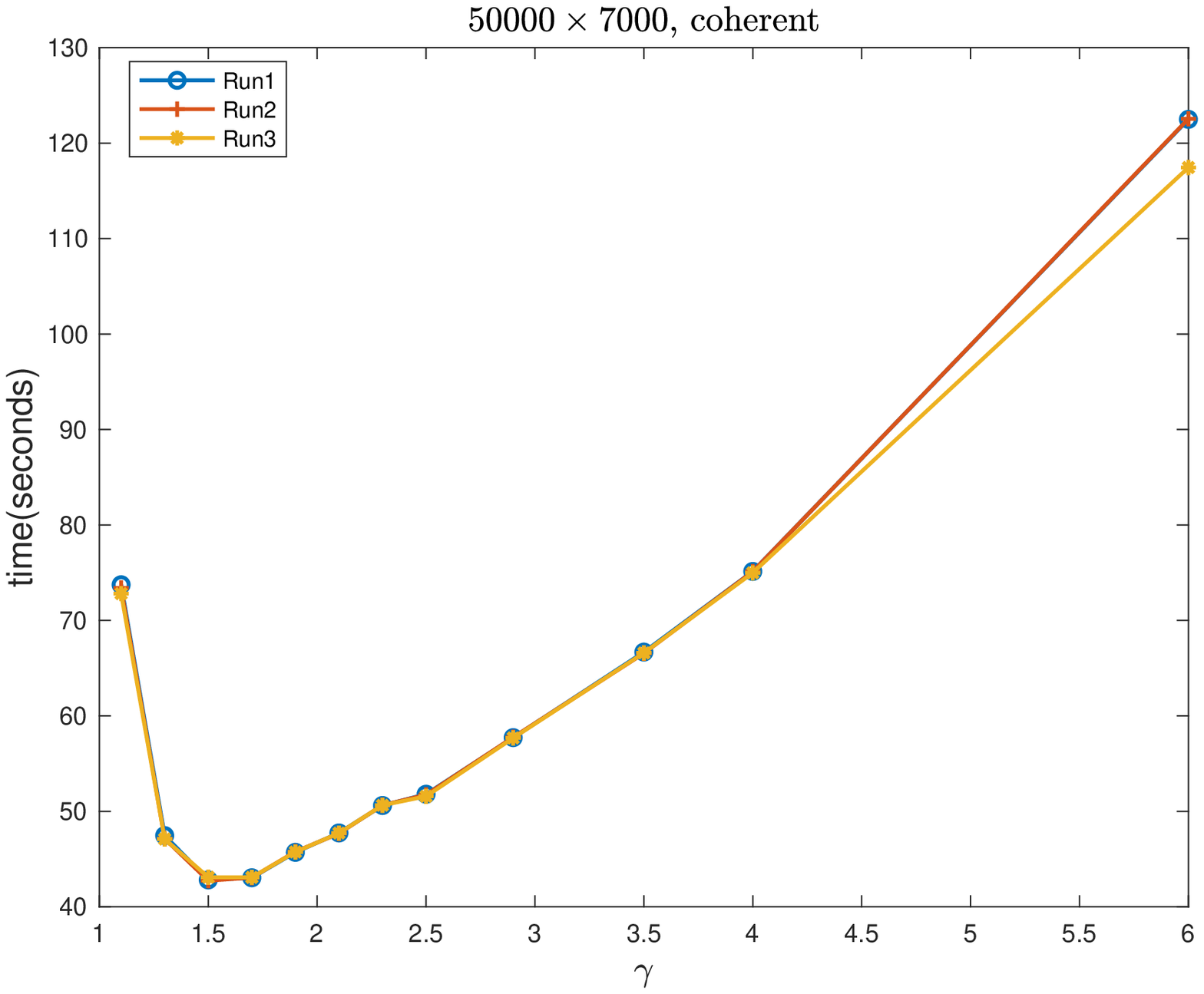}
{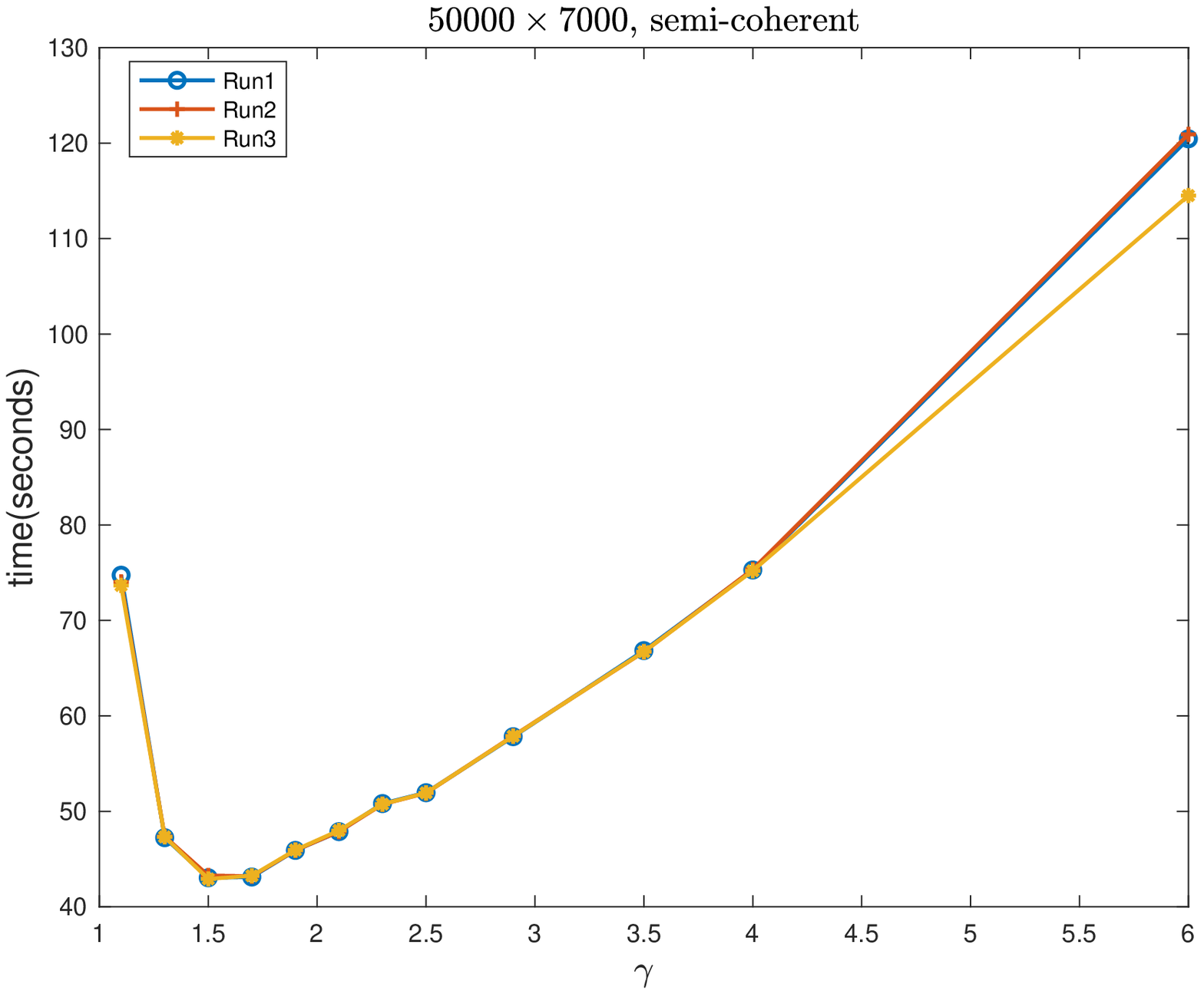}
{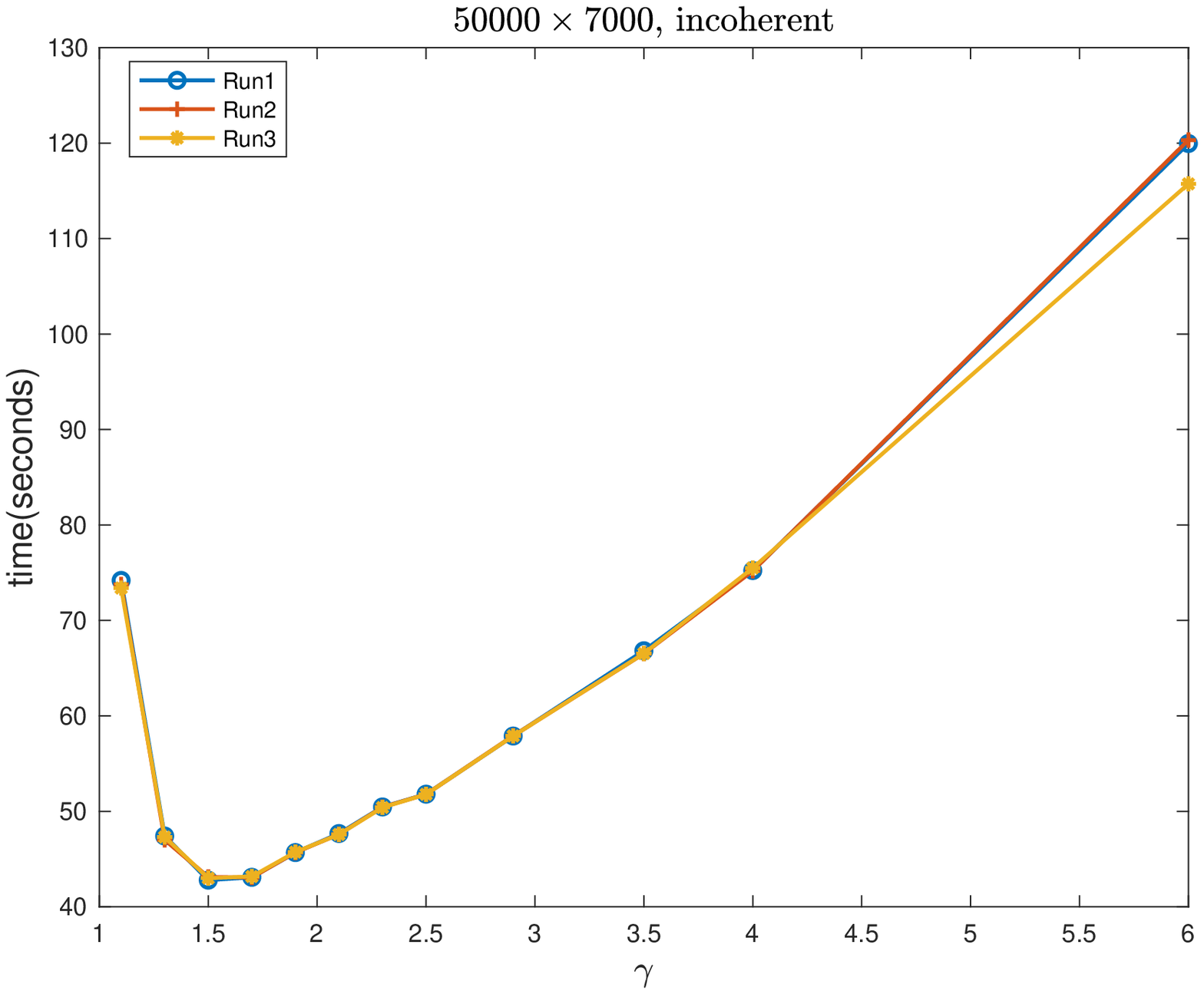}
{
\calibrationDenseCaptionSentenceOne{\solverNameDense{} without R-CPQR}. \calibrationDenseCaptionSentenceTwo{\solverNameDense{} without R-CPQR}. Note that using LAPACK QR instead of R-CPQR results in slightly shorter runtime (according to Figure \ref{fig::new_blen_engineering}). \calibrationDenseCaptionSentenceThree{$\gamma=1.7$}.}
{fig::new_blen_noCPQR_engineering}

\calibrationSixFigures{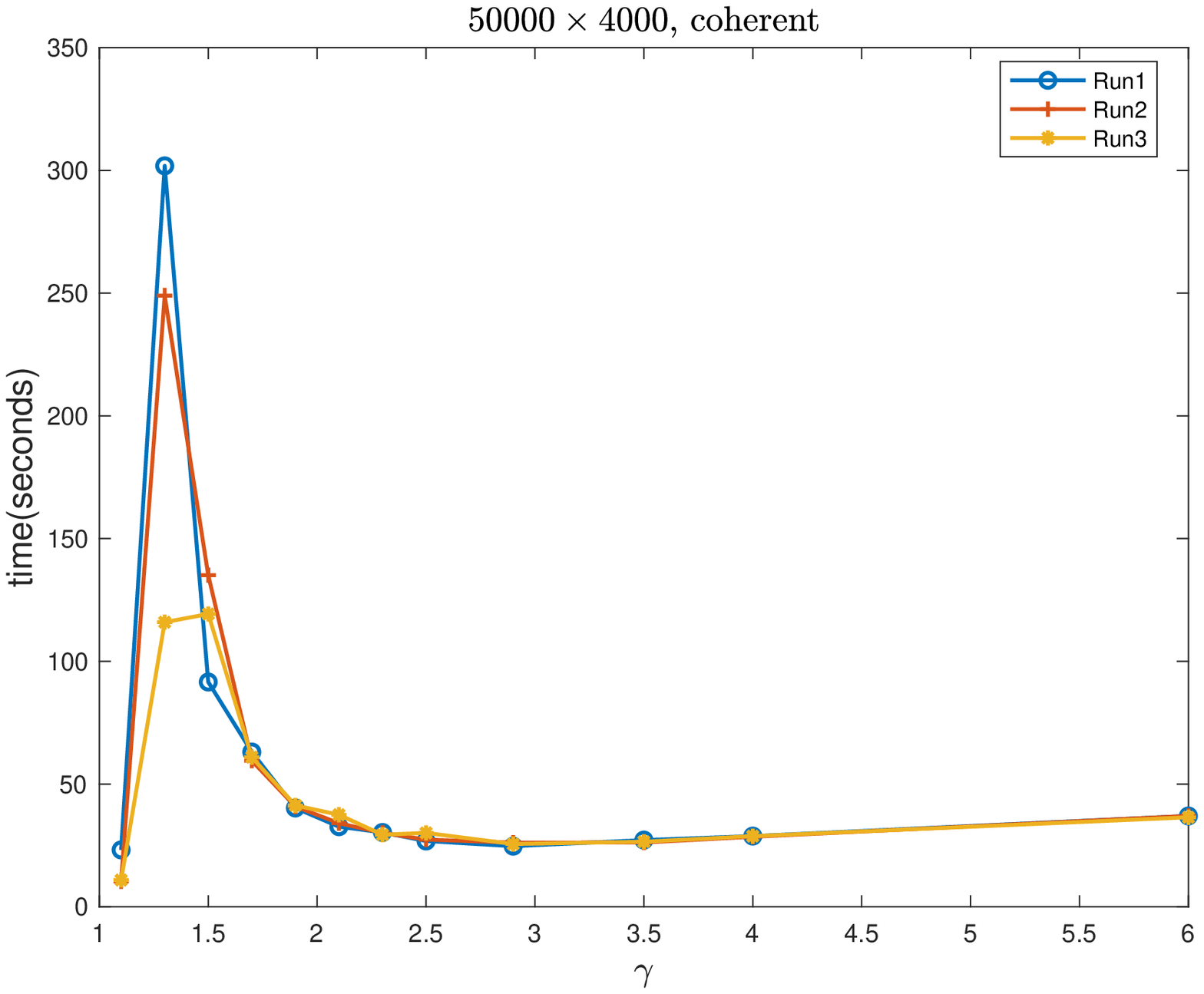}
{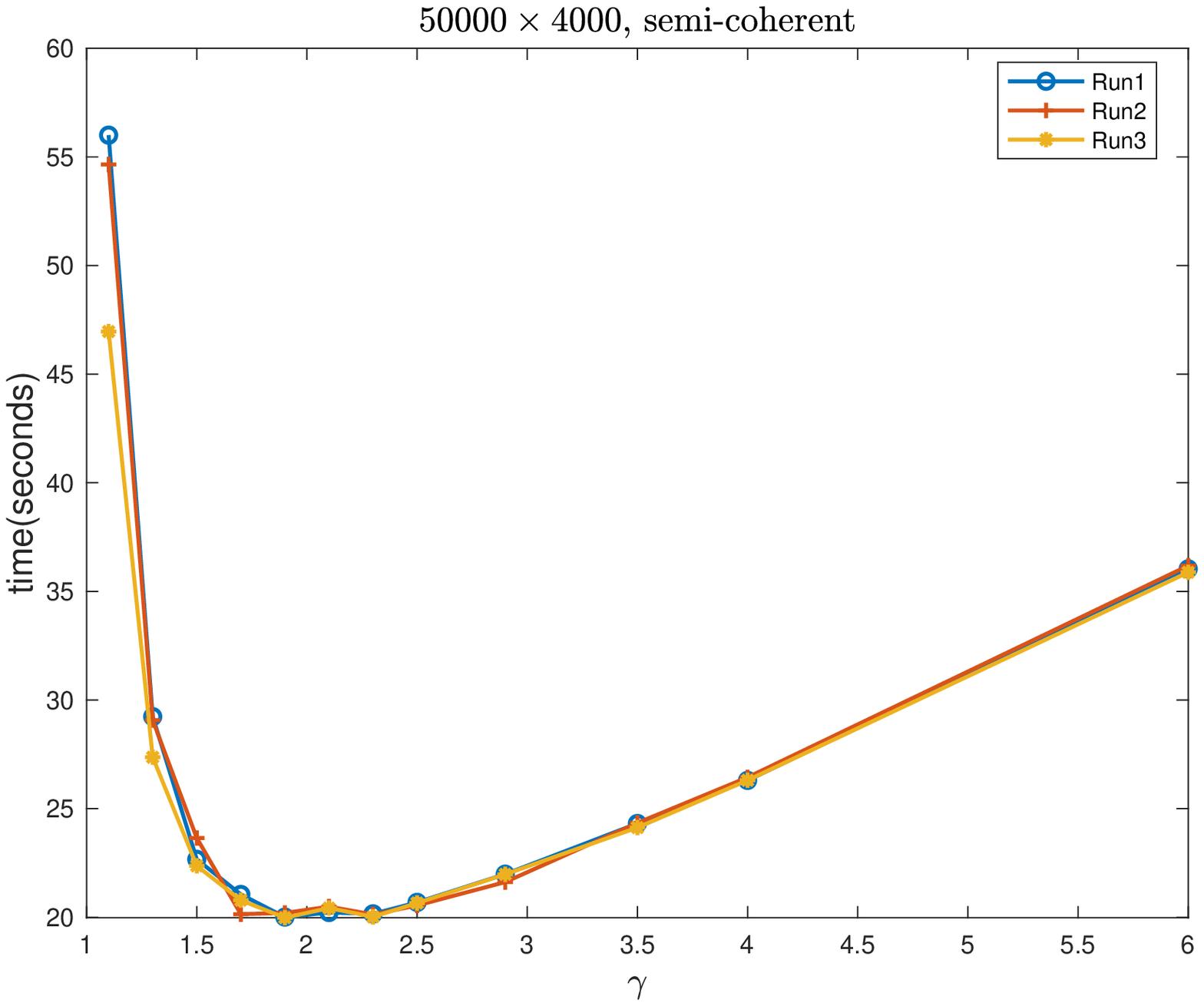}
{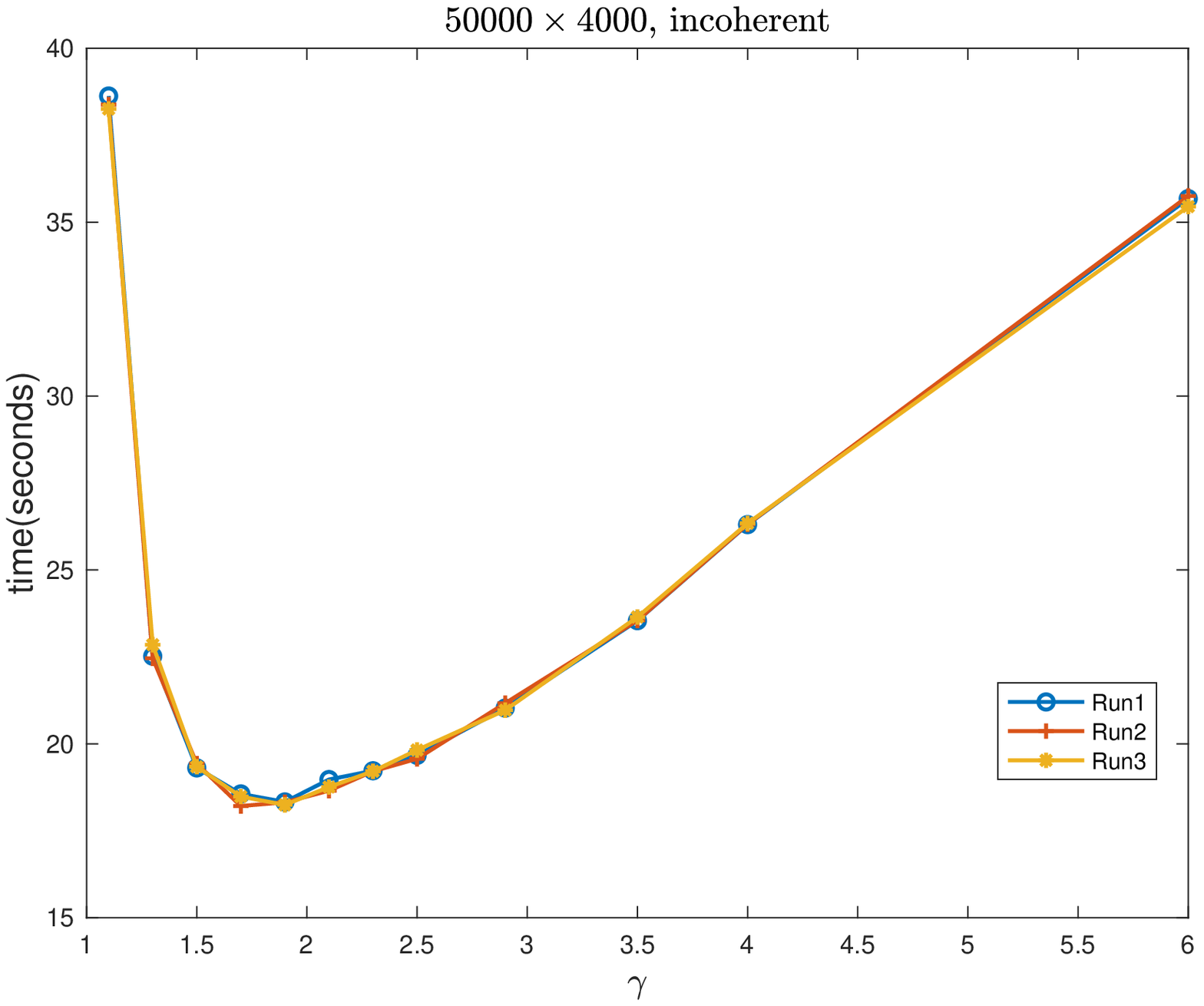}
{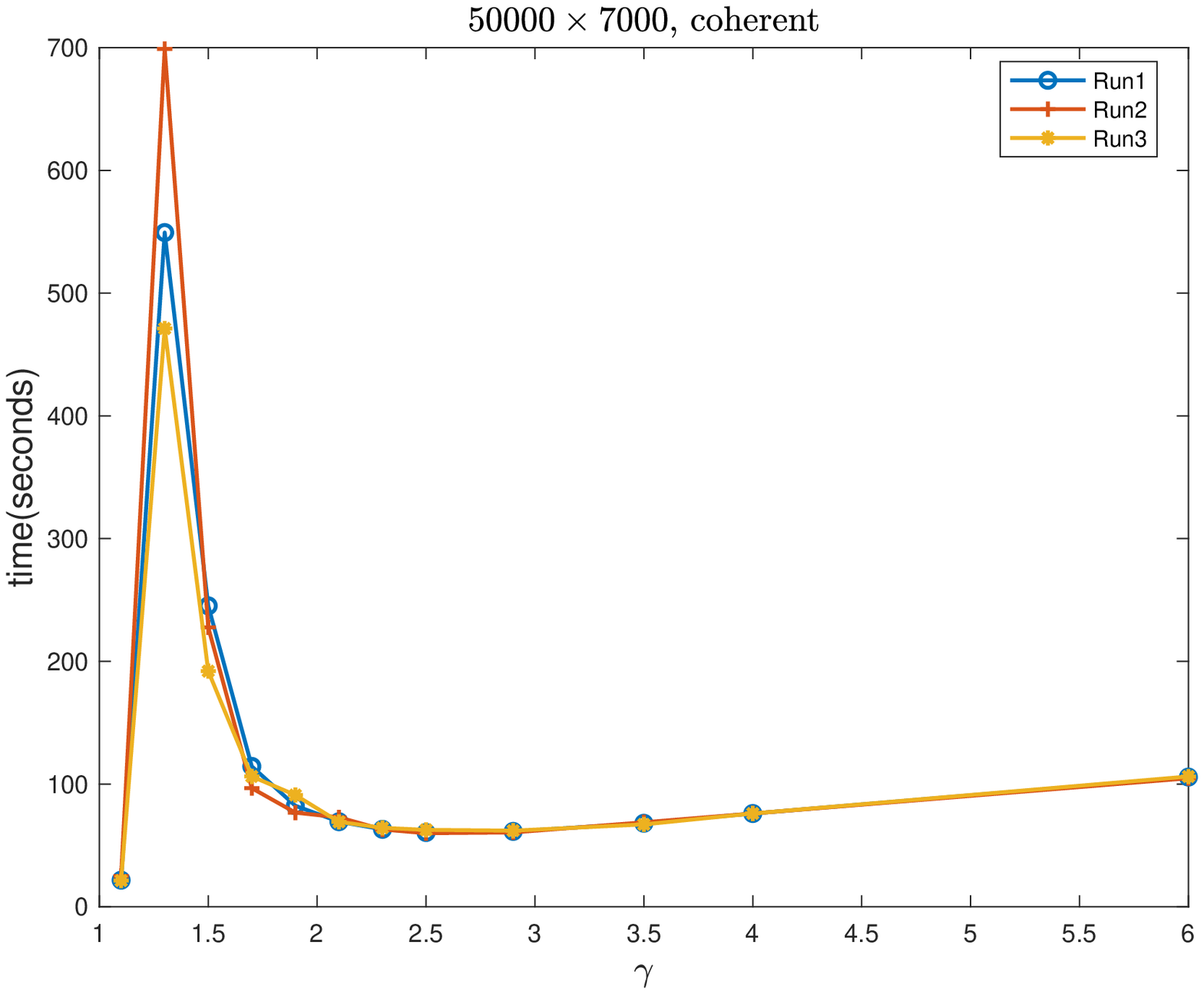}
{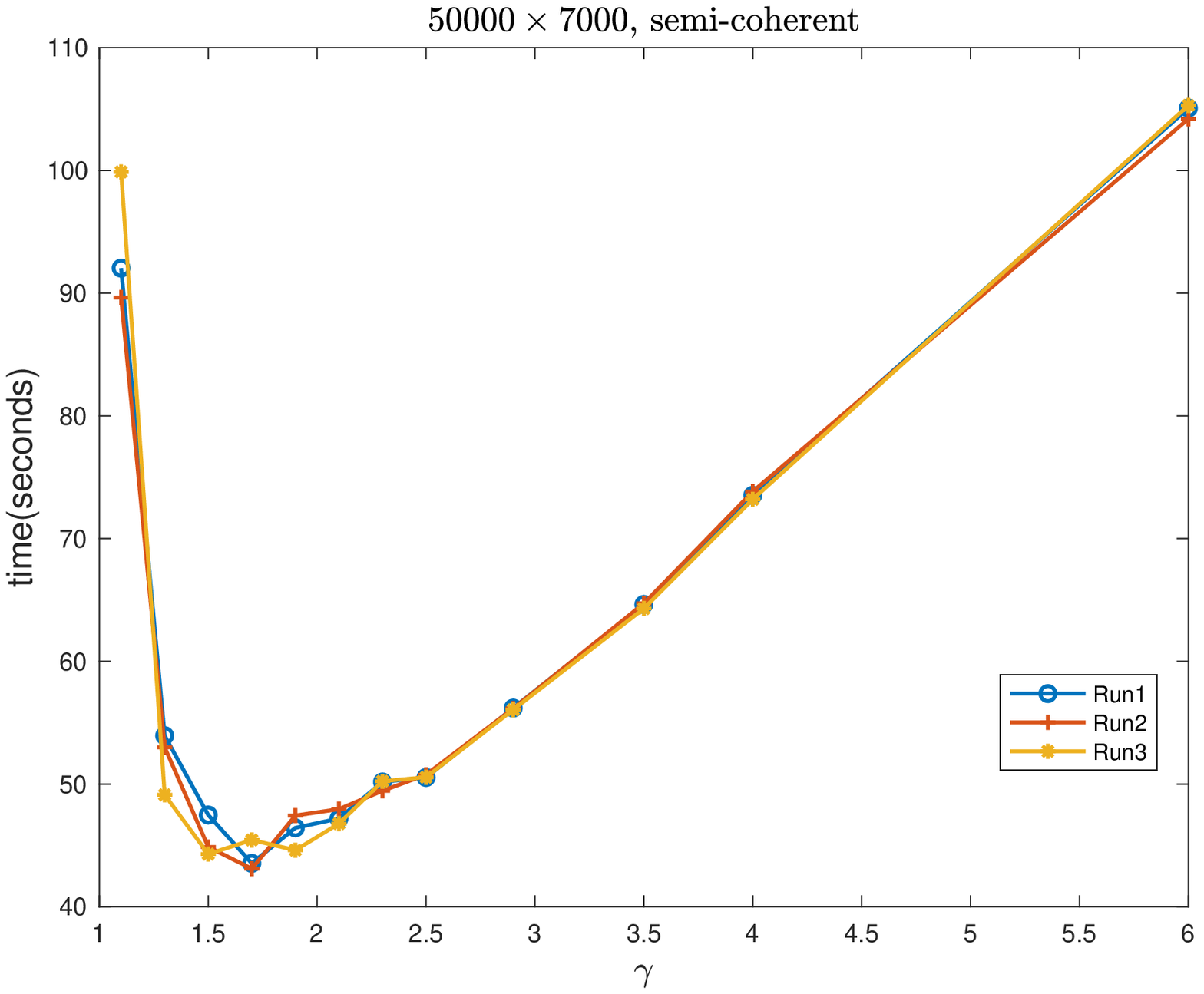}
{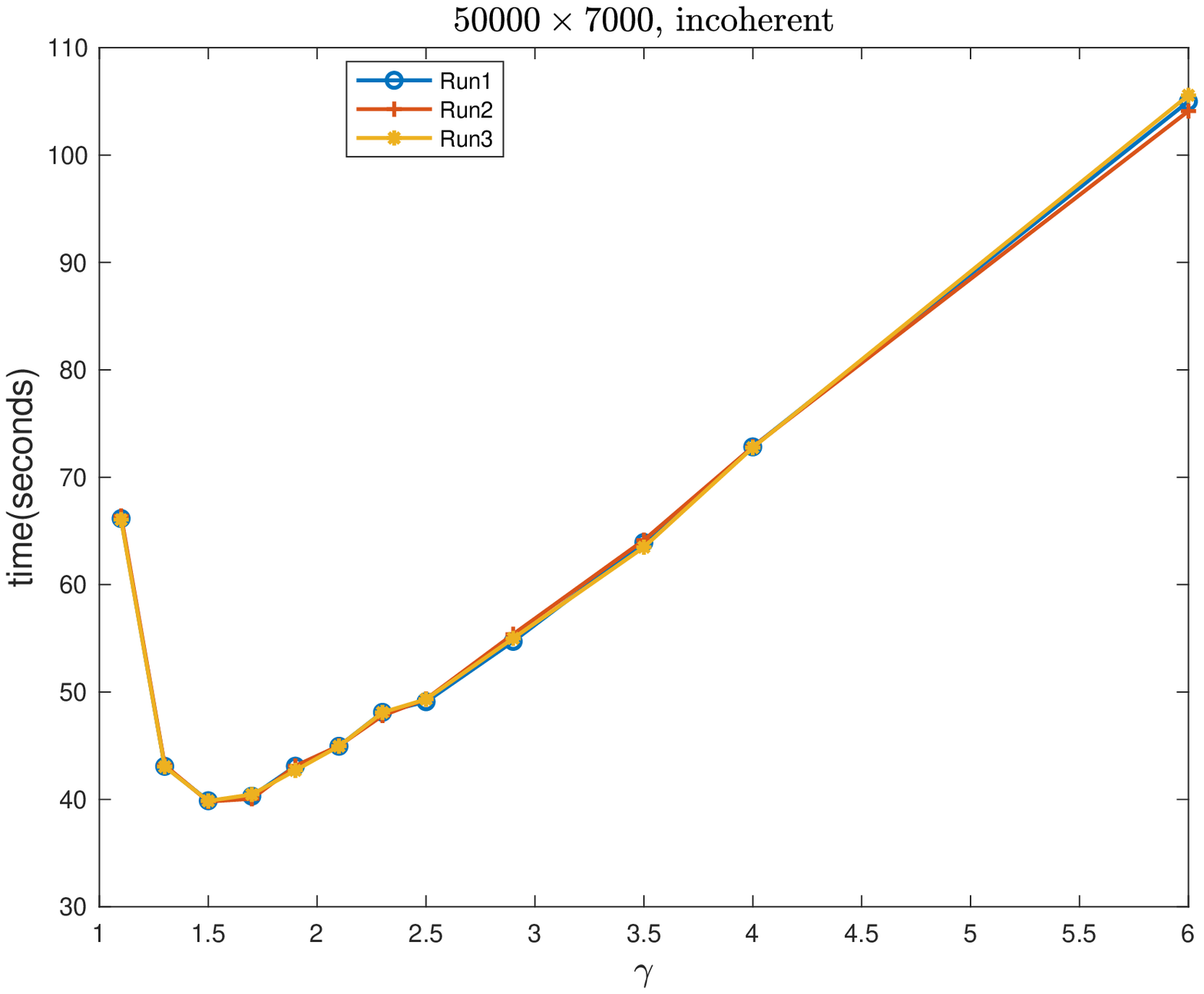}
{\calibrationDenseCaptionSentenceOne{Blendenpik}. \calibrationDenseCaptionSentenceTwo{Blendenpik}. Note that Blendenpik handles coherent dense $A$ significantly less well than \solverNameDense{}. \calibrationDenseCaptionSentenceThree{$\gamma=2.2$}.}
{fig::blen_engineering}

\calibrationSixFigures{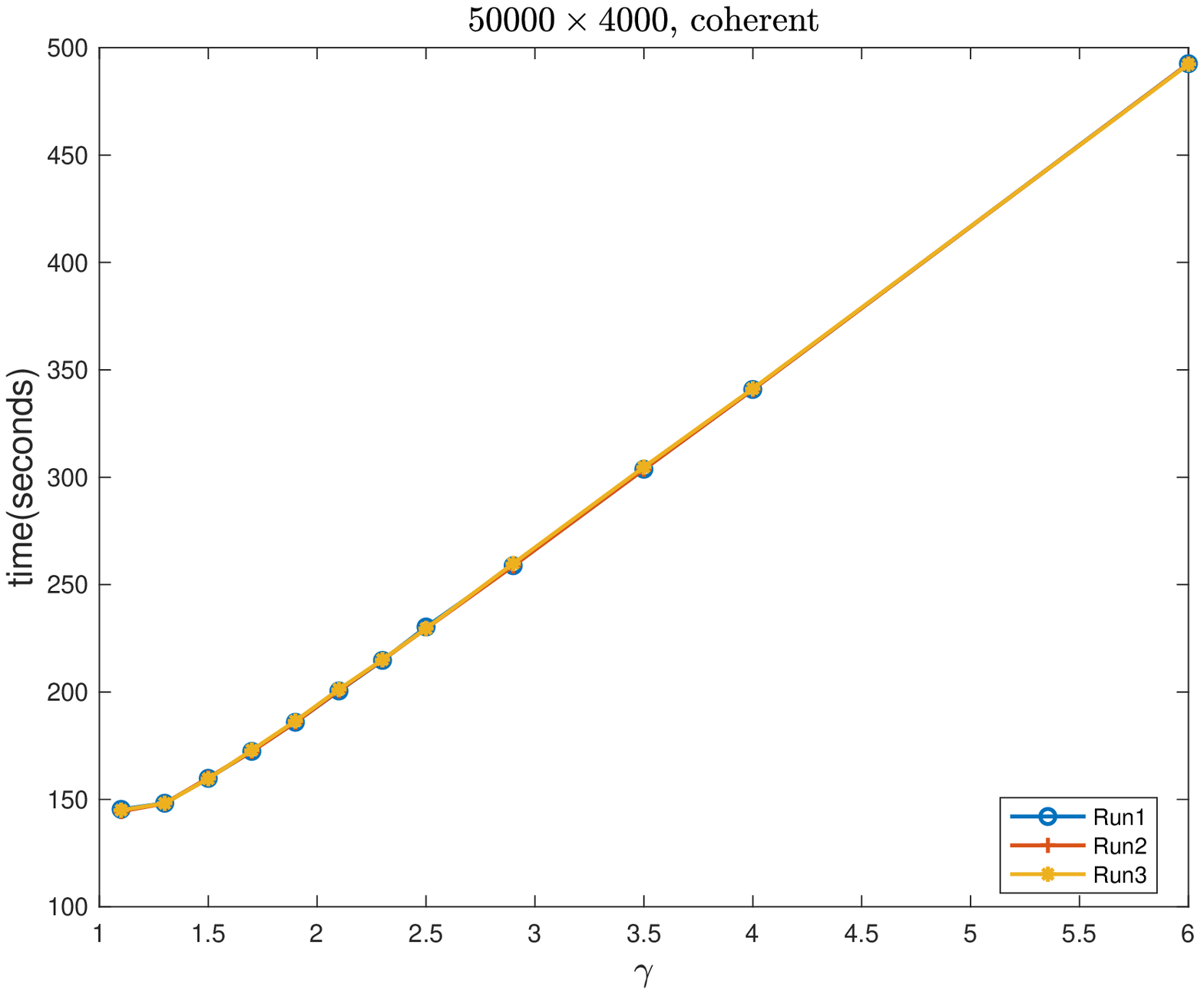}
{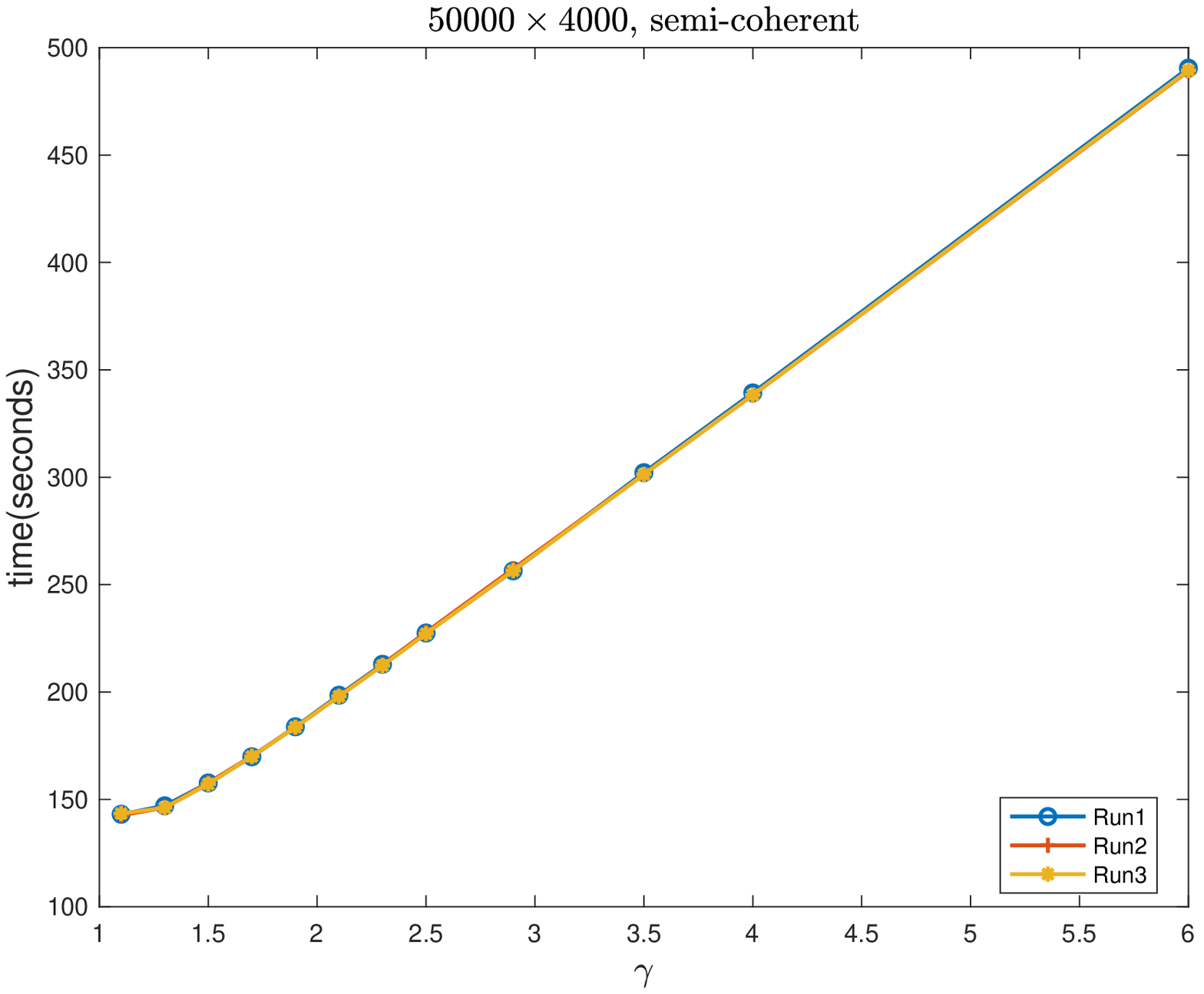}
{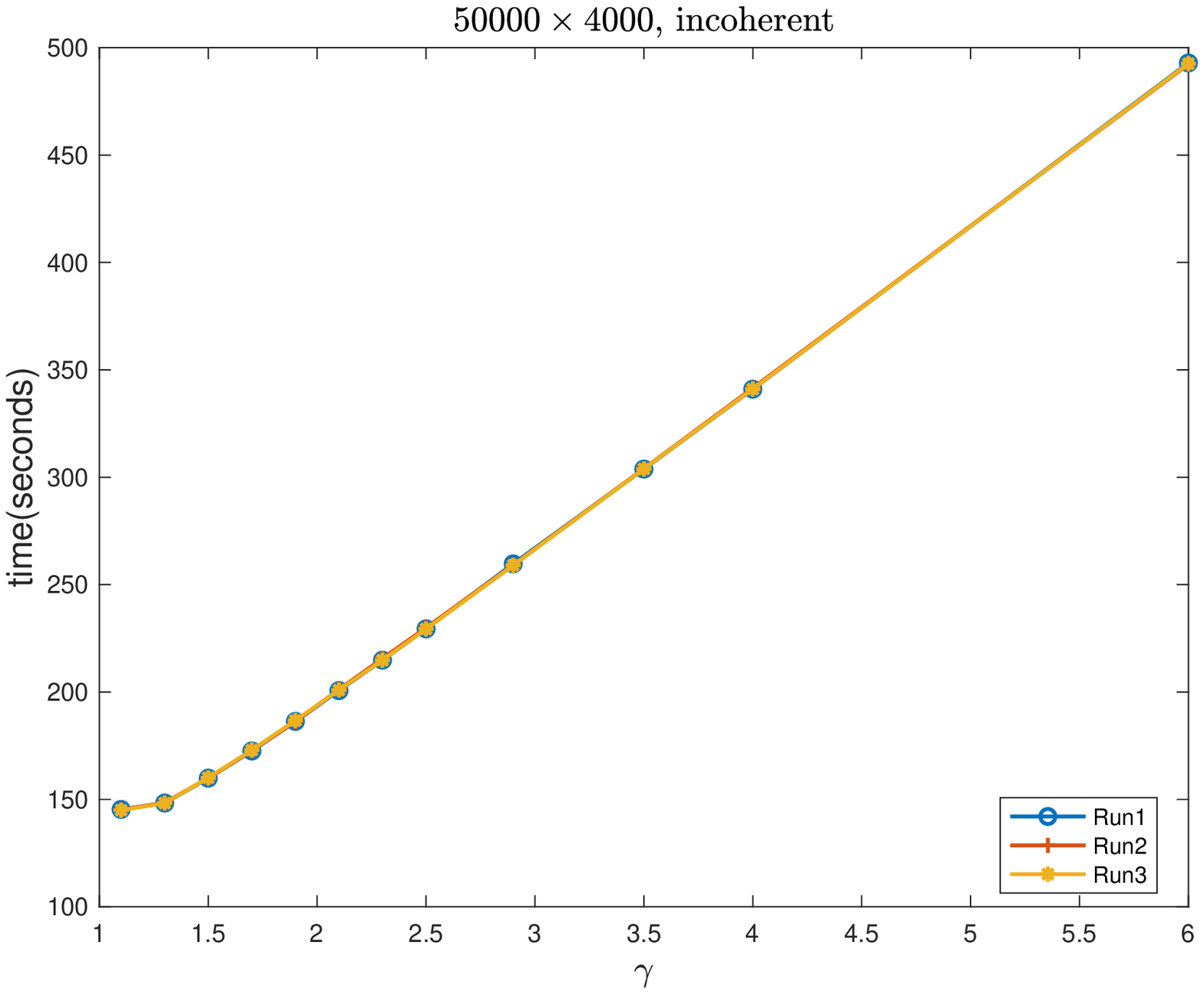}
{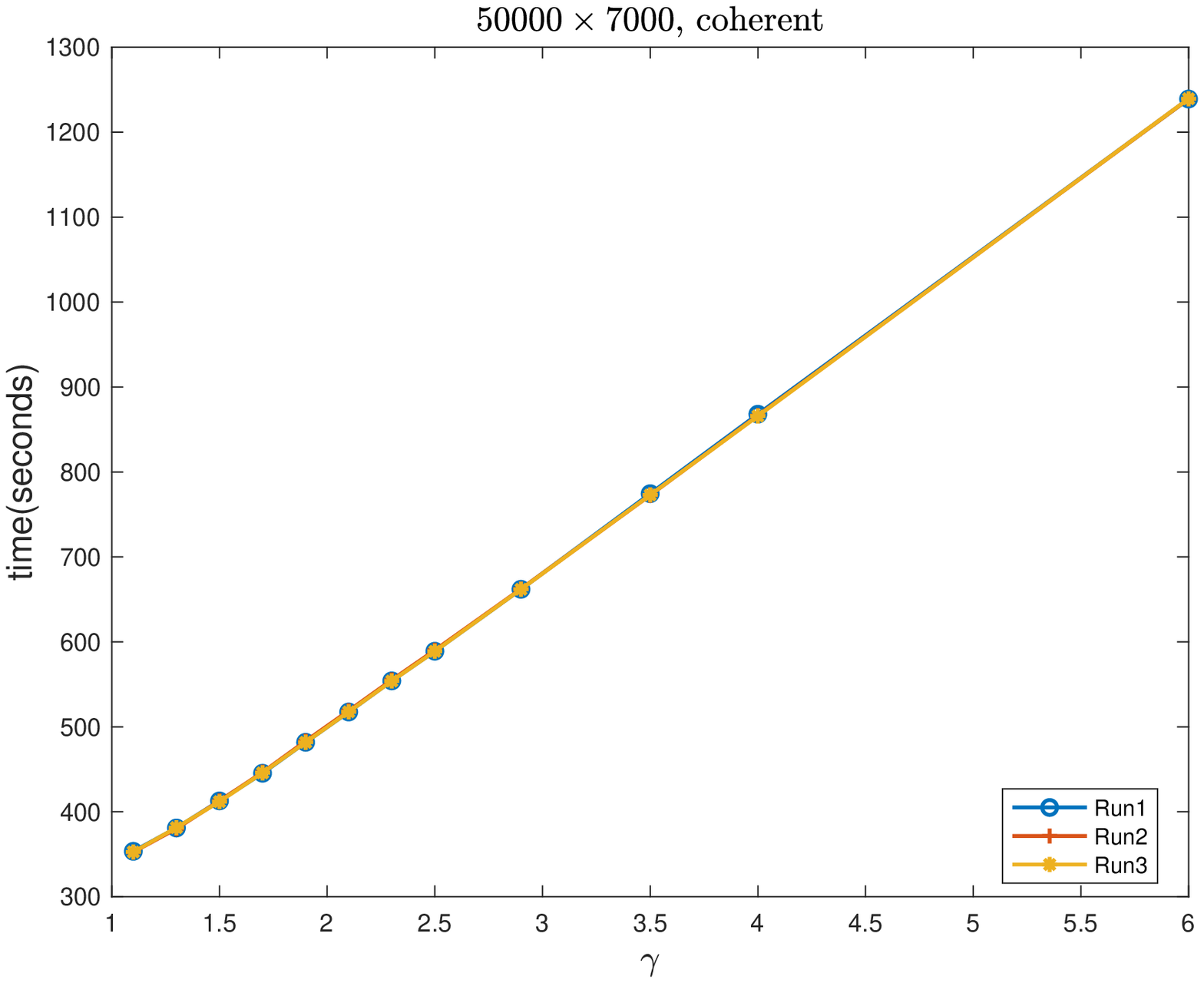}
{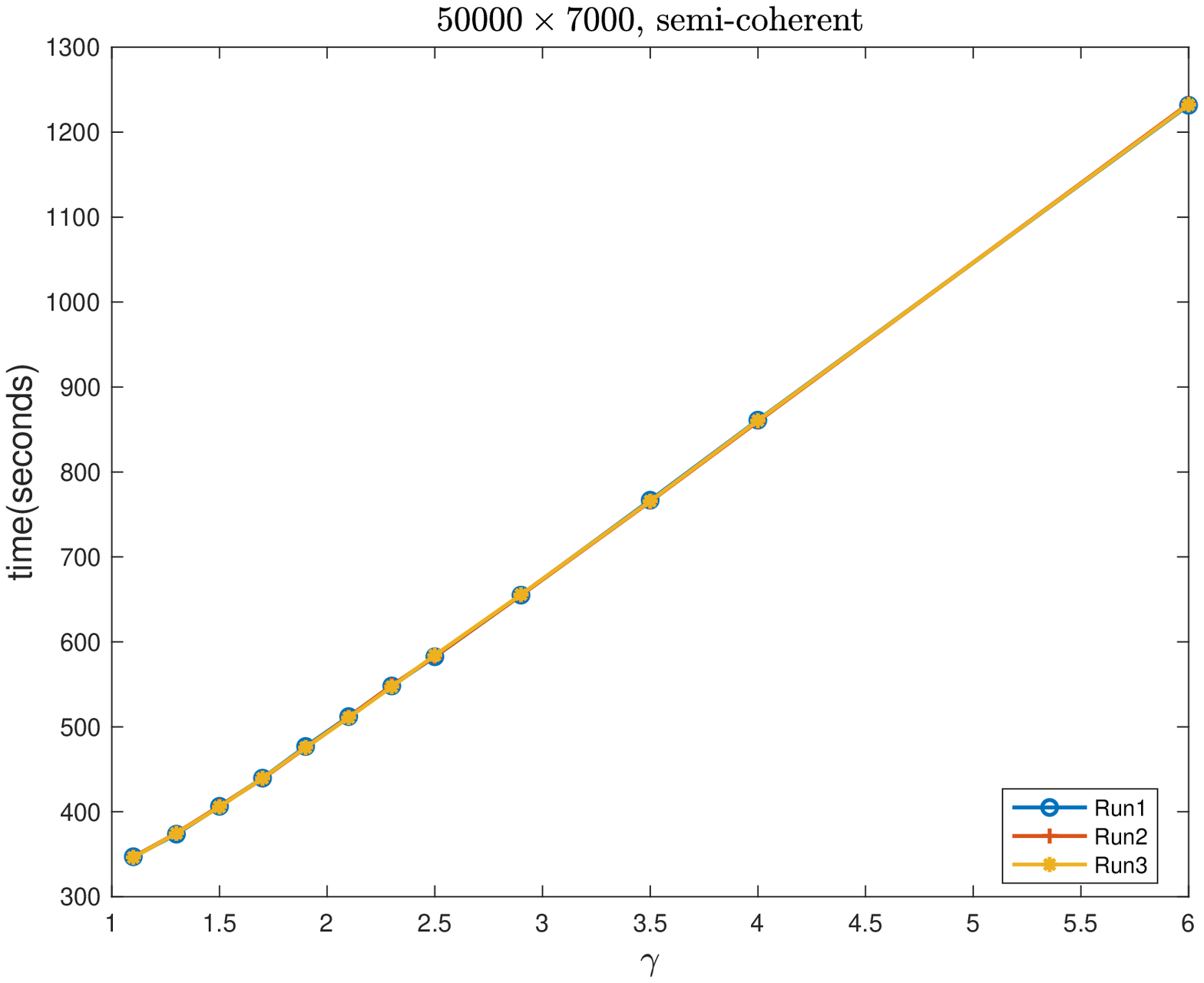}
{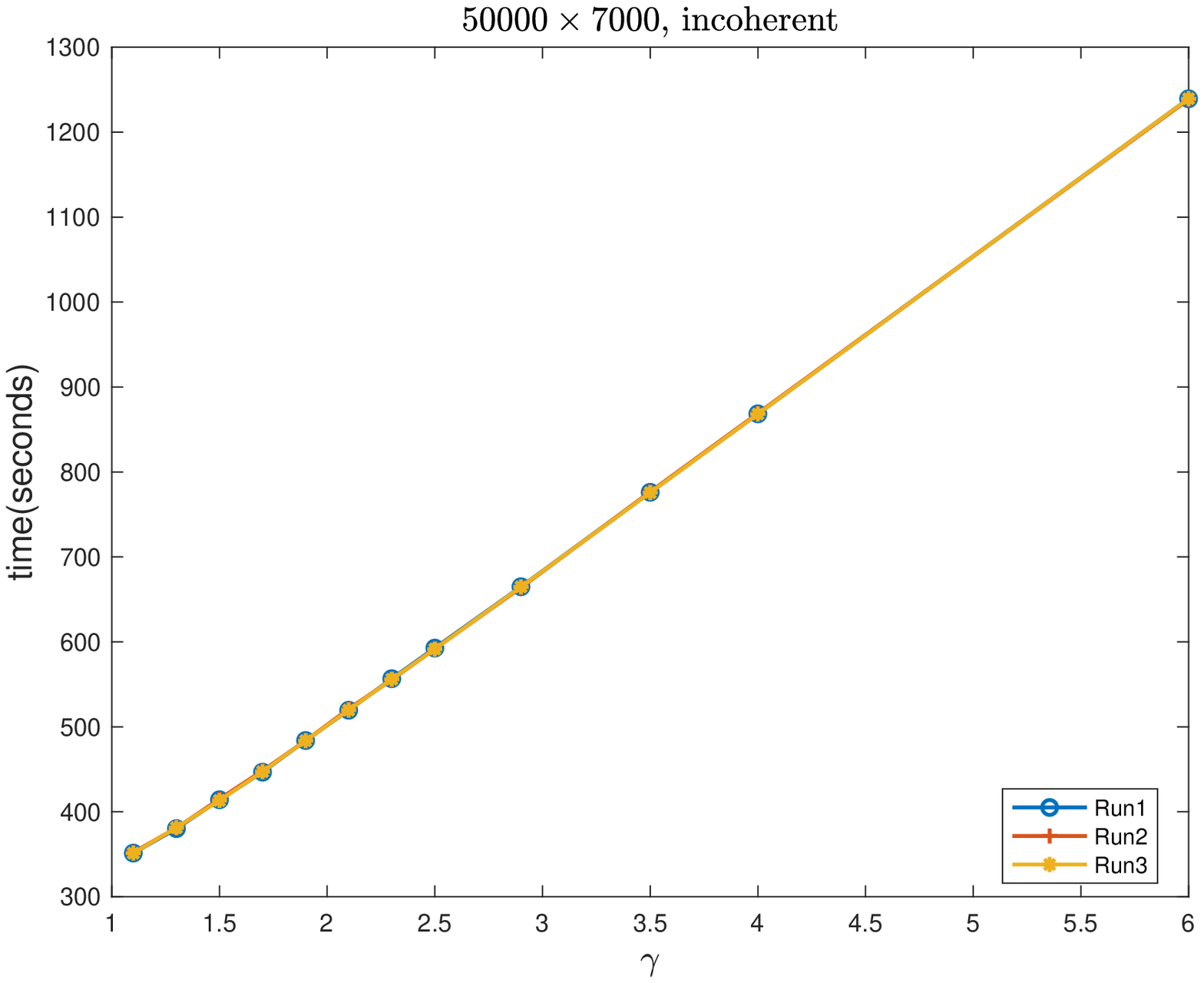}
{\calibrationDenseCaptionSentenceOne{LSRN}. \calibrationDenseCaptionSentenceTwo{LSRN}. Note that LSRN runs more than 5 times slower comparing to Blendenpik or \solverName{} in the serial testing environment, due to the use of SVD and Gaussian sketching. \calibrationDenseCaptionSentenceThree{$\gamma=1.1$}.} 
{fig::LSRN_engineering}

\section{Calibration of sketching dimension and number of hashing entries for sparse solvers}

In Figures \ref{fig::Ls_qr_engineering_time} and \autoref{fig::Ls_qr_engineering_residual} we display the calibration results for finding a good default value for $m$ (the size of the sketching matrix) and $s$ (the number of nonzero entries per sketching column), for \solverNameSparse{}; similar plots are given for LSRN in \autoref{fig::Ls_lsrn_engineering_time} and \autoref{fig::Ls_lsrn_engineering_residual}, respectively.

\calibrationSixFigures{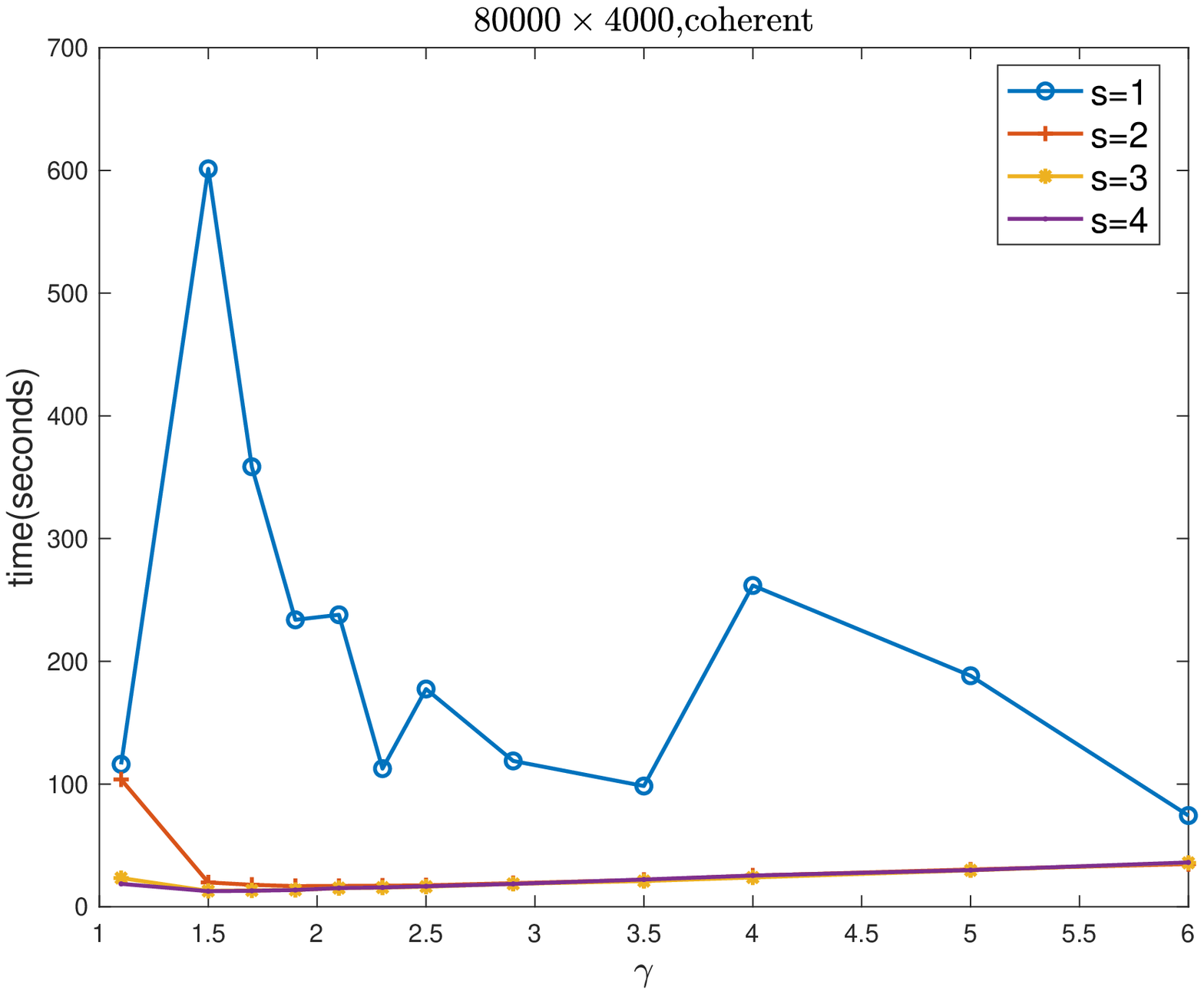}
{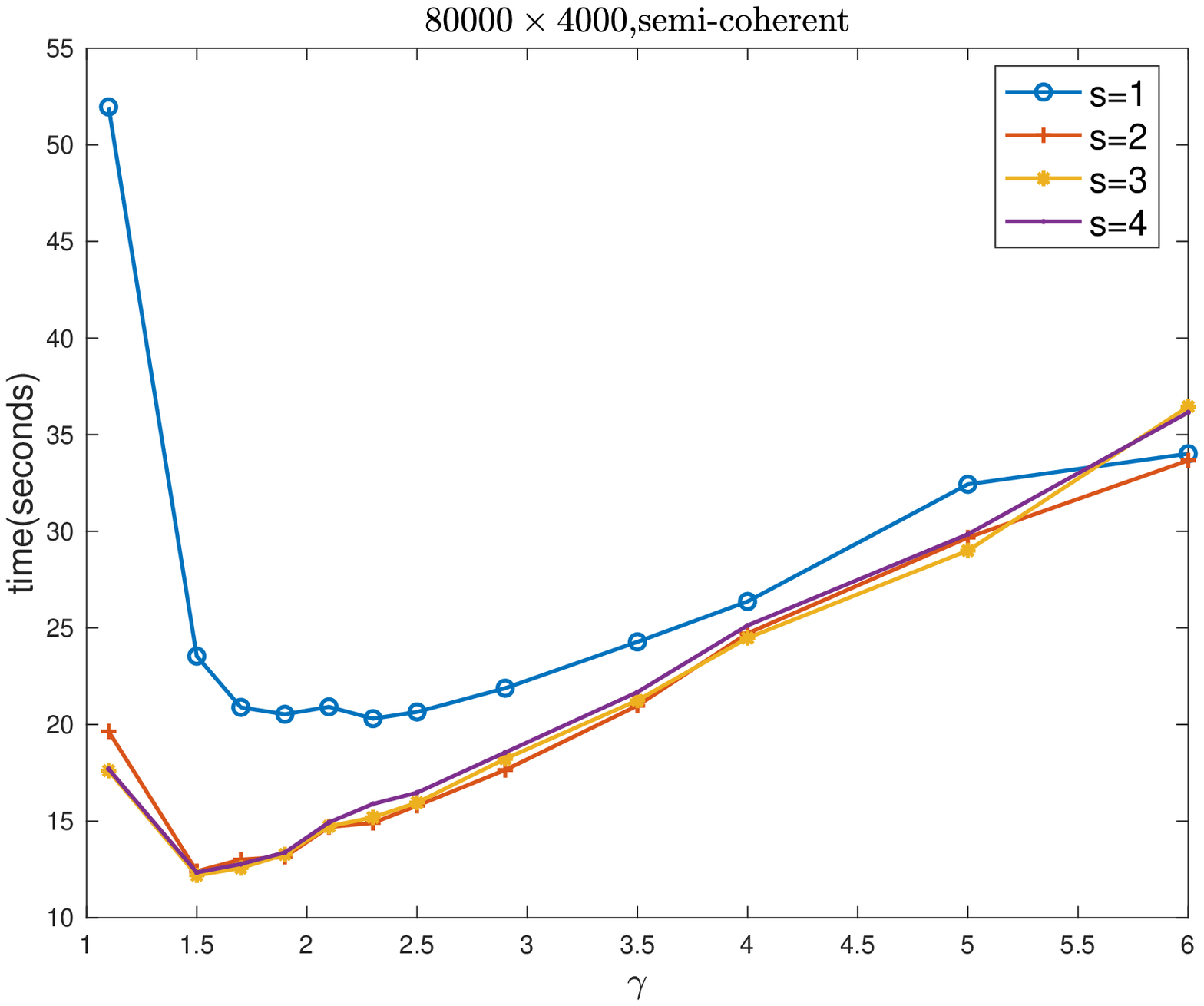}
{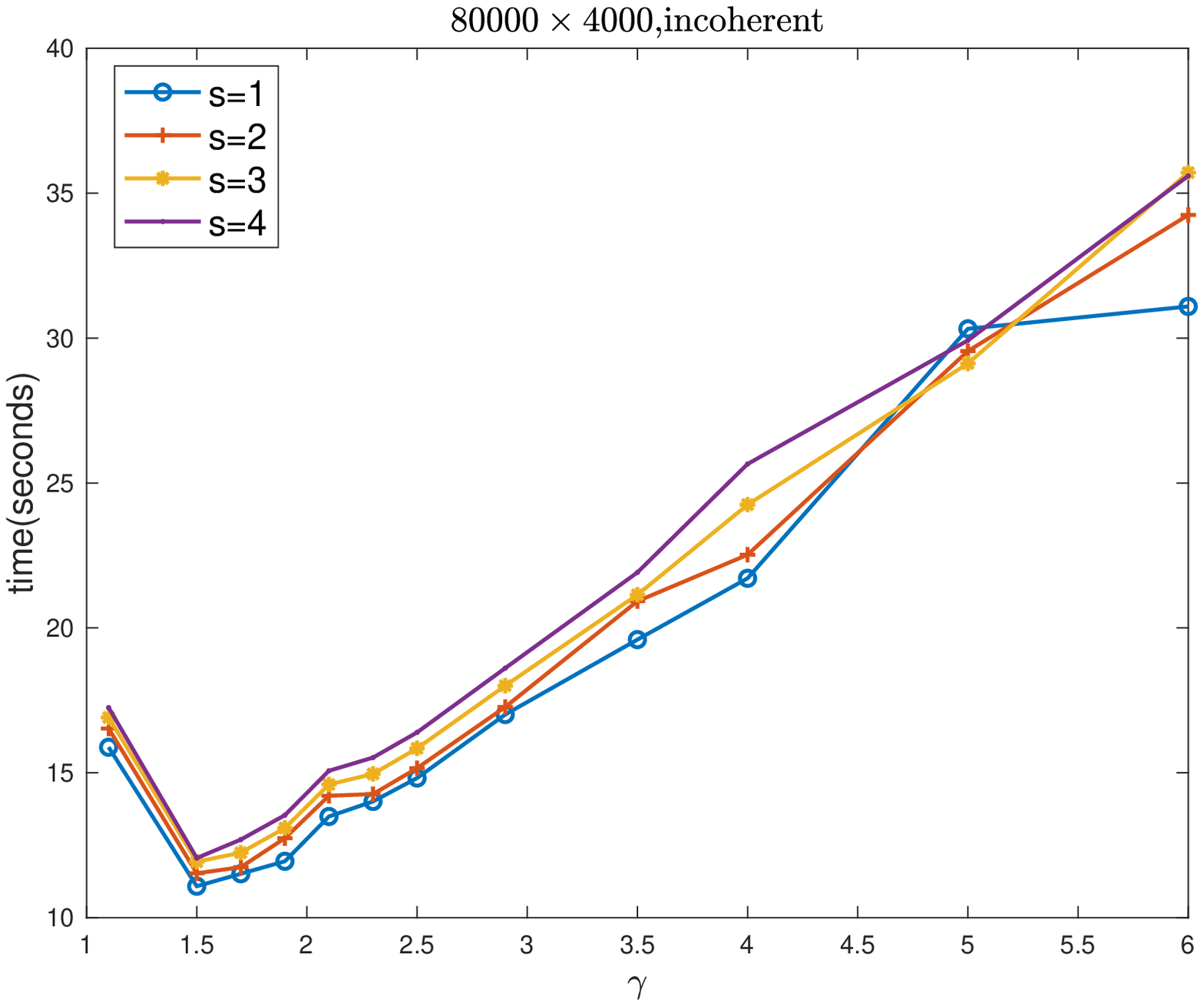}
{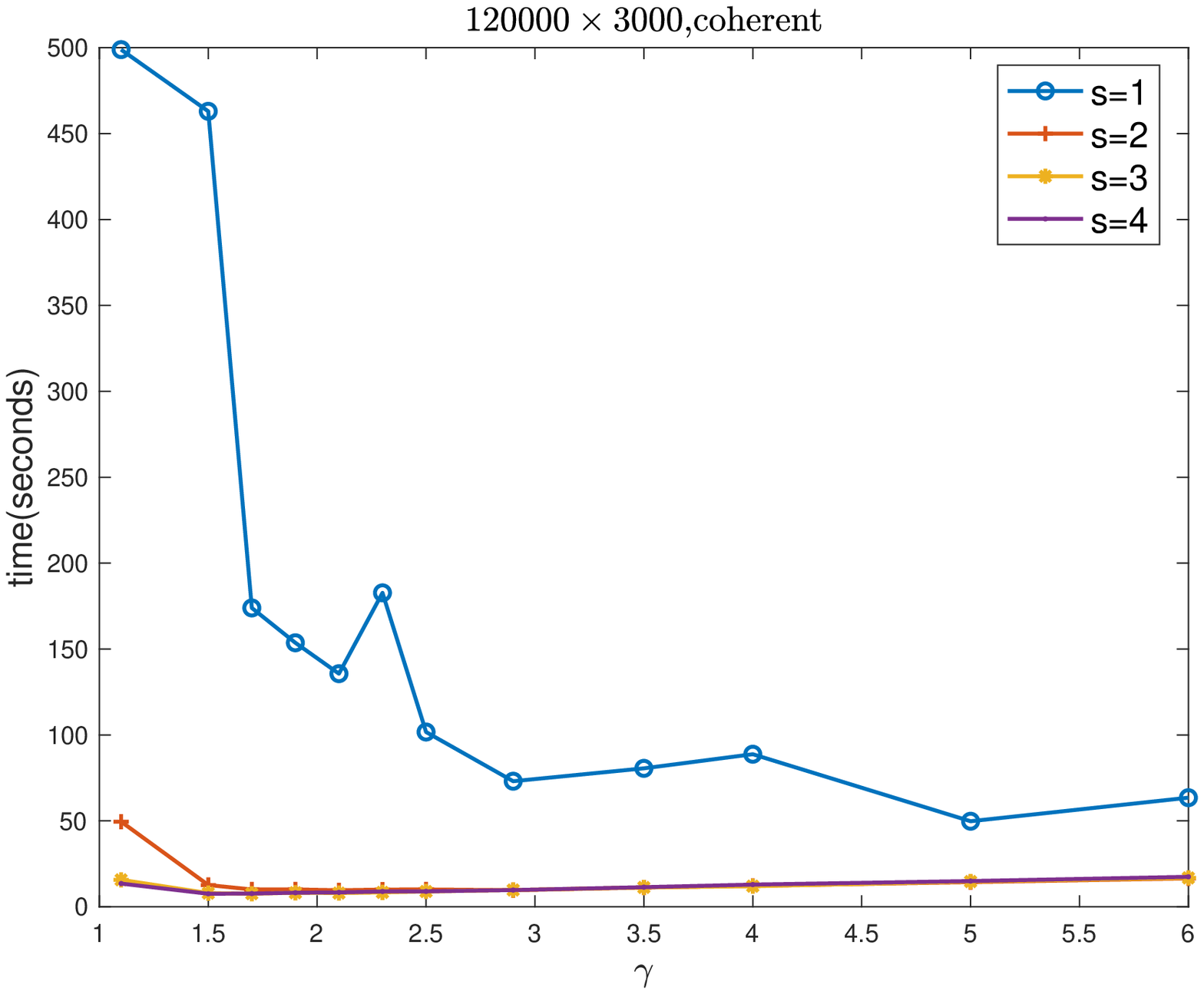}
{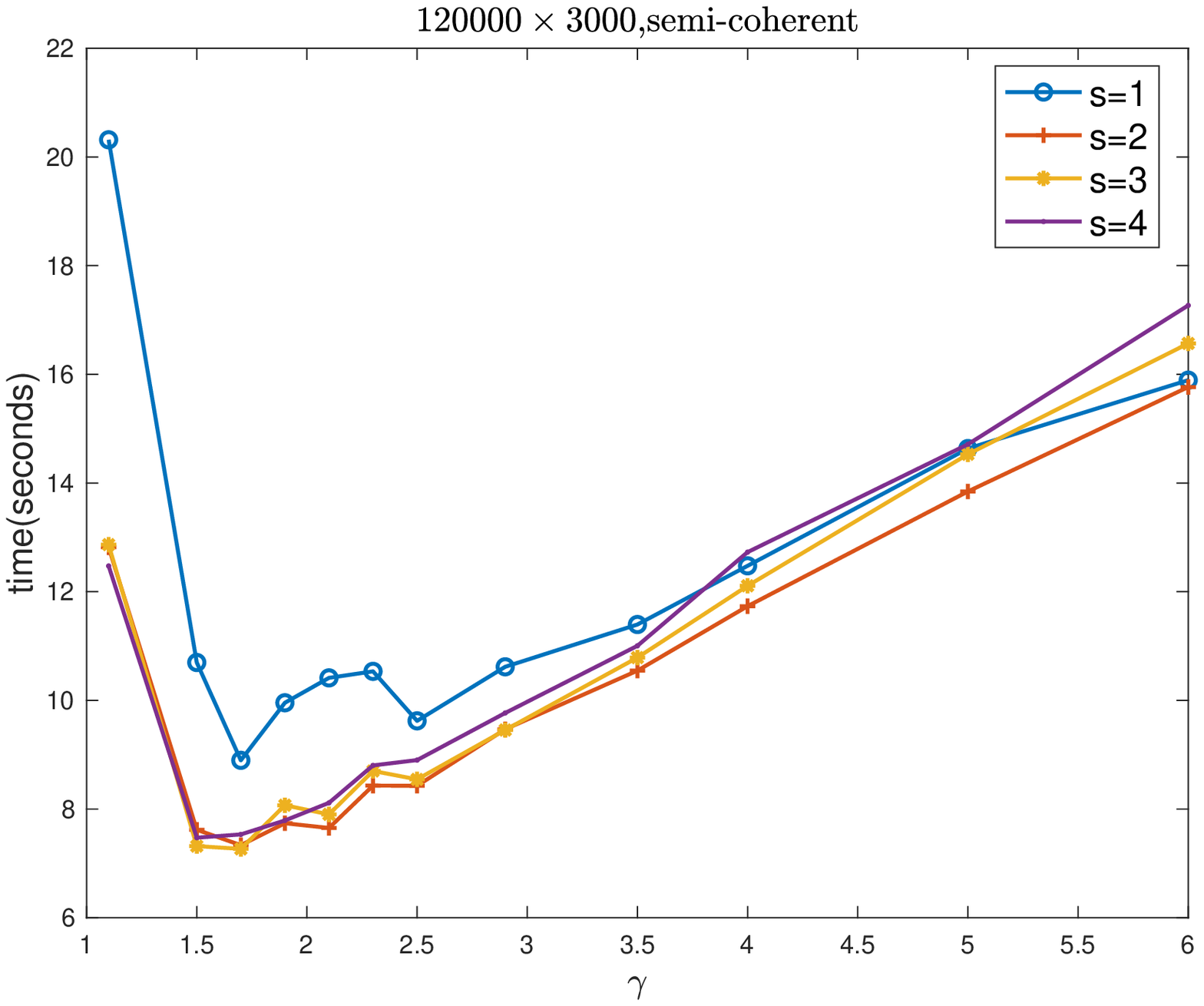}
{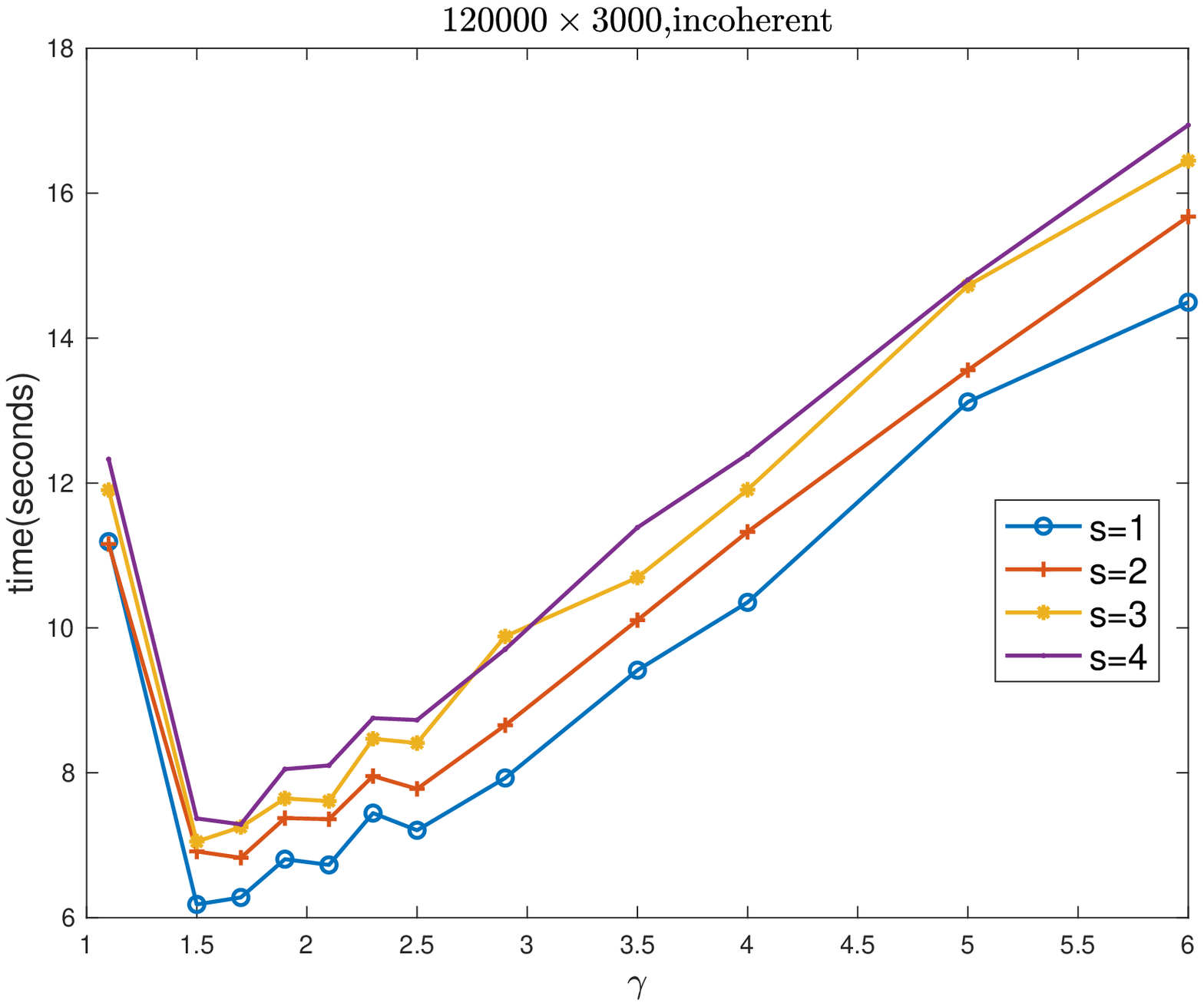}
{Runtime for \solverNameSparse{}  on  sparse $A \in \R^{n\times d}$ from Test Set 2 with $n=80000, d=4000$ and $n=120000, d=3000$ and different values of $\gamma=m/d$ 
 using different values of $s$ and $\gamma = m/d$. Using these plots and some experiments with Test Set 3 (Florida collection), we set $m = 1.4d$ and $s=2$ in \solverNameSparse{}.}
{fig::Ls_qr_engineering_time}

\calibrationSixFigures{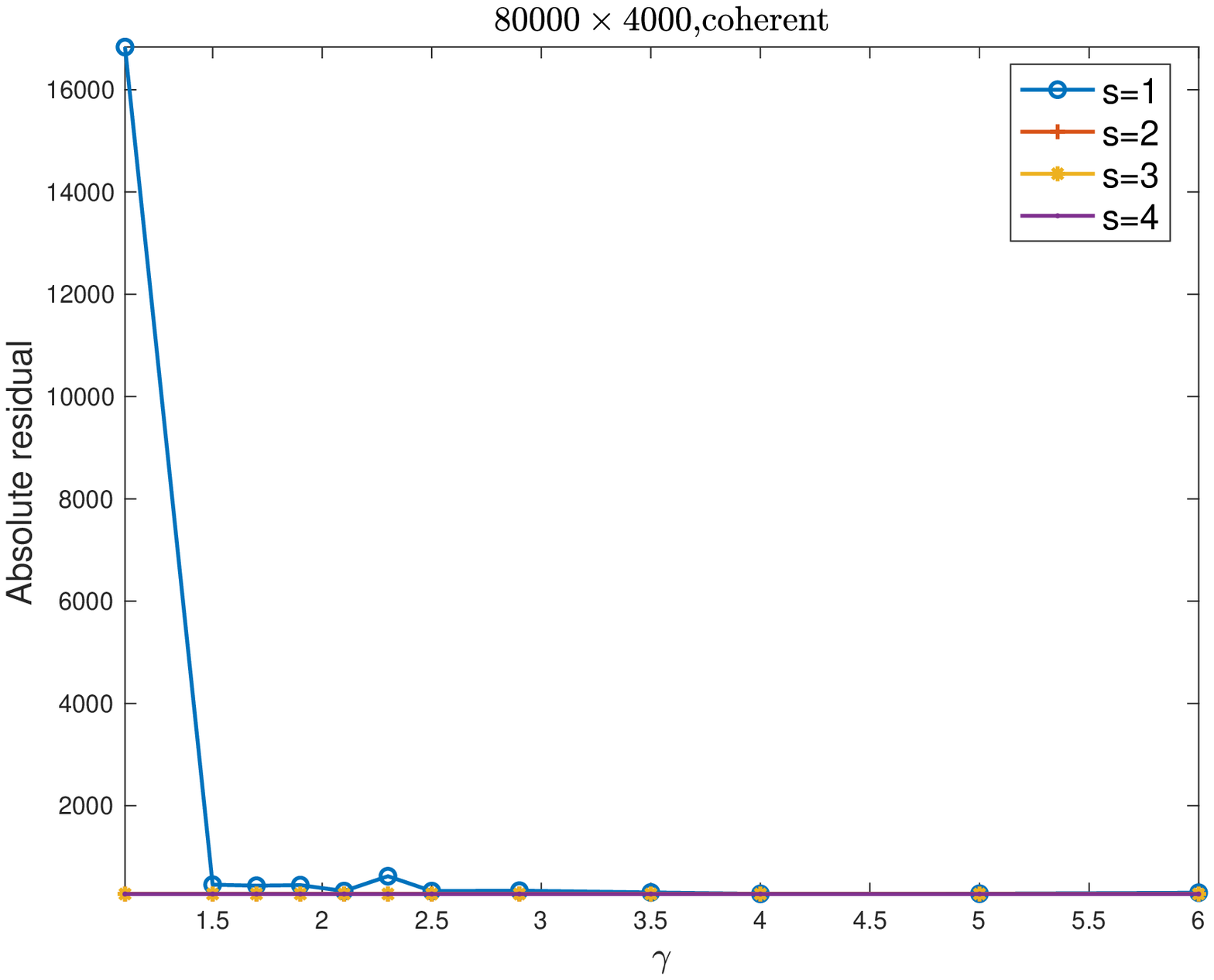}
{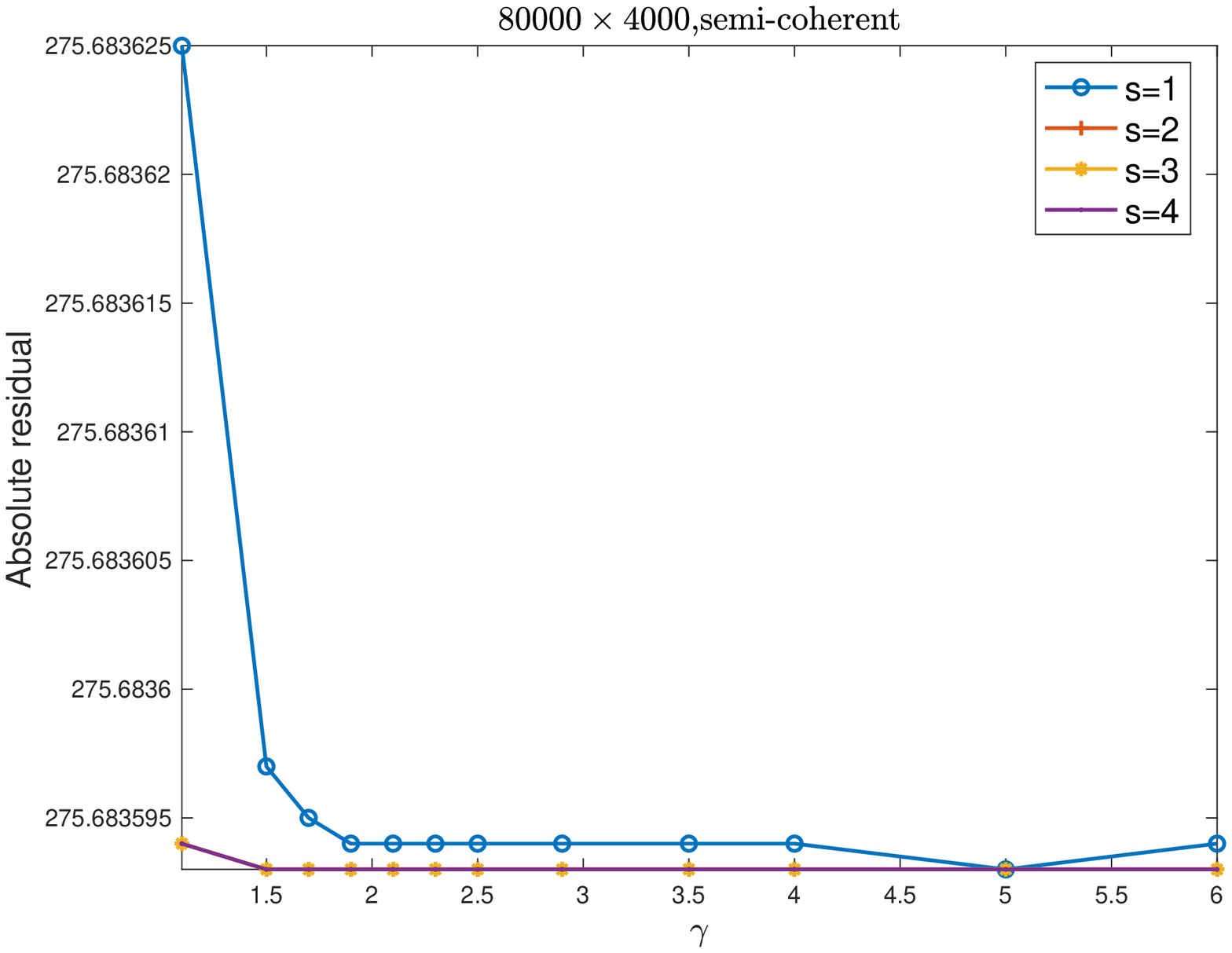}
{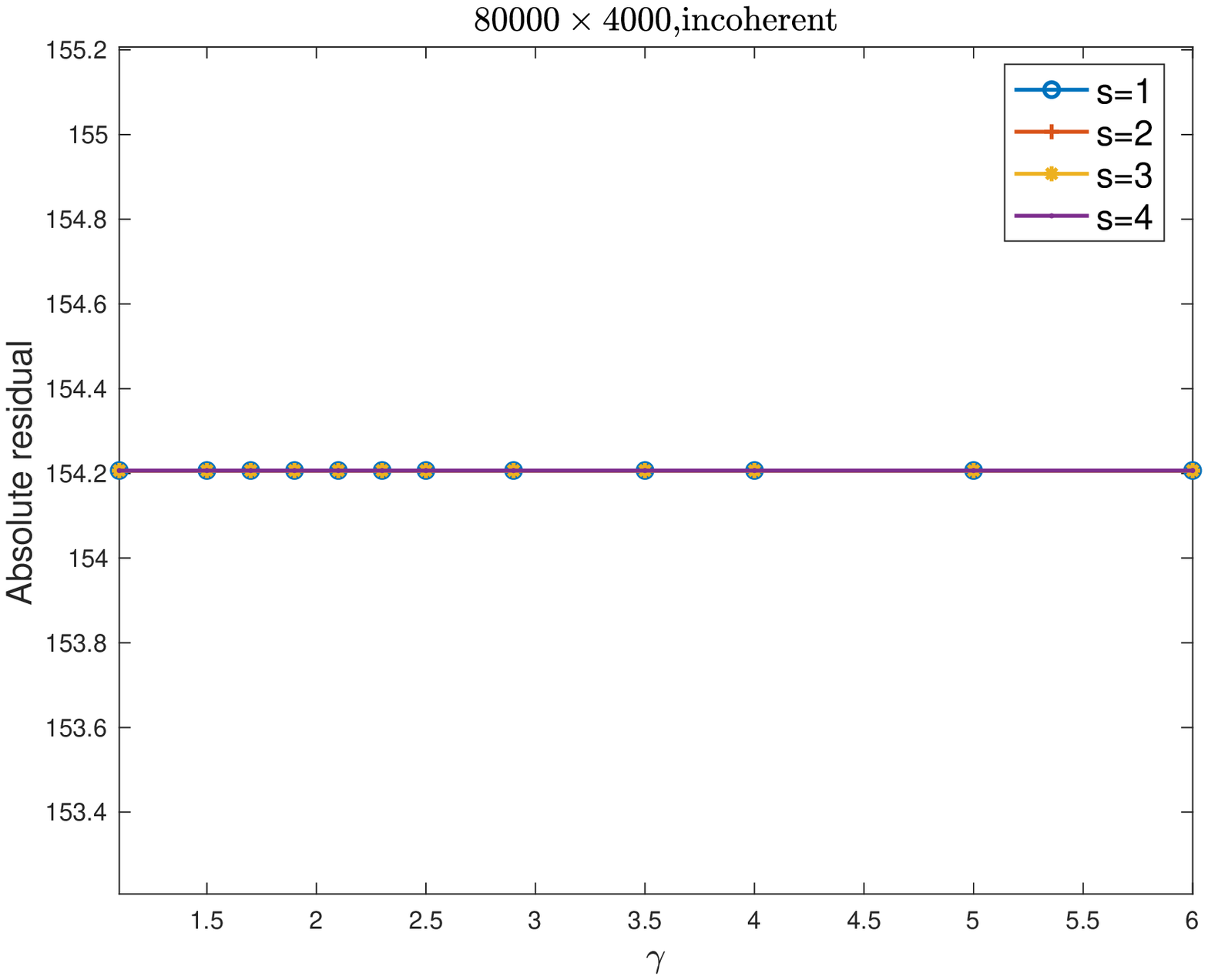}
{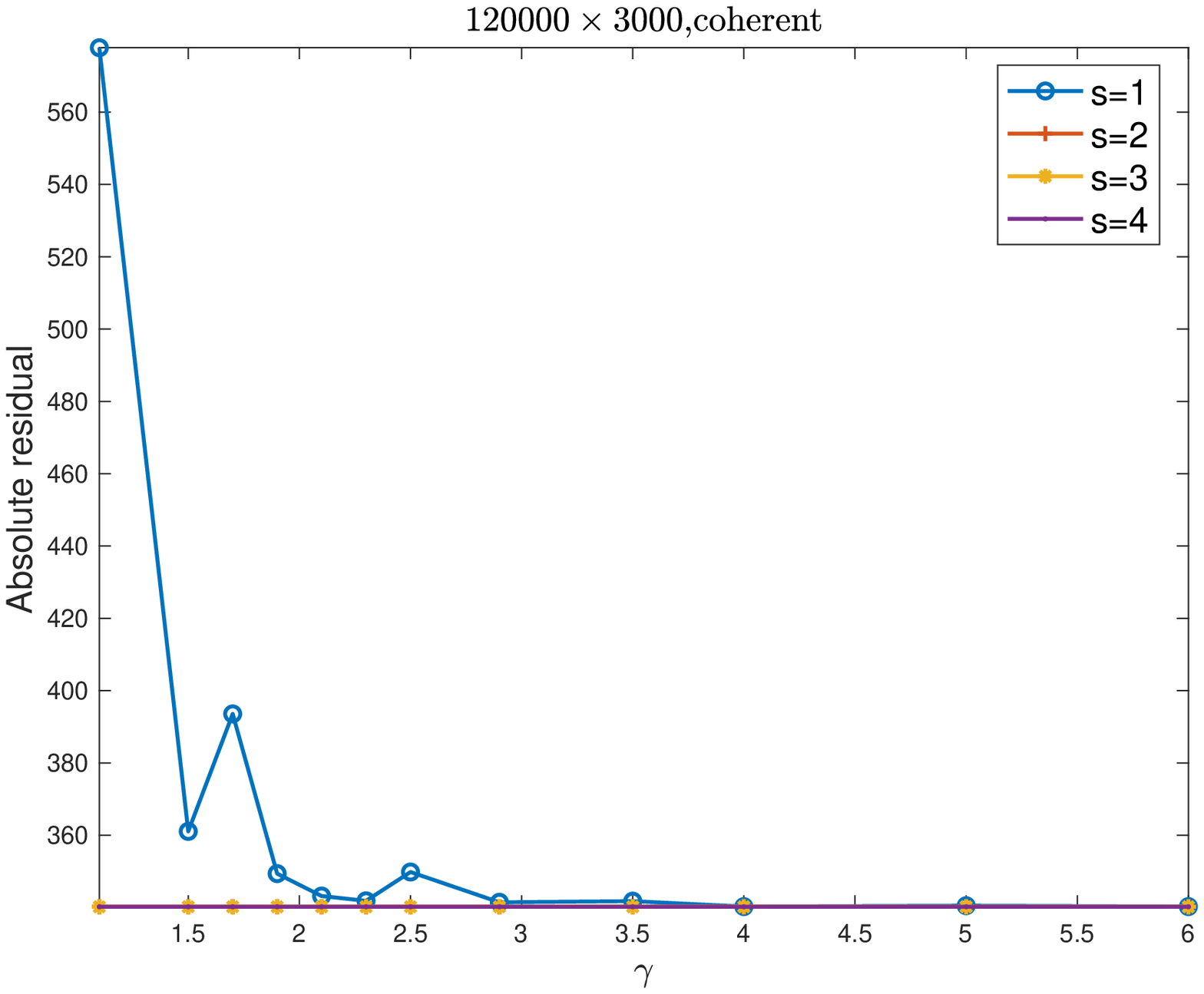}
{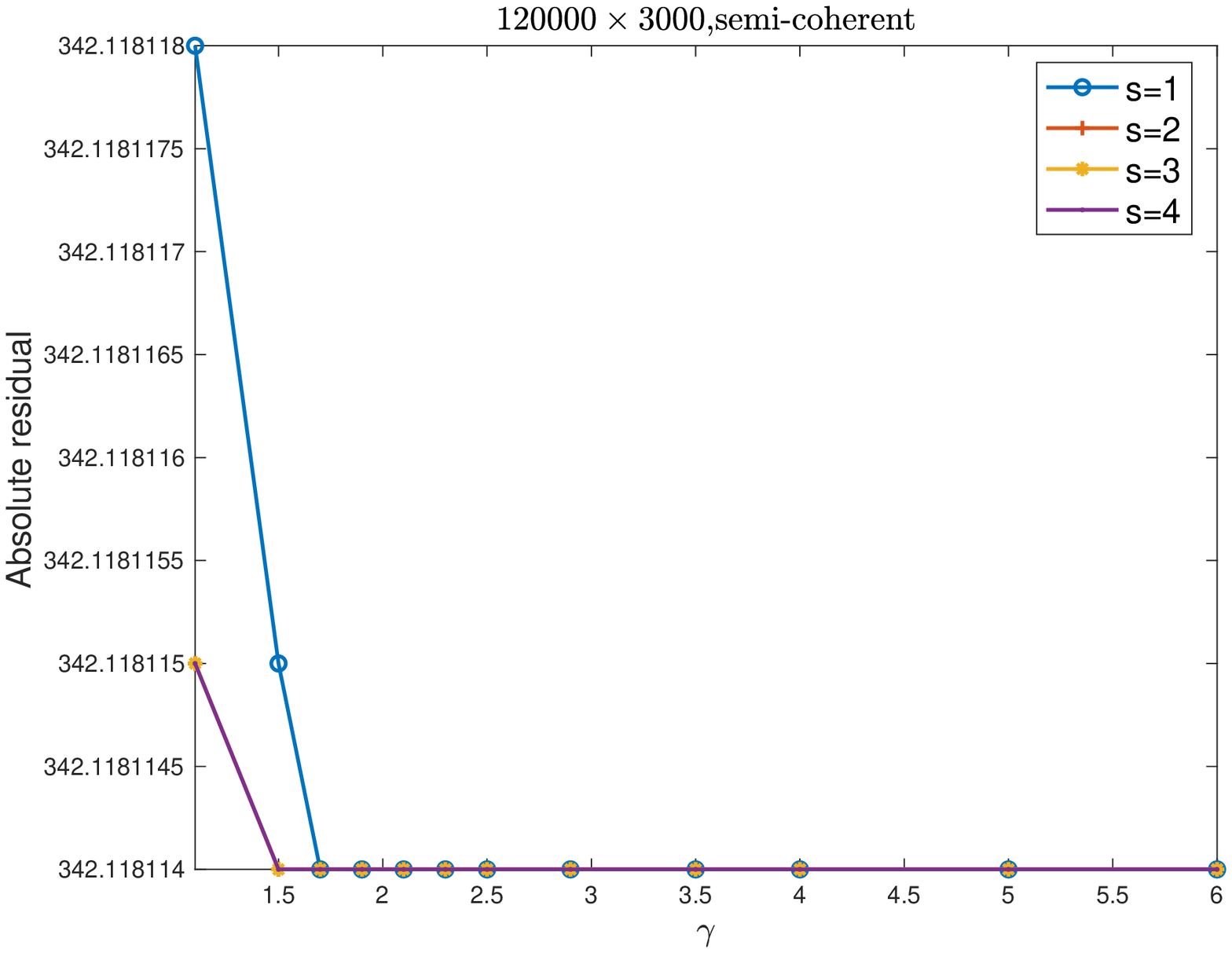}
{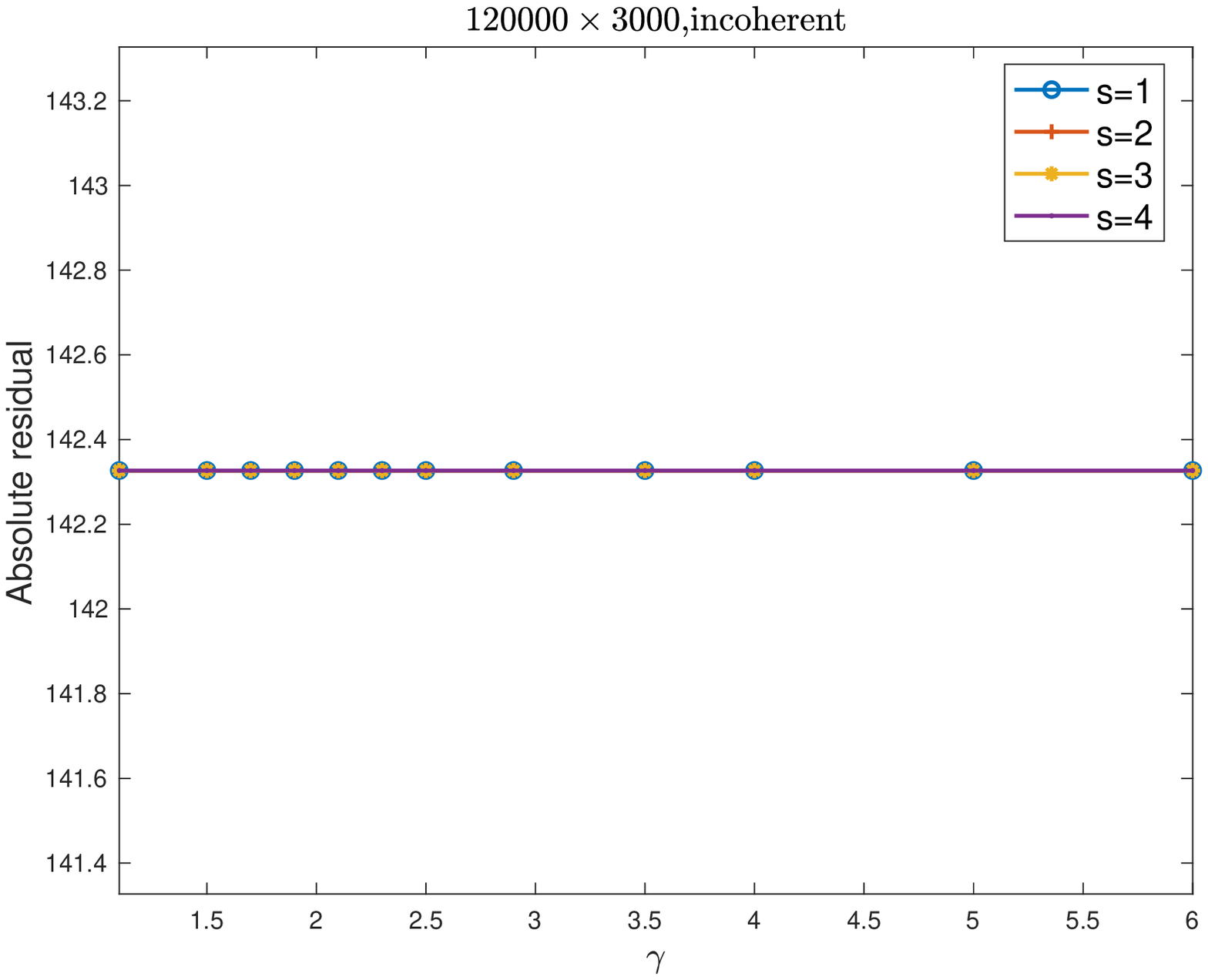}
{Residual values obtained by \solverNameSparse{} on
the same sparse problems as in Figure \ref{fig::Ls_qr_engineering_time}. Note that using $1$-hashing ($s=1$) results in inaccurate solutions. } 
{fig::Ls_qr_engineering_residual}

\calibrationSixFigures{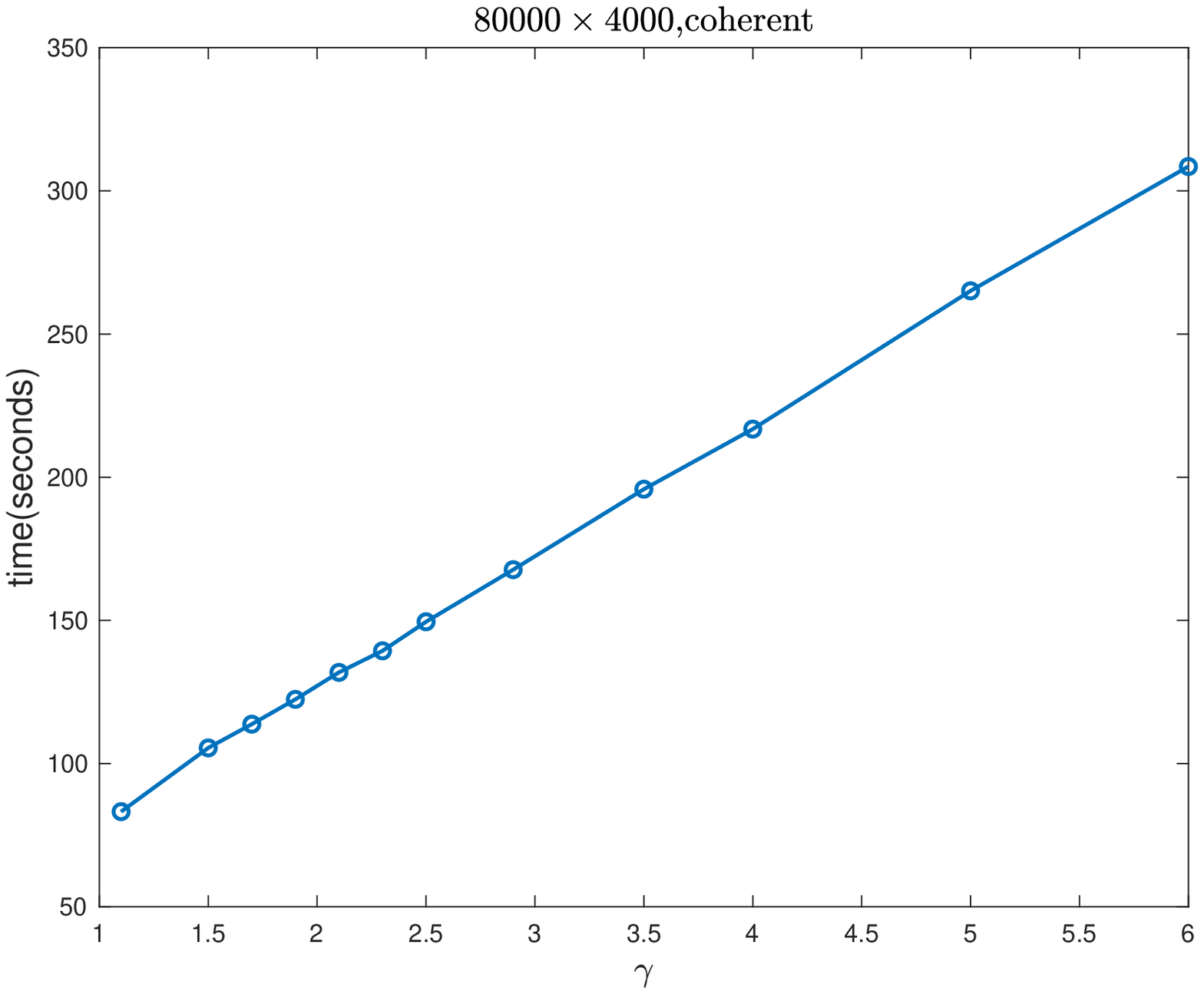}
{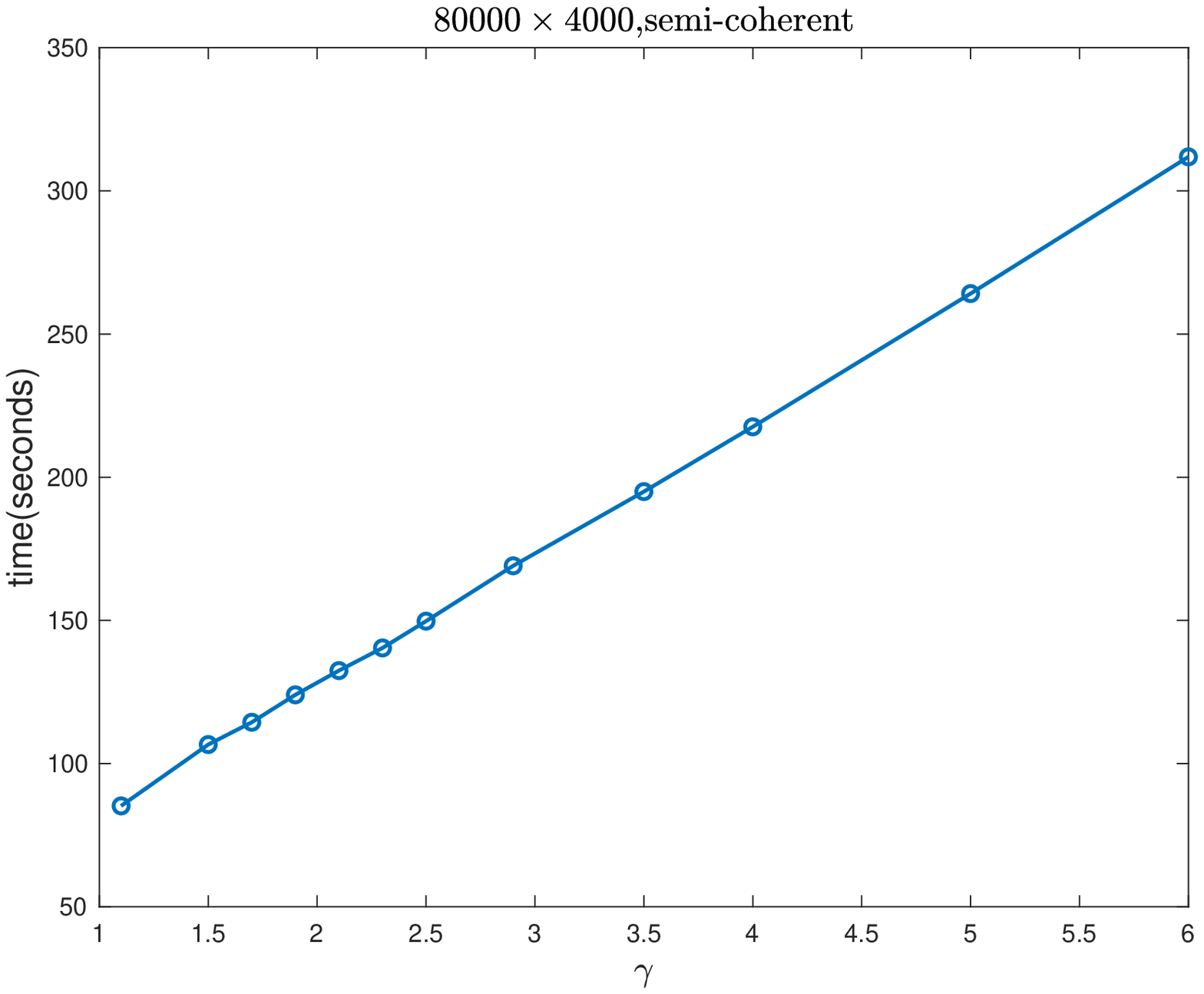}
{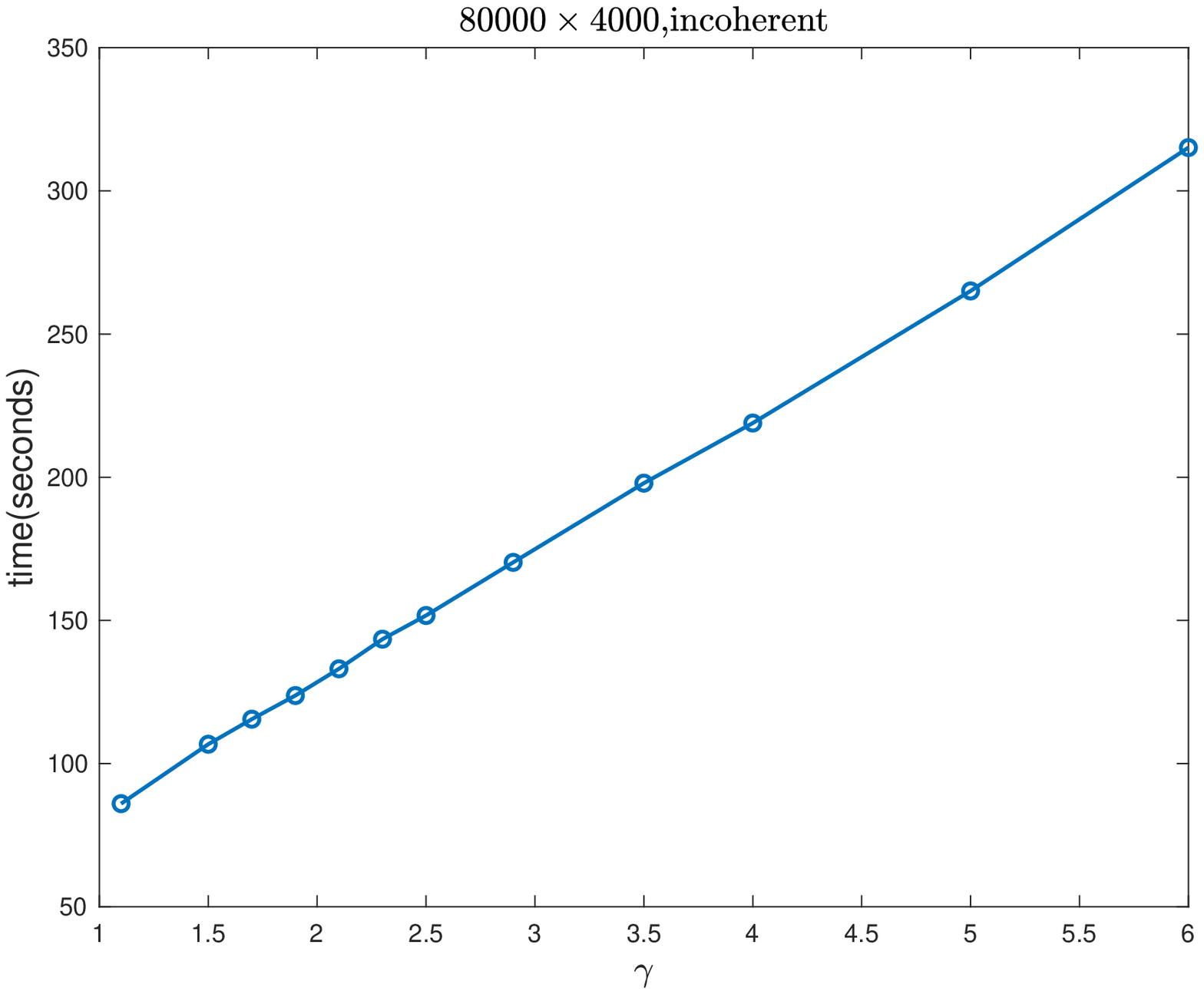}
{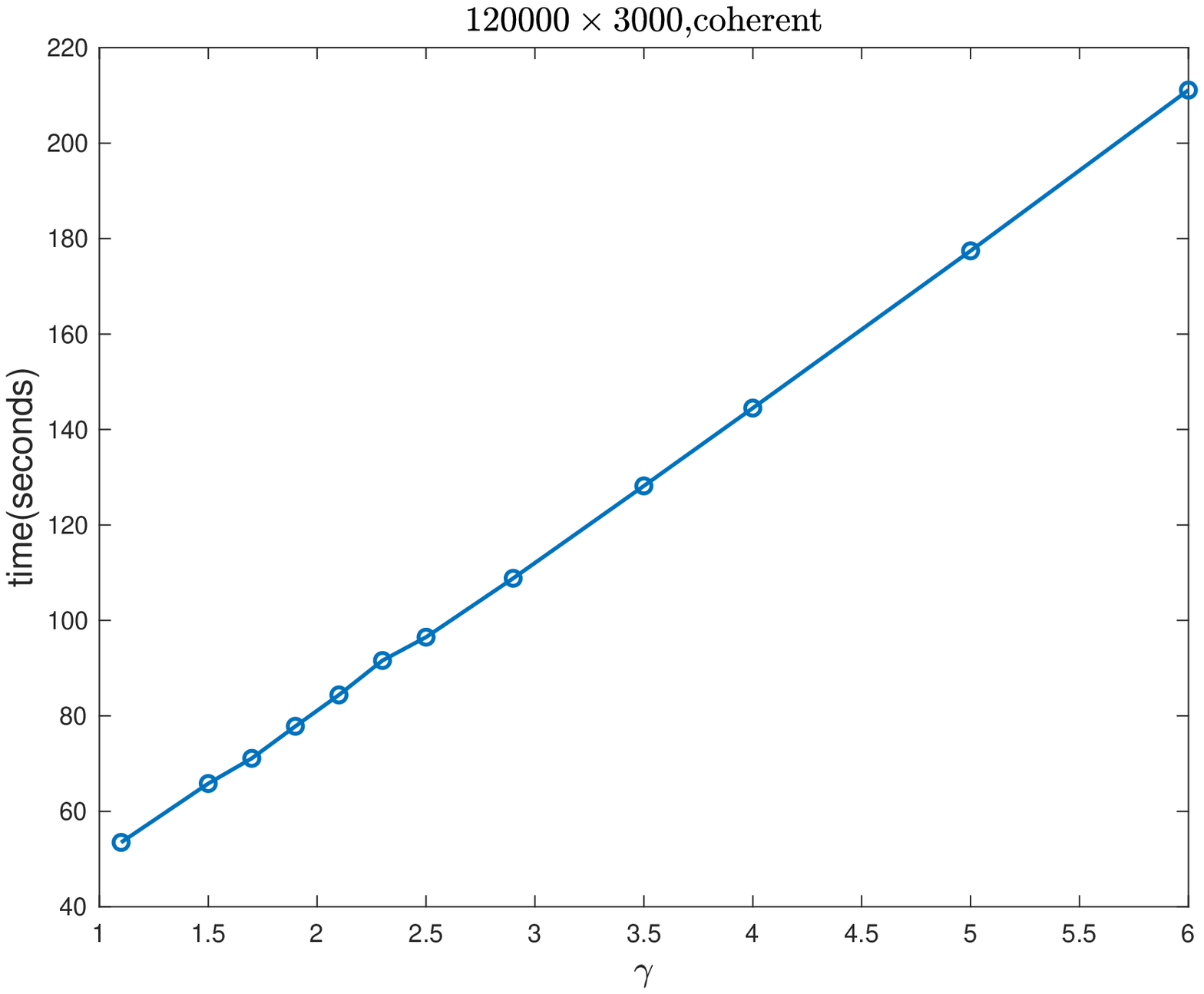}
{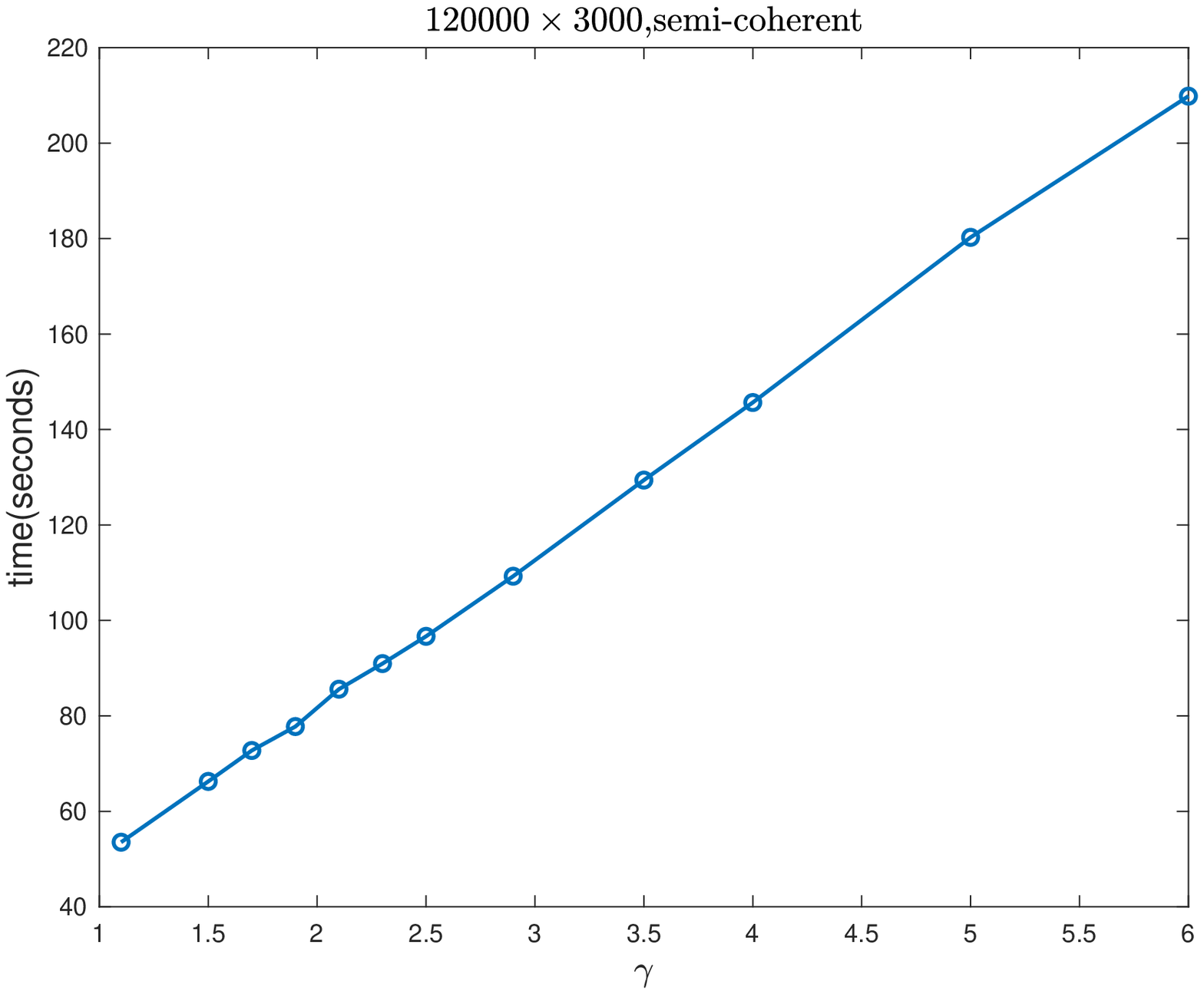}
{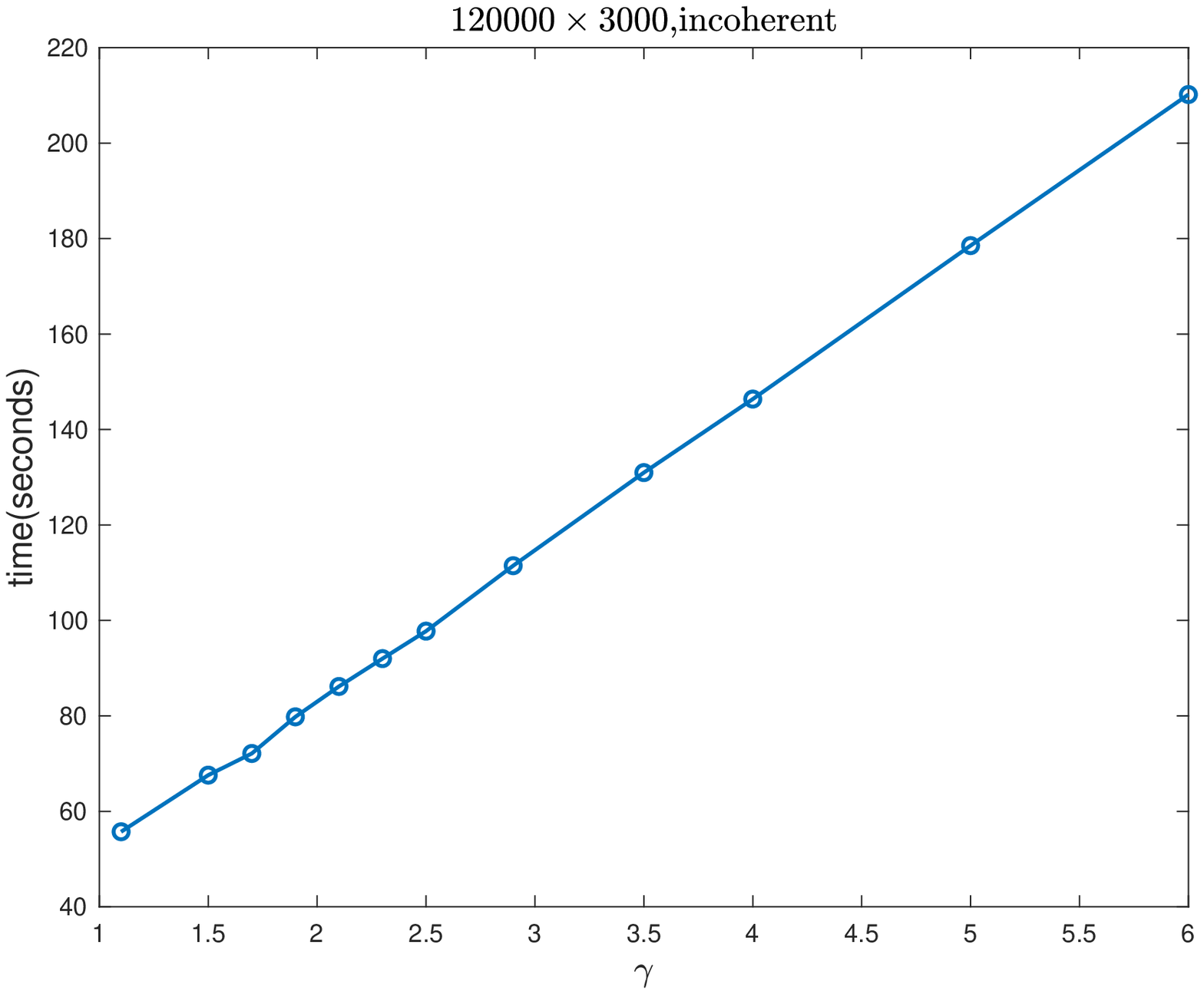}
{Runtime for LSRN  on  sparse $A \in \R^{n\times d}$ from Test Set 2 with $n=80000, d=4000$ and $n=120000, d=3000$ and different values of $\gamma=m/d$. The plots indicate that a value of  $m = 1.1d$
 is reasonable.}
{fig::Ls_lsrn_engineering_time}

\calibrationSixFigures{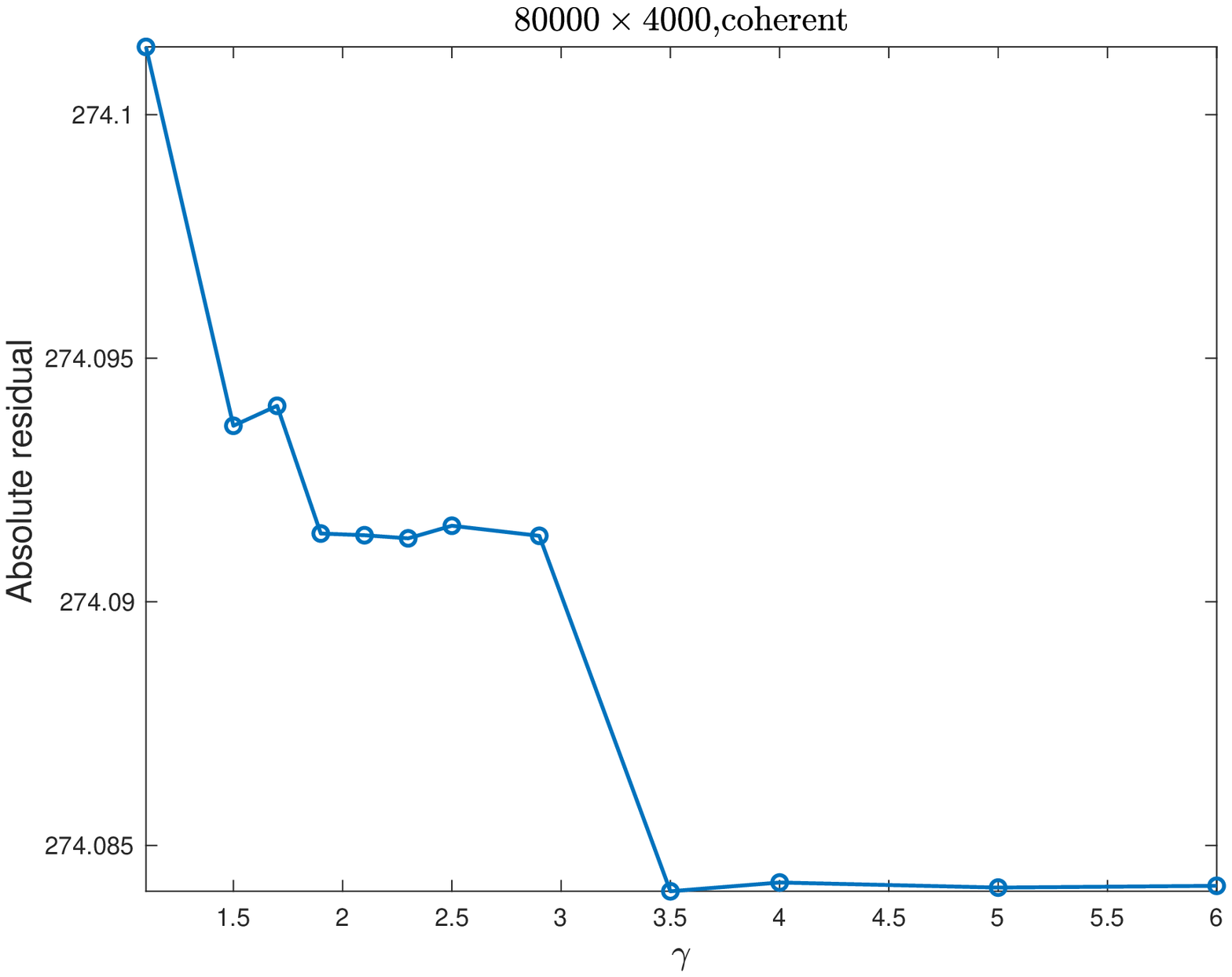}
{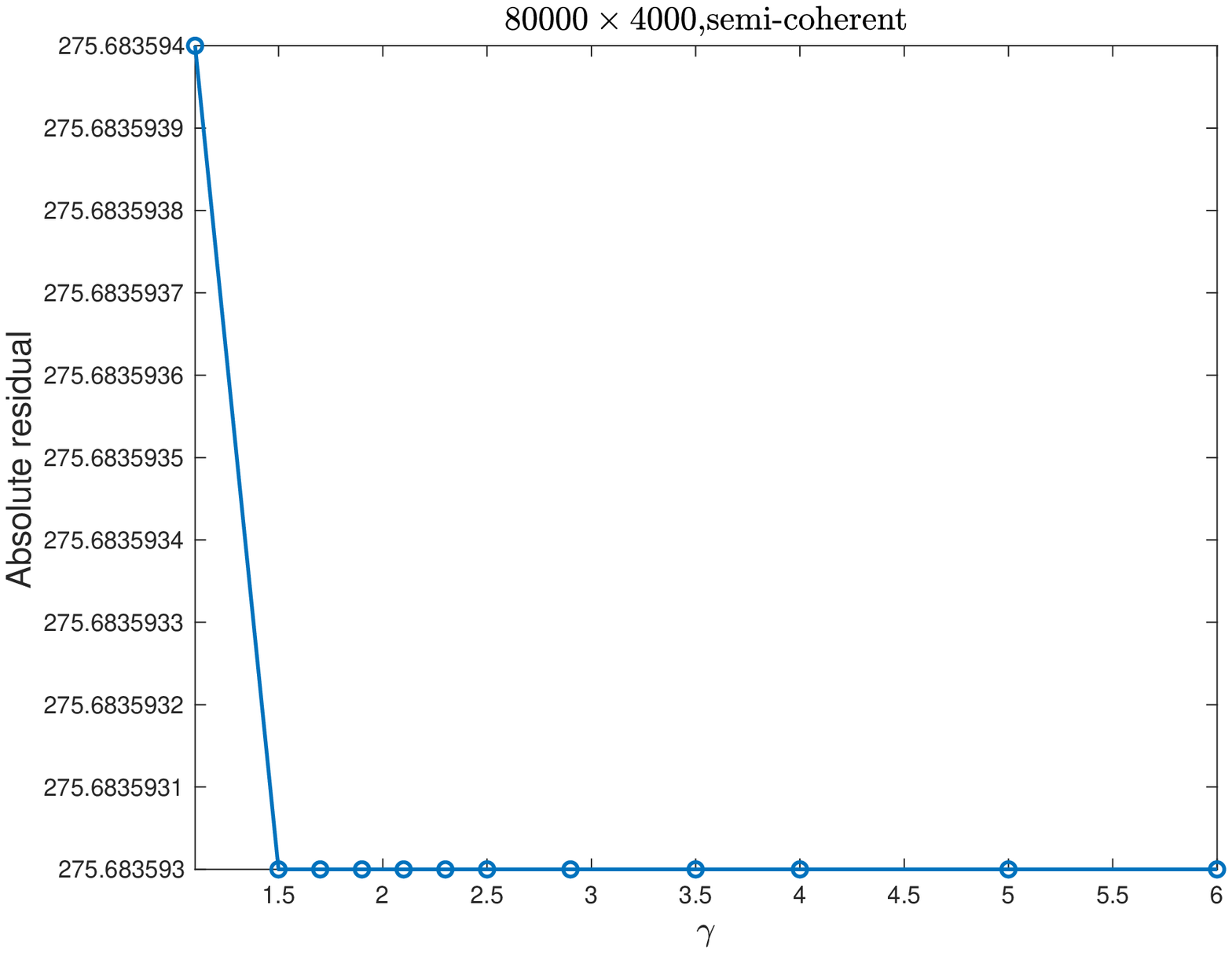}
{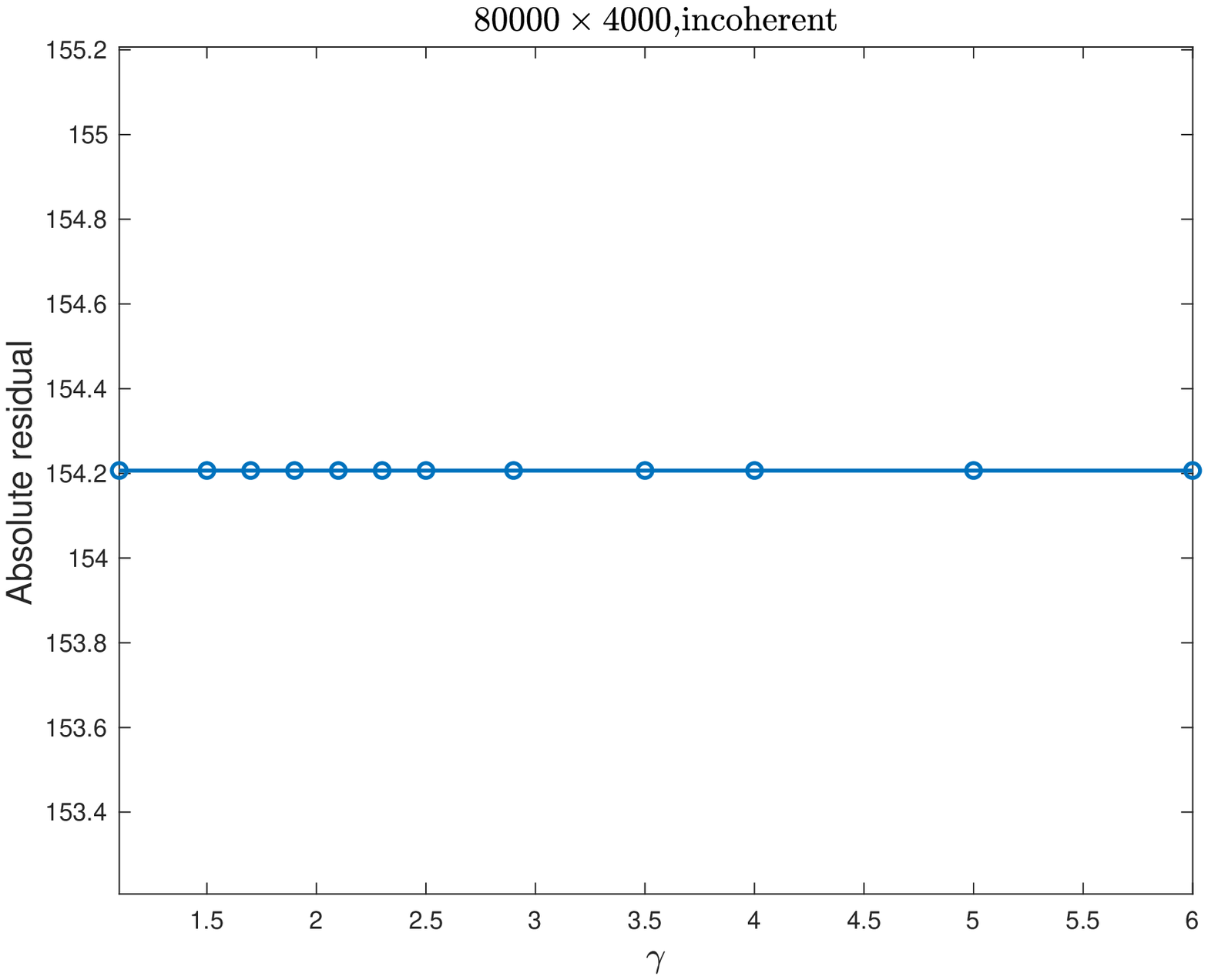}
{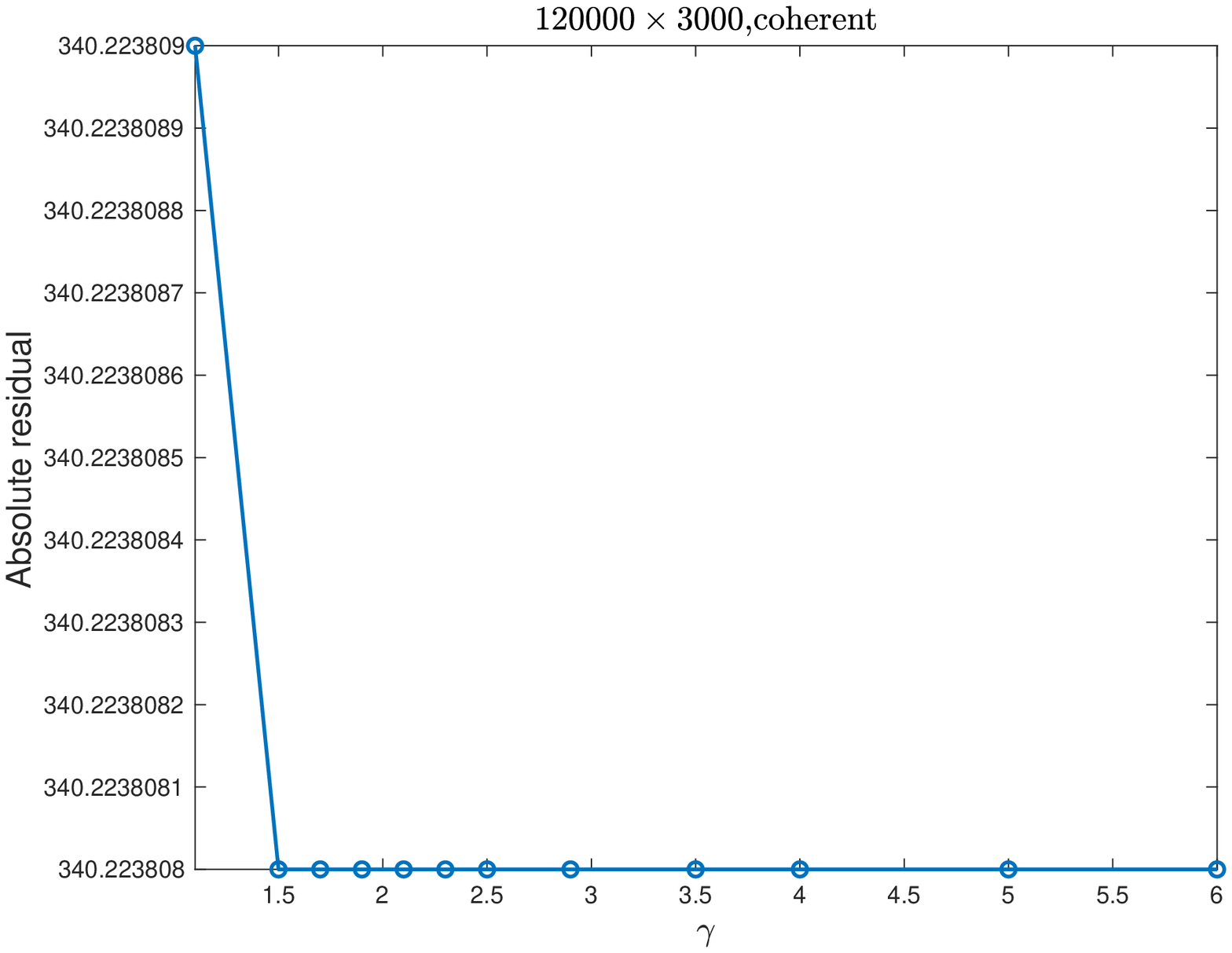}
{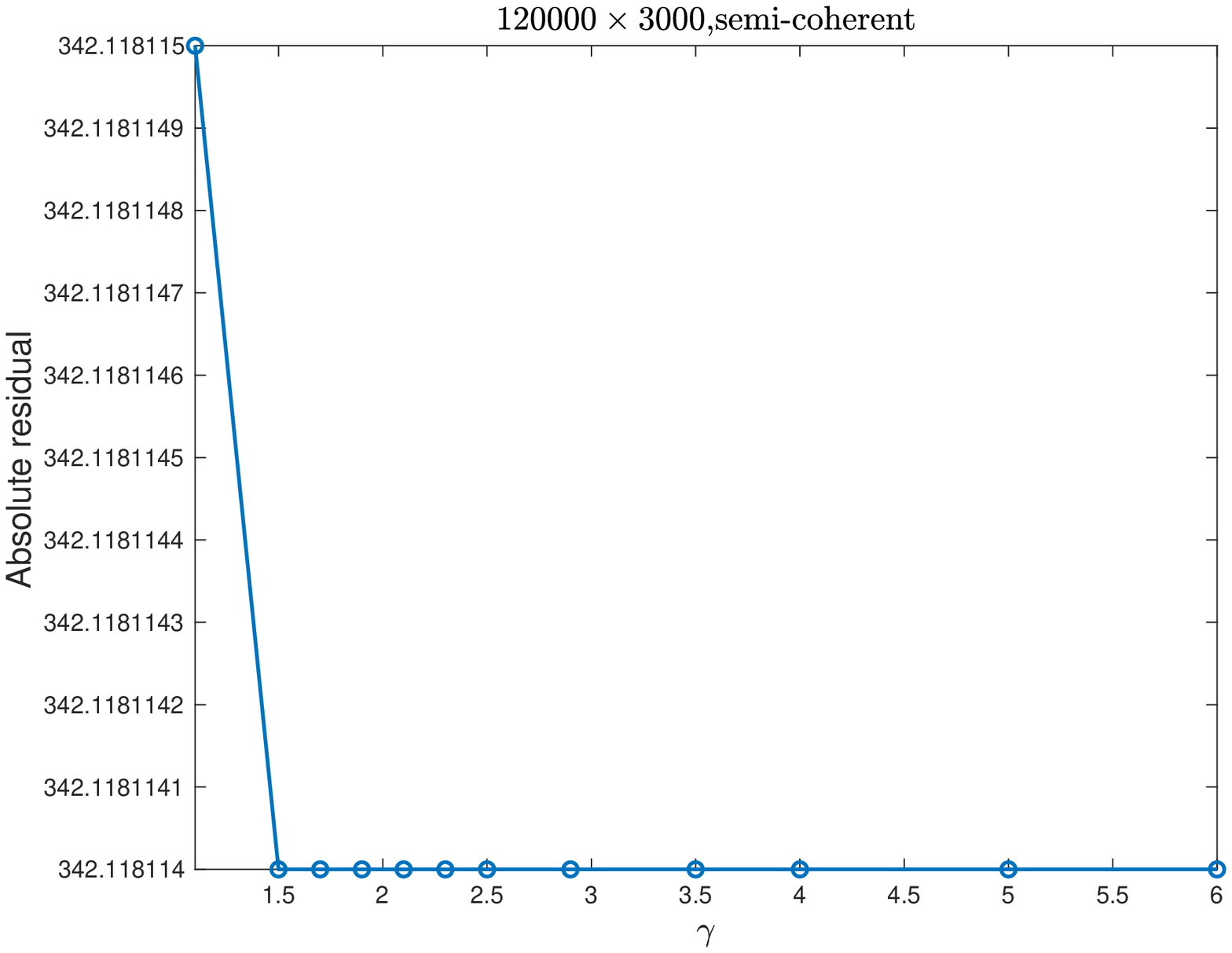}
{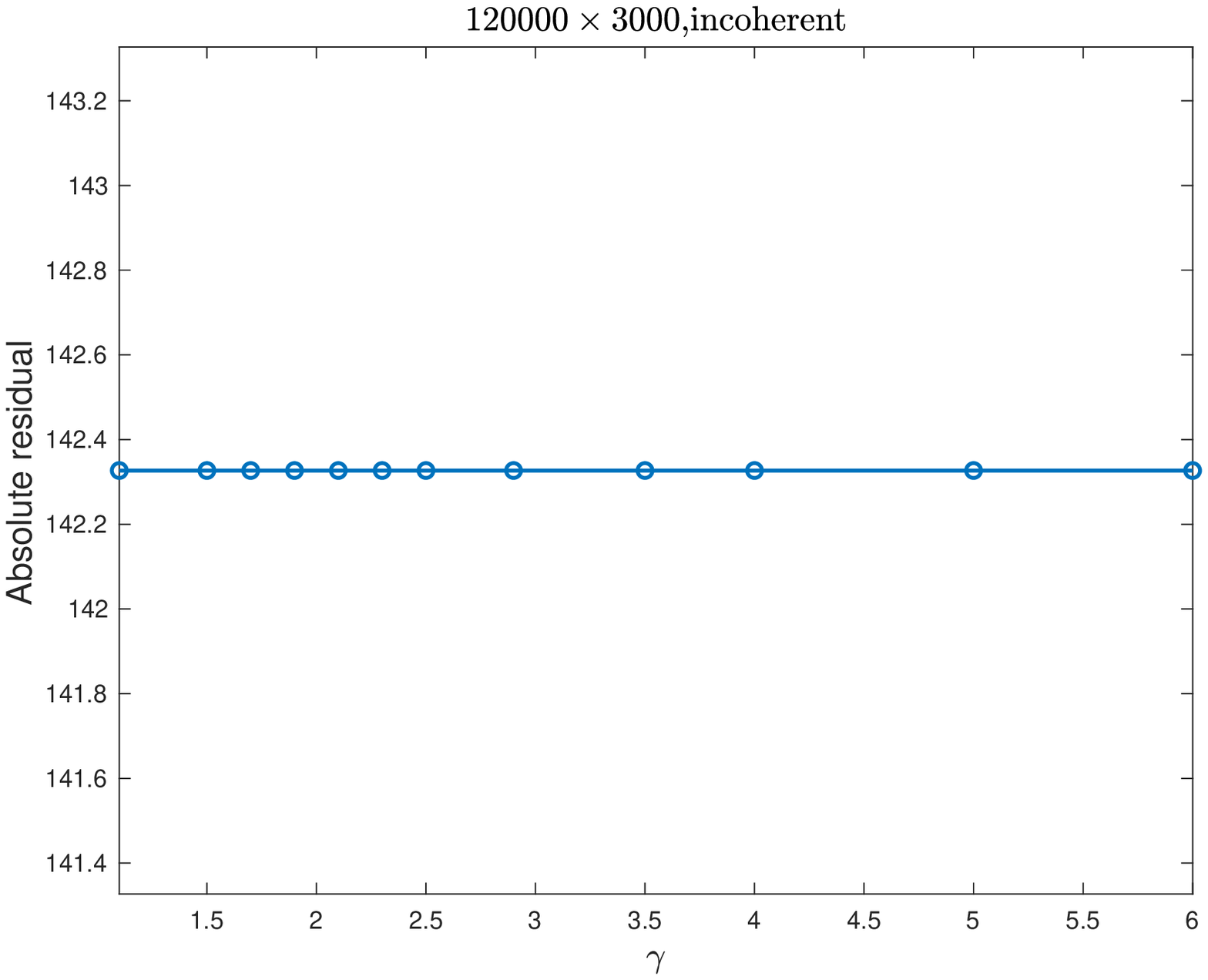}
{Residual values obtained by LSRN on
the same sparse problems as in Figure \ref{fig::Ls_lsrn_engineering_time}. Note that the reduced accuracy in one of the plot is likely due to the relative tolerance of the LSQR being set at $10^{-6}$.} 
{fig::Ls_lsrn_engineering_residual}

\section{Residual accuracy results for randomly generated problems}

See \autoref{tab::res_sparse_rand_inco}, \autoref{tab::res_sparse_rand_semi_co}, \autoref{tab::res_sparse_rand_co}.

\begin{table}
\begin{tabular}{|l|l|l|l|l|}
\hline
                     & \solverName{} & LS\_SPQR & LS\_HSL & LSRN    \\ \hline
$40000 \times 2000$  & 100.339       & 100.339  & 100.339 & 100.339 \\ \hline
$80000 \times 2000$  & 110.353       & 110.339  & 110.496 & 110.353 \\ \hline
$80000 \times 4000$  & 154.207       & 154.207  & 154.247 & 154.207 \\ \hline
$120000 \times 3000$ & 142.327       & 142.327  & 142.461 & 142.327 \\ \hline
$120000\times 5000$  & 182.171       & 182.171  & 182.208 & 182.171 \\ \hline
\end{tabular}
\caption{Residual values obtained by solvers on problems in Figure \ref{fig::sparse_rand_inco}.}
\label{tab::res_sparse_rand_inco}
\end{table}

\begin{table}
\begin{tabular}{|l|l|l|l|l|}
\hline
                     & \solverName{} & LS\_SPQR & LS\_HSL & LSRN    \\ \hline
$40000 \times 2000$  & 194.714       & 194.714  & 194.718 & 194.714 \\ \hline
$80000 \times 2000$  & 279.352       & 279.352  & 279.359 & 279.352 \\ \hline
$80000 \times 4000$  & 275.684       & 275.684  & 275.685 & 275.684 \\ \hline
$120000 \times 3000$ & 342.118       & 342.118  & 342.120 & 342.118 \\ \hline
$120000\times 5000$  & 339.129       & 339.129  & 339.141 & 339.129 \\ \hline
\end{tabular}
\caption{Residual values obtained by solvers on problems in Figure \ref{fig::sparse_rand_semi_co}.}
\label{tab::res_sparse_rand_semi_co}
\end{table}

\begin{table}
\begin{tabular}{|l|l|l|l|l|}
\hline
                     & \solverName{} & LS\_SPQR & LS\_HSL & LSRN    \\ \hline
$40000 \times 2000$  & 193.851       & 194.158  & 195.586 & 193.802 \\ \hline
$80000 \times 2000$  & 277.746       & 277.750  & 278.561 & 277.746 \\ \hline
$80000 \times 4000$  & 274.117       & 274.117  & 279.763 & 274.102 \\ \hline
$120000 \times 3000$ & 340.224       & 340.236  & 343.114 & 340.224 \\ \hline
$120000\times 5000$  & 337.100       & 337.144  & 342.291 & 337.085 \\ \hline
\end{tabular}
\caption{Residual values obtained by solvers on problems in Figure \ref{fig::sparse_rand_co}.}
\label{tab::res_sparse_rand_co}
\end{table}

\section{Additional Performance Profiles for the Florida Matrix Collection}

See Figures \ref{fig::all_solver_2} and 
\ref{fig::density0001}.

\renewcommand{\mysize}{0.42}
\twoFigures{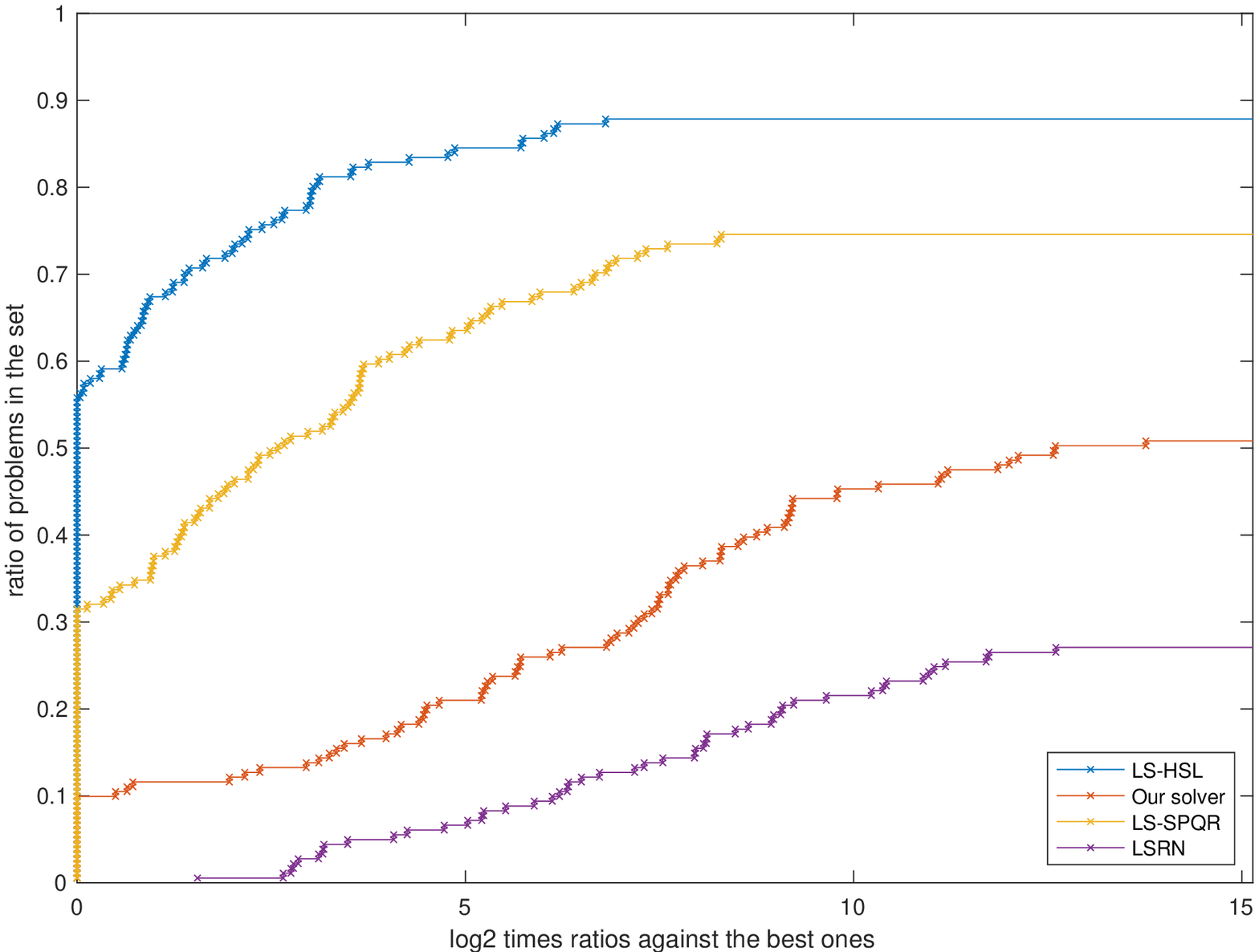}
{\performanceProfileCaption{$n\geq 2d$}.}
{fig::all_solver_2}
{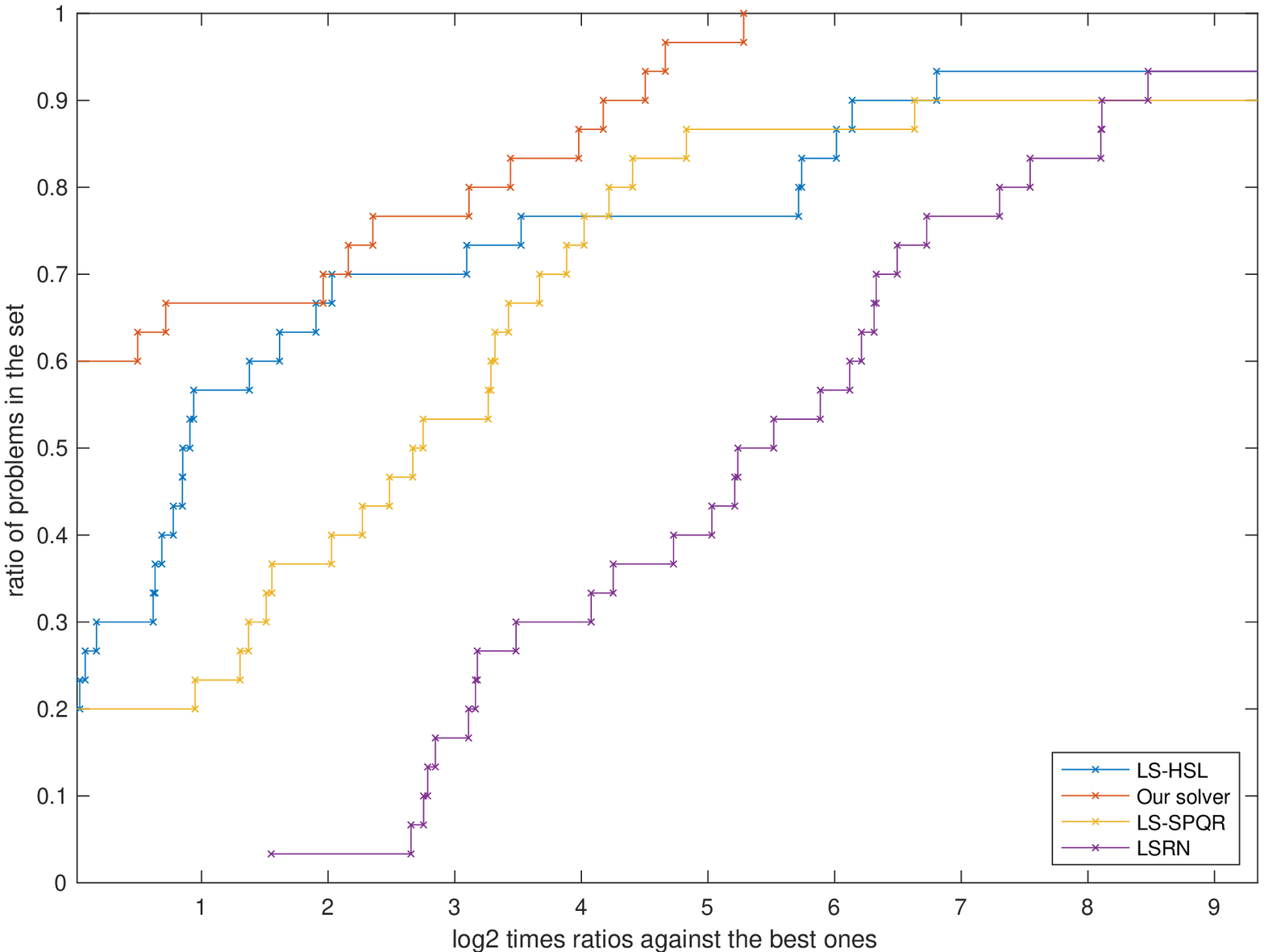}
{\performanceProfileCaption{$n \geq 10d$ and $\text{nnz}(A) \geq 0.001nd$}.}
{fig::density0001}

\section{Additional information on the Florida matrix collection}
\autoref{tab::rank_def_accuracy_dim} gives dimensions for the Florida collection problems tested in \autoref{tab::rank_def_accuracy}. 
\begin{table}
\centering
\scriptsize
\begin{tabular}{l|rrr}
\multicolumn{1}{r|}{} & nrow~  & ncol & rank            \\ 
\hline
lp\_ship12l           & 5533   & 1151 & 1042            \\
Franz1                & 2240   & 768  & 755             \\
GL7d26                & 2798   & 305  & 273             \\
cis-n4c6-b2           & 1330   & 210  & 190             \\
lp\_modszk1           & 1620   & 687  & 686             \\
rel5                  & 240    & 35   & 24              \\
ch5-5-b1              & 200    & 25   & 24              \\
n3c5-b2               & 120    & 45   & 36              \\
ch4-4-b1              & 72     & 16   & 15              \\
n3c5-b1               & 45     & 10   & 9               \\
n3c4-b1               & 15     & 6    & 5               \\
connectus             & 394792 & 512  & \textless{}458  \\
landmark              & 71952  & 2704 & 2671            \\
cis-n4c6-b3           & 5940   & 1330 & 1140           
\end{tabular}
\caption{Dimensions for the Florida collection problems tested in \autoref{tab::rank_def_accuracy}}
\label{tab::rank_def_accuracy_dim}
\end{table}

\end{document}